\documentclass[12pt,a4paper]{report}

\usepackage[T1]{fontenc}
\usepackage[latin1]{inputenc}
\usepackage{latexsym,amsfonts,amssymb}
\usepackage{wrapfig}
\usepackage{graphicx}
\usepackage{eepic}
\usepackage{color}
\usepackage{hyperref}

\newtheorem{theorem}{Theorem}[section]
\newtheorem{lemma}{Lemma}[section]
\newtheorem{corollary}{Corollary}[section]
\newtheorem{proposition}{Proposition}[section]
\newenvironment{definition}

\newcommand{\overdub}[1]{\overline{\overline{#1 }}}
\newcommand{\beq}{\begin{equation}}
\newcommand{\eeq}{\end{equation}}
\begin{document}
\begin{titlepage}
\parbox[t]{0.46\linewidth}{
\begin{flushleft}
Universit\'e Libre de Bruxelles\\
Facult\'e des Sciences\\
D\'epartement de Math\'ematiques
\end{flushleft}
}
\hfill
\parbox[t]{0.46\linewidth}{
\begin{flushright}
Ann\'ee Acad\'emique 2002--2003
\end{flushright}
}
\vspace{2cm}

\begin{center}
%%%%%%%%%%%%%%
%\textbf{\huge Classification of}\\
{\Huge Classification of Group Extensions
\smallskip
\smallskip
\bigskip
}

{\Large Claude Archer}
\vspace{1cm}
%%%%%%%%%%%%%%
%{\Huge
%\smallskip
%Classification of Group Extensions
%\smallskip
%}
%Claude \textsc{Archer}
%\vspace{2cm}
\begin{center}
\includegraphics[width=6.5cm,height=9cm]{couv.eps}
\end{center}
\end{center}

\vspace{1cm}
\parbox[t]{0.46\linewidth}{
\begin{flushleft}
Th\`ese pr\'esent\'ee en vue\\
de l'obtention du grade de\\
Docteur en Sciences
\end{flushleft}
}
\hfill
\parbox[t]{0.46\linewidth}{
\begin{flushright}
Promoteur :\\
Anne Delandtsheer\\
\end{flushright}
}
\end{titlepage}
%\vspace{1cm}
%\newpage
%\thispagestyle{empty}

%\begin{flushright}
%\begin{minipage}{10cm}
%\vspace{15cm}
%Je tiens \`a remercier A. \textsc{Delandtsheer},
%Professeur \`a l'Uni\-ver\-si\-t\'e Libre de Bruxelles,
%pour ses conseils et pour l'attention minutieuse qu'elle a bien
 %voulu accorder \`a la r\'edaction de cette th\`ese.
%Je tiens \`a remercier particuli\`erement Bettina Eick pour ses commentaires
%judicieux. Je tiens \`a remercier tout mes proches, Catherine et
%mon fiston Julien. Je suis aussi très reconnaissant à ma
%  mamam, à Samuel et Sofyane pour leur aide.

%Ce travail doit beaucoup \`a K. \textsc{Lux} et B. \textsc{Eick},
% que ces s\'ejours en Allemagne m'ont permis de rencontrer.

%\end{minipage}
%\end{flushright}

\chapter*{Remerciements}
Je tiens \`a remercier mon promoteur Anne Delandtsheer, pour ses commentaires et ses conseils.
% d'avoir \'et\'e   mon promoteur %\textsc{Delandtsheer},
%Professeur \`a l'Uni\-ver\-si\-t\'e Libre de Bruxelles,
%pour ses conseils et pour l'attention minutieuse qu'elle a bien
% voulu accorder \`a la r\'edaction de cette th\`ese.
%%et pour ses commentaires.
Je tiens \'egalement \`a remercier Bettina Eick pour son accueil \`a Braunschweig et
pour  ses remarques toujours judicieuses et constructives.\\

Je ne remercierai jamais assez mon \'epouse Catherine pour son soutien, sa solidarit\'e et son amour
durant ces quatre ann\'ees ; sans elle ce travail n'aurait pas abouti \`a sa forme actuelle.
Je remercie ma maman Henriette, pour son soutien et ses encouragements sans mesure tout au long
de mon parcours.
Je remercie la patience du petit Julien qui se demande pourquoi son papa reste
chaque soir immobile devant un carr\'e de lumi\`ere au lieu de venir jouer avec lui.
Gr\^{a}ce \`a lui j'ai compris qu'une th\`ese devait se terminer un jour.
Je remercie aussi, pour sa compr\'ehension, mon p\`ere venu de loin pour nous rendre visite et qui n'a vu de moi
qu'un dos face a un \'ecran.
Je remercie enfin mes amis Samuel Fiorini et Sofyan Iblisdir pour leur soutien et leur
d\'evouement dans la derni\`ere ligne droite.

La recherche math\'ematique m'\'emerveille par l'enthousiasme qu'elle peut insuffler mais la vie
simultan\'ement, m'\'emerveille tout autant pour tout ce qu'elle a d'autre \`a nous faire d\'ecouvrir.

\tableofcontents
%\printindex

% espacement entre paragraphes
\setlength{\parskip}{2ex plus 0.5ex minus 0.5ex}

\chapter{Introduction}
%Introduction : Known results are explicitly attributed to theirs authors.
%All the others results are from the author .
%The main application is the construction of finite nonsolvable groups.
%The family of groups for which these methods are the most useful are non solvable groups
%that are much larger than their Perfect Residuum $P$. Or more precisely such that
%the centralizer of of $P$ is larger than the centre of $P$
%But even if the centralizer is resumed to the centre of $P$ (automorphic extensions),
%my methods is efficient to classify these groups and to compute their automorphism group.
%and their maximal subgroups
%The essence is to reduce computation to computation in a small solvable extensions of
%a small nilpotent (or even a p-subgroup, by subdirect product) subgroup of $P$.
%we have completely solved
%the isomorphism problem for extension of $G$ by a group $F$ : it has been reduced to the
% classification of $Z\phi$-extensions of $Z(G)$ by $F$ (see Theorem \ref{autsplit}).
A group $E$ is an extension of a group $G$ by a group $H$  if $E$ contains a normal
subgroup $G$ such that $E/G\cong H$. For given groups $G$ and $H$, %% isomorphic to 
 the extension problem consists of classifying all groups $E$ up to isomorphism.

% that contains a normal subgroup $G$ is an extension of $G$.
The aim of this thesis is to propose a method to reduce
the classification of the extensions of a group $G$ by $H$ to the classification of the
extensions of a subgroup $U<G$ by $H$. For a finite group $G$, we prove that $U$ can be chosen
to be nilpotent, so that for $H$ solvable, the extension problem can be reduced
to a classification of solvable groups. We then present some applications of this reduction
method in particular, we provide tables to classify finite nonsolvable groups up to order
$23,039$.
%among others, the classification of finite nonsolvable groups up to order
%$23,040$ is achieved.
%%%% SEE WHAT I HAVE ALREADY WRITTEN AS SENTENCES
% We prove that if $G$ is finite then there exists a nilpotent subgroup $U$
% anf if $H$ is solvable all groups involved are solvable

%\subsection{The role of extensions into Finite Group Theory}
\section{Origins and motivations}
\subsubsection{Origins and motivations of the extension problem for
finite group theory}
 The notion of homomorphism is central in algebra. It
consists of mapping an object $O$ onto a simpler object $O'$
so that the mapping preserves the algebraic structure. Further
mappings on objects $O'',O''',\ldots$ can be obtained until we
reach an "atom" that cannot be simplified further. In the case of groups
the atoms are called simple groups and this approach by mappings
leads to the notion of composition series.

A composition series of a group
 $G$ is a finite sequence $G=A_{0}\rhd A_{1}\rhd A_{2}\rhd\ldots A_{n-1}\rhd A_{n}=1$
 of subgroups of $G$, where $A_{i}$ is a maximal normal subgroup of $A_{i-1}$.
 The factor groups $A_{i-1}/A_{i}$ are simple groups called composition factors of $G$.
 Every finite group posseses a composition series and the \emph{Jordan-H\"older's theorem}
states that every composition series of a given finite group produces the same composition factors up to
reordering. % mais  peut-\^etre dans un autre ordre.
 Every subgroup $A_{i-1}$ is an extension of $A_{i}$ by the simple group $A_{i-1}/A_{i}$.
 The group $G$ is an extension of $A_{1}$ which is an extension of $A_{2}$, etc.
Thus, if all finite simple groups are known and if we could solve the extension problem
it would be in principle possible to classify all finite groups. These two problems are more than
a century old ; since 1982, specialists consider that finite simple groups are "almost"
classified, but no full general and practical solution of the extension problem is expected 
so far, because it would imply the classification of all groups. Actually, the extension problem is divided
into two different subproblems. The first one is to construct explicitly all extensions of a group $G$ by a group
$H$. But since a lot of isomorphic copies of the same group can be obtained in this way, the second problem
is to classify these extensions up to isomorphism.

%%%%%%%%%%%%%%%%%%%%%%
%%%%%%%%%%%%%%%%%%%%%%%%

%%%%%%%%%%%%%%%%%%%%%%%%%
\section{The extension problem : historical survey}
\subsubsection{Schreier's description of extensions}
The first systematic analysis of group extensions is due to Otto Schreier in 1926 (\cite{Schreier1} and \cite{Schreier2}).
Schreier described an extension $E$ of $G$ by $H$ by a function $\xi:H\to Aut(G)$
that corresponds to the conjugation action on $G$ of a transversal
to $G$ in $E$, and by a function $\varphi:H\times H\to G$ called
the "factor set" associated to the transversal. These functions
define a multiplication on the cartesian  product of the set $G$ and
the set $H$, and Schreier gave necessary and sufficient conditions on
$\xi$ and $\varphi$ for $E(\xi,\varphi)$ to be an extension.

However these conditions as such cannot be used in practice to
determine in a reasonable ammount of time all extensions of $G$ by $H$.
 There are three well-known cases where the construction of extensions can be
achieved in practice. The case where $H$ is cyclic, the case where
$Z(G)=1$ and the case where the orders of $G$ and $H$ are coprime.
 The latter result is known as the Schur-Zassenhaus theorem (see chapter \ref{basic}).
A change of coset representatives for $G$ in $E(\xi,\varphi)$
gives rise to a new pair of associated functions $(\xi ',\varphi ')$ that defines an extension
$E(\xi ',\varphi ')$. An extension produced in this way is said to be \emph{equivalent} to $E(\xi,\varphi)$.
Since equivalent extensions are isomorphic, a first reduction of the isomorphism problem is obtained
by considering representatives of equivalence classes but generally, isomorphic extensions are not
necessarily equivalent.

%(importance of extensions for finite group theory and what has been done ...)
\subsection{Extensions, cohomology and the birth of category theory}
In 1942 Eilenberg and MacLane discovered unexpected connections
between group extensions and cohomology, a discipline born from topology.
 At the beginning of their collaboration, they expected to solve
topological problems by solving their translations into group
extension language. But the role of this connection will become of
first importance to the future of all mathematics. It was
generalized into a theory of connections between the different
areas of mathematics, namely category theory.

Here is a citation (\cite{Maclane}) of Saunders MacLane, cofounder with Samuel
Eilenberg of category theory.

"Sammy's idea was to dig deeper and deeper till he got to the bottom of each issue. This I
learned when I lectured at Ann Arbor about group extensions. I had calculated an example of
group extensions for an interesting factor group involving a prime number p. When I told
Sammy this result, he immediately saw that it answered a question of Steenrod about the
regular cycles of the p-adic solenoid (inside a solid torus, wrap another one p times around,
 and so on, ad infinitum). So Sammy and I stayed up all night to find out the reason for
 this unexpected appearance of group extensions. We found out more: it rested on a
"universal coefficient theorem" which gave cohomology with any coefficient group G in terms
 of homology and an exact sequence involving Ext, the group of group extensions. Thus Sammy
insisted on understanding this unexpected connection between algebra and topology. There was
 more there: the connection involved mapping topology into algebra, so we were forced to
invent functors, natural transformations, and categories to describe this. All told, this
 led to our fifteen joint papers. "

\subsubsection{Applications of cohomology to group theory}
It was not clear just after 1945 whether cohomology would be more than a convenient
language to reformulate the extension problem. The first purely
group theoretical theorems proved by a cohomological method were
established only during the sixties. One of the most famous
such applications is the proof of the existence of outer automorphisms
 of $p$-groups by Gasch\"{u}tz in 1965-66. A survey of other
applications of cohomology to group theory can be found in \cite{Robinson}.

 \subsection{The abelian case}
%%%%%%%%%%%%%%%%%%%%%%%%%%%%%%%%%%%%%%%%%%%%%%%
The applications of cohomology to extensions of groups
are based on the following result. Let $A$ be an abelian group and
let $\Phi:H\to Aut(A)$ be an homomorphism. The equivalence classes
of extensions of $A$ by $H$ associated with $\Phi$ are in one-to-one 
correspondence with the elements of the second cohomology group $H_{\Phi} ^{2}(H,A)$. But
even if $H_{\Phi} ^{2}(H,A)$ is known, it remains to detemine
whether two elements of $H_{\Phi} ^{2}(H,A)$ define isomorphic
extensions.

If one considers only isomorphisms that preserve $A$ ($A$-isomorphisms), a solution of the $A$-isomorphism
problem has been proposed by Eick and Besche in \cite{Eick_Ulrich} in the case where $H$ is
finite and where $A$ is elementary abelian. Let $\omega\in Aut(H)$ and
$\pi\in Aut(G)$. The pair $(\omega,\pi)$ is compatible if
$\pi^{-1} \Phi (h) \pi=\Phi (h^{\omega})$ for every $h\in H$. The group
of all compatible pairs is denoted by $Comp(\Phi)$. Eick and Besche
proved that there is a group action $\Gamma$ of $Comp(\Phi)$ on
$H_{\Phi} ^{2}(H,A)$ and that each $A$-isomorphism class is the
union of the equivalence classes of extensions $\mathcal{E}(x)$,
where $x$ ranges over an orbit of $\Gamma$. We will generalize here
this action to any groups $H$ and $A$ (not necessarily finite or abelian).\\

%%%%%%%%%%%%%%%%%%%%%%%%%%%%%%%%%%%%%%%%%%%%%%
\subsection{The nonabelian case}
Contrary to the abelian case, if $G$ is nonabelian it does not necessarily exist an extension
of $G$ by $H$ associated with a given homomophism $\Phi:H\to Out(G)$. % (\cite{Atlas} page )non voir basic.tex
%there do not necessarily exist an  extension of $G$ by $H$ associated with $\Phi$.
The key result of MacLane and Eilenberg
for the nonabelian case is the following. Assume that there
exists an extension $E(\xi,\varphi)$ associated with $\Phi$, then
all the other equivalence classes of such extensions can be
obtained as $E(\xi,\varphi+\zeta_i)$ where $\{\zeta_i\}$ is a set
of representatives for the elements of $H_{\Phi} ^{2}(H,Z(G))$.
Together with the existence problem for $E(\xi,\varphi)$, the
isomorphism problem remains once again to be solved. It was not known whether or not two
representatives $\zeta_1$ and $\zeta_2$ define $G$-isomorphic extensions
and we will treat this problem successfully in chapter \ref{General}.

The result of Mac-Lane and Eilenberg is very important in order to know
which type of classification can be expected for extensions. Even for an abelian group $A$, the
construction of the extensions (that is the determination of $H_{\Phi}
^{2}(H,A)$) is a hard problem. We also know that the construction
of the extensions of a group $G$ implies the construction of the
extensions of the abelian group $Z(G)$ ( that is the determination of
$H_{\Phi} ^{2}(H,Z(G))$). Hence we cannot hope to do better than
what is known for abelian groups. This puts a limitation to the
classification of extensions of nonabelian groups and we will
consider that the problem is completely solved if it can be
reduced to a problem of isomorphism between the extensions of $Z(G)$,
 since better results can not be achieved. That is why %%we consider the
the reduction of the extension and isomorphism problem for a group $G$
to the extension and isomorphism problem for $Z(G)$ is a kind of
"Holy Grail quest" for extension theory.

%%%%%%%%%%%%%%%%%%%%%%%%%%%%%%%%%%%%%%%%%%%%%%%%%%%%
\subsection{Presently available classifications}
\begin{itemize}
\item The most famous classification of finite groups is certainly the classification of the finite simple groups.
An overview of this collective work that has lasted more than a century can be found in \cite{solomon2001}
and a more detailled description of these groups is presented in \cite{Atlas}.

\item Perfect groups. The importance of these groups comes from the fact that every
finite nonsolvable group is an extension of a perfect group by a solvable group (see section \ref{solv+nonsolv}).
A partial classification of the perfect groups of order up to $1,000,000$ was achieved
by Holt and Plesken in 1989 (\cite{Holt_Plesken}). Some orders are missing ; for instance the orders $2^{10}.|S|$
where $S$ is a nonabelian simple groups.

\item Solvable groups. "Most" of the finite groups are $p$-groups (groups whose order is a power of a prime number).
% The groups of order 128 and 256 where respectively obtained by O'Brien in 1990 and 1990,512
The reference method for classifying $p$-groups is the \emph{$p$-group generation algorithm} of
 Newman and O'Brien \cite{O_Brien}. Eick and O'brien have developed a method to enumerate $p$-groups
 \cite{Eick_O_Brien}, that has been used to enumerate the 49 487 365 422 groups of order $2^{10}$.
Finite solvable groups  whose order is not a power of a prime were obtained up to order 2000 by
Besche and Eick \cite{Eick_Ulrich} in 2000, using the Frattini extension method.

\item Finite groups of order at most 2000 (except for the order 1024) are explicitely known and electronically
available for computer algebra systems. These groups form the Small Groups Library that has been
developed by Besche, Eick and O'brien.  To list all such finite groups, Besche and Eick simply added the
nonsolvable groups of order at most 2000 to the already obtained solvable group (see \cite{Millenium}).
They have constructed these nonsolvable groups by the method of iterated cyclic extension
(see subsection \ref{iteratedcyclic}).
We have classified in 1998 \cite{Archermem} the nonsolvable groups of order less than 3840.

A bibliography of the various determinations of groups of a given order is provided in \cite{O_Brien_Short} and
has been updated in \cite{Millenium}.
\end{itemize}
%
%>Wilbert Dijkhof  <s9400842@mail.student.utwente.nl> asked for a reference
%>for this result I had posted:
%>	Let  a_n  be the number of non-isomorphic groups of order  p^n.
%>	Then  log_p (a_n) is asymptotic to  (2/27)n^3.
%>
%>Graham Higman showed (Proc London Math Soc 10 (1960) 24-30) that
%>the number b_n = log_p (a_n) / n^3  satisfies
%>	2/27 - o(n) <=  b_n  <= 2/15 + o(n)
%>where the implied constant is independent of  p. Later, Charles Sims
%>(Proc London Math Soc 15 (1965) 151-166) improved the upper bound to
%>	b_n <= 2/27 + o(n)
%>(Again this is independent of  p, although Higman's lower bound is
%>a little tighter than Sims' upper bound.)%
%I am a little confused by your o(n) terms, since you have already divided
%through by n^3.
%More precisely,  Higman proved
%        n^3b_n >= 2n^3/27 + O(n^2)
%and Sims proved
%        n^3b_n <= 2n^3/27 + O(n^(8/3))
\subsubsection{Computational aspects of the different group representations }
A group may be specified in a number of different ways : as a group of permutations or matrices, in
terms of a finite presentation with given relations on generators (Fp-groups) or as the group of automorphisms of
a combinatorial structure.  Experience has shown that, as a general rule, the most powerful algorithms
 are those designed for permutation groups (see \cite{Cannon_Havas}). At the opposite side ,
 the less efficient algorithms are those designed for an arbitrary finitely presented groups.
There are Fp-groups for which one can not decide whether two different words represent the same element and
this is called \emph{the word problem}. A similar undecidability problem exists to determine whether a
given presentation defines a finite or an infinite group.

However, for some groups, there are particular presentations for which the undecidability of the word
problem vanishes. For such presentations, there exists a set of rules to rewrite a word into a
%%normal form such that no matter in which order the rules are applied, every word is rewrited into a
unique normal form. The word problem is then solved by a comparison of normal forms  (see \cite{Sims}).

Every finite solvable group admits a presentation called \emph{polycyclic} ( see subsection \ref{polycyclic})
that has such a nice rewriting property. It appears that the polycyclic presentation can be used to
determine efficiently almost all properties of a finite solvable group (see \cite{Neubuser}).
The  algorithms developed for finite polycyclic groups are among the most powerful and can be
compared to the situation of permutation groups algorithms (see \cite{seress2001}) .
In this work we show that it is possible to reduce the classification of finite nonsolvable groups to a
classification of finite solvable groups.
%%This is the reason why a reduction to finite solvable groups is an important improvement.
%\subsubsection{Computational Group Theory}
% From a computational point of view, all group
%presentations are not equivalent. It is much more efficient to
%work on a group $G$ presented as a permutation group or as a
%polycyclic group than if it is presented as a general finitely
%presented group. Very efficient algorithms have been developed for
%polycyclic groups and that is the reason why a reduction to finite
%solvable groups (hence polycyclic) is an important improvement.
%\subsubsection{Finite Nonsolvable groups}
%Every finite non solvable group $E$ is an extension of a Perfect
%group $P$ by a solvable group $H$. The derived series shows that
%$E$ is obtained from $P$ as a succession of cyclic extensions.
%%%%%%%%%%%%%%%%%%%%%%%%%%%%%%%%%%%%%%%%%%%%%
%\subsubsection{survey by chapters}
\section{Survey of the main results obtained in this thesis}
%\subsection{$G$-isomorphism problem}
\subsection{A general solution to the $G$-isomorphism problem}
We recall that for a nonabelian group $G$ and a group $H$, if there is an extension of $G$ by $H$ associated
with a given homomorphism from $H$ into $Out(G)$, then the equivalences classes of such extensions are
in one-to-one correspondence with the elements of the cohomology group $H^{2}:=H_{\Phi} ^{2}(H,Z(G))$.
Let $E(\zeta_1)$ and $E(\zeta_2)$ denote the extensions that correspond repectively to an elements $\zeta_1$
 and $\zeta_2$ of $H ^{2}$. We prove that there is a group action $\Gamma$ of the group of compatible pairs
 $Comp(\Phi)$ on $H^{2}$. One of our main results states that $E(\zeta_1)$ and $E(\zeta_2)$ are $G$-isomorphic if
 and only if $\zeta_2$ belongs to the orbit $\zeta_1 ^{\Gamma}$ (see chapter \ref{General}).
 Hence the $G$-isomorphism
classes of extensions are in one-to-one correspondence with the orbits of $\Gamma$ in $H^{2}$.

%In their work on construction of finite solvable groups, Eick and Besche (\cite{Eick_Ulrich})
%are led to classify extensions of an elementary abelian group $A$ by a finite group $H$ up to
% $A$-isomorphism ( isomorphisms that preserve $A$). They describe an action $\Gamma$ of
% a subgroup of $Aut(H)\times Aut(G)$ on the elements of the second cohomology group
% $H^2 (H,A)$ % of extensions of $G$ by $H$ associated to a given homomorphism $H\to Aut(G)$.
%and they prove that there is a one to one correspondence between the $A$-isomorphism
%classes of extensions and the orbits under $\Gamma$. In chapter \ref{General}, we generalize this idea
%to any group $G$. We prove that there is a similar action $\Gamma$ on the set
%$H^2 (H,A)$ where $A$ is the abelian group $Z(G)$ and that there is also
%a one to one correspondence between orbits and $G$-isomorphism classes of extensions of $G$
%by $H$.
%
%We propose in chapter \ref{General} a solution of the $G$-isomorphism problem,
%that generalizes to any two groups $G$ and $H$ the action $\Gamma$
%described by  Eick and Besche. We prove that there is a group
%action $\Gamma$ of $Comp(\Phi)$ on $H_{\Phi} ^{2}(H,Z(G))$ such
%that $E(\xi,\varphi+ \zeta_1)$ and $E(\xi,\varphi+ \zeta_2)$ are
%$G$-isomorphic if and only if $\zeta_2$ belongs to the orbit
%$\zeta_1 ^{\Gamma}$. Hence the $G$-isomorphism classes are in one-to-one
% correspondence with the orbits of $\Gamma$ in $H_{\Phi}
%^{2}(H,Z(G))$.
However, to make this method to classify group extensions practical and efficient we combine it with another
method that we developed in order to reduce the extension problem for $G$, to the extension problem for
a subgroup of $G$ (see section \ref{constsup}). For instance, if $G$ is finite and $H$ is solvable, we prove
that it is possible to reduce the computations involved for the determination of the $\Gamma$-orbits,
into solvable groups only.
%This is an application of the results mentionned in the subsection below.

We finally apply this method to the construction
 of all finite nonsolvable groups up to the order $23,039$ (there are more than $8.5\,10^{6}$ such groups).
For the nonsolvable groups of order at most $2000$, we confirm the existence of 1024 such groups, as
established by Eick and Besche. In subsection \ref{perfo}, we compare some performances of
different extension methods used to classify nonsolvable groups.

\subsection{Reduction to solvable groups}\label{reduc}
%Supplement function
First, the construction of an extension $E$ of $G$ will be reduced to the
construction of a smaller group $S_E$ which is a supplement to $G$ in
$E$ (that is $E=GS_E$). Next, we introduce the concept of a
\emph{supplement function preserved by a $G$-isomorphism}. We prove that
if such a function exists for a group $G$ then the isomorphism problem for the extensions of $G$ is also
reduced to an isomorphism problem for extensions of a subgroup of $G$.

We will prove stepwise in the next chapters that if $G$ is finite, then there exist such a function
$E\to S_E$ (where $E$ is an extension of $G$ and $S_E$ is a supplement to $G$ in $E$)
 such that the following three properties hold
\begin{enumerate}
\item $S_{E}\cap G$ is a nilpotent subgroup of $G$.
\item $E$ is completely described by the conjugation action of $S_{E}$ on $G$.
\item If $i:E_{1}\rightarrow E_{2}$ is an isomorphism between the extensions $E_1$ and $E_2$ of $G$ and
if $i$ preserves $G$, then there exists an isomorphism
$j:E_{1}\rightarrow E_{2}$ that maps $S_{E_{1}}$ onto $S_{E_{2}}$.
\end{enumerate}
Moreover, if $E_{1}$ and $E_{2}$ are two extensions of $G$ we will
prove necessary and sufficient conditions under which an
isomorphism from $S_{E_{1}}$ onto $S_{E_{2}}$ can be lifted into
an isomorphism from $E_{1}$ onto $E_{2}$. Section \ref{constsup} provides more details
about this reduction method.

 We will also describe explicitly in chapter \ref{findsup}, how to find such supplement functions $S_{E}$.
We provide an efficient algorithm for finding some functions $S_{E}$ and we give several examples
of the performance of this algorithm.

%\subsection{Algorithms}
%In chapter \ref{findsup}, we provide an efficient algorithm to find the
%supplements $S_{E}$ through a function $N_{\vec{p}}$. We produce examples
%of the performance of this algorithm.
\subsection{Full classification in particular cases}
We have explained previously that to reduce the extension
problem for a group $G$ to the extension problem for $Z(G)$ can be
considered as a full solution to the extension problem for $G$.
In chapter \ref{classiftheo} we describe families of groups for
which such a full classification of  their extensions can be achieved.
For the groups $G$ in these families, there exists an extension of
$G$ by $H$ for every homomorphism $\Phi:H\to Out(G)$ and the
isomorphism problem can be completely reduced to isomorphisms
between extensions of $Z(G)$ by $H$.

Here are two examples of such families of groups $G$
\begin{enumerate}
\item $|Z(G)|$ and $|\Phi (H)|$ are coprime. The case
where $|Z(G)|$ and $|Out(G)|$ are coprime and the case where $|Z(G)|$ and $|H|$ are coprime
are interesting subcases.
\item $Aut(G)$ is a split extension of $Inn(G)$;
%%%\item If $|Z(G)|$ and $|H|$ are coprime.
\end{enumerate}
Our result in the case where $|Z(G)|$ and $|H|$ are coprime generalizes in some sense the
Schur-Zassenhaus theorem  that allows to classify
extensions of a group $G$ by a group $H$ when $|G|$ and $|H|$ are
coprime.
\subsubsection{Cross-check of the supplement method}
For the groups $G$ of these families, this alternative method
provides us with a cross-check of our main method that uses
supplements. For instance for all but two perfect groups
(namely $G\neq SL_{2}(9)$ and $G\neq 3.SL_{2}(9)$)  this alternative method can be used to enumerate all
the nonsolvable groups of order less than $58,320$.
%%%%%%%%%%%%%%%%%%%%%%%%%%%%%%%%%%%%%%%%%%%%%%%%%%%%
\subsection{Examples of performances for the different methods}\label{perfo}
%\emph{\bf{JE DOIS ABSOLUMENT DIRE QUE JE COMBINE MA METHODE DES SUPPLEMENTS AVEC
%LA METHODE DU $H^2$ SINON ON NE LE SAIT PAS !!!. CE SERAIT AUSSI UNE BONNE IDEE DE
%MONTRER LE TABLEAU DE COMPARAISON ENTRE MA METHODE ET L'AUTRE. DANS MON MEMOIRE
%CA IMPRESSIONNAIT. SINON LE LECTEUR DOIT ATTENDRE TROP LONGTEMPS}}
\emph{\bf{Extensions of $G=PerfectGroup(1080,1)$ by $H$ :}}\\

We have $|Z(G)|=3$ and $Out(G)\cong 2^2$. The first column shows performances of the iterated cyclic
extension method. The second column do the same for the computation of the $\Gamma$-orbits by our
"reduction to supplement" method. The last column is the cross-check by our $z\phi$-classes method described
in the previous subsection. The mention "SG" means that the Small Groups Library has been used.
 The supplement method used it to find the groups of order $|H|
 $.
The $z\phi$-classe method used it to find the groups of order $3|H|$.

\begin{tabular}{|c|c|c|c|}
\hline
& iter. cyclic &$\Gamma$+supp. +SG & $z\phi$-classes + SG \\
\hline\hline
extensions  for $|H|=6$ & 12 & 12 & 12  \\
 time &  17 min. & 7 sec  & 1.3 sec.  \\
\hline
extensions  for $|H|=8$ & 34 & 34 & 34  \\
 time & 1 h 3 min.  & 1 min 20 sec. & 24 sec.  \\
\hline
extensions  for $|H|=12$ & 34  & 34& 34  \\
 time & 3 h. 9 min.  & 1 min. 40 sec.  & 29 sec.  \\
\hline
extensions  for $|H|=16$ & ? & 151 &  151 \\
 time &  $?>10\,days$ & 1 h. 44 min.  &  2 min. 27 sec. \\
\hline
extensions  for $|H|=24$ &  ?& 159 & 159  \\
 time & $?>10\,days$  & 1 h. 36 min. &  2 min. 30 sec. \\
\hline
\end{tabular}\label{tab3a6}
%%%%%%%%%%%%%%%%%%%
\section{The main method : construction of supplements}\label{constsup}
 Our aim
is to classify the extensions $E$ of a finite normal subgroup $G$
by a factor group $F$. The main idea is to reduce this problem to
a classification of extensions $S$ of a nilpotent subgroup $U$ of
$G$ by $F$. If $F$ is solvable then $S$ is solvable even if $G$ is
nonsolvable. For instance, the classification of nonsolvable
groups as extensions of a perfect group $G$ by a solvable group
$F$ is reduced to a classification  of smaller solvable groups.
Moreover it turns out that in many applications the order of
$U$ is quite small compared with the order of $G$ (for instance
$|U|\leq 4|Z(G)|$).

\subsubsection*{Outline of the general method}
We describe now with more detail the various steps of this method in a slightly simplified
version.% Further refinements will be added later on.
% We assume throughout this summary that
We will assume here that every isomorphism between two extensions of $G$
preserves $G$.
%if extensions $E_{1}$ and $E_{2}$ are isomorphic, then there exists an isomorphism between
%$E_{1}$ and $E_{2}$ that preserves $G$.
This is the case if we aim to construct
nonsolvable groups as extensions of a perfect group $G$ by a solvable group $H$.

Here are the different steps

\begin{enumerate}
%\item Choose a representative for each conjugacy class of subgroup in $Out(G)$
\item We first classify the possible actions of $E$ on $G$ by conjugation.
Let $L\leq Aut(G)$ with $Inn(G)\leq L$.
Let $\Psi$ be the set of all homomorphisms of $H$ onto the subgroup $L/Inn(G)$ of
$Aut(G)/Inn(G)=Out(G)$.
%Here, $L$ denotes a subgroup of $Aut(G)$ that contains $Inn(G)$.
 In Chapter \ref{General}
we define an action of $Aut(H)\times Out(G)$ on the elements of $\Psi$; let
$\{\psi_{i}\,|\, i\in I\}$ be a system of representatives for the orbits in $\Psi$ under
this action. Let $\mathbf{E}_{i}$ be the set of extensions %($I$ is a family of indices)
of $G$ by $F$ associated with $\psi_{i}$.
We prove that for an extension $E$ of $G$ by $H$ there is a unique $i$ such that $E$ is
isomorphic to an extension contained in $\mathbf{E}_{i}$ and therefore the sets $\mathbf{E}_{i}$ are
the classes of a partition of all extensions of $G$ by $H$.% into pairwise disjoint families.
%is isomorphic to an extension associated to exactly one of the $\psi_{i}$.

\item From now on, we are reduced to classify extensions associated to a given
$\psi_{i}:H\rightarrow L/Inn(G)$. Suppose that there exists such an extension $E$ and let
$f:E\rightarrow L\leq Aut(G)$ be the conjugation action of $E$ on $G$. We show in Chapter
\ref{findsup} how to find a supplement $B$ of $Inn(G)$ in $L$ such that $S:=f^{-1}(B)$ has
the properties listed in section \ref{reduc}.
\mbox{\setlength{\unitlength}{3947sp}%
\begingroup\makeatletter\ifx\SetFigFont\undefined%
\gdef\SetFigFont#1#2#3#4#5{%
  \reset@font\fontsize{#1}{#2pt}%
  \fontfamily{#3}\fontseries{#4}\fontshape{#5}%
  \selectfont}%
\fi\endgroup%
\begin{picture}(6184,3161)(1484,-4389)
\thicklines
\put(2926,-1861){\circle{168}}

\put(1651,-2161){\line( 4, 1){1200}}
 
\put(2926,-3136){\circle{168}}
 
\put(1576,-3473){\circle{168}}
 
\put(1651,-3436){\line( 4, 1){1200}}
 
\put(6226,-3061){\circle{168}}
 
\put(4876,-3398){\circle{168}}

\put(4951,-3361){\line( 4, 1){1200}}
 
\put(2926,-3136){\circle{168}}
 
\put(1576,-3473){\circle{168}}
 
\put(1651,-3436){\line( 4, 1){1200}}

\put(1576,-2198){\circle{168}}

%\put(7576,-1486){\circle{168}}
\put(7126,-1620){\circle{168}}

\put(6226,-1861){\circle{168}}
 
\put(4876,-2198){\circle{168}}

\put(1576,-3923){\circle{168}}

\put(4876,-3923){\circle{168}}
 
\put(1576,-4298){\circle{168}}

%\put(6301,-1786){\line( 4, 1){1200}}
\put(6310,-1826){\line( 4, 1){745}}
 
\put(4951,-2161){\line( 4, 1){1200}}

\put(2926,-1936){\line( 0,-1){1125}}
 
%\put(1576,-2311){\line( 0,-1){1050}}
\put(1576,-2275){\line( 0,-1){1110}}
 
%\put(4876,-2311){\line( 0,-1){975}}
\put(4876,-2275){\line( 0,-1){1000}}

\put(6226,-1936){\line( 0,-1){1050}}
 
\multiput(1651,-2236)(120.58824,0.00000){26}{\line( 1, 0){ 60.294}}
\put(4726,-2236){\vector( 1, 0){0}}

\put(4188,-1750){\makebox(0,0)[lb]{\smash{\SetFigFont{10}{12.0}{\rmdefault}{\mddefault}{\updefault}$f$
}}}
\multiput(3071,-1861)(119.38776,0.00000){25}{\line( 1, 0){ 59.694}}
\put(5926,-1861){\vector( 1, 0){0}}
\put(3061,-1861){\vector( -1, 0){0}}

\multiput(3041,-3061)(121.27660,0.00000){24}{\line( 1, 0){ 60.638}}
\put(5851,-3061){\vector( 1, 0){0}}
\put(3001,-3061){\vector( -1, 0){0}}

% \put(1576,-3586){\line( 0,-1){225}}
 \put(1576,-3540){\line( 0,-1){300}}

%\put(4876,-3511){\line( 0,-1){300}}
\put(4876,-3475){\line( 0,-1){370}}
 
% \put(1576,-4036){\line( 0,-1){150}}
\put(1576,-4000){\line( 0,-1){220}}
 
\multiput(1651,-3886)(120.58824,0.00000){26}{\line( 1, 0){ 60.294}}
\put(4726,-3886){\vector( 1, 0){0}}

\multiput(1876,-3436)(118.08511,0.00000){24}{\line( 1, 0){ 59.043}}
\put(4651,-3436){\vector( 1, 0){0}}

\put(1501,-2011){\makebox(0,0)[lb]{\smash{\SetFigFont{10}{12.0}{\rmdefault}{\mddefault}{\updefault}$G$
}}}
\put(2851,-1711){\makebox(0,0)[lb]{\smash{\SetFigFont{10}{12.0}{\rmdefault}{\mddefault}{\updefault}$E$
}}}
\put(1651,-3286){\makebox(0,0)[lb]{\smash{\SetFigFont{10}{12.0}{\rmdefault}{\mddefault}{\updefault}$\tilde{U}$
}}}
\put(2776,-2986){\makebox(0,0)[lb]{\smash{\SetFigFont{10}{12.0}{\rmdefault}{\mddefault}{\updefault}$S$
}}}
\put(6001,-2911){\makebox(0,0)[lb]{\smash{\SetFigFont{10}{12.0}{\rmdefault}{\mddefault}{\updefault}$B$
}}}
\put(4576,-2086){\makebox(0,0)[lb]{\smash{\SetFigFont{10}{12.0}{\rmdefault}{\mddefault}{\updefault}$Inn(G)$
}}}
\put(6076,-1711){\makebox(0,0)[lb]{\smash{\SetFigFont{10}{12.0}{\rmdefault}{\mddefault}{\updefault}$L$
}}}
%\put(7426,-1336){\makebox(0,0)[lb]{\smash{\SetFigFont{10}{12.0}{\rmdefault}{\mddefault}{\updefault}$Aut(G)$
%}}}
\put(6926,-1461){\makebox(0,0)[lb]{\smash{\SetFigFont{10}{12.0}{\rmdefault}{\mddefault}{\updefault}$Aut(G)$
}}}

\put(4651,-3286){\makebox(0,0)[lb]{\smash{\SetFigFont{10}{12.0}{\rmdefault}{\mddefault}{\updefault}$U$
}}}
\put(4651,-3736){\makebox(0,0)[lb]{\smash{\SetFigFont{10}{12.0}{\rmdefault}{\mddefault}{\updefault}$1$
}}}
\put(1651,-3811){\makebox(0,0)[lb]{\smash{\SetFigFont{10}{12.0}{\rmdefault}{\mddefault}{\updefault}$Z(G)$
}}}
\put(1651,-4186){\makebox(0,0)[lb]{\smash{\SetFigFont{10}{12.0}{\rmdefault}{\mddefault}{\updefault}$1$
}}}
\end{picture}
}
\label{picturesupplement}
\item The major reduction amounts to construct only $S$ together with its action $\phi$ on $G$, instead
of $E$. The supplement $S$ is an extension of $\tilde{U}:=G\cap S$ by $H$.
We will provide the conditions under which a homomorphism
$\phi:S\rightarrow B\leq Aut(G)$ and an extension $S$ of $\tilde{U}$ correspond to a
supplement to $G$ in an extension $E$. We finally obtain the corresponding extension as
$E\cong S\ltimes_{\phi}G/K$, where $S\ltimes_{\phi}G$ is the semi-direct product associated with $\phi$
and where $K$ is its subgroup $\{(u,u^{-1})\,|\, u\in \tilde{U}\}$ ( we identify $\tilde{U}$
in $S$ and in $G$). If $H$ is finite and solvable, $S$ has the same property and a
polycyclic generating sequence
 (pcgs) for $S$ can be constructed by extending the pcgs
 of $\tilde{U}$ with the pcgs of $H$.

\item Reduction to isomorphism. Suppose that the extension $E_{k}$ ($k=1,2$) is produced
by the supplement $S_{k}$, the action $\phi_{k}$  on $G$ and the normal subgroup $K_{k}$ as in
the above paragraph.
 We prove that if $E_1$ and $E_2$ are isomorphic then there exists an isomorphism $E_1\to E_2$ that can be
 lifted to an isomorphism of the corresponding semi-direct products  :
 i.e $E_{1}=S^{1}\ltimes _{\phi_{1}}G/K_{1}$ is isomorphic to $E_{2}=S^{2}\ltimes _{\phi_{2}}G/K_{2}$
 if and only if there is an isomorphism $i$ between the corresponding semi-direct product
$S^{1}\ltimes _{\phi_{1}}G$ and $S^{2}\ltimes _{\phi_{2}}G$
 which preserves $G$ and maps $S^{1}$ onto $S^{2}$ and $K_{1}$ onto $K_{2}$.
We will give necessary and sufficient conditions under which such an isomorphism exists.
% Such isomorphism exists if and only if for a pair $(j,b)$, where $b\in B$ and
% $j:S^{1}\rightarrow S^{2}$ is an isomorphism, the equations of Proposition \ref{isogh},
%  is fulfilled.
 %% then by $(\star)$, there is an isomorphism
% We will also use this result to compute the automorphism group of an extension and to
% reduce it to computing automorphism of the supplement $f^{-1}(B)$.
%semi-direct product %$S\rtimes G$ latex ne le reconnait pas
\end{enumerate}

\subsubsection{Examples of computing times}
We have implemented our method in GAP 4.2. We have checked that for
nonsolvable groups of order $\leq 2000$ our list of groups is
exactly the same as the one obtained by Eick and Besche in
\cite{Eick_Ulrich}. Unfortunately, for groups of orders between
$2000$ and $23,040$, there is no such list that can be compared to
ours. Nevertheless, we can compare our method to the method of
iterated cyclic extensions, which is implemented in GAP. But
this can only be done up to orders around $10,000$ because after
this bound, the time and memory  ressources needed become too large.
%of actual common personnal computer ( 900 mega of ram memory and a 900 Mhz cpu-speed) are
%unsufficient.
We have observed computing time beyond a day for a
fixed extension  by the iterated cyclic extension method, whereas our method
only took minutes ($<20$) to complete on the same example and is able
to handle much larger groups.
%%is also a very strong limit for an everyday-use of the

%\chapter{Basic notions}
\chapter{Elementary notions}\label{basic}
\subsubsection{Isomorphisms Theorems}
\begin{proposition}(Isomorphisms theorems)\label{isotheorems}
Let $G$ be a group and let $N$, $S$ and $K$ be subgroup of $G$.
\begin{enumerate}
\item If $f$ is a homomorphism from a group $G$ onto the group $f(G)$ then $f(G)\cong G/Ker f$.
\item If $N\unlhd G$ then $NS$ is a subgroup of $G$. If moreover $S\unlhd G$ then $NS\unlhd G$.
\item  If $N\unlhd G$ then $NS/N\cong S/(S\cap N)$. %%Let $N$ and $S$ be subgroups of $G$.
\item If $K$ and $N$ are normal in $G$ and if $K\leq N$ then $\frac{G/K}{N/K}\cong G/N$.
\item If $f:G\rightarrow f(G)$ is an homomorphism, then there is a one-to-one
correspondence between the subgroups of $f(G)$ and
the subgroups of $G$ that contain $Ker f$. Each subgroup of $f(G)$ is the image
of exactly one subgroup $S\leq G$ such that $Ker f \leq S$. The same holds for
normal subgroups of $f(G)$.
\end{enumerate}
\end{proposition}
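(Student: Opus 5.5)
The plan is to prove the five items in order, each directly from the definitions of subgroup, normal subgroup and coset multiplication. Item 1 is the basic building block, and items 3 and 4 will be derived from it by writing down a suitable surjective homomorphism and computing its kernel.

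For item 1, I would set $K=Ker f$ and define $\bar f:G/K\to f(G)$ by $\bar f(gK)=f(g)$. This is well defined and injective because $gK=g'K$ if and only if $g^{-1}g'\in K$, which holds if and only if $f(g)=f(g')$. It is a homomorphism since $K\unlhd G$ makes $gK\cdot g'K=gg'K$ a valid multiplication, and it is clearly onto $f(G)$.

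For item 2, I would use normality of $N$ to get $sN=Ns$ for every $s\in S$, hence $NS=SN$. Then $(ns)(n's')^{-1}=n\,(s s'^{-1}n'^{-1}s's^{-1})\,s s'^{-1}\in NS$, so $NS$ is a subgroup. If both $N$ and $S$ are normal, then $g(ns)g^{-1}=(gng^{-1})(gsg^{-1})\in NS$, so $NS\unlhd G$.

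For item 3, I would apply item 1 to the composite $S\hookrightarrow NS\to NS/N$, $s\mapsto sN$; here $N\unlhd NS$ by item 2. This map is onto, because $nsN=s(s^{-1}ns)N=sN$. Its kernel is $S\cap N$, which gives $NS/N\cong S/(S\cap N)$.

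For item 4, I would apply item 1 to $G/K\to G/N$, $gK\mapsto gN$. This is well defined because $K\leq N$, it is clearly surjective, and its kernel is $N/K$. It follows that $N/K\unlhd G/K$ and $\frac{G/K}{N/K}\cong G/N$.

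For item 5, I would use the maps $T\mapsto f^{-1}(T)$ for $T\leq f(G)$, and $S\mapsto f(S)$ for $Ker f\leq S\leq G$. Both clearly send subgroups to subgroups, and $f^{-1}(T)$ contains $Ker f$. We have $f(f^{-1}(T))=T$ because $T\subseteq f(G)$. The key step, and the only real obstacle in the whole proposition, is the identity $f^{-1}(f(S))=S$ when $Ker f\leq S$. For this, take $g$ with $f(g)=f(s)$ for some $s\in S$; then $s^{-1}g\in Ker f\leq S$, so $g\in S$. Thus the two maps are inverse bijections, so each subgroup of $f(G)$ is the image of exactly one $S\geq Ker f$. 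Finally, for normality, I would check that $f^{-1}$ of a normal subgroup of $f(G)$ is normal in $G$, and that $f(S)$ is normal in $f(G)$ whenever $S\unlhd G$, the latter because $f$ is onto $f(G)$. This gives the same correspondence for normal subgroups.
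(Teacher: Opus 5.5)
Your proposal is correct. The paper gives no proof of its own; it simply refers to Hall and Suzuki, and your argument is the standard textbook one found there. You derive items 3 and 4 from the induced map of item 1, and you get the correspondence from mutually inverse maps whose key step is $f^{-1}(f(S))=S$ when $Ker f\leq S$.
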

Proofs can be found in the first chapters of any standard reference for group theory (as \cite{Hall} or \cite{Suzuki_vol1}).

\textbf{Definition.} Let $G$ be a subgroup of a group $E$.  A \emph{\textbf{right transversal for G in E}} is a
set $\tau$ that contains exactly one representative of each right coset of $G$ in $E$. A left transversal is defined
similarly for left cosets. If $G$ is normal in $E$, right transversals are left transversals (and conversely),
 so that $\tau$ is just called a transversal.

\subsubsection{The exponential notation} In this work, we use the exponential notation for various situations :
\begin{enumerate}
\item Let $g$ and $t$ be elements of a group $G$. The conjugate of $g$ under $t$ is  denoted by  $g^{t}:=t^{-1}gt$.
The same notation is used for the conjugate $S^t$ of a subgroup $S\leq G$.
\item Let $\pi_1$ and $\pi_2$ be automorphims of $G$. We denote the image of $g$ under $\pi_1$ by
$g^{\pi_1}$. The exponential notation is handy for multiplying automorphims as $(g^{\pi_1})^{\pi_2}=g^{\pi_1 \pi_2}$.
If the conjugation automorphism by $t$ is composed with $\pi$ then parentheses remain to denote the
image $(g^{t})^{\pi}$ of $g$. A notation as $g^{t\pi}$ is a misuse since the multiplication between exponents is
not defined.
\index{check everywhere that parentheses are respected}
\item Let $f$ be a function from a set $X\neq G$ to a group $G$. The composition of $f$ with an automorphism $\pi$
of $G$ is denoted by $f^{\pi}$ and hence it is defined by $f^{\pi}:x\to f^{\pi}(x):=(f(x))^{\pi}$ for every $x\in X$.
The same holds if we compose $f$ with the conjugation by an element $t$ to obtain the function
$f^{t}:x\to f^{t}(x):=(f(x))^{t}=t^{-1} f(x)t$.
Finally, observe that if $h$ is a function from $X$ to $Aut(G)$, then to be coherent with $(1)$, the element
 $h^{\pi}(x)$ must denote $\pi ^{-1} h(x)\pi$.
\end{enumerate}
%%%%%%%%%%%%%%%%%%%%%%%%%%%%%%%%%
\section{Solvable and Nonsolvable groups}\label{solv+nonsolv}
\textbf{Definition.} The \emph{\bf{derived subgroup}} $G'$ of a group $G$ is the subgroup of
$G$ generated by the commutators $[x,y]=yxy^{-1}x^{-1}=yx(xy)^{-1}$ of elements $x,y\in G$.

If $f$ is a homomorphism from $G$ onto a group $S$, then an element $[f(x),f(y)]$ of $S'$
is the image of $[x,y]\in G'$ so that $f(G')=S'$.
Thus every automorphism of $G$ leaves $G'$ invariant. A \emph{\bf{perfect group}}
 is a group $G$ such that $G=G'$. If $f$ is an homomorphism from a perfect group $P$ onto
 group $S$ then $S$ is perfect since $S=f(G)=f(G')$ and $f(G')=S'$.
%maps $[x,y]$ to $[x^{\omega},y^{\omega}]\in G'$ and
%thus $G'$ is left invariant by every automorpism of $G$ (it is a characteristic subgroup).
%Moreover the same holds for a group homomorphism $f$ : the image $G'$ under $\omega$
%is the derived subgroup

Let $N$ be a normal subgroup of $G$. Observe that since $yx=[x,y]xy$,  then $G/N$ is abelian if and only if $[x,y]\in N$ for
 every $x,y\in G$. Hence, $G/N$ is abelian if and only if $G'\subseteq N$ and $G'$ is the
 smallest normal subgroup of $G$ that produces an abelian factor group.

 The \emph{\bf{derived series}} is the sequence $G^{i}$, $i\in\mathbb{N}$, of characteristic
 subgroups of $G$ such that $G^{0}=G$ and $G^{i+1}=(G^{i})'$. Each factor $G^{i}/G^{i+1}$
 of this series is thus abelian. If $G$ is finite, the derived series must reach a stationary term and there
  exists a number $n$ such that
 $G^{n}=G^{n+1}=G^{n+k}$ for every $k\in\mathbb{N}$. The stationary term $G^{n}$ is a perfect
 group denoted $G^{(\infty)}$ and called the \emph{\bf{perfect residuum}} (or solvable residual) of $G$.

A group $G$ (finite or infinite) is called \emph{\bf{solvable}} if $G^{n}=1$ $=G^{(\infty)}$ for some
positive integer $n$. If $G$ is solvable and perfect, then $G=1$ since $G=G'=G^{(\infty)}=1$.
Factor groups and subgroups of solvable groups are solvable and an extension of a solvable group
by a solvable group is solvable (see \cite{Hall} page 139).
Thus, if $f$ is an homomorphism from a perfect group $P$ onto a solvable group $S$ then
$f(S)$ is both perfect and solvable so that $S=1$.
A finite nonsolvable group may be defined as a finite group such that $G^{(\infty)}\neq 1$.

%Let us list some basic properties of  solvable groups (see \cite{Hall} page 139).
\subsubsection{The iterated cyclic extensions method}\label{iteratedcyclic}
One can show that a group $G$ is solvable if and only if it has a subinvariant series
$G=B_{1}\unrhd B_{2}\unrhd\ldots\unrhd B_{t}=1$ in which every $B_{i-1}/B_{i}$, $i=1,\ldots,t$
is abelian (see \cite{Hall} page 140).  For a finite nonsolvable group $G$, the series
$G/G^{(\infty)}\geq G'/G^{(\infty)}\geq G''/G^{(\infty)}\geq\ldots\geq G^{(\infty)}/G^{(\infty)}=1$
is a subnormal series whose factors are abelian since
$\frac{G^{i}/G^{(\infty)}}{G^{i+1}/G^{(\infty)}}\cong G^{i}/G^{i+1}$ (see Proposition \ref{isotheorems}).
Hence, every  \emph{\bf{finite nonsolvable group}} $G$ is an extension of a nontrivial perfect group $G^{(\infty)}$ by the
finite solvable group $G/G^{(\infty)}$. A catalogue of perfect group (see \cite{Holt_Plesken})
 and a catalogue of solvable group (for instance the Small Groups library of Eick and Besche)
 are thus crucial tools for classifying finite nonsolvable groups by the extension
 method.

 Let $G=B_{1}\unrhd B_{2}\unrhd\ldots\unrhd B_{t}=1$ be a subnormal series with abelian factors for a finite
solvable group $G$. Since every finite abelian group is a direct product of cyclic groups, one can show that
every finite solvable group has a subnormal series $G = C_{1} > C_{2} > \ldots > C_{n} > C_{n+1} = 1$
with cyclic factors.  Hence, for group extensions, a finite solvable group is just a group obtained by iterated
cyclic extensions : for $i\in 1\ldots n$, $C_{i}$ is an extension of $C_{i+1}$ by the cyclic group
$C_{i}/C_{i+1}$. It follows also, that a finite nonsolvable group $G$ with perfect residuum $P$
can be constructed from $P$ by a finite sequence of cyclic extensions. The cyclic factors correspond to the
factors of some subinvariant series for $G/P=G/G^{(\infty)}$.

 If $Aut(N)$ is known, there is a method to construct the extensions of a given group $N$ by
a cyclic group. This method is described in subsection \ref{cyclicext} and has been used by Betten
(\cite{Betten}) to construct finite solvable groups up to order 242. But it seems that the isomorphism problem
for this method is quite hard because a lot of isomorphic copies of the same group are produced. The
iterated Frattini extension method has been used instead by Eick and Besche (\cite{Eick_Ulrich}) and they
have provided more tools to reduce isomorphic copies. Consequently, they where able to list all finite
solvable group up to order 2000. Nevertheless, for the nonsolvable groups up to order 2000, they have
also used iterated cyclic extensions from a given perfect group that starts the sequence. All together,
they obtained a list of the finite groups up to order 2000 that is known as "The Small Groups Library" and
 is available on both GAP and MAGMA systems.
%Hence, if $N$ is a normal solvable subgroup
%of $G$ and if $G/N$ is solvable then $G$ is solvable. It also follows that $G/G^{(\infty)}$ is solvable.
%%%%and one can show that if $G/N$ is solvable then $G^{(\infty)}\leq N$ (see \cite{Rose} page 149).
%Therefore every finite \emph{\bf{nonsolvable}} group $G$ is an extension of a perfect group $G^{(\infty)}$
% by a solvable group $G/G^{(\infty)}$.
%%%%%%%%%%%%%%%%%%%%%%%%%%%%%
%%%%%%%%%%%%%%%%%%%%%%%%%%%%%%%%%

%have an efficency that can be even compared to that
%whose efficiency can be compared to ... for permutation groups.
%confluent system of rewriting rules such that
%every word in the generators can be converted into a unique normal form.
%give a summary of the article of Ako Seress (\cite{seress2001}), refer to it and give the hierarchy of
%presentations efficency (depending on the algorithm). Also cite \cite{Cannon_Havas}
%give also some comments that cannon gave in his talk in Brussel.\\
\subsubsection{pcgs and polycyclic groups}\label{polycyclic}
%%%MMMMMMMMMMM  Abstract from the gap 4 manual.\\
The efficient algorithms that have been developed to study finite solvable groups use the polycyclic presentation.
We briefly outline now what a polycyclic presentation is. Further details can be found in \cite{GAP4}.
 A group G is \emph{\bf{polycyclic}} if there exists a subnormal series
 $G = C_{1} > C_{2} > \ldots > C_{n} > C_{n+1} = {1}$ with cyclic factors (not necessarily finite).
Such a series is called pc series of $G$. Every polycyclic group is solvable and every finite solvable
 group is polycyclic. However, there are infinite solvable groups which are not polycyclic, for instance, the
 multiplicative group of real numbers (which is abelian and thus solvable).

%In Computational Group Theory
%In computer algebra systems like GAP and MAGMA, there exists a large number of methods for polycyclic
%groups which are based upon the polycyclic structure of these groups.
%These methods are usually very efficient and hence GAP tries to use them whenever possible.
 Let $G$ be a polycyclic group with a pc series as above. A polycyclic generating sequence
 (pcgs for short) of $G$ is a sequence $P : = (g_{1},\ldots, g_{n})$ of elements of $G$
 such that $C_{i} = \langle C_{i+1}, g_{i} \rangle$ for $1 \leq i \leq n$.
%Note that each polycyclic group has a pcgs, but except  for very small groups, a pcgs is not unique.
%For each index $i$ the subsequence of elements $(g_{i},\ldots, g_{n})$ forms a pcgs of the
% subgroup $C_{i}$. In particular, these tails generate the subgroups of the pc series and
%  hence we say that the pc series is determined by P.

Let $r_{i}$ be the index of $C_{i+1}$ in $C_{i}$ which is either a finite positive number
or infinity. Then $r_{i}$ is the order of $g_{i} C_{i+1}$ and we call the resulting list
of indices the relative orders of the pcgs P.

Moreover, with respect to a given pcgs $(g_{1},\ldots, g_{n})$ each element $g$ of $G$ can
 be represented in a unique way as a product
 $g = g_{1}^{e_{1}} *g_{2}^{e_{2}} \ldots g_{n}^{e_{n}}$ with exponents
 $e_{i}\in  \{0,\ldots, r_{i-1}\}$, if $r_{i}$ is finite, and $e_{i}\in \mathbb{Z}$ otherwise.
 Words of this form are called %normal words or
 words in normal form. Then the integer vector $[e_{1}, \ldots , e_{n}]$ is called the exponent
vector of the element $g$.
%Furthermore, the smallest index $k$ such that $e_{k}\neq 0$
%is called the depth of g and $e_{k}$ is the leading exponent of $g$.

%For many applications we have to assume that each of the relative orders
%ri is either a prime or infinity. This is equivalent to saying that there
% are no trivial factors in the pc series and the finite factors of the pc
% series are maximal refined. Then we obtain that $r_i$ is the order of $g C_{i+1}$
% for all elements $g in C_i \backslash C_{i+1}$ and we call $r_i$ the relative order of the
% element g.
%pcgs which has particularly nice properties, for example it always refines
% an elementary abelian series, for p-groups it even refines a central
%series. These nice properties permit particularly efficient algorithms.

%Let $G$ be a polycyclic group with pcgs $P = (g_1, \ldots, g_n)$
%and corresponding relative orders $(r_1, \ldots, r_n)$. Recall that the
%$r_i$ are positive integers or infinity and let $I$ be the set of indices
%$i$ with $r_i$ a positive integer. Then $G$ has a finite presentation
%on the generators $g_1, \ldots, g_n$ with relations of the following
%form.
%$$
%\begin{array}{l l l}
   %    $(g_i)^{r_i}$ & $=$ & $g_{i+1}^{a(i,i,i+1)} \ldots g_n ^{a(i,i,n)}$\\
     %            & & \hbox{ for } $1 \leq i \leq n$ \hbox{ and } $i \in I$\\
%$g_i^{-1} g_j g_i$ & $=$ & $g_{i+1}^{a(i,j,i+1)} \ldots g_n^{a(i,j,n)}$\\%
%& & \hbox{ for } $1 \leq i \< j \leq n$\\

%\end{array}
%$$
\index{laissez tombez power commutator presentation: pas utile car je ne detaille pas dans chap 5}
%A finite presentation of this type is called a power-conjugate presentation and a pc group is a
%polycyclic group defined by a power-conjugate presentation. Instead of conjugates we could just as
%well work with commutators and then the presentation would be called a power-commutator
%presentation. Both types of presentation are abbreviated as pc presentation.
%Note that a pc presentation  is a rewriting system.

%MMM COLLECTED WORD\\

%%%%   $e_k$ works, but not $e_kbg$
%Note from Chapter 5 :tchap5.tex
%Note that a pc-group with a pcgs of length $n$ is defined by
%$r=n(n+1)/2$ relations (the $k^{th}$ generator gives one power relation and $n-k$ conjugate
%relations for its action on the $n-k$ remaining generators : Thus $r=n+(n-1)+\ldots +1$).
%MMMMMMMMMMMMMMMMMMMM

%%%%%%%%%%%%%%%%%%%%%%%%%
\section{Extensions}
%%%%%%%%%%%%%%%%%%%%%%%%%%%%%%%%%%%%%%%%
\subsection{The action on $G$ associated with an extension}\label{couplingbasic}
Let $E$ be an extension of $G$. Since $G\lhd E$, each $e\in E$ induces by
 conjugation an automorphism $f(e)$ of $G$. This defines an action $f$ of
 $E$ on $G$, that is to say a homomorphism from $E$ into $Aut(G)$.
% Many times in our proof we need this $f$, and we say that $f$ is induced by conjugation in $E$.
In $E$, the elements of $G$ act on $G$ itself and they induce inner
 automorphisms of $G$. Therefore, $f(G)=Inn(G)$ and $f(E)$ is a subgroup of
 $Aut(G)$ containing $Inn(G)$ and we say that the \emph{\bf{extension}} $E$ is
\emph{\bf{ associated with the subgroup $f(E)$}}. In this work, we often choose
 a subgroup $L$ such that $Inn(G)<L<Aut(G)$ and we try to construct the
 extensions of $G$ associated with $L$.
 Since the elements of $Ker f$ are the elements of $E$ acting trivially
 on $G$ by conjugation, $Ker f=C_{E}(G)$, the centralizer of $G$ in $E$.
 We get that $E/C_{E}(G)\cong f(E)$ and there is a one-to-one correspondence
  between subgroups $f(E)$ and subgroups of $E$ containing $C_{E}(G)$.

%it corresponds to a subgroup $B$ of $Out(G)=Aut(G)/Inn(G)$.
%Followed by the homomorphism from $Aut(G)$ onto $Out(G)$
There is another homomorphism which plays a fundamental role in order to
describe an extension of $G$. If $eG$ is a coset of $G$ in $E$, its image
 $f(eG)=f(e)f(G)=f(t)Inn(G)$ is  a coset of $Inn(G)$ in $Aut(G)$. In this way
$f$ induces a homomorphism $\overline f$ from $E/G$ into $Out(G)=Aut(G)/Inn(G)$,
 the group of outer automorphisms, and $\overline f (E/G)=f(E)/Inn(G)$.
  We say that each \emph{\bf{extension}} of $G$ by $H=E/G$ is
\emph{\bf{associated with a homomorphism from $H$ into $Out(G)$}}. But it is not
true that any homomorphism from $H$ into $Out(G)$ is associated with an
extension of $G$ by $H$ . A counterexample for $G=SL_{2}(9)$ is described page xxiv, chapter 7
in \cite{Atlas}.

%\subsection{Conjugation on $G$ and coupling}\label{couplingbasic}
%If $E$ is an extension of a group $G$ and if for $e\in E$, $f(e)$ denotes the automorphism
%$g\rightarrow e^{-1}ge$ ($g\in G$), then $f:E\rightarrow Aut(G)$ is an homomorphism.
%Since $f(G)=Inn(G)$, another homomorphism from $E/G$ into $Out(G):=Aut(G)/Inn(G)$ is induced
%by $f$; it is defined by $eG\rightarrow f(eG)=f(e)Inn(G)$.
%ùNow, assume that $E$ is an extension of $G$ by $H$, and that a transversal
%$\{\overline{h} :\, h\in H\}$ of $G$ is given such that
%$\overline{h}G\rightarrow h$ is an isomorphism from $E/G$ onto $H$.
%We obtain an homomorphism $\Phi :H\to Out(G)$ defined by $h\to \overline{h}G\to f(\overline{h}) Inn(G)$.
%From now we say that the homomorphism \emph{$\Phi$ is associated to the extension $E$}.
%%%%%%%%%%%%%%%%%%%%%%%%%%%%%
%\subsection{Group Construction methods}
We now expose some classical group construction methods.

\subsection{Semi-direct products}
%a family of extensions, which is one of the most known.
%%$G\rtimes S$  $S\ltimes _{\phi} G$
\textbf{Definition .} Let $G$ and $S$ be groups. \emph{An action of S on G}
is a homomorphism from $S$ into $Aut(G)$. We say that $S$ acts on $G$.

\begin{proposition}Let $\phi$ be an action of a group $S$ on a group $G$.
The set of ordered pairs $(s,g)$ where $g\in G,\,s\in S$ provided with the following
multiplication is a group.
$$(s_{1},g_{1})(s_{2},g_{2})=(s_{1}s_{2},g_{1}^{\phi (s_{2})}g_{2})$$
\end{proposition}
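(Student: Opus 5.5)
We verify the group axioms directly: closure, associativity, identity and inverses. Closure is immediate, since $s_1s_2\in S$, and $g_1^{\phi(s_2)}g_2\in G$ because $\phi(s_2)\in Aut(G)$. Throughout we use the exponential conventions fixed earlier: $g^{\pi_1\pi_2}=(g^{\pi_1})^{\pi_2}$. The fact that $\phi$ is a homomorphism then gives $g^{\phi(s_2s_3)}=(g^{\phi(s_2)})^{\phi(s_3)}$. We also use that each $\phi(s)$ is a homomorphism of $G$, so $(xy)^{\phi(s)}=x^{\phi(s)}y^{\phi(s)}$.

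\textbf{Associativity.} This is the main computation, though it is routine. On the one hand,
$$\bigl((s_1,g_1)(s_2,g_2)\bigr)(s_3,g_3)=\bigl(s_1s_2s_3,\ (g_1^{\phi(s_2)}g_2)^{\phi(s_3)}g_3\bigr).$$
Expanding with the homomorphism property of $\phi(s_3)$, the second component becomes $(g_1^{\phi(s_2)})^{\phi(s_3)}g_2^{\phi(s_3)}g_3$. On the other hand,
$$(s_1,g_1)\bigl((s_2,g_2)(s_3,g_3)\bigr)=\bigl(s_1s_2s_3,\ g_1^{\phi(s_2s_3)}g_2^{\phi(s_3)}g_3\bigr).$$
Since $g_1^{\phi(s_2s_3)}=g_1^{\phi(s_2)\phi(s_3)}=(g_1^{\phi(s_2)})^{\phi(s_3)}$, the two sides agree. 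The only delicate point is keeping the order of composition consistent with the right-action convention. Because $\phi$ is a homomorphism into $Aut(G)$ composed in the exponential (left-to-right) manner, $\phi(s_2s_3)=\phi(s_2)\phi(s_3)$ acts first by $\phi(s_2)$ and then by $\phi(s_3)$, which is exactly what is needed.

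\textbf{Identity.} We claim $(1,1)$ is the identity. Indeed, $(1,1)(s,g)=(s,1^{\phi(s)}g)=(s,g)$ because automorphisms fix $1$. Also $(s,g)(1,1)=(s,g^{\phi(1)})=(s,g)$ because $\phi(1)=id_G$.

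\textbf{Inverses.} We propose $(s,g)^{-1}=\bigl(s^{-1},(g^{-1})^{\phi(s^{-1})}\bigr)$. On the right,
$$(s,g)\bigl(s^{-1},(g^{-1})^{\phi(s^{-1})}\bigr)=\bigl(1,\ g^{\phi(s^{-1})}(g^{-1})^{\phi(s^{-1})}\bigr)=\bigl(1,(gg^{-1})^{\phi(s^{-1})}\bigr)=(1,1).$$
On the left,
$$\bigl(s^{-1},(g^{-1})^{\phi(s^{-1})}\bigr)(s,g)=\bigl(1,\ \bigl((g^{-1})^{\phi(s^{-1})}\bigr)^{\phi(s)}g\bigr)=\bigl(1,(g^{-1})^{\phi(s^{-1}s)}g\bigr)=(1,g^{-1}g)=(1,1).$$
This completes the verification, so the set of pairs with this multiplication is a group.
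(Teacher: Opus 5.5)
Your proof is correct and complete. Associativity, identity and inverses all check out. You also identify the one substantive point: $g^{\phi(s_2s_3)}=(g^{\phi(s_2)})^{\phi(s_3)}$ holds because the paper composes automorphisms left to right, $g^{\pi_1\pi_2}=(g^{\pi_1})^{\pi_2}$. With right-to-left composition the same multiplication rule would fail to be associative in general.

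The paper does not prove this proposition itself. It refers to Suzuki and offers only a heuristic: it computes $s_1g_1s_2g_2=s_1s_2\,g_1^{\phi(s_2)}g_2$ inside an ambient group $E=SG$ with $G\unlhd E$. That explains why the formula is forced, but as a proof it is circular, since it presupposes a group in which the pairs already multiply. Your direct check of the axioms is the standard argument that closes this gap.

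Your inverse $\bigl(s^{-1},(g^{-1})^{\phi(s^{-1})}\bigr)$ agrees with the paper's $\bigl(s^{-1},(g^{\phi(s^{-1})})^{-1}\bigr)$, because $\phi(s^{-1})$ is an automorphism and so commutes with inversion. Once associativity is known, checking one-sided identity and inverses would suffice, so verifying both sides is harmless but not needed.
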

A proof can be found in \cite{Suzuki_vol1}, page 67. \\
This group is called \emph{the semi-direct product} of $S$ and $G$ with respect to the
action $\phi$ and we use the notation $S\ltimes _{\phi}G$ for it.
Note that in   the inverse of the element $(s,g)$ is $(s^{-1},(g^{\phi (s^{-1})})^{-1}).$

Where does this strange multiplication  comes from ?

If $G$ is a normal subgroup of a group $E$ and if $S$ is a subgroup of $E$ then, $S$
 acts on $G$ by conjugation. For $s\in S$
and $g\in G$, the mapping $\phi$ that maps $s$ onto the automorphism
$\phi(s):g\to s^{-1}gs:=g^{\phi(s)}$ of $G$, is a homomorphism from $S$ into $Aut(G)$.
If $E$ is generated by $G$ and $S$, then $E=SG$ and every element of
 $E$ can be written as a product $sg$ of an element of $S$ and of an element of $G$.
The product $s_{1}g_{1}s_{2}g_{2}$ of such elements can be written as a product $sg$
by the trick $s_{1}g_{1}s_{2}g_{2}=s_{1}s_{2}s_{2}^{-1}g_{1}s_{2}g_{2}=s_{1}s_{2}g_{1}^{\phi(s_{2})}g_{2}$.
The inverse of the element $sg$ is
$g^{-1}s^{-1}=s^{-1}sg^{-1}s^{-1}=s^{-1}(g^{\phi (s^{-1})})^{-1}.$

It can be proved that if $S\cap G=1$ then $E$ is isomorphic to the semi-direct product $S\ltimes _{\phi}G$
(\cite{Suzuki_vol1} page 69 ).

\textbf{Definition .}  Let $G$ be a  normal subgroup of a group $E$. A subgroup $S$ of $E$ such that
$E=SG$ and such that $S\cap G=1$ is called a \emph{\bf{complement}} to $G$ in $E$.
Such extension $E$ of $G$ is called a \emph{\bf{split extension}}.
Note that $S\ltimes _{\phi}G$ is a split extension of its normal subgroup $(1_{S},G)$
by its complement subgroup $(S,1_{G})$.

%\begin{proposition}\label{defsemi} Let $G$, $S$ be subgroups of a group $E$
%such that: \begin{enumerate}
%\item $G\lhd E$;
%\item $G\cap S=1$;
%\item $E$ is generated by $G$ and $S$;
%\end{enumerate}
%Then $E$ is called a (inner) semi-direct of $G$ and $S$ and : \\
%$(i)$ every $e\in E$ can be written in a unique as $e=sg$ for $g\in G$ and $s\in S$;\\
%$(ii)$ the quotient $E/G$ is isomorphic to $S$.
%\end{proposition}
%For a proof see \cite{Armstrong}.
%Such extension $E$ is called a \emph{\bf{split extension}} of $G$.
%Note that $S\ltimes _{\phi}G$ is a (inner) semi-direct product of its normal subgroup $(1_{S},G)$
%by the subgroup $(S,1_{G})$.

%  Let $\overline E=\{gh:\,g\in G,\,h\in H\}$. The knowledge of the action
%  $\Phi$ is sufficient to define the product of elements in $\overline E$.
%  If $g_{k}\in G,\,h_{k}\in H(k=1,2)$ then
%  $$g_{1}h_{1}g_{2}h_{2}=g_{1}h_{1}g_{2}h_{1}^{-1}h_{1}h_{2}=g_{1}\Phi_{h_{1}}(g_{2})h_{1}h_{2}=(*).$$
%  Since $g_{1}\Phi_{h_{1}}$ belongs to $G$, the element $(*)$ belongs to $\overline E$.
%  For an element $gh\in\overline E$, its inverse is $$(gh)^{-1}=h^{-1}g^{-1}=h^{-1}g^{-1}hh^{-1}=\Phi_{h^{-1}}(g^{-1})h^{-1}.$$
%  By the same argument, $(gh)^{-1}\in\overline E$ and this proves that
%  $\overline E$ is a subgroup of $E$. The next result shows the importance
%  of the multiplication defined by $(*)$.
\subsubsection{The Schur-Zassenhaus theorem}
The \emph{Schur-Zassenhaus} theorem whose proof can be found in
\cite{Suzuki_vol1}, page 236-7, is stated as follows. %first version of the theorem=1902 schur,
% 30 years later Zassenhaus generalized it to the present version
\begin{theorem}\label{schurzass} Let $G$ be a normal subgroup of a finite group $E$. If its order $|G|$
 is relatively prime to $|E|/|G|$ then there exists a complement of $G$ in $E$ and
 any two complements are conjugate in $E$.
\end{theorem}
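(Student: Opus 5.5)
We argue by induction on $|E|$, treating first the case where $G$ is abelian and then reducing the general case to it. Write $n=|G|$ and $m=|E/G|$, so that $\gcd(n,m)=1$.

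\textbf{Abelian case.} We use Gasch\"utz's averaging argument, which avoids cohomology.
\begin{enumerate}
\item Fix a transversal $\tau$ of $G$ in $E$. For two transversals $\tau,\sigma$, set $d(\tau,\sigma)=\prod_{eG}t_{eG}^{-1}s_{eG}\in G$; this is well defined because $G$ is abelian.
\item Check the rules $d(\tau,\sigma)d(\sigma,\rho)=d(\tau,\rho)$ and $d(\tau x,\sigma x)=d(\tau,\sigma)^x$ for $x\in E$. Check also that $d(\tau,\tau g)=g^{m}$ for $g\in G$, since each coset contributes a factor $g$.
\item Since $\gcd(m,n)=1$, the map $g\mapsto g^m$ is an automorphism of $G$. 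So there is a unique $g$ with $g^m=d(\tau x,\tau)$, or equivalently $d(\tau x,\tau g)=1$.
\item Define $S=\{x\in E: d(\tau,\tau x)=1\}$. From the rules, $S$ is a subgroup and $S\cap G=\{g: g^m=1\}=1$.
\item For surjectivity onto $E/G$: given $x$, take $g$ with $d(\tau,\tau xg)=1$. Then $xg\in S$, so $SG=E$ and $S$ is a complement.
\item For conjugacy: two complements $S,S'$ are themselves transversals, with $d(S,S')=c$. Writing $c=a^{-m}$, one checks that $S'=S^{a}$, because the elements $s\in S$ act on transversals by right multiplication and the averaging forces $S'a^{-1}$ to be fixed in the appropriate sense.
\end{enumerate}

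\textbf{General existence.} Let $p$ divide $n$ and let $P$ be a Sylow $p$-subgroup of $G$. The Frattini argument gives $E=GN_E(P)$. If $N_E(P)<E$, apply induction to $G\cap N_E(P)\lhd N_E(P)$, which has index $m$ with coprime order; this yields a complement to $G$ in $E$. Otherwise $P\lhd E$, and we take $Z=Z(P)\neq1$, which is characteristic in $P$ and hence normal in $E$. Induction in $E/Z$ gives a complement $K/Z$ to $G/Z$. Then $Z\lhd K$ with index $m$, and the abelian case gives a complement in $K$, which is a complement in $E$.

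\textbf{General conjugacy.} This is the main obstacle, since the standard proof needs $G$ or $E/G$ to be solvable. By Feit--Thompson one of $n,m$ is odd, so one of $G$, $E/G$ is solvable.
\begin{enumerate}
\item If $G$ is solvable, take a minimal characteristic subgroup $A$ of $G$; it is abelian and normal in $E$. Induction in $E/A$ conjugates $SA$ to $S'A$, and the abelian case inside $SA$ conjugates $S$ to $S'$.
\item If $E/G$ is solvable, let $S,S'$ be complements, take a prime $q$ dividing $m$ and a minimal normal subgroup $M/G$ of $E/G$, a $q$-group. Then $M=G(S\cap M)=G(S'\cap M)$, and $S\cap M$, $S'\cap M$ are Sylow $q$-subgroups of $M$. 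So after conjugating we may assume $Q:=S\cap M=S'\cap M$. Then $S,S'\le N_E(Q)$. Now $N_E(Q)=N_G(Q)S$ has $N_G(Q)\cap S=1$, so $S$ and $S'$ are complements of $N_G(Q)$ in $N_E(Q)$. They remain complements modulo $Q$, and induction in $N_E(Q)/Q$ finishes, or the case $N_E(Q)=E$ is handled by passing to $E/Q$ and lifting.
\end{enumerate}

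The reliance on Feit--Thompson is the delicate point; we would cite it, as in \cite{Suzuki_vol1}.
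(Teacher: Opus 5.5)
Your proof is correct. The paper gives no proof of its own: it quotes the theorem from \cite{Suzuki_vol1}, pp.~236--237, and only remarks that the conjugacy part there uses Feit--Thompson, which can be avoided when $G$ or $E/G$ is solvable. Your argument is the standard one, and it makes that remark concrete:
\begin{itemize}
\item existence (Frattini argument on a Sylow subgroup, then $Z(P)$ and the abelian case) never uses the odd order theorem;
\item your conjugacy cases (1) and (2) are exactly the Feit--Thompson-free proofs under the two solvability hypotheses;
\item Feit--Thompson serves only to guarantee that one of those two cases applies.
\end{itemize}
The one ingredient that differs from a cohomological treatment is the abelian case. You handle it by Gasch\"utz's averaging of transversals rather than by the vanishing of $H^{2}(E/G,G)$ and $H^{1}(E/G,G)$ in coprime order. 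Your version is self-contained and elementary. The cohomological version would fit more naturally with the paper's own language of factor sets and coboundaries: for abelian $G$, existence of a complement in every such extension is exactly $H^{2}(E/G,G)=0$.

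Two small repairs are needed.
\begin{itemize}
\item In (2), the alternative ``$N_E(Q)=E$'' needs no separate treatment. Since $Q\neq 1$, we have $|N_E(Q)/Q|<|E|$ in every case, so induction always applies.
\item Step 6 of the abelian case is only gestured at, and the sign is off. Choose $a\in G$ with $a^{m}=c=d(S,S')$, not $a^{-m}$. For $s'\in S'$ write $s'=sg$ with $s\in S$ and $g\in G$. Since $Ss'=Sg$ and $S's'=S'$, your rules give $g^{m}=d(S,Sg)=d(S,S')\,d(S's',Ss')=c\,(c^{s})^{-1}=\bigl(a(a^{s})^{-1}\bigr)^{m}$. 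Hence $g=a(a^{s})^{-1}$, and because $G$ is abelian, $s'=s\,a\,s^{-1}a^{-1}s=a^{-1}sa$. Therefore $S'=S^{a}$.
\end{itemize}
The same inversion slips into step 3: there $d(\tau x,\tau g)=1$ means $g^{m}=d(\tau x,\tau)^{-1}$, not $g^{m}=d(\tau x,\tau)$.
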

The proof in \cite{Suzuki_vol1} of the conjugacy of the complements uses the
Feit-Thompson Theorem on the solvability of every group of odd order. %While writing his book
%in 1982, Suzuki mentioned that there was no known proof which does not use this theorem.
According to \cite{Suzuki_vol1} and \cite{LyonsNotes} there is no known proof which does not use this theorem.
With the additional hypothesis that either $E/G$ or $G$ is solvable, it becomes possible
to have a proof that do not use the odd order Theorem.

%%\section*{Extensions and subdirectproducts.}
%%\subsection{Central products}
%%%??????????? ALLER VOIR ASCHBASCHER PG 32 SUR CENTRAL PRODUCT
\subsection{Cyclic extensions}\label{cyclicext}
If $E$ is an extension of $G$ such that $E/G$ is a cyclic group, then
$E$ is called  a \emph{cyclic extension} of $G$. If $E/G$ is a cyclic group of order $n$, then
$E$ is generated by $G$ and by an element $\theta \notin G$ such that
 $\theta^{n}=\tilde g\in G$ and $\theta^{i} \notin G$ for $i\in 0,1\ldots n-1$. The elements of $E$ can be
 written in a unique way as $\theta^i g$ for some $g\in G$ and some $i\in 1\ldots n-1$.
 The element $\theta$ induces by conjugation an automorphism  $\alpha :g\to \theta^{-1} g\theta:=g^{\alpha}$
of $G$. Hence, the pair $< \alpha, \tilde{g}>$ completely describes $E$ since in $E$ the product of two elements
$\theta^{i} g_{1}$ and $\theta^{j} g_{2}$ is equal to
$\theta^{i} g_{1}\theta^{j} g_{2}=\theta^{i+j}\theta^{-j} g_1 \theta^{j}g_2 :=\theta^{i+j} g_{1}^{\alpha^j} g_2$.

In order to construct a cyclic extension of $G$, the pair $< \alpha, \tilde g>$ can not be chosen anyhow.
As $\theta^{n}=\tilde g$,
\begin{enumerate}
\item the automorphism $\alpha^{n}$ must be the conjugation by $\tilde g$ in $G$;
\item $\alpha$ must fix $\tilde g$ since $\tilde{g}^{\alpha}:=\theta^{-1}\tilde{g}\theta=\theta^{-1}\theta^{n}\theta=\theta^{n}=\tilde{g}$.
\end{enumerate}
 One can show that conditions $(1)$ and $(2)$ are sufficient (\cite{Hall} page 224).
  To every pair $< \alpha, \tilde g>$ that fulfills these conditions
 corresponds a cyclic extension of $G$ whose elements are $\theta^{i} g\, (g\in G,\,i=0,1,..,n-1)$
 with the following product :
$$\theta^{i} g_{1}\star \theta^{j} g_{2}=M g_{1}^{\alpha^j}g_2$$
where $M=\theta^{i+j}$ if $i+j < n$ and $M=\theta^{i+j(mod\, n)} \tilde{g}$ if $i+j\geq n$.

%%%%%%%%%%%%%%%%%%%%%%%%
%Montrer des dessins une seule fois et dire que dans les demos, c'est utile de les
%avoir en tete pour y voir plus clair.\\
%L'application principale de la these est l'etude des groupes qui ont
%un perfect residuum $P\neq 1$ a partir des proprietes de P et de E/P solvable
%This covers in particular the finite non-solvable groups.
%Dessin pour subdirect product . If $K_{1}\cap K_{2}=1$ the $G$ is isomorphic to a su
%subdirect product of $(G/K_{1})\times (G/K_{2})$.
%Dessin du subdirect product en disant que c'est un diagonal generaluse.
%Other explanation : pour l'expliquer : the combination of two existing homomorphism
%gives a third one on $A\times B$. Double modulo.on a l'info de $A$ et de $B$.
% Dire que solvable are very well known since the work of Hall and others.
% See the chapter of tome 2 suzuki on all the various characterization of solvable groups.
% For p-groups, see Newman-O'brien (thus also for nilpotent group, their direct product)
% For non nilpotent solvable group, see bettina eick.
%%%%%%%%%%%%%%%%%%%%%%%%%%%%%%%%ù
\index{pas besoin des central product=cas particulier de quotient de semi direct}
\subsection{G-isomorphisms}
 For $i=1,2$ let $E_{i}$ be extensions of a normal subgroup $G$.
\begin{definition}\textbf{Definition }  An isomorphism $j$ from $E_{1}$ onto $E_{2}$ such
that $j(G)=G$ is called a $G-$isomorphism.
\end{definition}

We note that a $G$-isomorphism induces an isomorphism from $E_{1}/G$ onto $E_{2}/G$.
In many cases, isomorphisms from $E_{1}$ onto $E_{2}$ $j$ are $G$-isomorphisms
: for instance in the case where $G$ and $E_{i}/G$ do not have a common composition factor.
More generally we have the following result :
\begin{lemma}\label{allgiso} Let $E_{1}\unrhd G$ and $E_{2}\unrhd G$ be  two extensions such that no factor
 group of $G$ is isomorphic to a non trivial normal subgroup of $E_{2}/G$.
 Then, any isomorphism between $E_{1}$ and $E_{2}$ is a $G$-isomorphism.
\end{lemma}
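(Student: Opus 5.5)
Let $j:E_{1}\to E_{2}$ be an isomorphism and let $\pi:E_{2}\to E_{2}/G$ be the canonical projection. The plan is to show that $j(G)\leq G$ and $G\leq j(G)$; equivalently, that $j(G)=G$ as subgroups of $E_{2}$. The natural object to study is the image $\pi(j(G))=j(G)G/G$, because it is simultaneously a normal subgroup of $E_{2}/G$ and a homomorphic image of $G$.

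First, since $G\unlhd E_{1}$ and $j$ is an isomorphism, $j(G)\unlhd E_{2}$. By Proposition \ref{isotheorems} (2), $j(G)G$ is then a normal subgroup of $E_{2}$ containing $G$. By the correspondence in Proposition \ref{isotheorems} (5), $j(G)G/G$ is a normal subgroup of $E_{2}/G$.

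Second, by Proposition \ref{isotheorems} (3),
$$j(G)G/G\cong j(G)/(j(G)\cap G).$$
Since $j(G)\cong G$, this group is isomorphic to a factor group of $G$: it is $G/N$, where $N=j^{-1}(j(G)\cap G)\unlhd G$. So $j(G)G/G$ is a normal subgroup of $E_{2}/G$ that is isomorphic to a factor group of $G$. The hypothesis now forces it to be trivial. Hence $j(G)\leq G$.

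It remains to exclude $j(G)$ being a proper subgroup of $G$. This is the one step not settled by the hypothesis, and I expect it to be the main obstacle, since for infinite $G$ a group can be isomorphic to a proper subgroup of itself. I would first assume $G$ is finite, which is the setting of this work. Then $|j(G)|=|G|$ and $j(G)\leq G$ give $j(G)=G$ immediately.

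For the general case I would try a symmetric argument applied to $j^{-1}$, which would need the analogous hypothesis on $E_{1}/G$. Since $j$ induces $E_{1}/j^{-1}(G)\cong E_{2}/G$, this is not automatic. I would therefore either note that the lemma is intended for finite $G$ (or, more generally, for $G$ not isomorphic to a proper subgroup of itself), or impose the condition on both quotients. I would state the finiteness assumption explicitly at this point of the proof.
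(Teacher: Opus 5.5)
Your first step is exactly the paper's: the image of $j(G)$ in $E_2/G$ is a normal subgroup isomorphic to $j(G)/(j(G)\cap G)$. That is a quotient of $G$, so the hypothesis makes it trivial, and hence $j(G)\subseteq G$.

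For the reverse inclusion the paper does not use finiteness. It applies the same argument to $j^{-1}$ to get $j^{-1}(G)\subseteq G$, that is, $G\subseteq j(G)$. As you suspected, that step silently uses the hypothesis for $E_1/G$, which is not assumed. In fact the lemma as literally stated is false for infinite $G$. Take $G=\mathbb{Z}$, $E_1=\frac12\mathbb{Z}\supseteq G$ and $E_2=G$. The hypothesis holds vacuously because $E_2/G=1$. Yet $x\mapsto 2x$ is an isomorphism $E_1\to E_2$ that sends $G$ onto $2\mathbb{Z}\neq G$. Indeed no isomorphism here can be a $G$-isomorphism, since $E_1/G\not\cong E_2/G$.

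So the two routes prove two different correct statements:
\begin{itemize}
\item Your route proves the lemma with the one-sided hypothesis exactly as stated, but only for finite $G$, or more generally for $G$ not isomorphic to a proper subgroup of itself.
\item The paper's symmetric argument works for arbitrary $G$, but needs the hypothesis on both $E_1/G$ and $E_2/G$.
\end{itemize}
The two-sided version is what Corollary \ref{giso} actually needs. There both quotients are isomorphic to the solvable group $H$, and a perfect group has no nontrivial solvable quotient. So that corollary holds even without finiteness. Your version yields it for finite $G$, which is the setting of the thesis's applications.
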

\emph{Proof:} Let $i$ be an isomorphism from $E_{1}$ onto $E_{2}$ and let
$\Psi$ be the canonical homomorphism from $E_{2}$ onto $E_{2}/G$. The subgroup $i(G)$
is normal in $E_{2}$ (because $G$ is normal in $E_{1}$) and thus
$\Psi(i(G))\cong i(G)/G\cap i(G)$ is a normal subgroup of $E_{2}/G$.
The group $i(G)/G\cap i(G)$ is isomorphic to a factor group of $G$ (since $i(G)\cong G$),
hence, our hypothesis
implies that $i(G)/G\cap i(G)\cong 1$ so $ i(G)=G\cap i(G)$ and  $ i(G)\subseteq G$.
Using the same argument
with $i^{-1}$ , which is an isomorphism from $E_{2}$ onto $E_{1}$ , we find
$i^{-1}(G)\subseteq G$ which gives ( applying $i$ to both sides of this inequality)
 $G\subseteq i(G)$. Finally $i(G)=G$ , so $i$ is a $G$-isomorphism$\,\Box.$

 For a classification of finite nonsolvable groups as extensions of a perfect group $G$
 by a solvable group, this is also the case. Any normal subgroup of the solvable group
 $E/G$ is solvable too, but since $G$ is perfect, its only solvable quotient is $1$
  (the trivial one). Therefore we obtain the following corollary of Lemma \ref{allgiso}:
%%%%%%%%%%%%%%%%%%%%%
%a normal  is a subgroup of $H$, so it is solvable ; but it is also a quotient of $i(G)$.
% Since $i(G)$ is perfect, its only solvable quotient is $1$, (the trivial one),
%and so $\Psi(i(G))=1$, which implies that $i(G)\subseteq G$. Using the same argument
%with $i^{-1}$ , which is an isomorphism from $E_{2}$ onto $E_{1}$ , we find
%$i^{-1}(G)\subseteq G$ which gives ( applying $i$ to both sides of this inequality)
%%%%%%%%%%%%%%%%%%%
\begin{corollary} \label{giso}Let $E_{1}\unrhd G$ and $E_{2}\unrhd G$ be  two extensions
of a perfect group $G$ by a solvable group $H$. Any isomorphism between
$E_{1}$ and $E_{2}$ is a $G$-isomorphism.
\end{corollary}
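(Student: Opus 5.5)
The plan is to apply Lemma \ref{allgiso} directly. I only need to check its hypothesis: no factor group of $G$ is isomorphic to a nontrivial normal subgroup of $E_{2}/G\cong H$. The argument combines two facts from the section on solvable and nonsolvable groups. First, subgroups of solvable groups are solvable. Second, every homomorphic image of a perfect group is perfect.

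The steps, in order, are as follows. Let $N$ be a normal subgroup of $E_{2}/G$. Since $E_{2}/G\cong H$ is solvable, $N$ is solvable. Now suppose some factor group $G/M$ is isomorphic to $N$. The canonical map $G\to G/M$ is a homomorphism from the perfect group $G$, so $G/M$ is perfect. Hence $N$ is both perfect and solvable, and the remark in the paper gives $N=N'=N^{(\infty)}=1$. Therefore no factor group of $G$ is isomorphic to a \emph{nontrivial} normal subgroup of $E_{2}/G$.

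With the hypothesis verified, Lemma \ref{allgiso} shows that every isomorphism $E_{1}\to E_{2}$ maps $G$ onto $G$, so it is a $G$-isomorphism.

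There is no real obstacle here. The only point needing care is the fact that a group which is both perfect and solvable is trivial. The paper states this explicitly: the derived series of a perfect group is constant, while the derived series of a solvable group reaches $1$.
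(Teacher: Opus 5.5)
Your proof is correct and is essentially the paper's own argument: apply Lemma \ref{allgiso}, using that every normal subgroup of the solvable group $E_{2}/G\cong H$ is solvable, while every factor group of the perfect group $G$ is perfect, so only the trivial group can be both. Since $E_{1}/G\cong H$ as well, the same check also covers the hypothesis on $E_{1}/G$ that the lemma's proof implicitly uses when it repeats the argument with $i^{-1}$.
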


There is another situation where $G$-isomorphisms exist : if in the extension $E_{2}$ of
$G$, all the normal subgroups isomorphic to $G$ such that $E_{2}/N\cong E_{2}/G$ are in
the same orbits under $Aut(E_{2})$. Then if  $j:E_{1}\rightarrow E_{2}$ is an
 isomorphism and $j(G)=G^{\omega}$ for some $\omega\in Aut(E_{2})$.
Thus $\omega^{-1}j$ is a $G$-isomorphism (right-sided notation) and in this situation,
there exists a $G$-isomorphism if and only if there exists an isomorphism.

%%%%%%%%%%%%%%%%%%%%%%%%%%%%%%%
%%  SOME LATEX LAYOUT %%%%%%
%%%%%%%%%%%%%%%%%%%%%%%%%%%%%
%  \chapter{Basic results on extensions}
%  \section{The action on $G$ associated to an extension.}
%  \emph{associated with a homomorphism from $H$ into $Out(G)$}.
%  \subsection{Extensions with inner action on $G$}
%
%  \begin{proposition}\label{inneract}
%  \end{proposition}
%  \emph{Proof }. .\quad\Box$

%  \textbf{Definition }.

%  \begin{corollary}
%  Every extension of a complete group  is a direct product.
%  \end{corollary}
%
%  begin{enumerate}\item
%  \item see \cite{Carter}).
%  \end{enumerate}
%
%  \begin{corollary}If $H$ is a group of odd order and if $G$ belongs to the list
%  below then, the only extension of $G$ by $H$ is the direct product $G\times H$.
%  \begin{itemize}
%  \item The sporadic groups $M_{12},M_{22},J_{2},Suz,HS,McL,He,Fi_{22},
%  Fi^{'}_{24},HN,O'N,J_{3}$.
%  \item The alternating groups $A_{n}$ ( $n>3$).
%  \item $L_{2}(p^{k})$ where $p$ is an odd prime number and $k$ is a power of $2$.
%  \end{itemize}
%  \end{corollary}
%%%%%%%%%%%%%%%%%%%%%%%%%%%%%%%%%

\chapter{Constructing extensions}\label{quotientsemi}
\subsubsection{Introduction}
In chapter \ref{basic} we have recalled some classical methods for building a new
group from two given groups. We will present here a new construction that generalizes both
the semi-direct product and the central product. This new construction is a semi-direct
product where some subgroups are amalgamated and we will use it for almost all extensions that
are considered in this work. In chapter \ref{findsup}, we will prove that this method can be used to
reduce the classification of extensions of a group $G$ by a group $H$ to the classification of
the extensions of a nilpotent subgroup $U$ of $G$ by $H$. %, where $U$ is a nilpotent subgroup of $G$.
For our main application,  the extensions of a perfect group $P$ by a solvable group $H$,
%We prove for instance that if $P/Z(P)$ has less than 30,720 elements, then the order of the
%subgroup $U\leq G$ is less than $2|Z(P)|$, which is a significant improvement. Moreover
the extensions of $U$ by $H$ are solvable, so that everything can be reduced to polycyclic
groups in the finite case.

%In this chapter we view extensions $E$ of a group $G$ as groups containing $G$ as a
%normal subgroup. It is possible to consider instead that $E$ contains a normal subgroup isomorphic
%to $G$, but this formulation makes our proofs much less readable.

\section{Reduction to extensions of a small subgroup $U$}
\index{ Dessin pour supplement (but montrer que le quotient se lit dans le quotient
du supplment}
\textbf{Definition}. Let $G$ and $S$ be subgroups of a group $E$. If every element $e$
of $E$ can be written as $e=sg$ for some $g\in G$ and some $s\in S$ (that is
$E=SG$), then $S$ is called a \emph{\bf{supplement to $G$ in $E$}}. Note that if $E=SG$ then
$E=GS$ since $E=E^{-1}=(SG)^{-1}=G^{-1}S^{-1}=GS$.
%%%%%% the following commentary is for theorem about G-isomorphisms between extensions
The way an element of $E$ can be written as a product $s.g$ is not necessarily unique : for every
$u\in S\cap G$ we have $su\in G$, $u^{-1}g\in G$ so that $s.g=su.u^{-1}g$. Conversely,
if for $i=1,2$, $s_{i}\in S$ and $g_{i}\in G$, the equality $s_{1}g_{1}=s_{2}g_{2}$ implies
$s_{1}^{-1}s_{2}=g_{1}g_{2}^{-1}\in S\cap G$ so that there exists $u\in S\cap G$ such that
$s_{2}=s_{1}u$ and $g_{2}=u^{-1}g_{1}$. This proves that every element of $E$ is uniquely written as a
product $sg$ where $s\in S$ and $g\in G$ if and only if $S\cap G=1$.

Note that by Proposition \ref{isotheorems}, $E/G\cong S/(S\cap G)$ for every supplement $S$ to $G$ in $E$.
If $G$ is normal in $E$ and $S\cap G=1$, then the supplement $S$ is
 a complement to $G$, and $E$ is a split extension.% (or semi-direct product).
%Let us recall that if $\phi$ is an action of a group $S$ on a group $G$,
%the semi-direct product $S\ltimes _{\phi}G$ is the set of ordered pairs $(s,g)$ where
%$g\in G,\,s\in S$ with the multiplication
%$$(s_{1},g_{1})(s_{2},g_{2})=(s_{1}s_{2},g_{1}^{\phi (s_{2})}g_{2})$$
                        
\subsection{Supplements and quotient of semi-direct product}

%%%%%%%%%%%%%%%%%%%%
\begin{theorem}\label{semi}Let $E$ be an extension of a group $G$. Let $S$ be a
supplement to $G$ in $E$  and let $\phi$ be the action of $S$ on $G$ by conjugation.
\begin{enumerate}
\item $\varphi :S\ltimes _{\phi}G\rightarrow E: (s,g)\rightarrow sg$ is an epimorphism.
%\item $E$ is isomorphic to $S\ltimes _{\phi}G/Ker \varphi$ and
\item $E\cong S\ltimes _{\phi}G/Ker \varphi$ where $Ker \varphi=\{(u,u^{-1})\,:\,u\in G\cap S\}\cong G\cap S$
%$E\cong S\ltimes _{\phi}G/(G\cap S) $.\scriptstyle
%\item In $S\ltimes _{\phi}G$, we have $Ker \varphi\cap S=1=Ker \varphi\cap G$.
% \item $K\cap (S\times 1_{G})=1_{S\ltimes _{\phi}G}=K\cap (1_{S}\times G)$
\item $K\cap (S_{\times 1_{G}})=1_{S\ltimes _{\phi}G}=K\cap (G_{\times 1_{S}})$ where $K=Ker \varphi$.
\end{enumerate} \end{theorem}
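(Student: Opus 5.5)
The plan is to check the three items in order by direct computation in $S\ltimes_{\phi}G$, with multiplication $(s_1,g_1)(s_2,g_2)=(s_1s_2,g_1^{\phi(s_2)}g_2)$ and $g^{\phi(s)}=s^{-1}gs$.

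For item (1), I first verify that $\varphi$ is a homomorphism. We have
$$\varphi((s_1,g_1)(s_2,g_2))=s_1s_2\,s_2^{-1}g_1s_2\,g_2=s_1g_1s_2g_2=\varphi(s_1,g_1)\varphi(s_2,g_2).$$
This is exactly the trick recalled before the definition of the semi-direct product. Surjectivity then follows from $E=SG$: every $e\in E$ can be written as $sg$, and this element is $\varphi(s,g)$.

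For item (2), I compute the kernel. Suppose $\varphi(s,g)=sg=1$. Then $s=g^{-1}$ lies in $S\cap G$, so with $u:=s$ we get $(s,g)=(u,u^{-1})$. Conversely, every pair $(u,u^{-1})$ with $u\in G\cap S$ is mapped to $1$. Hence $Ker\varphi=\{(u,u^{-1}):u\in G\cap S\}$.

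The map $u\mapsto(u,u^{-1})$ is clearly a bijection from $G\cap S$ onto $Ker\varphi$. It is also a homomorphism, since
$$(u_1,u_1^{-1})(u_2,u_2^{-1})=(u_1u_2,\,u_2^{-1}u_1^{-1}u_2u_2^{-1})=(u_1u_2,(u_1u_2)^{-1}).$$
This gives $Ker\varphi\cong G\cap S$. Part (1) of Proposition \ref{isotheorems} then yields $E\cong S\ltimes_{\phi}G/Ker\varphi$.

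For item (3), consider an element $(u,u^{-1})$ of $K$. If it lies in $S\times 1_G$, then $u^{-1}=1$. If it lies in $1_S\times G$, then $u=1$. In either case the element is the identity $(1,1)$.

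No step is a real obstacle. The only point requiring care is the right-action convention in the homomorphism check, namely keeping $g^{\phi(s)}=s^{-1}gs$ consistent with the paper's exponential notation.
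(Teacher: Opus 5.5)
Your proposal is correct and follows the paper's proof essentially step for step: you check the homomorphism property via $s_1s_2\,s_2^{-1}g_1s_2\,g_2=s_1g_1s_2g_2$, get surjectivity from $E=SG$, compute the kernel from $sg=1\Rightarrow s=g^{-1}\in G\cap S$, and read item (3) off the form of the kernel. Your explicit check that $u\mapsto(u,u^{-1})$ is a homomorphism, which gives $Ker\varphi\cong G\cap S$, fills in a small detail that the paper leaves implicit.
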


\emph{Proof}.\begin{enumerate}
%\item
%For $s\in S$, the conjugation by $s$ on $G\unlhd E$ defines the automorphism $\phi(s)$
% of $G$ defined by $g^{\phi(s)}=s^{-1}gs=g^{s}$ for $g\in G$.
%Observe that the product of two elements of $E=SG$ can also be written
% as an element of $SG$ by the following rule :
%$s_{1}g_{1}s_{2}g_{2}=s_{1}(s_{2}s_{2}^{-1})g_{1}s_{2}g_{2}=s_{1}s_{2}g_{1}^{s_{2}}g_{2}
%=s_{1}s_{2}g_{1}^{\phi (s_{2})}g_{2}$.\\% $(\star)$

\item The mapping
 $\varphi :(s,g) \to sg$, from $S\ltimes _{\phi}G$ to $E$ is  onto since $E=SG$ and it is a
 well-defined homomorphism
 because $\varphi [(s_{1},g_{1}).(s_{2},g_{2})]=\varphi [(s_{1}s_{2},g_{1}^{\phi (s_{2})}g_{2})]=
s_{1}s_{2}g_{1}^{\phi (s_{2})}g_{2}=s_{1}g_{1}s_{2}g_{2}=\varphi [(s_{1},g_{1})]\cdot \varphi [(s_{2},g_{2})].$

% Since in $E$, the group $G$ is normal, $S$ is acting on $G$ by conjugation and this
% action defines a homomorphism $\Phi$ from $S$ into  $Aut(G)$.

\item Let us compute the kernel $K$ of this homomorphism. \\
$(s,g) \in Ker \ \varphi $ means that $sg=1$ in $E$, whence $g=s^{-1}\in G \cap S$.
 Conversely, for every $u\in G\cap S $ the element $(u,u^{-1})$ is in $Ker \ \varphi $.
 Hence, \mbox{$Ker \ \varphi =\{\ (u,u^{-1})\ : \ u\in G\cap S \}$}.\\

\item Trivial by 2.
% \item Let $K:=Ker \varphi$. In $S\ltimes _{\phi}G$, we identify $S$ with $S\times 1_{G}$
% and $G$ with $1_{S}\times G$.
%$K\cap (S\times 1_{G})=1_{S\ltimes _{\phi}G}$.
% Indeed if  $x=(u,u^{-1})$ is in $1_{S}\times G$,
% we must have $u=1$ and thus $x=1_{S\ltimes _{\phi}G}$. In the same way,
% $K\cap (1_{S}\times G)=1_{S\ltimes _{\phi}G}$ since
%  $(u^{-1},u)\in S\times 1_{G}$ implies $u=1$ and $x=1_{S\ltimes _{\phi}G}$.
%Clearly, the projection $(u,u^{-1}) \to (u,1)$ is an isomorphism
% from $K$ onto $(G\cap S)\times 1_{G}$.$\qquad \Box$\\
\end{enumerate}
%%%%%%%%%%%%%%%%%%%%%%%
\subsubsection{Outline of this section}
Hence, every extension $E$ of $G$ for which we know a supplement $S$ to $G$, can be completely
described by $S$ and by its conjugation action $\phi$ on $G$. If $E$ is an extension of $G$ by a group $H$,
then $S$ is an extension of a subgroup $U< G$ by $H$. This analysis of extensions suggests a
way to construct extensions by just constructing an extension of a subgroup $U< G$ together with a
homomorphism $\phi$ from $S$ into $Aut(G)$.

For this purpose, we will first need to determine under which conditions such a pair $(S,\phi )$ corresponds
to a supplement to $G$ in an extension (see Proposition \ref{ucns}). We will show in subsection
\ref {semi-identify}, that the key property is the existence of a normal subgroups $K\unlhd S\ltimes _{\phi}G$
such that $K\cap G=K\cap S=1$. All the rest follows from it.
Then we will need to know how to construct such a pair and we will illustrate this by the example of section
\ref{extsl29}. Since one supplement is sufficient to describe each extension, we will also need a function
that associates to each extension, a unique supplement to $G$ (see subsection \ref{suppfunct}).
%%%%%%%%%%%%%%%%%%%%%%%%%%%%%%%%%%%%%%%%%%%%%%%
\subsection{Semi-direct product with identified subgroups}\label{semi-identify}
%CONVERSELY UNDER WHICH CONDITIONS IS IT A SUPPLEMENT
\subsubsection{Identification conditions}
We state for the group $S\ltimes _{\phi}G$, the conditions (necessary and sufficient) under which
we can identify a subgroup $U\leq G$ and a subgroup $i(U)\leq S$ as in Proposition \ref{semi}.
%%by factoring $S\ltimes _{\phi}G$ by a normal subgroup $K$ such that $K\cap G=K\cap S=1$.

\begin{proposition}\label{ucns} Let $S\ltimes _{\phi}G$ be a semi direct-product of the groups $G$
 and $S$. Let $i:u\rightarrow \tilde{u}$ be an isomorphism from a subgroup $U$ of $G$ onto a
 normal subgroup $i(U)$ of $S$.
 The set  $K=\{(\tilde{u},u^{-1})\textrm{ for } u\in U\}$ is a normal subgroup
 of $S\ltimes _{\phi}G$ if and only if for every $u\in U$\\
$(i)\quad \tilde{u}^{s}=\widetilde{u^{\phi (s)}}$ for every $s\in S$  and\\
$(ii)$\quad$g^{u}=g^{\phi (\tilde{u})}$ for every $g\in G$.
\end{proposition}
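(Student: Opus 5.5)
We first check that $K$ is always a subgroup, and then reduce normality to conjugation by generators. The product in $S\ltimes_{\phi}G$ is $(s_1,g_1)(s_2,g_2)=(s_1s_2,g_1^{\phi(s_2)}g_2)$. For $u,v\in U$ this gives
$$(\tilde u,u^{-1})(\tilde v,v^{-1})=(\tilde u\tilde v,(u^{-1})^{\phi(\tilde v)}v^{-1}).$$
This element lies in $K$ exactly when $(u^{-1})^{\phi(\tilde v)}=v u^{-1}v^{-1}$, i.e. when $\phi(\tilde v)$ agrees with conjugation by $v$ on $U$. So closure of $K$ already needs a weak form of $(ii)$.

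The cleanest route is to avoid computing closure directly and instead use a homomorphism. Set $S\ltimes_\phi G\ni (s,g)$ and identify $S$ with $(S,1)$ and $G$ with $(1,G)$. Inside the semidirect product, $(s,1)^{-1}(1,g)(s,1)=(1,g^{\phi(s)})$. Hence $(\tilde u,u^{-1})=(\tilde u,1)(1,u^{-1})$.

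\emph{Necessity.} Assume $K\unlhd S\ltimes_\phi G$.
\begin{itemize}
\item Conjugate $k_u=(\tilde u,u^{-1})$ by $(s,1)$. This gives $(\tilde u^{s},(u^{-1})^{\phi(s)})$. Its $S$-component determines the element of $K$ uniquely, because the projection of $K$ to $S$ is injective ($i$ is injective). Membership in $K$ therefore forces $(u^{\phi(s)})^{-1}$ to be the $G$-part attached to $\tilde u^{s}$, which yields $\tilde u^{s}=\widetilde{u^{\phi(s)}}$. This is $(i)$, and it implicitly shows $u^{\phi(s)}\in U$.
\item Conjugate $k_u$ by $(1,g)$. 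One computes $(1,g)^{-1}(\tilde u,u^{-1})(1,g)=(\tilde u,(g^{-1})^{\phi(\tilde u)}u^{-1}g)$. The $S$-component is still $\tilde u$, so this element must equal $k_u$. That gives $(g^{-1})^{\phi(\tilde u)}u^{-1}g=u^{-1}$, i.e. $g^{\phi(\tilde u)}=u^{-1}gu=g^{u}$. This is $(ii)$.
\end{itemize}

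\emph{Sufficiency.} Assume $(i)$ and $(ii)$.
\begin{itemize}
\item $K$ is a subgroup. By the product formula above together with $(ii)$, the product of $k_u$ and $k_v$ is $(\widetilde{uv},(uv)^{-1})=k_{uv}$, using $(u^{-1})^{\phi(\tilde v)}=v^{-1}u^{-1}v$. Also $k_1=1$. So $u\mapsto k_u$ is a homomorphism and $K$ is its image.
\item $K$ is normal. The same two conjugation computations, read backwards, show that $(s,1)$ sends $k_u$ to $k_{u^{\phi(s)}}$ (by $(i)$) and $(1,g)$ fixes $k_u$ (by $(ii)$). Since the elements $(s,1)$ and $(1,g)$ generate $S\ltimes_\phi G$, $K$ is normal.
\end{itemize}

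The main obstacle is bookkeeping with the right-action conventions, especially the identity $(s,g)=(s,1)(1,g)$ and the conjugation formulas. These need to be verified carefully once from the multiplication rule.

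A further subtle point in necessity: applying $i$ requires $u^{\phi(s)}\in U$. I will get this from the conjugated element lying in $K$, whose $G$-coordinate is $(u^{-1})^{\phi(s)}$ and must be the inverse of some element of $U$.
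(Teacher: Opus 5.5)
Your proof is correct and follows the paper's argument. Normality is tested by conjugating $(\tilde u,u^{-1})$ by $(s,1)$ and by $(1,g)$, which yields (i) and (ii) respectively, and (ii) gives closure via $k_uk_v=k_{uv}$. One small slip: in your opening paragraph the closure condition should read $(u^{-1})^{\phi(\tilde v)}=v^{-1}u^{-1}v$, as you correctly use later in the sufficiency part, not $vu^{-1}v^{-1}$.
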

%%%%%%%%%%%%%%%%%%%%%%%%%%%%%%%%%%%%%%%%%%%%%%%%%%%%%%%%%%%%%%%%%
\emph{Proof }.
%Before performing computations let us remind that the multiplication in $S\ltimes _{\phi}G$
%is defined by $(s_{1},g_{1})(s_{2},g_{2})=(s_{1}s_{2},g_{1}^{\phi (s_{2})}g_{2})$
%Moreover $(1,g)^{-1}$ is $(1,g^{-1})$ and $(s,1)^{-1}=(s^{-1},1)$.

We recall that $\tilde{u}^{s}=s^{-1}\tilde{u}s\in S$ whereas $u^{\phi(s)}$ is the image of $u\in G$ under
$\phi(s) \in Aut(G)$. Clearly, $K$ is normal if and only if $K$ is normalized by the subgroups
$(S,1)$ and $(1,G)$, because the elements of $S\ltimes _{\phi}G$ are products of their elements.
 The element $(s,1)$ normalizes $K$ if and only if the element
$$x:=(s^{-1},1)(\tilde{u},u^{-1})(s,1)=(s^{-1}\tilde{u},u^{-1})(s,1)=
(s^{-1}\tilde{u}s,(u^{-1})^{\phi(s)})=(\tilde{u}^{s},(u^{\phi(s)})^{-1})$$
belongs to $K$ for every $u\in U$. Since $i(U)$ is normal in $S$, $\tilde{u}^{s}\in i(U)$
 and thus $x\in K=\{(\tilde{u},u^{-1})| u\in U\ \}$ is equivalent to condition $(i)$.
The element $(1,g)$ normalizes $K$ if and only if the element
$$(1,g^{-1})(\tilde{u},u^{-1})(1,g)=(1,g^{-1})(\tilde{u},u^{-1}g)=
(\tilde{u},(g^{-1})^{\phi (\tilde{u})}u^{-1}g)$$
belongs to $K$. This amounts to asking that $t:=(g^{-1})^{\phi (\tilde{u})}u^{-1}g=u^{-1}$,
%The inverse of this identity is
wich is equivalent to
$u=t^{-1}=g^{-1}ug^{\phi (\tilde{u})}\Leftrightarrow gu=ug^{\phi (\tilde{u})}
\Leftrightarrow u^{-1}gu=g^{\phi (\tilde{u})}$, which is condition $(ii)$.

Finally let us prove that condition $(ii)$ implies that $K$ is a subgroup.
 Let $u_{1}$ and $u_{2}$ be elements of $U$.
Let $x:=(\tilde{u_{1}},u_{1}^{-1})(\tilde{u_{2}},u_{2}^{-1})=
(\tilde{u_{1}}\tilde{u_{2}},(u_{1}^{-1})^{\phi (\tilde{u_{2}})}u_{2}^{-1})=
(\widetilde{u_{1}u_{2}},(u_{2}u_{1}^{\phi (\tilde{u_{2}})})^{-1})$ because
$\tilde{u_{1}}\tilde{u_{2}}=i(u_{1})i(u_{2})=i(u_{1}u_{2})$. As $u_{1}$ is in
$G$, condition $(ii)$ implies $u_{1}^{\phi (\tilde{u_{2}})}=u_{1}^{u_{2}}$ so that
the second component of $x$ is the inverse of $u_{2}u_{1}^{u_{2}}=
u_{2}u_{2}^{-1}u_{1}u_{2}=u_{1}u_{2}$ and this proves that $x$ is in $K$.
The same computation with $u_{2}=u_{1}^{-1}$ shows that the inverse of
$(\tilde{u_{1}},u_{1}^{-1})$ is $(\tilde{u_{1}}^{-1},u_{1})\in K$.
$\quad\Box$

The reader interested in an explicit construction for such $S$ and for such $\phi$ satisfying
the two conditions of Proposition \ref{ucns},  may jump to subsection \ref{extsl29}. %for an example.
%$SL_{2} (9)$ where such pairs $<S,\phi>$ are explicitely constructed.
%%%%%%%%%%%%%%%%%%%%%%%%%%%%%%%%
\subsubsection{The role of the hypothesis $K\cap G=1=K\cap S$}
%We proved in Proposition \ref{semi} that each extension $E$ of $G$ (where $S$ is a proper supplement to $G$) can be
%constructed as a quotient $(S\ltimes _{\phi}G)/K$. %% of a semi-direct product that amounts to identify
%%%a subgroup of $S$ with a subgroup of $G$.
It remains to show that every quotient of a semi-direct product $S\ltimes _{\phi}G$ by a normal subgroup
$K$, as described in Proposition \ref{ucns}, corresponds to an extension of $G$ with a supplement
(isomorphic to $S$) whose conjugation action on $G$ corresponds to $\phi$.

This can be done easily just by using the property $K\cap G=1=K\cap S$.  Furthermore, we will show
 (Proposition \ref{uu-1}) that it is not necessary to assume that $K$ has a particular form as in
Proposition \ref{ucns} : the assumption $K\cap G=1=K\cap S$ implies that
%quotient amounts to identify a subgroup $i(V)$ of $S$ with a subgroup $V$ of $G$
%( that is to say $K=\{\ (i(v),v^{-1})\ : \ v\in V \}$).
$K=\{\ (i(v),v^{-1}): v\in V \}$ where $V$ is a subgroup of $G$ and  $i:V\to i(V)$ is an isomorphism onto
 a subgroup of $S$.  We call $K$ \emph{\bf{the identifier subgroup}}.
%In $S\ltimes _{\phi}G$, the normal subgroup $K$ allows us to identify $u\in G$ and $u\in S$
% for every $u\in U$.

%every property described in the previous section about $E$ are consequences of the property
% $K\cap G=1=K\cap S$.

Let $\phi$ be a homomorphism from a group $S$ onto $B\leq Aut(G)$ and let $M=S\ltimes _{\phi} G$
 be the corresponding semi-direct product. If $\varphi:M\rightarrow \varphi (M)$ is a
 homomorphism such that $Ker \varphi\cap G=1=Ker \varphi\cap S$ then :

\begin{itemize}
\item the restrictions $\varphi|_{G}:G\rightarrow \varphi (G)$ and $\varphi|_{S}:S\rightarrow \varphi (S)$
 are isomorphisms since $Ker \varphi|_{G}=Ker \varphi\cap G=1$ and
$Ker \varphi|_{S} =Ker \varphi\cap S=1$.

\item $\varphi (M)$ is an extension of $\varphi (G)\cong G$ (since $G\unlhd M$) where
$\varphi (S)\cong S$ is a supplement to $\varphi (G)$ (since $\varphi (M)=\varphi (SG)=
\varphi (S)\varphi (G)$);

%\item $\varphi (E)/\varphi (G)\cong S/i(V)$  where $V$ is the subgroup of $G$
%such that $\varphi (G)\cap\varphi (S)=\varphi (V)$ and
%$i$ is an isomorphism from $V$ onto a normal subgroup of $S$.

\item The isomorphism $\varphi|_{G}:G\rightarrow \varphi (G)$ induces a natural isomorphism
from $Aut(G)$ onto $Aut(\varphi (G))$. The image of $b\in Aut(G)$ is the automorphism $\tilde b$
of $\varphi (G)$ defined by $\tilde b:\varphi (g)\mapsto \varphi (g^b)$ (for every $g\in G$).
 For $m\in M$, the conjugation action of $\varphi (m)$ on $\varphi (G)$ is $\tilde b$
 if and only if the conjugation action of $m$ on $G$ is $b$. Indeed, as
$\varphi (g)^{\varphi (m)}=\varphi (m^{-1})\varphi (g)\varphi (m)=\varphi (g^{m})$,
we have
\beq \label{b+e}  \varphi (g)^{\varphi (m)}=\varphi (g^{b})\Leftrightarrow \varphi (g^{m})=
\varphi (g^{b})\Leftrightarrow g^{m}=g^{b}
\eeq
 since $\varphi|_{G}$ is an isomorphism.
Consequently, the conjugation action of the supplement
$\varphi (S)$ on $\varphi (G)$ coincides exactly with the action of $S$ on $G$ in $S\ltimes _{\phi}G$.

\end{itemize}

Let $E$ be an extension of $G$ and let $f:E\to Aut(G)$ be its conjugation action on $G$.
We will need in subsection \ref{suppfunct} %explained in the outline of our main method (see the introduction), most of the
supplements of type $f^{-1}(B)$ where $B$ is a subgroup of $Aut(G)$.
The second part of the following proposition is stated for this purpose, in order to have
a criterion to recognize such supplements in $(S\ltimes _{\phi}G)/K$. It will also be used
to prove a result in section \ref{classifs}.

\begin{proposition} \label{uu-1}
Let $\phi$ be a homomorphism from a group $S$ onto $B\leq Aut(G)$ and let $M=S\ltimes _{\phi} G$
 be the corresponding semi-direct product. If $\varphi:M\rightarrow \varphi (M)$ is a
 homomorphism such that $Ker \varphi\cap G=1=Ker \varphi\cap S$ then
\begin{enumerate}

\item $Ker \varphi=\{(i(v),v^{-1})\textrm{ for } v\in V\}$ where $i$ is an isomorphism from
a subgroup $V\leq G$ onto a normal subgroup of $S$.

%\item $\varphi (E)/\varphi (G)\cong S/i(V)$.
\item For every $g\in G$, the isomorphism $j:g\rightarrow \varphi (g)$ induces an isomorphism
$\tilde j$ from $Aut(G)$ onto $Aut(\varphi (G))$. Let $\tilde B=\tilde j(B)$
and let $U$ be the subgroup of $G$ corresponding to $Inn(G)\cap B$.
%\leq Aut(\varphi G)$ be
If $f:\varphi (M)\to Aut(\varphi (G))$ is the natural conjugation action of $\varphi (M)$ on $\varphi (G)\cong G$  then
$$  f^{-1}(\tilde B)=\varphi (SU)\qquad and \,\,\,\qquad f^{-1}(\tilde B)=\varphi (S)
\Leftrightarrow V=U.$$
\end{enumerate}
\end{proposition}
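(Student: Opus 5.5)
For part 1, I would work directly with the projections of $K:=Ker\varphi$ onto the two coordinates. Define $V:=\{g\in G\,:\,(s,g)\in K \textrm{ for some } s\in S\}$. For $g\in V$ the corresponding $s$ is unique: if $(s,g),(s',g)\in K$ then $(s,g)(s',g)^{-1}$ has trivial $G$-coordinate, so it lies in $K\cap S=1$ and $s=s'$. Hence there is a well-defined map $v\mapsto s_v$, and I would set $i(v):=s_v^{-1}$ so that $K=\{(i(v),v^{-1})\}$ after renaming; more precisely, I would take $V$ to be the set of inverses of $G$-coordinates. The map $i$ is injective by the symmetric argument using $K\cap G=1$.

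To see that $i$ is a homomorphism, the natural first step is to show that $K$ commutes elementwise with $G$ modulo the right twist. That is cumbersome, so I would instead use that $K$ is normal and $K\cap G=1$. Then $[K,G]\le K\cap G=1$, so $K$ centralizes $(1,G)$. Computing $(1,g)^{-1}(s,h)(1,g)$ and setting it equal to $(s,h)$ gives $g^{\phi(s)}=h^{-1}gh$, which is exactly condition $(ii)$ of Proposition \ref{ucns}. Using this, the product $(\tilde u_1,u_1^{-1})(\tilde u_2,u_2^{-1})$ has second coordinate $(u_1u_2)^{-1}$, by the computation already done in the proof of Proposition \ref{ucns}. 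So $V$ is a subgroup and $i$ is an isomorphism onto the first projection $i(V)$. Normality of $i(V)$ in $S$ follows because conjugating $(i(v),v^{-1})$ by $(s,1)$ stays in $K$ and has first coordinate $i(v)^s$.

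For part 2, I would use the bullet points preceding the statement, which show that the action of $\varphi(m)$ on $\varphi(G)$ corresponds under $\tilde j$ to the action of $m$ on $G$ in $M$. An element $m=(s,g)$ acts on $G$ as $\phi(s)\,\iota_g$. Hence $f(\varphi(m))\in\tilde B$ iff $\phi(s)\iota_g\in B$. Since $\phi(s)\in B$, this holds iff $\iota_g\in B\cap Inn(G)$, i.e.\ iff $g\in U$, taking $U$ to be the full preimage of $B\cap Inn(G)$ in $G$, which contains $Z(G)$. Therefore $f^{-1}(\tilde B)=\varphi(SU)$.

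For the equivalence, I would note that $\varphi(SU)=\varphi(S)$ iff $U\subseteq \varphi$-preimage of $\varphi(S)$ inside $G$. The elements $g\in G$ with $\varphi(1,g)\in\varphi(S)$ are exactly those with $(s,1)^{-1}(1,g)\in K$ for some $s$, i.e.\ $g\in V$. So the condition is $U\le V$. Conversely, $V\le U$ always holds: for $v\in V$, condition $(ii)$ gives $\iota_v=\phi(i(v))\in B$. Hence equality holds iff $V=U$.

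The main obstacle is bookkeeping of inverses and of left versus right conventions in the semidirect product, together with the deduction of $(ii)$ from $[K,G]=1$.
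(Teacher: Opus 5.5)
Your proof is correct, but for part 1 it takes a different route from the paper. The paper never computes inside $S\ltimes_{\phi}G$. It defines $V$ by $\varphi(V)=\varphi(G)\cap\varphi(S)$ and sets $i:=(\varphi|_{S})^{-1}\circ\varphi|_{G}$ on $V$. Then $i$ is an isomorphism for free, being a composite of two isomorphisms, and normality of $i(V)$ is inherited from $\varphi(G)\cap\varphi(S)\unlhd\varphi(S)$. The description of $Ker\varphi$ is a single line: $\varphi(s)=\varphi(g^{-1})$ iff $s=i(g^{-1})$.

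You instead work with the coordinate projections of $K$. You get uniqueness and injectivity from $K\cap S=1=K\cap G$, then use $[K,G]\le K\cap G=1$ to extract condition $(ii)$ of Proposition \ref{ucns}, and only then get multiplicativity from the product computation. This is longer and carries all the inverse bookkeeping, but it buys two things. First, it shows directly, before $i$ is known to be a homomorphism, that $K$ satisfies the identification conditions; the paper obtains this only afterwards, by feeding part 1 back into Proposition \ref{ucns}. Second, it gives a one-line proof of $V\le U$ in part 2, namely $\iota_v=\phi(i(v))\in B$, where the paper pulls $\varphi(V)\subseteq\varphi(S)\subseteq f^{-1}(\tilde B)$ back to $M$. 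The rest of your part 2 is the paper's argument.

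Three bookkeeping fixes are needed.

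\begin{enumerate}
\item For $(s,h)\in K$, the centralizing computation gives $g^{\phi(s)}=hgh^{-1}$, not $h^{-1}gh$. Substituting $h=v^{-1}$ then yields $g^{\phi(i(v))}=v^{-1}gv$, which is condition $(ii)$. As you wrote it, the same substitution would give $\phi(i(v))=\iota_{v^{-1}}$ instead.
\item Since $V$ is not yet known to be a subgroup, closure under inverses should come from $(i(v),v^{-1})^{-1}=(i(v)^{-1},v)\in K$, a short computation using $(ii)$. It should not come from the paper's substitution $u_2=u_1^{-1}$, which presupposes $u_1^{-1}\in U$.
\item The tentative definition $i(v):=s_v^{-1}$ is only justified once $(ii)$ is known. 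Use your ``more precisely'' definition from the start.
\end{enumerate}
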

\emph{Proof }.
\begin{enumerate}
\item Let us prove that $Ker \varphi=\{(i(v),v^{-1})\textrm{ for } v\in V\}$ where $V$
 is the subgroup of $G$ such that $\varphi (G)\cap\varphi (S)=\varphi (V)$ and
$i$ is an isomorphism from $V$ onto a normal subgroup of $S$.
 The isomorphism $\varphi |_{G}$ induces an isomorphism from $V$ onto $\varphi (V):=\varphi (G)\cap\varphi (S)$
 and $(\varphi |_{S})^{-1}$ induces an isomorphism from $\varphi (V)$ onto a subgroup of $S$
 that we note $V_{S}$. If $i$ is the composition of these isomorphisms then, $i$ maps $v\in V$
 to $v_{S}:=(\varphi |_{S})^{-1}(\varphi (v))\in V_{S}$ thus $\varphi (v_{S})=\varphi (v)$.
Conversely if for $g\in G$ and $s\in S$, $\varphi (g)= \varphi (s)$ then
$s=(\varphi |_{S})^{-1}(\varphi (g))=i(g)$ and thus $g\in V$ and $s\in V_{S}$.
 Now, $sg\in Ker \varphi\Leftrightarrow \varphi (s)=\varphi (g^{-1})\Leftrightarrow
 s=i(g^{-1})$ ( and $g^{-1}\in V$)$\Leftrightarrow Ker \varphi=\{(i(v),v^{-1})\,|\, v\in V\}$.
 Note that the subgroup $V_{S}$ is normal in $S$ since its image by the isomorphism
 $\varphi |_{S}$ is $\varphi (V_{S})=\varphi (V)=
 \varphi (G)\cap \varphi (S)$ that is normal in $\varphi (S)$ .

\item $f$ is the conjugation action of $\varphi (M)$ on $\varphi (G)$ and $\tilde B$
be the image in $Aut(\varphi (G))$ of $B\leq Aut(G)$. $U$ is the
subgroup of $G$ that corresponds to $Inn(G)\cap B$ (i.e. $G\cap f^{-1}(B)$). Let us prove that
$$  f^{-1}(\tilde B)=\varphi (SU)\qquad and \,\,\,\qquad f^{-1}(\tilde B)=\varphi (S)
\Leftrightarrow V=U$$
% this is useful for supplements of type $f^{-1}(B)$ that we study later on.
Let $h$ be the conjugation action of $M=S\ltimes _{\phi}G$ on $G$.\\
 Note that $U=G\cap h^{-1}(B)$ and that $h|_{S}=\phi$.
 For $m\in M$ we have showed that $\varphi (g)^{\varphi (m)}=\varphi (g^{b})\Leftrightarrow
 g^{m}=g^{b}$ (equalities \ref{b+e}) and therefore $f^{-1}(\tilde B)=\varphi (h^{-1}(B))$. Let $m=sg$ be
 an element of $M$ ( $s\in S$ and $g\in G$) : $m\in h^{-1}(B)\Leftrightarrow
 h(s)h(g)\in B\Leftrightarrow h(g)\in h(s)^{-1}B=B$ since $h(s)=\phi(s)\in B$. Hence
$sg\in h^{-1}(B)\Leftrightarrow g\in U=G\cap h^{-1}(B)$ so that $h^{-1}(B)=SU$ and
we have proved that $f^{-1}(\tilde B)=\varphi (SU)$. \\
It remains to  prove that $\varphi (SU)=\varphi (S)\Leftrightarrow V=U$.
In order to prove this statement we observe that $V\subset U$ because
$\varphi (V)\subseteq \varphi (h^{-1}(B))$ ( since $\varphi (V)\subset \varphi (S)
\subset\varphi (h^{-1}(B))$) so that by $(\star)$
 $V\subset h^{-1}(B)$ whence $V\subset U=h^{-1}(B)\cap G$.
Finally if $\varphi (SU)=\varphi (S)$ then $\varphi (U)\subset\varphi (S)\Rightarrow
\varphi (U)\subset\varphi (S)\cap\varphi (G)=\varphi (V)\Rightarrow U\subset V$ (since
$\varphi|_{G}$ is a bijection) whence $U=V$. Conversely if $U=V$ we get $\varphi (SU)=
\varphi (SV)=\varphi (S)$ since $\varphi (V)\subset\varphi (S)$.
\end{enumerate}
$\quad\Box$
%%%%%%%%%%%%%%%%%%%%%%%%%%%%%%%%%
% ??????????? ALLER VOIR ASCHBASCHER PG 32 SUR CENTRAL PRODUCT
%To avoid such a situation with $G\ltimes _{\phi}S$, we must ask that the set
%$K=\{\ (u^{-1},u)\ : \ u\in U\}$ is its own normal closure.
%
%Indeed, here is an example to illustrate that in a group, we cannot identify
%elements anyhow.
%The group $D_{8}=\{x,y\vert x^4, y^{2}, (xy)^{2}\}$ is
%isomorphic to the dihedral group of $8$ elements. It is a semi-direct product
%of $<x>\cong 4$ by $<y>\cong 2$ and the subgroups $<x^{2}>$ and $<y>$ are
%isomorphic. Let us see what happens if we identify them.
%Let $N$ be the new group defined by $\tilde D=\{x,y\vert x^4, y^{2}, (xy)^{2}, x^{2}y\}$.
%Since $N=\{1,x^{2}, y, x^{2}y\}$ is the normal subgroup of $D_{8}$
% generated by $x^{2}y$, the group $\tilde D=D_{8}/N$ is of order two.
%Thus, together with $x_{2}y$, other elements have been "destroyed" by this
%identification.
%To avoid such a situation with $G\ltimes _{\phi}S$, we must ask that the set
%$K=\{\ (u^{-1},u)\ : \ u\in U\}$ is its own normal closure.
%
%%%%%%%%%%%%%%%%%%%%%%%%%%%%%%%%%%%%%%%%%%%%%%%%%%%%%%%%%%%%%%%%%%

%%%%%%%%%%%%%%%%%%%%%%%%%%%%%%
%\subsubsection{Summary}
%To construct $S$ and the homomorphism $S\to B$ that fulfills condition of proposition \label{ucns}.
%to construct an admissible pair $<S,\phi>$. The reader may jump to section for an example for extensions of
%$SL_{2} (9)$ where such pairs $<S,\phi>$ are explicitely constructed.
%%%%%%%%%%%%%%%%%%%%%%%%%%%%%%%%%%%%%%%%%%%%%
\subsection{Supplement functions}\label{suppfunct}
To each extension containing a supplement $S$  of $G$, we can associate a pair
 $(S,\phi )$, but since there could be a lot of supplements, the description of
 an extension  by such a pair is not unique. Clearly, we do not want to construct all the supplements :
 one per extension is sufficient. If $\mathcal{F}$ is a family of extensions that we want to construct,
 we need a function that gives for each extension $E\in \mathcal{F}$ a supplement $S_{E}$ to $G$ in $E$.
We call such a function $E\to S_{E}$, a \emph{\bf{supplement function}} for the family $\mathcal{F}$.

In our work we encounter various family of extensions : the extensions of a group $G$ associated with a
given subgroup $L$ such that $Inn(G)\leq L\leq Aut(G)$; the extensions of $G$ by a given group $H$ ;
the extensions of $G$ by $H$ associated to a given homomorphism from $H$ into $Out(G)$ or even the whole
family of extensions of $G$.
Examples of supplement functions for these families are described in subsection \ref{funcNp} and
\ref{Gisofunctphi}.
%%%%%%%%%%%%%%%%%%%%%%%%%%%%%%%%%%%
\subsubsection{Preimage supplements}\label{preimsupp}
%In this work, we often choose a subgroup $L$ such that $In(G)<L<Aut(G)$ and we try to classify the
%family $\mathcal{F}_{L}$ of extensions of $G$ associated with $L$.
\begin{wrapfigure}[10]{l}{0.4\linewidth}
%\mbox{\input{Giso1.latex}
\includegraphics[width=5.5cm,height=4.5cm]{outline.eps}
\end{wrapfigure}
Let $G$ be a non abelian group,
let $L$ be a subgroup such that $Inn(G)\leq L\leq Aut(G)$ and let $\mathcal{F}_{L}$ be the family of
extensions of $G$ associated with $L$. We will construct a supplement function for $\mathcal{F}_{L}$.
%The picture on page \pageref{picturesupplement} shows how  the structure of $Aut(G)$ provides supplements
This picture shows how  the structure of $Aut(G)$ provides supplements
to $G$ for the extensions in $\mathcal{F}_{L}$. We detail this fact here.

%%We note first that the structure of $Aut(G)$ provides families of supplements.
Let $E$ be an extension of $G$, let $f$ be the conjugation action of $E$ on $G$ and assume that $f(E)=L$.
If $S$ is a supplement to $G$ in $E$ then $L=f(E)=f(GS)=f(G)f(S)=Inn(G)f(S)$ and thus
$f(S)$ is a supplement to $Inn(G)$ in $L$. Conversely, if $B$ is a supplement to $Inn(G)$
in $L$, then for every $e\in E$ we have $f(e)=f(g)b$, for some $f(g)\in Inn(G)$ and some
$b\in B$ ($g\in G$). Thus for $s=g^{-1}e$, $f(s)=b$ and $s\in S:=f^{-1}(B)$
whence $e=gg^{-1}e=g\in GS$. Therefore, the preimage in $E$ of a supplement to $Inn(G)$
in $L$, is a supplement to $G$ in $E$.  These preimage supplements are exactly the supplements to $G$
that contain $Ker\,f=C_{E}(G)$, the centralizer of $G$ in $E$, and they are in
one-to-one correspondence with the supplements to $Inn(G)$ in $L$.  If $G$ is abelian then
$G=Z(G)\leq C_{E}(G)\leq f^{-1}(B)$ and $f^{-1}(B)$ is not a proper supplement to $G$ in $E$.
Observe that since $f(G\cap f^{-1}(B))=Inn(G)\cap B$, the supplement $f^{-1}(B)$ is proper in $E$ if and
only if $B$ is a proper supplement to $Inn(G)$ in $L$. Hence, for every proper supplement $B$ to $Inn(G)$
in $L$ we may create a "proper" supplement function for  $\mathcal{F}_{L}$ described as $E\to f^{-1}(B)$.
In chapter \ref{findsup} we propose a method to find such supplements when $G$ is not nilpotent.
%% observe that since nilpotent groups are successive central extensions, If Inn(G) is nilpotent
%then $G$ is nilpotent because it is a central extension of Z(G) by the nilpotent G
%%%%%%%%%%%%%%%%%%%%%%%%%%%%%%%%%%%%%%%%%

\subsubsection{Structure of $f^{-1}(B)$}
We now briefly describe some properties of $f^{-1}(B)$ that will be usefull in order to construct the
extensions of $\mathcal{F}_{L}$.

\begin{wrapfigure}[10]{l}{0.35\linewidth}
%\mbox{\input{Giso1.latex}
\includegraphics[width=5cm,height=4.5cm]{structS2.eps}
\end{wrapfigure}
%\begin{wrapfigure}[9]{l}{0.35\linewidth}
%\input{structS.latex}
%\end{wrapfigure}

%\begin{wrapfigure}[height of figure in lines]{l|r}[overhang]{width}
%  figure, caption, etc.
%\end{wrapfigure}
%
%\setlength{\intextsep}{0pt}
% \begin{wrapfigure}[6]{l}{0.2\linewidth} % l for left r for right
%  \mbox{\input{latt1.latex}} %%% IT IS THE WRONG GRAPHIC
% \end{wrapfigure}
%%\subsubsection{Structure of $f^{-1}(B)$}
First observe that since $U:=G\cap f^{-1}(B)$ is the subgroup of $G$ that by conjugation induces the inner
automorphisms of $Inn(G)\cap B$, it is constant for every $E\in \mathcal{F}_{L}$ :  every supplement
$S_E:=f ^{-1} (B)$ is an extension of the same subgroup $U<G$ %($f$ is the conjugation action of $E$ on $G$).

Let us show that $GC_{E}(G)$ can be constructed as a quotient of $G\times C_{E}(G)$.
This construction is called a \emph{\bf{central product}} (see \cite{Aschb} page 32).
Since $C_{E}(G)$ is a supplement to  $G$ in $GC_{E}(G)$, and since it acts trivially on $G$ by conjugation,
 Proposition \ref{semi} shows that the group $GC_{E}(G)$ is isomorphic to
 $(C_{E}(G)\times G)/\tilde{Z}$ where $\tilde{Z}=\{(z,z^{-1})\,|\,z\in Z(G)\}$.
 Conversely if $\mathcal{C}$ is a central extension of $Z(G)$ ( i.e $Z(G)\leq Z(\mathcal{C})$ ),
 then by Proposition \ref{ucns}, $\tilde{Z}$ is a normal subgroup of $\mathcal{C}\times G$
 and by Proposition \ref{uu-1}, $(\mathcal{C}\times G)/\tilde{Z}$ is an extension of $G$
 where the image of $\mathcal{C}$ is the centralizer of the image of $G$. This holds not only for $G$
 but for every subgroup of $G$ that contains $Z(G)$. %that is centralized by $C_{E}(G)$.
Consequently, the subgroup $\tilde{K}:=UC_{E}(G)$ can be constructed as a central product
$(\mathcal{C}\times U)/\tilde{Z}$.
%This construction is called a \emph{\bf{central product}} (see \cite{Aschb} page 32 ).

Observe that since $E=GS_E$ and since $S_E \cap GC_{E}(G)=UC_{E}(G)=\tilde{K}$ we have
$S_E /\tilde{K} \cong E/GC_{E}(G)$.
But $E/GC_{E}(G)\cong \frac{E/C_{E}(G)}{GC_{E}(G)/C_{E} (G)}\cong L/Inn(G)$. Hence, $S_E$ is an
extension of $\tilde{K}$ by the subgroup $L/Inn(G)<Out(G)$. This suggest a
\emph{\bf{two steps strategy to construct $S_E$}} :
first construct a central extension $\mathcal{C}$ of $Z(G)$ and the corresponding
central product $\tilde{K}=(\mathcal{C}\times U)/\tilde{Z}$ ; next, try to build an extension $S$ of $\tilde{K}$
by $L/Inn(G)$ such that there is an homomorphism (with $\mathcal{C}$ as kernel) that respects the two
conditions of Proposition \ref{ucns}.
%%%%%%%%%%%%%%%%%%%%%%%%%%%%%%%%%%%%%%%%%%
\subsection{Reduction to Frattini extensions}\label{frattini}
%In the previous sections we have shown how it was possible to construct any extension from a supplement $S$
%only. It reduces the construction of an extension of $G$ by $H$ to the construction of an extension of
%$S\cap G$ by $H$ and to the construction of an homomorphism $S\to Aut (G)$ that satisfies the two conditions
%of Proposition \ref{ucns}.
We show now that if $G$ is finite, it is always possible to find a supplement $S$ such
that $S\cap G$ is nilpotent. Hence, the construction of extensions of a finite group can be reduced to the extensions
of finite nilpotent subgroup.

Let us recall what a finite nilpotent group is.
Let $G$ be a finite group of order $p^{n}.m$ such that the prime number $p$ is not a
divisor of the integer $m$. A \emph{Sylow-p} subgroup of $G$ is a subgroup of order $p^{n}$.
 Such a subgroup always exists and for each prime $p$ that divides $|G|$, the Sylow-p
 subgroups are conjugate in $G$.
A finite group is nilpotent if, and only if it is the direct product of its
Sylow subgroups (see \cite{Hall} pg. 155). A finite group $N$ is nilpotent if, and only if every
Sylow subgroup of $N$ is normal in $N$.
A nilpotent group is solvable and the subgroups of a nilpotent group are nilpotent.
%%%%%%%%%%%%%%%
%% There is another way to prove this result.
 The Frattini subgroup $\Phi(E)$ is the intersection of the maximal subgroups of $E$. If
 $E$ is finite, $\Phi(E)$ is nilpotent (\cite{Hall} page 157).
 % If $E$ is infinite, the set of all maximal subgroups  could be empty and in this case $\Phi(E)=E$.
\index{$\Phi (E)$ for$E$ infinite ?}
 \index{fratini for infinite ? existence of maxiamls ?}
% MMMMMM fratini for infinite ? existence of maxiamls ? . The descending sequence for supp
% depends on "AXIOME DU CHOIX". ?\\

\textbf{Definition.} Let $E$ be an extension of a normal subgroup $G$ such that $E/G\cong H$.
$E$ is a \emph{\textbf{Frattini extension of G by H}}  if $E/\Phi (E) \cong H/\Phi (H)$ (see \cite{Eick_Ulrich}).

\begin{proposition}\label{redfrat} Let $G$ be a normal subgroup of a finite group $E$.
The following propositions are equivalent.
\begin{enumerate}
\item $G$ has no proper supplement in $E$.
\item $G\leq \Phi(E)$.
\item $E$ is a Fratini extension of $G$ by $E/G$.
\end{enumerate}
\end{proposition}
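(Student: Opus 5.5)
I will prove the cycle of implications $(1)\Rightarrow(2)\Rightarrow(1)$ and $(2)\Leftrightarrow(3)$. A proper supplement to $G$ in $E$ means a proper subgroup $S<E$ with $E=SG$.

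\textbf{$(1)\Rightarrow(2)$.} Argue by contraposition. Suppose $G\not\leq\Phi(E)$. Then there is a maximal subgroup $M$ of $E$ with $G\not\leq M$; such maximal subgroups exist because $E$ is finite. Since $G\unlhd E$, Proposition \ref{isotheorems} shows that $MG$ is a subgroup of $E$. It properly contains $M$, so maximality forces $MG=E$. Thus $M$ is a proper supplement to $G$, contradicting (1).

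\textbf{$(2)\Rightarrow(1)$.} Suppose $G\leq\Phi(E)$ and $E=SG$ with $S<E$. Because $E$ is finite, $S$ lies in some maximal subgroup $M$. Also $G\leq\Phi(E)\leq M$, so $E=SG\leq M$, which is absurd. This is the standard fact that $\Phi(E)$ consists of non-generators.

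\textbf{$(2)\Leftrightarrow(3)$.} Put $H=E/G$ and let $\pi:E\to H$ be the canonical map. The key step is to show that $\pi(\Phi(E))\leq\Phi(H)$ always holds, with equality when $G\leq\Phi(E)$. By the correspondence theorem (Proposition \ref{isotheorems}, item 5), the maximal subgroups of $H$ are exactly the images of the maximal subgroups of $E$ that contain $G$. Hence $\pi^{-1}(\Phi(H))$ is the intersection of the maximal subgroups of $E$ containing $G$, and this intersection contains $\Phi(E)$.

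If $G\leq\Phi(E)$, then every maximal subgroup of $E$ contains $G$, so $\pi^{-1}(\Phi(H))=\Phi(E)$. Proposition \ref{isotheorems}(4) then gives
$E/\Phi(E)\cong \frac{E/G}{\Phi(E)/G}=H/\Phi(H)$, which is (3).

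Conversely, assume $E/\Phi(E)\cong H/\Phi(H)$. Let $N=\pi^{-1}(\Phi(H))$, so that $\Phi(E)\leq N$ and $E/N\cong H/\Phi(H)$. Then $E/\Phi(E)$ and $E/N$ are finite groups of the same order, so $|\Phi(E)|=|N|$ and therefore $\Phi(E)=N\supseteq G$. This gives (2).

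The only delicate point is this last implication. The definition of a Frattini extension in (3) is stated as an abstract isomorphism, so it must be converted into the equality $\Phi(E)=\pi^{-1}(\Phi(H))$. Finiteness handles this through the order count above, since $\Phi(E)\leq\pi^{-1}(\Phi(H))$ always holds.
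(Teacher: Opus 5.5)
Your proof is correct and follows the paper's argument essentially step for step. A maximal subgroup not containing $G$ gives a proper supplement; a maximal subgroup containing a proper supplement would contain $\Phi(E)\geq G$; and the Frattini-extension equivalence goes through identifying $\pi^{-1}(\Phi(E/G))$ with the intersection of the maximal subgroups containing $G$, plus an order comparison in the finite case. The differences are cosmetic: you avoid the paper's case split on whether $S$ is maximal, and you argue $(3)\Rightarrow(2)$ directly rather than by contraposition.
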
\emph{Proof }.
\begin{itemize}
\item $(1)\Longleftrightarrow (2)$.
If $G$ is not a subgroup of $\Phi(E)$, then there exists a maximal subgroup $M$ of $E$
that do not contain $G$. Then $M$ is a proper subgroup of the subgroup $GM\neq M$ and
$GM=E$ since $M$ is maximal. Hence, $M$ is a proper supplement to $G$ in $E$.
Now suppose that $G\leq \Phi(E)$. Let $S$ be a proper supplement to $G$ in $E$. If $S$
 is maximal, it contains $\Phi(E)$ and thus $G$, so that it is not a proper supplement to
$G$. If $S$ is not maximal, it is contained in a maximal subgroup $M\neq E$. But since
$E=GS$, we have $E=GM$ and since $M$ is maximal it contains $G\leq \Phi(E)$ so $E=M$
 which is a contradiction.
 \item $(2)\Longleftrightarrow (3)$. Let $\mathcal{M}_{G}$ be the set of maximal subgroups of $E$ that contain
$G$. Observe that $\mathcal{M}_{G} /G$ are precisely the maximal subgroups of $E/G$ so that $\Phi (E/G)=B/G$
 where $B=\bigcap (M_{G}\in \mathcal{M}_{G})$. Obviously $\Phi (E)\leq B$ since $\mathcal{M}_{G}$ is a
 subset of the maximals of $E$. If $G\leq \Phi(E)$ then every maximal of $E$ contains $G$, $\mathcal{M}_{G}$
  contains every maximal of $E$ so that $B=\Phi(E)$ and thus $\Phi (E/G)=B/G$. Hence, $\frac{E/G}{\Phi (E/G)}
 =\frac{E/G}{B/G}$ is isomorphic, by Theorem \ref{isotheorems}, to $E/B=E/\Phi (E)$ so that $E$ is a Frattini
 extension of $G$ by $E/G$. Conversely let us show that if $G\nleqq \Phi(E)$ then $E$ is not a Frattini extension.
$\Phi (E) G\leq B$ because $\Phi(E)\leq B$ and $G\leq B$. If $G\nleqq \Phi(E)$ then $\Phi(E)$ is a proper
subset of $\Phi(E)G\leq B$. As $\frac{E/G}{\Phi (E/G)}\cong E/B$ it is also isomorphic to
$Q:=\frac{E/\Phi (E)}{B/\Phi (E)}$ but since $\Phi (E)$ is a proper subset of $B$, the group $B/\Phi (E)$ is not
trivial so that $Q\ncong E/\Phi (E)$. $\quad\Box$
% add a line for the infinite case
 \end{itemize}

Hence, if $S$ is a supplement to a finite normal subgroup $G$ of $E$ and if $S\cap G$ is not nilpotent, then
there exists a "smaller" proper supplement $\tilde{S}$ to $S\cap G$ in $S$ ( "smaller" relatively to the
order of $S\cap G$). It is also a supplement to $G$ in $E$ since $S=(S\cap G)\tilde{S}$ and $E=GS$.
%the subgroup $\tilde{S}$ is also a supplement to $G$ in $E$.
Observe that $|\tilde{S}\cap G|$ is strictly smaller than $|S\cap G|$.
We can iterate the process and if $G$ is finite, it stops on a supplement $\mathcal{S}$ to $G$ in $E$ whose intersection with
$G$ is nilpotent and that is a Frattini extension of $\mathcal{S}\cap G$ by $E/G$. \index{even if $H$ infinite ?}
If $H$ is solvable then $\mathcal{S}$ is solvable because it is an extension of the solvable group $\mathcal{S}\cap G$ by $H$.
It is interesting to note that the tool used by Eick and Besche (\cite{Eick_Ulrich}) to construct finite solvable
groups, was precisely iterated Frattini extensions. This is a first indication that shows how tools used for finite
solvable groups may be used for finite groups in general.

\emph{\bf{Consequently, in order to construct the extensions of a finite group $G$ by $H$ it suffices to construct
Frattini extensions of nilpotent subgroups of $G$ (by $H$)}}.

 Unfortunately, this algorithm, suggested by the proof of Proposition \ref{redfrat} (part 1),  produces
 minimal supplements by iterated computations of maximal subgroups. Since the determination of maximal
 subgroups is generally a hard problem, we propose another approach that requires iterated computations
 of normalizer of sylow subgroups.  Performances of this approach will be analyzed in section \ref{fastsplitest}.
 The supplements obtained by this approach are not necessarily minimal (i.e. frattini extensions).

\subsubsection{The iterated Frattini argument}
Let $E$ be an extension of $G$. For a given prime divisor $p_{1}$ of $|G|$,
let $S_{p_{1}}$ be a Sylow $p_{1}$-subgroup of $G$ and let $N_{p_{1}}=N_{E}(S_{p_{1}})$
be the normalizer of $S_{p_{1}}$ in $E$. If $x\in E$, we have
$S_{p_{1}}^{x}=S_{p_{1}}^{g}$  for some $g\in G$ because all the Sylow $p_{1}$-subgroups
 of $G$ are conjugate in $G$. Hence, $n=xg^{-1}\in N_{p_{1}}$ and since
 $x=ng\in N_{p_{1}}G$, the subgroup $N_{p_{1}}$ is a supplement to $G$ in $E$.
 This observation is called \emph{ the Frattini argument}.
\begin{proposition}\label{nilsup}% Every non nilpotent, finite normal subgroup $G$ of a group $E$ has
 %a proper supplement $S$ such that $S\cap G$ is nilpotent.
 If $G$ is a finite normal subgroup of a group $E$ and if $G$ is not nilpotent then it
has a proper supplement $S$ such that $S\cap G$ is nilpotent.
\end{proposition}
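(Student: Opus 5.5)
We iterate the Frattini argument over the prime divisors of $|G|$, shrinking the intersection with $G$ at each step. Set $S_0:=E$ and $U_0:=G$. Suppose we have built a supplement $S_k$ to $G$ in $E$ with $U_k:=S_k\cap G$; note that $U_k\unlhd S_k$, since $G\unlhd E$. If $U_k$ is nilpotent, we stop. Otherwise some Sylow $p$-subgroup $P$ of $U_k$ is not normal in $U_k$, because a finite group is nilpotent if and only if all its Sylow subgroups are normal. We then put $S_{k+1}:=N_{S_k}(P)$.

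By the Frattini argument recalled just before the statement, applied to the finite normal subgroup $U_k$ of $S_k$, we get $S_k=N_{S_k}(P)U_k=S_{k+1}U_k$. Hence
$$E=GS_k=G\,S_{k+1}U_k=G\,S_{k+1},$$
where the last equality uses $U_k\leq G$ and the normality of $G$ to commute the factors. So $S_{k+1}$ is again a supplement to $G$ in $E$. Its intersection with $G$ is $U_{k+1}=S_{k+1}\cap G=N_{U_k}(P)$, and this is a proper subgroup of $U_k$ because $P$ is not normal in $U_k$. Thus $|U_{k+1}|<|U_k|$.

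Since $G$ is finite, the orders $|U_k|$ cannot decrease forever, so the process stops at some $S_m$ with $S_m\cap G$ nilpotent. It remains to see that $S_m$ is proper. Because $G$ is not nilpotent, the first step is actually performed, and $S_1\cap G=N_G(P)<G$. Since $S_m\cap G\leq S_1\cap G$, the group $S_m$ does not contain $G$. As $E=GS_m$, this forces $S_m\neq E$.

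The only delicate point is the Frattini argument itself. It must be applied inside $S_k$ to the normal subgroup $U_k$, rather than in $E$ to $G$, and this is legitimate because $U_k=S_k\cap G$ is normal in $S_k$ and finite, so its Sylow $p$-subgroups are conjugate within $U_k$. No finiteness of $E$ or $H$ is needed, since the descent is measured only by the finite orders $|U_k|$.
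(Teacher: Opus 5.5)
Your proof is correct and follows essentially the same route as the paper: iterate the Frattini argument, passing from $(S_k,U_k)$ to $(N_{S_k}(P),\,N_{U_k}(P))$ for a non-normal Sylow subgroup $P$ of $U_k$, until the intersection with $G$ is nilpotent. The only difference is that you spell out two points the paper leaves implicit: that a supplement to $U_k$ in $S_k$ is again a supplement to $G$ in $E$, and that the final $S_m$ is proper because $S_m\cap G\leq S_1\cap G<G$.
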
\emph{Proof }.
Let $E$ be an extension of $G$ and let $S_{p_{1}}$ be a Sylow $p_{1}$-subgroup of $G$.
%(the other ones are $\{S_{p_{1}}^{g}:\,g\in G \}$).
Let $N_{p_{1}}=N_{E}(S_{p_{1}})$ be the normalizer of $S_{p_{1}}$ in $E$.

% If $x\in E$, we have $S_{p_{1}}^{x}=S_{p_{1}}^{g}$ for some $g\in G$ because all the
% Sylow $p_{1}$-subgroups of $G$ are conjugate in $G$.
% Hence, $n=xg^{-1}\in N_{p_{1}}$ and since $x=ng\in N_{p_{1}}G$, the subgroup $N_{p_{1}}$
% is a supplement to $G$ in $E$. (This is called "the Frattini argument").\\
%Clearly $E$ is a supplement but it is not interesting at all and the supplement $N_{p_{1}}$
The supplement $N_{p_{1}}$ of $G$
is different from $E$ if and only if $N_{p_{1}}\cap G=N_{G}(S_{p_{1}})\neq G$.
Hence, $N_{p_{1}}$ is a proper supplement if and only if $S_{p_{1}}$ is not normal in $G$.
Since the finite nilpotent groups are exactly those that do not have any non normal
Sylow subgroups, there exist proper supplements to every finite non nilpotent normal
subgroup.\\
Let $G_{p_{1}}=G\cap N_{p_{1}}$. If $G_{p_{1}}$ is not nilpotent yet, we can repeat our
 argument for the extension $N_{p_{1}}$ of $G_{p_{1}}$ : there exists a prime
$p_{2}$ such that a Sylow $p_{2}$-subgroup  $S_{p_{1},p_{2}}$ of $G_{p_{1}}$
 is not normal in $G_{p_{1}}$. %($p_{2}\neq p_{1}$ since $S_{p_{1}}\unlhd K_{p_{1}}$).
  Hence, $N_{p_{1},p_{2}}$,
its normalizer in $N_{p_{1}}$, is a proper supplement to $G_{p_{1}}<G$ and thus %in particular
a proper supplement to $G$ in $E$. The order of its intersection with $G$ is strictly
smaller than $|G_{p_{1}}|$.
%The normalizers of the others Sylow $p_{2}$-subgroups
%$S_{(p_{1},p_{2})}^{k}$ are $N_{(p_{1},p_{2})}^{k}$ for $k\in K_{p_{1}}<G$
% such that finally all the possible choices for $N_{(p_{1},p_{2})}$ are conjugate
% under $G$. Conversely, any conjugate $N_{(p_{1},p_{2})}^{g}$ can be reached choosing
% the sequence $S_{p_{1}}^{g},S_{(p_{1},p_{2})}^{g}$ of Sylow subgroups.
 The process can be continued to obtained a sequence of supplements $S$ of
 $G$, as long as $S\cap G$ is not nilpotent. Since $G$ is finite and since
$|S\cap G|$ is strictly decreasing, the process reaches a nilpotent group $S\cap G$ in
 a finite number of steps. $\quad\Box$
 %It defines a function from a sequence of primes
 %$(p_{1},\ldots,p_{n})$ onto the conjugacy class of supplements $N_{(p_{1},\ldots,p_{n})}$
 %as announced in (1) and (2).
%%%%%%%%%%%%%%%%%%%%%%%%%%%%%%%%%%%%%%%%%%
\subsubsection{Remark}
We have mentionned in many places that the existence of supplement functions with nilpotent intersection
reduces the problem that consists of constructing all extensions of a finite group $G$ to the construction of
extensions of a nilpotent subgroup of $G$. But this does not solve the extension problem completely, since
at this point, we do not know how to study the isomorphisms between these extensions. Experience has
showed that a "good" method to construct extensions can produce a "bad" isomorphism problem ; for instance,
 this was the case with the iterated cyclic extension method to construct solvable groups (see \cite{Betten}).
We will prove in chapter \ref{findsup} that our approach of extensions by supplements also reduces the
isomorphism problem to isomorphisms between extensions of a nilpotent subgroup of $G$. For this
purpose, we will introduce restrictions for the "good" supplement functions (see section \ref{isopreserved}).

%\mbox{\input{structS.latex}}

%The subgroup $f^{-1}(Inn(G))$ is equal to $C_{E}(G)G$ because for $e\in f^{-1}(Inn(G)$
% we have $f(e)=f(g)\in Inn(G)$ for some $g\in G$ whence $f(eg^{-1})=1$ and
% $eg^{-1}=c\in Ker f$ so that finally $e=eg^{-1}g=cg$.
%Hence, $S_E /\tilde{K}\cong E/G\tilde{K}=E/(GC_{E}(G))\cong L/Inn(G)<Out(G)$
%
%We have $Z(G)=G\cap C_{E}=U\cap C_{E} (G)$. and $\tilde{K}:=UC_{E} (G)$
%%%%%%%%%%%%%%%%%%%%%%%%%%%%%%%%%%%%%%%%%%%%%%%%%%%%%%%%%%%%
%%%%%%%%%%%%%%%%%%%%%%%%%%%%%%%%%%%%%%%%%%%%%%
\subsection{Extensions of $SL_{2}(9)$}\label{extsl29}
We will explain in subsection \ref{58320} that this is the first perfect group for wich we do not provide
a cross-check.  %%are availab(except pgl29 and central product)

The group $G=SL_{2}(9)$ can be defined on generators $[ a, b, c ]$ by the relations
$$< 1=a^{4}=b^{3}=c^{3}=(bc^{-1})^{5}, (bc)^{4}=a^{2},a=b^{-1}cbcb^{-1}cbc^{-1},
a^{2}\,is\,central>$$
 ( see \cite{Holt_Plesken} page 213). $Aut(G)\cong Aut(A_{6})$ and $Out(G)\cong 2\times 2$.
Using $GAP$ we have determined that $Aut(G)$ does not split over $Inn(G)$ and we provide
two automorphisms of $G$ that, together with $Inn(G)$, generate $Aut(G)$ :\\
$\alpha:[a,b,c]\rightarrow [b^{-2}c^{-2}b^{-2}a^{-2}c^{-2},
a^{-1}c^{-1}b^{-1}c^{-1},b^{-2}c^{-2}b^{-2}a^{-1}]$ and \\
$\beta:[a,b,c]\rightarrow [c^{-2}b^{-2}a^{-1}c^{-2}b^{-2}a^{-2}c^{-1}b^{-2}
, b^{-2}c^{-2}b^{-2}a^{-2}b^{-1}a^{-1}, c^{-2}b^{-2}a^{-1}]$.\\
The group $B:=<\alpha,\beta>$ is the direct product of the cyclic subgroups\\
$<\alpha>\cong C_{2}$ and $<\beta>\cong C_{4}$. The subgroup
$U$ of $G$ corresponding to $<\beta ^{2}>=Inn(G)\cap B$ is cyclic of order $4$ and is
generated by\\
$u:=b^{-1}c^{-2}b^{-2}a^{-1}c^{-2}b^{-2}a^{-1}c^{-1}a^{-1}c^{-2}a^{2}$ and
$Z(G)=<u^{2}>$.
 The action of $B$ on $U$ is described by $\alpha:u\rightarrow u^{3}$
and $\beta:u\rightarrow u$.
% mage $u^{\alpha}$ of $u$ under $\alpha$ is $u^{3}$ and $u^{\beta}=u$.

With this information only, % us show how this information is sufficient to classify
% all the extensions of $SL_{2}(9)$
our method reduces the classification of extensions of $SL_{2}(9)$ by a group $Q$, to
a classification of extensions of $<u>\cong C_{4}$ by $Q$. Let us show now, how to proceed.
\\
\subsubsection{An extension of $G=SL_{2}(9)$ by $D_{8}$ associated with $Out(G)$.}
\index{Sl(2,9) by D8 : ununderstandable}
The dihedral group $D_{8}$ of order $8$ can be defined on generators $[\tilde x,\tilde y]$ by the relations
$< \tilde{x}^{2}=1, \tilde{y}^{\tilde{x}}=\tilde{y}^{3}, \tilde{y}^{4}=1>$. Let $<u>=U\cong C_{4}$ be the subgroup of $G=SL_{2}(9)$
described in the previous section. We aim to construct an extension $E$ of $SL_{2}(9)$ by
$D_{8}$ associated to the following homomorphism $\Psi$ from $D_{8}\cong E/G$ onto $Out(G)$ :
\begin{displaymath}
\Psi : D_8 \to Out(G) :\left \{ \begin{array}{r}
\tilde{x}\to Inn(G)\alpha \\
\tilde{y} \to Inn(G)\alpha\beta
\end{array} \right. \end{displaymath}
%$\tilde{x}\to Inn(G)\alpha$ and  $\tilde{y}$ to $Inn(G)\alpha\beta$.
%\end{displaymath}
Suppose that such $E$ exists and if $f$ is its
conjugation action on $G$, then let $x$ and $y$ denote elements that by conjugation on $G$
 induce $\alpha$ and $\alpha\beta$ respectively. The isomorphism from $E/G$ onto $D_{8}$ is
 now described by the mapping
 $$E/G \to D_{8}:Gx\to \tilde{x}, \,Gy\to \tilde{y}$$ and $<x,y>$ is a supplement to $G$ in
 $E$. Since $B=<\alpha,\beta>$ is a supplement to $Inn(G)$ in $Aut(G)$ then $S:=f^{-1}(B)$
 supplements $G$ in $E$. But since $<x,y>\subseteq S$, then $<x,y>$ supplements $G\cap S=U$.
 The group $S=<x,y,U>$ is a succession of cyclic extensions, $U$, $<y,U>$, $<x,y,U>$. Furthermore,
 $u^{x}$ (respectively $u^{y}$) is equal to $u^{\alpha}=u^{3}$ (respectively $u^{\alpha\beta}=u^{3}$).
 Hence, $S$ can be defined by a power-conjugate presentation
$$S=<x,y,u|\,x^{2}=r_{1},\,y^{x}=y^{3}r_{2},\, \,y^{4}=r_{3},u^{x}=u^{3},\,u^{y}=u^{3},\, u^{4}=1>$$
where $r_{i}\in U$ for $i=1,2,3$. The restriction of $f$ to $S$, it is defined by
$$f|_{S}:x\to\alpha,y\to \alpha\beta, \, u\to\beta ^{2}.$$

 We have proved that in order to construct $E$ it is sufficient to
 construct $S$ (if it exists) and an homomorphism $\phi:S\rightarrow B$
 that fulfills the two conditions of Proposition \ref{ucns} ( $\phi$ corresponds to the restriction of $f$ to $S$).
 In order to lighten our notations, we consider here that $U\subset G$ is also a subset of $S$
 so that the isomorphism  $u\rightarrow \tilde{u}$ described in these conditions is just
 the identity. The conditions on $S$ and $\phi$ now become :

 $(i)\quad u^{s}=u^{\phi (s)}$ for every $s\in S$  and\\
 $(ii)$\quad$g^{u}=g^{\phi (u)}$ for every $g\in G.$

 Hence, the constructions of all possible extensions $E$ is reduced to the determination
 of the $r_{i}\in U$ such that
$$\tilde{S}=<x,y,u|\,x^{2}=r_{1},\,y^{x}=y^{3}r_{2},\, \,y^{4}=r_{3},u^{x}=u^{3},\,u^{y}=u^{3},\, u^{4}=1>$$
 defines an extension of $U$ by $D_{8}$ (this can be achieved
 by a repeated use of the cyclic extension conditions of Chapter \ref{basic}), and such that
 the mapping $$\phi :x\to\alpha,\,y\to \alpha\beta,\,u\to\beta ^{2}$$ is an homomorphism.
This is sufficient because if $\phi$ is a homomorphism then the relations $u^{x}=u^{\alpha}=u^{3}$
and $u^{y}=u^{\alpha\beta}=u^{3}$ implies that $\phi$ satisfies condition $(i)$ and since
in $G$,  conjugation by $u$ is $\beta ^{2}$, condition $(ii)$ is also satisfied.
In this case $E=(\tilde{S}\ltimes _{\phi}G)/K$, where $K=\{(v,v^{-1})| v\in U\ \}$, is a solution
 to our problem.

 Up to now, $\phi$ is only defined on
$\{x,y,u\}$ but it can be extended multiplicatively on $\tilde{S}=<x,y,u>$. The condition
for $\phi$ to be a homomorphism is that $\phi$ preserves every defining relation
$R(x,y,u)=1$ of $\tilde{S}$ (i.e $R(x,y,u)=1\Rightarrow R(\phi (x),\phi (y),\phi (u))=1$).
 Therefore the conditions are :
\begin{enumerate}
\item $\phi (r_{1})=\phi (x^{2})=\phi (x)^{2}=\alpha^{2}=1\Rightarrow r_{1}\in
C_{E}(G)\cap U=Z(G)=\{1,u^{2}\}.$
\item $\phi (y^{x})=\phi (y^{3})\phi (r_{2})\Rightarrow \beta=\beta ^{\alpha}=\beta ^{3}\phi(r_{2})
\Rightarrow \phi (r_{2})=\beta ^{2}\Rightarrow r_{2}\in \{u,u^{3}\}.$
\item $\phi (r_{3})=\phi (y^{4})=\phi (y)^{4}=(\alpha\beta)^{4}=1\Rightarrow r_{3}\in
Z(G)=\{1,u^{2}\}.$
\end{enumerate}

Now, there are $8=2^{3}$ possible presentations and for each of them it remains to determine whether
it defines an extension of $U$ by $D_{8}$ ( this extension must have order 32=4.8 ). Here, we show how to
proceed for the case $\{r_{1}=1,r_{2}=1,r_{3}=u^{3}\}$ i.e the group
$\tilde{S}=<x,y,u|\,x^{2}=1,y^{4}=1,y^{x}=y^{3}u^{3}, u^{x}=u^{y}=u^{3}, u^{4}=1>.$\\
 Firstly, it is straightforward to
check cyclic extension conditions on $U_{y}=<y,u|\,y^{4}=1,u^{y}=u^{3}, u^{4}=1>$
 which is thus a group of order 16 and a split extension of $U$ .
The conjugation action of $x$ on $U_{y}$ in $\tilde{S}$ is defined by the mapping
$\omega:u\rightarrow u^{3},\,y\rightarrow\tilde{y}:=y^{3}u^{3}$ ; it extends to an
 automorphism  of $U_{y}$ because it preserves the relations of $U_{y}$ ( for instance observe that
 $\tilde{y}^{4}=1$) and because $\{\tilde{y},u^{3}\}$ generate $U_{y}$. Similar calculations
  show that $\omega ^{2}$ is the identity on $U_{y}$. Once again, this guarantees that
$\tilde{S}$ is a cyclic extension (even a split extension of $U_{y}$) : $\tilde{S}$ is an extension of $U$, it has
$2.16=32$ elements and
  $\tilde{S}/U\cong D_{8}$. Finally $E=(\tilde{S}\ltimes _{\phi}G)/K$, where $K=\{(v,v^{-1})| v\in U\ \}$
  is an extension of $SL_{2}(9)$ by $D_{8}$ associated to the homomorphism $\Phi$ from
  $D_{8}$ onto $Out(SL_{2}(9))$. Observe also that $<x,y>$ is not a complement of $U$
since $y^{x}\notin <y>$.\\

For the isomorphism problem, observe that by Corollary \ref{liftiso}, if $\tilde{S_{1}}$ and $\tilde{S_{2}}$ are two such
 extensions of $U$ that are not isomorphic then the corresponding extensions of $SL_{2}(9)$
 are not isomorphic.
%%%%%%%%%%%%%%%%%%%%%%%%%%%%%%%%%%%%%%%%%%%%
%%%%%%%%%     %%%%%%%%%%     %%%%%%%%%%     %%%%%%%%%%      %%%%%%%
\section{Isomorphisms of extensions}
We have constructed extensions as some specific factor groups $(S\ltimes G) /K$ of a semi-direct product.
A $G$-isomorphism between such extensions preserves $G/K$ but not necessarily $S/K$. Nevertheless
we will prove in Chapter \ref{findsup} that it is possible to consider only $G$-isomorphisms that preserve $S/K$
and we prove in this section that such isomorphisms can be lifted to isomorphisms of
the corresponding semi-direct products that preserve the complement $S$.
We first describe  such isomorphisms between semi-direct products.

\begin{lemma}\label{fpi} Let $i:E_{1}\rightarrow E_{2}$ be a $G$-isomorphism between extensions
of a group $G$ and let $\pi\in Aut(G)$ be the restriction of $i$ to $G$. Then for
every $g\in G$ and every $t\in E_{1}$
$$ g^{i(t)}=((g^{\pi^{-1}})^{t})^{\pi}$$
\end{lemma}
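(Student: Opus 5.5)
The identity is a one-line transport-of-structure computation: since $i$ is a homomorphism, it carries conjugation in $E_1$ to conjugation in $E_2$, and since $i$ maps $G$ onto $G$, we can pull elements of $G$ back through $\pi^{-1}$. I will apply $i$ to a suitable conjugate computed inside $E_1$.

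Fix $g\in G$ and $t\in E_1$. Because $\pi$ is the restriction of $i$ to $G$ and $i(G)=G$, the map $\pi$ is a bijection of $G$, indeed an automorphism. So $h:=g^{\pi^{-1}}$ is a well-defined element of $G$ with $i(h)=g$.

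Since $G\unlhd E_1$, the conjugate $h^{t}=t^{-1}ht$ lies in $G$. Applying the homomorphism $i$ gives
$$i(h^{t})=i(t)^{-1}\,i(h)\,i(t)=g^{i(t)}.$$
On the other hand, $h^t\in G$, so $i(h^t)=(h^t)^{\pi}=((g^{\pi^{-1}})^{t})^{\pi}$. Comparing the two expressions yields the claimed formula.

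There is no real obstacle here. The only points needing care are that $h^t$ lies in $G$, which is where the normality of $G$ in $E_1$ is used, and that the exponential notation is read as the paper specifies, with parentheses kept between conjugation by $t$ and application of $\pi$. Stated equivalently, the formula says that $f_2(i(t))=\pi^{-1}f_1(t)\pi$ in $Aut(G)$, where $f_k$ is the conjugation action of $E_k$ on $G$. This is the form I expect to be used later to relate the actions of the supplements.
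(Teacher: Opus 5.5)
Your proposal is correct and follows the paper's proof: both compute $i(h^{t})$ in two ways, once as $i(t)^{-1}i(h)i(t)$ using that $i$ is a homomorphism, and once as $(h^{t})^{\pi}$ using normality of $G$ in $E_1$. The only cosmetic difference is that you substitute $h=g^{\pi^{-1}}$ at the start, whereas the paper works with $g$ and then notes that every element of $G$ has the form $g^{\pi}$.
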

\emph{Proof }. Since $g^{t}\in G$, its image $i(g^{t})$ is equal to
$(g^{t})^{\pi}=(((g^{\pi})^{\pi^{-1}})^{t})^{\pi}$=x.
On the other hand  $i(g^{t})=i(t^{-1}gt)=i(t)^{-1}i(g)i(t)=i(t)^{-1}g^{\pi}i(t)=(g^{\pi})^{i(t)}$=y.
Since any element of $G$ can be written $g^{\pi}$ for
 some $g\in G$, the equality x=y is equivalent to $g^{i(t)}=((g^{\pi^{-1}})^{t})^{\pi}$
$.\quad\Box$

The following Proposition has been previously proved by Eick in \cite{Eick_Ulrich}.
\begin{proposition}\label{semiso}Let $A\ltimes _{\phi}G$ and  $B\ltimes _{\theta}G$
 be two semi-direct products. There exists an isomorphism
 $j:A\ltimes _{\phi}G\to B\ltimes _{\theta}G$ such that  $j(G)=G$ and $j(A)=B$
 if and only if there exists $\pi\in Aut(G)$ and an isomorphism $i:A\to B$ such that
 $$\theta (i(a))=\pi^{-1}\phi (a)\pi\qquad\qquad \textrm{for every } a\in A.$$

%for some $\pi\in Aut(G)$ and for every $a\in A$.
\end{proposition}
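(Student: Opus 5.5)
We prove the two directions separately. Throughout, identify $A$ with $\{(a,1)\}$ and $G$ with $\{(1,g)\}$ in $A\ltimes_{\phi}G$, and similarly in $B\ltimes_{\theta}G$. Recall that conjugation by $(a,1)$ on $(1,g)$ gives $(1,g^{\phi(a)})$, by the multiplication rule.

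\emph{Necessity.} Let $j$ be an isomorphism with $j(G)=G$ and $j(A)=B$. Let $\pi\in Aut(G)$ be the restriction of $j$ to $G$, and let $i:A\to B$ be the restriction of $j$ to $A$; both are isomorphisms. Both semi-direct products are extensions of $G$ and $j$ is a $G$-isomorphism, so we may apply Lemma \ref{fpi} with $t=(a,1)$. It gives $g^{j(t)}=((g^{\pi^{-1}})^{t})^{\pi}$. The left side is $g^{\theta(i(a))}$, since $j(t)=(i(a),1)$ acts on $G$ by $\theta(i(a))$. The right side is $g^{\pi^{-1}\phi(a)\pi}$, written in the exponential notation for automorphisms. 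Hence $\theta(i(a))=\pi^{-1}\phi(a)\pi$ for every $a\in A$.

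\emph{Sufficiency.} Given $\pi$ and $i$ with $\theta(i(a))=\pi^{-1}\phi(a)\pi$, define
$$j:A\ltimes_{\phi}G\to B\ltimes_{\theta}G:\ (a,g)\mapsto (i(a),g^{\pi}).$$
This map is clearly bijective, with inverse $(b,g)\mapsto(i^{-1}(b),g^{\pi^{-1}})$, and it maps $A$ onto $B$ and $G$ onto $G$. It remains to check that $j$ is a homomorphism, i.e.\ $j((a_1,g_1)(a_2,g_2))=j(a_1,g_1)j(a_2,g_2)$.

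The left side equals $(i(a_1a_2),(g_1^{\phi(a_2)}g_2)^{\pi})=(i(a_1)i(a_2),(g_1^{\phi(a_2)})^{\pi}g_2^{\pi})$. The right side equals $(i(a_1)i(a_2),(g_1^{\pi})^{\theta(i(a_2))}g_2^{\pi})$. So we need $(g_1^{\pi})^{\theta(i(a_2))}=(g_1^{\phi(a_2)})^{\pi}$, i.e.\ $\pi\,\theta(i(a_2))=\phi(a_2)\pi$ as composed automorphisms. This is exactly the hypothesis multiplied on the left by $\pi$.

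The only delicate point is keeping the right-action (exponential) convention consistent when composing $\pi$ with $\phi(a)$. Once that is fixed, both directions are direct computations.
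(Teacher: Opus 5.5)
Your proof is correct and follows the paper's argument. Necessity comes from Lemma \ref{fpi} (you usefully make explicit the choice $t=(a,1)$, which the paper leaves implicit). Sufficiency uses the same map $(a,g)\mapsto(i(a),g^{\pi})$ together with the identity $\pi\,\theta(i(a))=\phi(a)\pi$.
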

\emph{Proof .} Lemma \ref{fpi} shows that this condition is necessary.
 Let us prove that it is also sufficient. If there exists an isomorphism
 $i:A\to B$ and $\pi\in Aut(G)$ which satisfy this condition then, the function
 $$j:(a,g)\to(i(a),g^{\pi})$$  is an isomorphism from $A\ltimes_{\phi}G$ onto $B\ltimes _{\theta}G$
 (for more legibility we choose right-sided notation for the second components).
It is clearly a one-to-one correspondence, $j(G)=G$ and $j(A)=B$.
 Let us show that it preserves product. For $g_{k}\in G$ and $a_{k}\in A$
 ( $k=1,2$), we have :
$$j(a_{1},g_{1}).j(a_{2},g_{2})=(i(a_{1}),g_{1}^{\pi}).(i(a_{2}),g_{2}^{\pi})$$
$$=\Big(i(a_{1})i(a_{2}),g_{1}^{\pi\theta (i(a_{2}))} g_{2}^{\pi} \Big)=(*) $$
Our condition implies that $\pi\theta (i(a_{2}))=\phi (a_{2})\pi$ in $Aut(G)$
 so that $(*)$ becomes\\
$$\Big(i(a_{1})i(a_{2}),g_{1}^{\phi (a_{2})\pi} g_{2}^{\pi} \Big)=
\Big(i(a_{1}a_{2}),(g_{1}^{\phi (a_{2})}g_{2})^{\pi} \Big)
=j[(a_{1}a_{2},g_{1}^{\phi (a_{2})}g_{2})]$$ which is equal to
$j[(a_{1},g_{1}).(a_{2},g_{2})].$ $\quad\Box$\\

%%%%%%%%%%%%%%%%%%%%%%%%%%%%%%%
%\begin{proposition}\label{semiso}Let $S^{1}\ltimes _{\phi}G$ and  $S^{2}_{\theta}G$
% be two semi-direct products. There exists an isomorphism
% $j:S^{1}_{\phi}\to S^{2}_{\theta}G$ such that  $j(G)=G$ and $j(S^{1})=S^{2}$
% if and only if there exists an isomorphism $i:S^{1}\to S^{2}$ such that
% $$\theta_{i(s)}=\pi^{-1}\phi_{s}\pi$$
%for some $\pi\in Aut(G)$ and for every $s\in S^{1}$.
%\end{proposition}
%\emph{Proof .} Lemma \ref{fpi} shows this condition is necessary.
% Let us prove that it is a sufficient one. If there exists an isomorphism $i:S^{1}\to S^{2}$,
% and  $\pi\in Aut(G)$ which satisfy this condition then, the function $$j:(g,s)\to(g^{\pi},i(s))$$
% is an isomorphism from $G_{\phi}S$ onto  $G_{\theta}S$.
%It is clearly a one-to-one correspondence, $j(G)=G$ and $j(S^{1})=S^{2}$.
% Let us show that it preserves multiplication. For $g_{k}\in G$ and $s_{k}\in S^{k}$
% ( $k=1,2$), we have :
%$$j(g_{1},s_{1})\circ j(g_{2},s_{2})=(g_{1}^{\pi},i(s_{1}))\circ(g_{2}^{\pi},i(s_{2}))$$
%$$=\Big(g_{1}^{\pi}\theta_{i(s_{1})}(g_{2}^{\pi}),i(s_{1})i(s_{2})\Big)=(*) $$
%Our condition implies $(\theta_{i(s_{1})}\pi)(g_{2})=(\pi\phi_{s_{1}})(g_{2})$
%and so $(*)$ becomes\\$$\Big(g_{1}^{\pi}\pi(\phi_{s_{1}}(g_{2})),i(s_{1})i(s_{2})\Big)=
%\Big(\pi(g_{1}\phi_{s_{1}}(g_{2})),i(s_{1}s_{2})\Big)=j[(g_{1},s_{1}).(g_{2},s_{2})]$$
% The symbols  $.$ and $\circ$ denote multiplication in $G_{\phi}S^{1}$ and  $G_{\theta}S^{2}$.
%$\quad\Box$
%%%%%%%%%%%%%%%%%%%%%%%%%%%%%%

\begin{proposition}\label{isogh} For $r=1,2$, let $S_{r}$ be a supplement to a group $G$
in an extension $E_{r}$ of $G$.
 Let $f_{r}$ be the action of $E_{r}$ on $G$ by conjugation, let $\pi\in Aut(G)$ and let
 $j$ be an isomorphism from $S_{1}$ onto $S_{2}$.
%For $g\in G$ and $s\in S_{1}$, the application
 The mapping $\alpha:E_1\to E_2:s.g\rightarrow j(s).g^{\pi}$ is an isomorphism from $E_{1}$ onto $E_{2}$ if and only if
  \begin{enumerate}\item $f_{2}(j(s))=f_{1}^{\pi}(s)$ for every $s\in S_{1}$.
 \item $j(u)=u^{\pi}$ for every $u\in S_{1}\cap G$.
 \end{enumerate}
\end{proposition}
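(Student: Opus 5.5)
The natural route is to factor $\alpha$ through the semi-direct products of Theorem \ref{semi}. For $r=1,2$ let $\phi_r=f_r|_{S_r}$ be the conjugation action of $S_r$ on $G$, and let $\varphi_r:S_r\ltimes_{\phi_r}G\to E_r$, $(s,g)\mapsto sg$, be the epimorphism of Theorem \ref{semi}. Its kernel is $K_r=\{(u,u^{-1})\,:\,u\in S_r\cap G\}$. Put $\tilde j:(s,g)\mapsto (j(s),g^{\pi})$. Then $\varphi_2\circ\tilde j=\alpha\circ\varphi_1$ as set maps, and this identity also shows that $\alpha$ is well defined exactly when $\tilde j(K_1)\subseteq K_2$.

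\textbf{Sufficiency.} Condition 1 reads $\phi_2(j(s))=\pi^{-1}\phi_1(s)\pi$, which is the hypothesis of Proposition \ref{semiso}. Hence $\tilde j$ is an isomorphism $S_1\ltimes_{\phi_1}G\to S_2\ltimes_{\phi_2}G$ mapping $S_1$ onto $S_2$ and $G$ onto $G$. By condition 2, $\tilde j(u,u^{-1})=(j(u),(u^{-1})^{\pi})=(u^{\pi},(u^{\pi})^{-1})$, so $\tilde j(K_1)\subseteq K_2$.

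Equality $\tilde j(K_1)=K_2$ follows from a short check. Since $j$ is a bijection $S_1\to S_2$ and $\pi$ a bijection of $G$, the element $j(u)=u^{\pi}$ lies in $S_2\cap G$, so $j(S_1\cap G)\subseteq S_2\cap G$. For the reverse inclusion, take $v\in S_2\cap G$ and write $v=j(s)$ with $s\in S_1$. Also $v=u^{\pi}$ for a unique $u\in G$. A comparison of $f_1$-actions via condition 1 alone does not obviously force $s=u$. The cleaner route is to use $E_1/G\cong S_1/(S_1\cap G)$ and a counting argument when groups are finite. In general, one can argue as follows: $\tilde j^{-1}(K_2)$ is a normal subgroup of $S_1\ltimes G$ meeting $S_1$ and $G$ trivially and containing $K_1$. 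By Proposition \ref{uu-1} it has the form $\{(i(v),v^{-1})\,:\,v\in V\}$ with $V\supseteq S_1\cap G$, and every such $v$ satisfies $i(v)v^{-1}\mapsto$ an element of $E_1$.

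So $\alpha$ is a well-defined surjective homomorphism $E_1\to E_2$ with kernel $\varphi_1(\tilde j^{-1}(K_2))$. Injectivity then reduces to $\tilde j^{-1}(K_2)=K_1$. I expect this step to be the main obstacle. If it cannot be settled directly, I would read "isomorphism" in the statement as including the implicit requirement that $j$ maps $S_1\cap G$ onto $S_2\cap G$, which holds whenever these intersections are the fixed subgroup $U$ arising from preimage supplements.

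\textbf{Necessity.} Assume $\alpha$ is an isomorphism. It restricts to $\pi$ on $G$ (take $s=1$) and to $j$ on $S_1$ (take $g=1$), so it is a $G$-isomorphism. Lemma \ref{fpi} with $t=s\in S_1$ gives $g^{j(s)}=((g^{\pi^{-1}})^{s})^{\pi}$, which is exactly $f_2(j(s))=\pi^{-1}f_1(s)\pi=f_1^{\pi}(s)$; this is condition 1. For $u\in S_1\cap G$, well-definedness on the two expressions $u\cdot 1$ and $1\cdot u$ of the same element forces $j(u)=\alpha(u)=u^{\pi}$, which is condition 2.
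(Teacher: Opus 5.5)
Your approach is the paper's. Condition 1 makes $\tilde j:(s,g)\mapsto(j(s),g^{\pi})$ an isomorphism of the semi-direct products by Proposition \ref{semiso}, condition 2 gives $\tilde j(K_1)\subseteq K_2$, and necessity comes from Lemma \ref{fpi}; your necessity argument is fine.

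The step you flagged, $\tilde j(K_1)=K_2$ (i.e.\ $j(S_1\cap G)=S_2\cap G$), is a genuine gap. No ``short check'' can close it, because it does not follow from the hypotheses. The paper's proof has the same hole: it simply asserts $U_2=j(U_1)=U_1^{\pi}$, whereas conditions 1 and 2 only give $j(U_1)\subseteq U_2$.

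Here is a counterexample. Take $G=\langle a\rangle\cong C_2$, $E_1=G\times\langle t\rangle\cong C_2\times C_2$, $S_1=\langle t\rangle$, $E_2=S_2=G$, $j:t\mapsto a$ and $\pi=1$. All conjugation actions are trivial, so condition 1 holds, and condition 2 is vacuous since $S_1\cap G=1$. Yet $\alpha(t^ia^k)=a^{i+k}$ has kernel $\langle ta\rangle$. In general, given 1 and 2, $\alpha$ is a well-defined epimorphism with $\ker\alpha=\{s\,(j(s)^{-1})^{\pi^{-1}} : s\in S_1,\ j(s)\in G\}$. This kernel is trivial exactly when $S_2\cap G\subseteq j(S_1\cap G)$.

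So the correct statement must add the hypothesis $j(S_1\cap G)=S_2\cap G$. This is also necessary, since an isomorphism maps $S_1\cap G$ onto $\alpha(S_1)\cap\alpha(G)$. With it your argument is complete: $\tilde j(K_1)=K_2$, so $\tilde j$ induces an isomorphism $(S_1\ltimes_{\phi_1}G)/K_1\to(S_2\ltimes_{\phi_2}G)/K_2$, and that isomorphism is $\alpha$.

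Your proposed repair is the right one, and the extra hypothesis holds automatically in the cases that matter:
\begin{itemize}
\item If $E_1,E_2$ are finite of equal order, then $|E_r|=|S_r|\,|G|/|S_r\cap G|$ and $|S_1|=|S_2|$ force equality. This is the only way your counting idea works, since $|E_1|=|E_2|$ is not among the hypotheses.
\item For preimage supplements of a common $B$, both intersections equal the subgroup $U$ corresponding to $B\cap Inn(G)$. Intersecting $f_2(S_2)=f_1(S_1)^{\pi}$ (from condition 1) with $Inn(G)$ gives $(B\cap Inn(G))^{\pi}=B\cap Inn(G)$. Hence $U^{\pi}=U$, and $j(U)=U$ by condition 2.
\end{itemize}

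Drop the detour through Proposition \ref{uu-1}. Its last sentence does not state anything, and that route cannot supply the missing inclusion anyway.
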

\emph{Proof}. $\Leftarrow$  : Since the different ways to write an element $s.g$ of
$E_{1}$ are the products $su.u^{-1}g$, where $u\in S_{1}\cap G$ the application
$\alpha:s.g\rightarrow j(s).g^{\pi}$ is a well-defined function if and only if
$j(s).g^{\pi}=j(su).(u^{-1}g)^{\pi}$ $\Leftrightarrow$
$j(s).g^{\pi}=j(s)j(u).(u^{\pi})^{-1}g^{\pi}\Leftrightarrow j(u).(u^{\pi})^{-1}=1$ which
is equivalent to condition $(2)$. If condition $(2)$ is satisfied, $\alpha$ maps $G$ onto
$G$, $S_{1}$ onto $S_{2}$ and thus $U_{1}:=S_{1}\cap G$ onto $U_{2}:=S_{2}\cap G=j(U_{1})=U_{1}^{\pi}$.

We know that $E_{r}\cong (S_{r} \ltimes_{\tilde{f_r}}G)/K_{r}$ where
$K_{r}=\{(u,u^{-1}):\,u\in U_{r}\}$ and where $\tilde{f_{r}}$ is just $f_{r}$ restricted to $S_r$.
Note that since $U_{2}=U_{1}^{\pi}$ we can write $K_{2}$ as $\{((u^{\pi},(u^{\pi})^{-1}):\,u\in U_{1}\}$.
 By Proposition \ref{semiso}, equation $(1)$ is the condition for the function
$\tilde{\alpha}:(s,g)\rightarrow (j(s),g^{\pi})$ to be an isomorphism between the
semi-direct products $S_{1}\ltimes_{\tilde{f_1}}G$ and $S_{2}\ltimes_{\tilde{f_2}}G$.
% that preserves $1_{S}\times G$ and $S\times 1_{G}$.
 Let us show that equation $(2)$ is the necessary and sufficient condition for $\tilde{\alpha}$ to map $K_{1}$ to
 $K_{2}$. Indeed for $u\in U_{1}$, $\tilde{\alpha}$ maps $k_{1}=(u,u^{-1})\in K_{1}$ to
$(j(u),(u^{\pi})^{-1})$ that belongs to $K_{2}$ if and only if $j(u)=u^{\pi}$.
Consequently the isomorphism $\tilde{\alpha}$ induces an isomorphism from
$E_{1}\cong (S_{1}\ltimes_{\tilde{f_{1}}}G)/K_{1}$ onto
$\alpha(S_{1}\ltimes_{\tilde{f_{1}}}G)/\alpha (K_{1})=(S_{2}\ltimes_{\tilde{f_{2}}}G)/K_{2}\cong E_{2}$.

$\Rightarrow$ : If the function $\alpha:s.g\rightarrow j(s).g^{\pi}$ is an isomorphism
then $\alpha (s)=j(s)$ for every $s\in S_{1}$ and $g^{\alpha}=g^{\pi}$ for every $g\in G$.
 Hence, $\alpha (u)=j(u)=u^{\pi}$ for every $u\in S\cap G$.
 Condition $(1)$ follows from Lemma \ref{fpi} ($\alpha$ is a $G$-isomorphism)$.\quad\Box$

\begin{corollary}\label{liftiso} For $r=1,2$, let $S_{r}$ be a supplement to a group $G$
in an extension $E_{r}$ of $G$.
 Let $\phi_{r}$ be the conjugation action of $S_{r}$ on $G$.% by conjugation.
 Every $G$-isomorphism from $E_{1}$ onto $E_{2}$ that maps $S_{1}$ onto $S_{2}$ can be
 lifted to a $G$-isomorphism from $(S_{1})\ltimes _{\phi_{1}}G$ onto $(S_{2})\ltimes _{\phi_{2}}G$ that
 maps $S_{1}$ onto $S_{2}$.
\end{corollary}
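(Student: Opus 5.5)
Let $\alpha:E_1\to E_2$ be a $G$-isomorphism with $\alpha(S_1)=S_2$. The plan is to read off from $\alpha$ a pair $(j,\pi)$ as in Proposition \ref{isogh}, and then pass to the semi-direct products with Proposition \ref{semiso}. Put $\pi:=\alpha|_G\in Aut(G)$, which makes sense because $\alpha(G)=G$. Put $j:=\alpha|_{S_1}$, an isomorphism from $S_1$ onto $S_2$ because $\alpha(S_1)=S_2$. Every element of $E_1$ has the form $s.g$ with $s\in S_1$ and $g\in G$. Since $\alpha$ is a homomorphism, $\alpha(s.g)=j(s).g^{\pi}$. So $\alpha$ is exactly the map considered in Proposition \ref{isogh}.

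Since $\alpha$ is an isomorphism of this form, the direction $\Rightarrow$ of Proposition \ref{isogh} applies. It gives condition (1), namely $\phi_2(j(s))=\pi^{-1}\phi_1(s)\pi$ for every $s\in S_1$; this is just Lemma \ref{fpi} applied to $t=s$. It also gives condition (2), namely $j(u)=u^{\pi}$ for every $u\in S_1\cap G$.

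Condition (1) is precisely the hypothesis of Proposition \ref{semiso}, with $A=S_1$, $B=S_2$, $\phi=\phi_1$, $\theta=\phi_2$ and $i=j$. Hence the map
$$\tilde\alpha:(s,g)\mapsto (j(s),g^{\pi})$$
is an isomorphism from $S_1\ltimes_{\phi_1}G$ onto $S_2\ltimes_{\phi_2}G$. It maps $G$ onto $G$, so it is a $G$-isomorphism, and it maps $S_1$ onto $S_2$.

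It remains to check that $\tilde\alpha$ is a lift of $\alpha$. By Theorem \ref{semi}, $\varphi_r:(s,g)\mapsto sg$ is an epimorphism onto $E_r$ with kernel $K_r=\{(u,u^{-1}):u\in S_r\cap G\}$. We need $\varphi_2\circ\tilde\alpha=\alpha\circ\varphi_1$, and this holds because
$$\varphi_2(\tilde\alpha(s,g))=j(s)g^{\pi}=\alpha(sg)=\alpha(\varphi_1(s,g)).$$
As a consequence, $\tilde\alpha(K_1)=K_2$. This is also visible directly from condition (2), since $(u,u^{-1})\mapsto(u^{\pi},(u^{\pi})^{-1})$ and $S_2\cap G=\alpha(S_1\cap G)=(S_1\cap G)^{\pi}$. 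Therefore $\tilde\alpha$ induces $\alpha$ on the quotients $(S_r\ltimes_{\phi_r}G)/K_r\cong E_r$.

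The argument has no real obstacle. The only point needing care is verifying that $\alpha$ genuinely has the form $s.g\mapsto j(s).g^{\pi}$, so that Proposition \ref{isogh} applies. This is immediate from $\alpha(G)=G$ and $\alpha(S_1)=S_2$.
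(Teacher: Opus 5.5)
Your proof is correct and follows the paper's route: the corollary is read off from Proposition~\ref{isogh}, whose direction $\Rightarrow$ gives conditions (1) and (2) for $j=\alpha|_{S_1}$ and $\pi=\alpha|_{G}$. Proposition~\ref{semiso} then makes $(s,g)\mapsto(j(s),g^{\pi})$ an isomorphism of the semi-direct products carrying $K_1$ onto $K_2$. Your explicit check $\varphi_2\circ\tilde\alpha=\alpha\circ\varphi_1$ simply makes precise the meaning of ``lifted'', which the paper leaves implicit.
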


%%%%%%%%%%%%%%%%%%%%%%%%%%%%%%%%%%%%%%%%%%%%%%%%%%%%%%%%%%
\subsubsection{Correspondence between extensions of non isomorphic groups}

We have reduced the determination of the extensions of a group $G$ to the determination
of the extensions of a smaller subgroup $U<G$. All we need, in order to construct such smaller
extensions, is to have some supplement $B$ of $Inn(G)$ in a subgroup of $Aut(G)$ and to
 know how it acts on $U$. But such a small amount of information ($B$ and its action on
 $U$) do not characterizes $G$ completely : non isomorphic groups could share the same
 information about $B$ and $U$. Therefore, the extension problem for a group $G_{1}$
 could be equivalent to the extension problem for a group $G_{2}$ non isomorphic to
 $G_{1}$. This is the meaning of the Theorem \ref{bij3840}.
\index{Bijection Theorem (Proof)}
\begin{theorem}\label{bij3840} Let $G_1$ and $G_2$ be groups.
For $n=1,2$ let $B_n$ be a supplement to $Inn(G_n)$ in $Aut(G_n)$ and $U_n$ be the subgroup of
$G_n$ corresponding to $Inn(G_n)\cap B_n$.
 Assume that  for $b_2\in B_{2}$, the function
$q:b_2\rightarrow b_1$ is an isomorphism from $B_2$ onto $B_1$ and that
for $u_2\in U_2$, the function $k:u_2\rightarrow u_1$ is an isomorphism
from $U_{2}$ onto $U_{1}$ such that\\
$(i)\quad k:u_{2}^{b_2}\rightarrow u_{1}^{b_1} $ and\\
$(ii)$\quad$ q: \overline{u_2}\rightarrow \overline{u_1}\qquad \forall u_2 \in U_2 ,\, \forall b_2 \in B_2$\\
 where $\overline{u_n}$ denotes the inner automorphism of $G_n$ induced by $u_n \in U_n$.
% Assume that  for $s_{1}\in S_{1}$, the function
%$j:s_{1}\rightarrow \tilde{s_{1}}$ is an isomorphism from $S_{1}$ onto $S_{2}$ and that
%for $u_{1}\in U_{1}$, the function $k:u_{1}\rightarrow \tilde{u_{1}}$ is an isomorphism
%from $U_{1}$ onto $U_{2}$ such that
%$$k:u_{1}^{s_{1}}\rightarrow \tilde{u_{1}}^{\tilde{s_{1}}}. $$

Let $\mathbb{E}_n$ be the set of $G_n$-isomorphism classes of extensions of $G_n$. %%($n=1,2$).
Then there is a one-to-one correspondence between $\mathbb{E}_{1}$ and $\mathbb{E}_{2}$.
\end{theorem}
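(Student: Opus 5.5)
The plan is to pass through the intermediate objects used throughout this chapter: pairs $(S,\phi)$, where $S$ is a group containing a normal copy $i(U_n)$ of $U_n$ and $\phi:S\to B_n$ is a homomorphism satisfying the two conditions of Proposition \ref{ucns}. The correspondence will be built at the level of these pairs and then transported to $G_n$-isomorphism classes.

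First, for an extension $E$ of $G_n$ with conjugation action $f$, take the preimage supplement $S_E:=f^{-1}(B_n\cap f(E))$. By the discussion of preimage supplements and Proposition \ref{uu-1}, $S_E\cap G_n=U_n$, since $U_n$ corresponds to $Inn(G_n)\cap B_n$. Theorem \ref{semi} then gives $E\cong (S_E\ltimes_{f|_{S_E}}G_n)/K$ with $K=\{(u,u^{-1})\}$. Conversely, any admissible pair $(S,\phi)$ with $\phi(S)\le B_n$ and $\phi(\tilde u)=\overline{u}$ yields an extension. By Proposition \ref{uu-1}, part 2, its preimage supplement for $B_n$ is exactly the image of $S$, because $V=U_n$.

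Second, transport pairs. Given an admissible pair $(S,\phi_1)$ for $G_1$ with $i:U_1\to S$, define $\phi_2:=q^{-1}\circ\phi_1:S\to B_2$ and $i_2:=i\circ k:U_2\to S$. Condition (i) of Proposition \ref{ucns} for $G_2$ requires $i(k(u_2))^s=i(k(u_2^{\phi_2(s)}))$. By hypothesis (i), $k(u_2^{b_2})=k(u_2)^{q(b_2)}$, and with $q(\phi_2(s))=\phi_1(s)$ this reduces to condition (i) for $G_1$. For condition (ii), the requirement $\phi_2(i_2(u_2))=\overline{u_2}$ follows from $\phi_1(i(u_1))=\overline{u_1}$ together with hypothesis (ii), $q(\overline{u_2})=\overline{u_1}$. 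The construction is symmetric, so it gives a bijection between admissible pairs for $G_1$ and for $G_2$ sharing the same abstract group $S$.

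Third, and hardest, check that $G_n$-isomorphism is respected. Proposition \ref{isogh} says a map $s.g\mapsto j(s).g^{\pi}$ is an isomorphism iff $f_2(j(s))=\pi^{-1}f_1(s)\pi$ and $j(u)=u^{\pi}$ on $U$. The difficulty is that an arbitrary $G_n$-isomorphism need not map preimage supplements to preimage supplements, and $\pi$ ranges over all of $Aut(G_n)$, not just $B_n$. I would handle this as follows. Write $\pi=\bar g\,b$ with $\bar g\in Inn(G_n)$ and $b\in B_n$, and compose with conjugation by $g$ in $E_2$. This reduces to $\pi=b\in B_n$, whose conjugation normalizes $B_n$ and preserves $U_n$, so the isomorphism then maps $f_1^{-1}(B_n)$ onto $f_2^{-1}(B_n)$. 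The resulting conditions involve only $B_n$, its action on $U_n$, and the inner automorphisms $\overline{u}$. These are exactly the data matched by $q$ and $k$, so they transfer: $j$ is kept, and $b_1$ is replaced by $q^{-1}(b_1)$.

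One point in this reduction needs extra care. When $f(E)\cap B_n$ is a proper subgroup, composing with $b$ must keep the relevant subgroup $f(E)\cap B_n$ compatible. I will handle this by conjugating inside $B_n\cap f(E_2)$, or by working with the image $L/Inn(G_n)$ directly. With that settled, the correspondence induces a well-defined bijection $\mathbb{E}_1\to\mathbb{E}_2$, whose inverse comes from the symmetric construction.
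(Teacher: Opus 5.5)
Your proposal is correct and follows essentially the same route as the paper's proof. The steps match: preimage supplements of $B_n\cap f(E)$ meeting $G_n$ in $U_n$; transport of pairs via $\phi_2=q^{-1}\circ\phi_1$ and $i_2=i_1\circ k$, checked against the two conditions of Proposition \ref{ucns}; and, for $G_n$-isomorphisms, composition with an inner automorphism so the restriction lies in $B_1$, followed by Proposition \ref{isogh} with $b_2=q^{-1}(b_1)$. The point you flag as needing extra care is in fact automatic: $f^{-1}(B_n)=f^{-1}(B_n\cap f(E))$ as subsets of $E$, and Lemma \ref{fpi} gives $f_2(\alpha(e))=f_1(e)^{b}$ with $B_n^{b}=B_n$, so $\alpha$ maps one preimage supplement onto the other with no further conjugation (this is the argument of Proposition \ref{Allisopres}).
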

\emph{Proof }.
%For each $G$-isomorphism class $x\in \mathbb{E}_{1}$ let us choose a representative $E_x$.
Let $Ext_1$ (respectively $Ext_2$), be the set of all extensions of $G_1$ (respectively $G_2$).
In order to prove the theorem, we construct a bijection $\beta$ between $Ext_1$ and $Ext_2$ such that
for $x,y\in Ext_1$, the extensions $\beta (x)$ and $\beta (y)$ of $G_2$ are $G_2$-isomorphic if and only if
$x$ and $y$ are $G_1$-isomorphic.

For $n=1,2$, let $E_n$ be an extension of $G_n$, let $f_n$ be its conjugation action on $G_n$, let
$\widetilde{B_n}:=B_1 \cap f_n(E_n)$, then $\widetilde{S_n}=f_n ^{-1}(\widetilde{B_n})$ is a supplement to $G_n$ in
$E_n$ and  $\widetilde{S_n}\cap G_n$ is $U_n$. If $\widetilde{\phi_n}:\widetilde{S_n}\to \widetilde{B_n}$ is the
restriction of $f_n$ to $\widetilde{S_n}$ then by Theorem \ref{semi}, $E_n$ is isomorphic to
$(\widetilde{S_n}\ltimes_{\widetilde{\phi_n}} G_n)/K_n$ where $K_n=\{(u_n,u_n ^{-1})\, :\, u_n\in U_n\}$.

Now, let $U$ be a group and let $i_1:u_1\to u$ ( for $u_1\in U_1$) be an isomorphism from $U_1$ onto $U$.
Let $i_2:=i_1 k:u_2 \to u$ be the corresponding isomorphism from $U_2$ onto $U$.
By Proposition \ref{ucns} and \ref{uu-1} we may construct $E_n$ as
$(S_n\ltimes_{\phi_n} G_n)/K_n$ where $S_n$ is an extension of $U$, where $\phi_n$ is an homomorphism
from $S_n$ onto $\widetilde{B_n}$ and where $K_n=\{(u,u_n ^{-1})\, :\, u_n\in U_n\}$ if and only if the
following two equalities hold :
\begin{eqnarray}
 & u^s=i_n ( u_n ^{\phi_n (s)} ) &  \label{ucns1}\\
& \overline{u_n}=\phi_n (u) &  \textrm{for every} s\in S_n \textrm{ and for every } u_n\in U_n. \label{ucns2}
\end{eqnarray}

For every $E_1:=(S_1\ltimes_{\phi_1} G_1)/K_1$ in $Ext_1$, we construct the
extension $E_2:=\beta (E_{1})$ as follows. We take $S_2:=S_1$ and
$\phi_2:=q^{-1}.\phi_1:s\to q^{-1}(\phi_1 (s))$ for every $s\in S_1$. Then we
construct the semi-direct product $S_1\ltimes_{\phi_2} G_2$ and we now show that to obtain $E_2$, we may
 identify $U\unlhd S_1$ and $U_2\leq G_2$ through the isomorphism $i_2$, because  equalities (\ref{ucns1})
 and (\ref{ucns2}) correspond respectively to conditions $(i)$ and $(ii)$ of the present theorem.
Equality (\ref{ucns1}) holds for $n=2$ since by the first condition of the present theorem, $k(u_2 ^{\phi_2 (s)})$
  is equal to $u_1 ^{q\phi_2 (s)}=u_1 ^{\phi_1 (s)}$ and thus applying $i_1$ to both sides we have
$i_2(u_2 ^{\phi_2 (s)})=i_1 (u_1 ^{\phi_1 (s)})$ which is equal to $u^{s}$ since (\ref{ucns1}) holds for $n=1$.
Equality (\ref{ucns2}) holds for $n=2$ since by the second condition of the present theorem
$\overline{u_2}=q^{-1}(\overline{u_1})$ which is equal to $q^{-1}\phi_1 (u)=\phi_2 (u)$ since (\ref{ucns2}) holds
for $n=1$. Finally we may define $E_2$ as $(S_1\ltimes_{\phi_2} G_2)/K_2$ where
$K_2=\{(u,u_2 ^{-1})\, :\, u_2\in U_2\}$.

Let us show that every extension of $G_2$ is the image under $\beta$ of an extension of $G_1$.
Since $k^{-1}$ and $q^{-1}$ also satisfy the hypothesis of the present Theorem when $G_1$ and $G_2$ are
permuted, our arguments are symmetrical with respect to $G_1$ and $G_2$ if we permute respectively $1, k, q$ with $2, k^{-1}, q^{-1}$.
Therefore, a correspondence $\beta ':Ext_2 \to Ext_1$ can be constructed in the same way :
if $E_2:=(S_2\ltimes_{\phi_2} G_2)/K_2$ as before then $\beta '(E_2)$ is defined as
$(S_2\ltimes_{\phi_1} G_1)/K_1$ for $\phi_1:=q \phi_2$ and $K_1:=\{(u,u_1 ^{-1})\, :\, u_1\in U_1\}$).
Now, observe that since $q^{-1}\phi_1=\phi_2$, then $\beta(\beta ' (E_2))$ is precisely
$(S_2\ltimes_{\phi_2} G_2)/K_2 =E_2$ so that $\beta$ is surjective on $Ext_2$ and $\beta^{-1}=\beta'$.

We end with the $G_n$-isomorphism problem. Assume that there is a $G_1$-isomorphism $i'$ between two extensions
$E_1$ and $E_1 '$ of $G_1$. Since $Aut(G_1)=B_1 Inn(G_1)$, there is another $G_1$-isomorphism $i$
whose restriction to $G_1$ is $b_1\in B_1$  (it suffices to apply after $i'$, some inner automorphism of
$E_1 '$). Moreover $i$ maps the preimage of a subgroup $\widetilde{B_1}\subseteq B_1$ in $E_1$ to the
corresponding preimage in $E_1 '$ (full details will be given in the proof Proposition \ref{Allisopres}).
Hence if as before, $E_1$ is described as $(S_1\ltimes_{\phi_1} G_1)/K_1$ and
$E_1 '=(S_1 '\ltimes_{\phi_1 '} G_1)/K_1 '$ then by Corollary \ref{liftiso} and Proposition \ref{isogh}, $i$ is
completely described by $b_1$ and its restriction $j:S_1 \to S_1 '$ to $S_1$. Moreover, for every $s\in S_1$
and $u\in U$ we have by the same argument as in Proposition \ref{isogh}
\begin{eqnarray}
&\phi_1 '(j(s))=b_1 ^{-1}\phi_1 (s)b_1 & \label{isoghrel1} \\
&  j(u)=i_1 (u_1 ^{b_1}) & \label{isoghrel2}.
\end{eqnarray}
Now, we show that $j$ and $b_2=q^{-1} (b_1)$ define a $G_2$-isomorphism from
$\beta (E_1)=(S_1\ltimes_{\phi_2} G_2)/K_2$ onto  $\beta (E_1 ')=(S_1 '\ltimes_{\phi_2 '} G_2)/K_2 '$ where
$\phi_2=q^{-1}\phi_1$ and  $\phi_2=q^{-1}\phi_1$. By the arguments of Proposition \ref{isogh}, we must only
verify that equalities (\ref{isoghrel1}) and (\ref{isoghrel2}) hold for $n=2$. For (\ref{isoghrel1}) it suffices to
apply $q^{-1}$ to both sides and for (\ref{isoghrel2}), since $u_1 ^{b_1}=k(u_2 ^{b_2})$ (hypothesis $(i)$) and
since $i_1 k=i_2$ we obtain (\ref{isoghrel2}) for $n=2$.
Once again all these arguments are symmetrical with respect to $G_1$ and $G_2$ so that if two extensions
$\beta (E_1)$ and $\beta (E_1 ')$ of $G_2$ are $G_2$-isomorphic, then $E_1$ and $E_1 '$ are
$G_1$-isomorphic $.\quad\Box$

There are 3 interesting cases of a Perfect group $P$ such that $Aut(P)$ do not split
over $Inn(G)$, for which such situation occurs.

 It concerns the case where the isomorphism type of
$P/Z(P)$ is either $PSL(2,9)$, the perfect group $[1920,4]$ or $PSL(2,25)$. We determined
(using $GAP$), that for these 3 categories, $Inn(P)$ has a supplement $S$ of order $8$ in
$Aut(P)$ such that the subgroups $U$ of $P$ corresponding to $Inn(P)\cap S$ are isomorphic
and such that the action of $S$ on the subgroup $U$ are "isomorphic" in the sense of
Theorem \ref{bij3840}.

%%%%%%%%%%%%%%%%%%%%%%%%%%%%%%%%%%
%and thus there are three central products of $3A_{6}$ whose order is less than
%3840.% Since $Out(3A_{6})\cong2^{2}$, for an extension whose action is not inner,
% only the extensions of $3A_{6}$ by 2 need to be considered.

%%%%%%%%%%%%%%%%%%%%%%%%%%%%%%%%%
%%%\section{Supplement function preserved by a $G$-isomorphism}
\subsection{Supplements preserved by a $G$-isomorphism}\label{isopreserved}
%\subsection{finding solvable supplement for non-solvable group}
%Let us remind \emph{Schur-Zassenhaus theorem}. If a finite group $E$ has a normal
%subgroup $G$ such that the greatest common divisor of $|G|$ and $|E|/|G|$ is $1$, then
%there exist a subgroup $C$ of $E$ that is a complement of $G$ ($E=GC$ and $G\cap C=1$)
%(see \cite{}).
Our method reduces the construction of extensions of $G$ to the construction of
supplements of $G$, which are extensions of smaller subgroups of $G$.
It is very helpful in order to construct extensions,  %determine the existence of a given extension
but if we aim to classify extensions we would like moreover that isomorphisms between extensions could
  be reduced to isomorphism between supplements. In this situation, we can lift the isomorphisms
  to semi-direct product (Corollary \ref{liftiso}). If $G$-isomorphic extensions $E_{i}$
 (for $i=1,2$) have been constructed from supplements $S_{i}$  we would like that there
 exists a $G$-isomorphism $j$ mapping $S_{1}$ onto $S_{2}$ because in that case $j$
 is completely described by  $j|_{S_{1}}$ and by $j|_{G}\in Aut(G)$. %the automorphism $j|_{G}$ of $G$.
This is the motivation for the following definition.

 \begin{definition}\textbf{Definition.} Let $\mathcal{F}$ be a family of extensions of $G$ and let $E\to S_{E}$
 be a supplement function for the extensions in  $\mathcal{F}$.  We say that
\emph{\bf{the function $S_{E}$ is preserved by a $G$-isomorphism}} if for every pair $(E_1, E_2)$ of $G$-isomorphic
extensions in $\mathcal{F}$, there exists a $G$-isomorphism $j$ from $E_1$ onto $E_2$ that maps
$S_{E_1}$ onto $S_{E_2}$.
\end{definition}

It it equivalent to require that there exists a $G$-isomorphism $j:E_1\to E_2$ mapping $S_{1}=S_{E_1}$ onto a
conjugate of $S_{2}=S_{E_{2}}$ in $E_2$. Indeed, if $j(S_{1})=S_{2}^{e}$ for some $e\in E_{2}$ then we
can compose $j$ with the inner automorphism induced by $e^{-1}$ to get a new
$G$-isomorphism $\tilde j$ that maps $S_{1}$ onto $S_{2}$. Hence, $S_{E}$
is preserved by a $G$-isomorphism if and only its
conjugacy class is preserved by a $G$-isomorphism. Note also that since every $e\in E$ can
 be written $sg$ for some $s\in S_{1}$ and some $g\in G$, the conjugacy class of the
 supplement $S_{1}$ in $E$ is the set $\{S_{1}^{g}:\,g\in G \}$. We would like to insist on the difference
 between the notion "being preserved by a $G$-isomorphism" and the notion
 "being preserved by $G$-isomorphism" ; the latter is much more restrictive since it requires that every
$G$-isomorphism from $E_1$ onto $E_2$ maps $S_1$ onto $S_2$.
Generally, our supplement functions are not preserved by every $G$-isomorphism : to classify extensions
up to $G$-isomorphism, it is sufficient to find a single $G$-isomorphism between a pair of extensions.

An immediate consequence of our definition is the following corollary that can be used to prove non isomorphism :
\begin{corollary} Let $\mathcal{F}$ be a family of extensions of $G$
and let $E\to S_E$ be a supplement function for $\mathcal{F}$ that is preserved by a $G$-isomorphism.
If $S_{E_1}\ncong S_{E_2}$, then $E_1$ and $E_2\in \mathcal{F}$ are not $G$-isomorphic.
\end{corollary}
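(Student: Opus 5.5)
We argue by contraposition, using only the definition of a supplement function preserved by a $G$-isomorphism. Suppose that $E_1$ and $E_2$ in $\mathcal{F}$ are $G$-isomorphic. By the definition, there is then a $G$-isomorphism $j:E_1\to E_2$ such that $j(S_{E_1})=S_{E_2}$. We will show that this forces $S_{E_1}\cong S_{E_2}$, which contradicts the hypothesis of the corollary.

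The key step is to restrict $j$ to $S_{E_1}$. Since $j$ is an injective homomorphism, its restriction $j|_{S_{E_1}}:S_{E_1}\to E_2$ is again an injective homomorphism. Its image is $j(S_{E_1})=S_{E_2}$, so $j|_{S_{E_1}}$ is an isomorphism from $S_{E_1}$ onto $S_{E_2}$. Hence $S_{E_1}\cong S_{E_2}$.

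This contradicts the assumption $S_{E_1}\ncong S_{E_2}$, so $E_1$ and $E_2$ cannot be $G$-isomorphic.

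There is no real obstacle here. The one point that needs care is that the definition supplies \emph{some} $G$-isomorphism sending $S_{E_1}$ onto $S_{E_2}$, not that every $G$-isomorphism does so. The argument uses only this existence, which is why the weaker notion ``preserved by a $G$-isomorphism'' is enough. If one works instead with the equivalent conjugacy-class formulation given just before the corollary, $j(S_{E_1})=S_{E_2}^{e}$ for some $e\in E_2$, then the conclusion is the same because $S_{E_2}^{e}\cong S_{E_2}$.
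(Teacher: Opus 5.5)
Your proof is correct and is exactly the argument the paper has in mind: it gives no separate proof and calls the corollary an immediate consequence of the definition. The point is that the $G$-isomorphism $j$ with $j(S_{E_1})=S_{E_2}$ supplied by the definition restricts to an isomorphism $S_{E_1}\to S_{E_2}$.
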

%We note first that the structure of $Aut(G)$ provides families of supplements.
%Let $E$ be an extension of $G$, let $f$ be the conjugation action of $E$ on $G$ and
%let $L\geq Inn(G)$ be $f(E)$.
%If $S$ is a supplement to $G$ in $E$ then $L=f(E)=f(GS)=f(G)f(S)=Inn(G)f(S)$ and thus
%$f(S)$ is a supplement to $Inn(G)$ in $L$. Conversely, if $B$ is a supplement to $Inn(G)$
%in $L$, then for every $e\in E$ we have $f(e)=f(g)b$, for some $f(g)\in Inn(G)$ and some
%$b\in B$ ($g\in G$). Thus for $s=g^{-1}e$, $f(s)=b$ and $s\in S:=f^{-1}(B)$
%whence $e=gg^{-1}e=g\in GS$. Therefore, the preimage in $E$ of a supplement to $Inn(G)$
%in $L$, is a supplement to $G$ in $E$.\\

Let $G$ be a non abelian group,
let $L$ be a subgroup such that $Inn(G)\leq L\leq Aut(G)$ and let $\mathcal{F}_{L}$ be the family of
extensions of $G$ associated with $L$.  In subsection \ref{preimsupp} we have constructed a
supplement function for $\mathcal{F}_{L}$. We will now determine sufficient conditions for such preimage
supplement to be preserved by a $G$-isomorphism. This will help in the search for any kind of supplement
function preserved by a $G$-isomorphism. Indeed, every $G$-isomorphism $i:E\rightarrow E_{2}$ between
extensions of $G$, maps the centralizer $C_{E}(G)$ of $G$ in $E$ onto $C_{E_{2}}(G)$.
Hence, if a supplement function $E\to S_E$ is preserved by a $G$-isomorphism, then
 the supplement function $E\to S_E C_{E}(G)$ has the same property (and it is a preimage supplement
 function as described in subsection \ref{preimsupp}. Therefore the determination of the preimage supplements
 preserved by a $G$-isomorphism is the first step to solve the isomorphism problem for extensions of $G$.

\begin{wrapfigure}[8]{l}{0.6\linewidth}
%\mbox{\input{Giso1.latex}
\includegraphics[width=5.5cm,height=4cm]{normL.eps}
\end{wrapfigure}

\begin{proposition}\label{Lisopres}
Let $G$ be a group and let $L$ be a subgroup such that $Inn(G)\leq L\leq Aut(G)$.
 Let $\mathcal{F}_{L}$ be the set of extensions of $G$ associated to $L$, let $B$ be a supplement to
 $Inn(G)$ in $L$ and for every $E\in \mathcal{F}_{L}$ let $S_E$ be the preimage of $B$ in $E$. If
 $N_{Aut(G)}(L)=N_{Aut(G)}(B)L$ then
\begin{enumerate}
\item the function $E\to S_E$ on $\mathcal{F}_{L}$ is preserved by a $G$-isomorphism.
\item For $E_1$ and $E_2$ in $\mathcal{F}_{L}$, every $G$-isomorphism from $E_{1}$ onto
$E_{2}$ maps the conjugacy class of $S_{E_1}$ onto the conjugacy class of $S_{E_2}$.
%%Every $G$-isomorphism $E_1 \to E_2$ preserves the conjugacy classes of the supplements $S_{E_k}$
\end{enumerate}
\end{proposition}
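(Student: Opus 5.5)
Let $i:E_1\to E_2$ be a $G$-isomorphism, with $E_1,E_2\in\mathcal{F}_L$, and let $f_r$ be the conjugation action of $E_r$ on $G$. Let $\pi\in Aut(G)$ be the restriction of $i$ to $G$. The plan is to read off what $i$ does to the images in $Aut(G)$ using Lemma \ref{fpi}. That lemma gives $f_2(i(t))=\pi^{-1}f_1(t)\pi$ for every $t\in E_1$. Hence $f_2(i(X))=\pi^{-1}f_1(X)\pi$ for every subgroup $X\leq E_1$, and in particular for preimages we get $i(f_1^{-1}(Y))=f_2^{-1}(\pi^{-1}Y\pi)$ for $Y\leq L$.

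Taking $X=E_1$ gives $L=f_2(E_2)=\pi^{-1}L\pi$, so $\pi\in N_{Aut(G)}(L)$. By hypothesis we can write $\pi=n\ell$ with $n\in N_{Aut(G)}(B)$ and $\ell\in L$. Then
$$i(S_{E_1})=f_2^{-1}(\pi^{-1}B\pi)=f_2^{-1}(\ell^{-1}B\ell).$$
Since $\ell\in L=f_2(E_2)$, choose $e\in E_2$ with $f_2(e)=\ell$. Then $f_2^{-1}(\ell^{-1}B\ell)=f_2^{-1}(B)^{e}=S_{E_2}^{e}$, because $f_2^{-1}(f_2(e)^{-1}Bf_2(e))=e^{-1}f_2^{-1}(B)e$ (as $Ker f_2\leq f_2^{-1}(B)$). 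So $i(S_{E_1})$ is conjugate to $S_{E_2}$.

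For part (2), apply the same argument to $i$ composed with an inner automorphism. Any conjugate $S_{E_1}^{x}$ is mapped by $i$ to $S_{E_2}^{e\,i(x)}$, so the whole conjugacy class of $S_{E_1}$ goes into the class of $S_{E_2}$. Applying the argument to $i^{-1}$ gives equality of the classes. For part (1), compose $i$ with the inner automorphism of $E_2$ induced by $e^{-1}$. This is still a $G$-isomorphism, since $G\unlhd E_2$, and it maps $S_{E_1}$ onto $S_{E_2}$; this matches the remark after the definition that preserving conjugacy classes suffices.

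The main obstacle is the identity $i(f_1^{-1}(B))=f_2^{-1}(\pi^{-1}B\pi)$, which requires care with the right-action conventions of Lemma \ref{fpi}. Concretely, I need $f_2(i(t))=\pi^{-1}f_1(t)\pi$ in the paper's exponential notation, and surjectivity of $i$ to obtain equality of the preimages rather than just an inclusion. The factorization $\pi=n\ell$ versus $\ell n$ is harmless, since $N_{Aut(G)}(B)L=LN_{Aut(G)}(B)$ when this product is a group (both factors lie in $N(L)$, and $L$ is normal there).
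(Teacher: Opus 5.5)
Your proof is correct and follows essentially the same route as the paper. You use Lemma \ref{fpi} to get $\pi\in N_{Aut(G)}(L)$ and $f_2(i(S_{E_1}))=B^{\pi}$, then the factorization $\pi=n\ell$ and a preimage of $\ell$ in $E_2$ to exhibit $i(S_{E_1})$ as a conjugate of $S_{E_2}$. The one small difference is that you derive $i(f_1^{-1}(Y))=f_2^{-1}(Y^{\pi})$ directly from the bijectivity of $i$, which is slightly cleaner than the paper's observation that $i(S_{E_1})$ contains $C_{E_2}(G)$ and is therefore a full preimage.
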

\emph{Proof }.
For $k=1,2$, let $E_{k}$ be $G$-isomorphic extensions in $\mathcal{F}_{L}$.
 Let $f_{k}$ be the conjugation action of $E_{k}$ on $G$ ; by hypothesis  $L=f_{k}(E_{k})$.
Let $S_k:=S_{E_k}=f_{k}^{-1}(B)$. If $j:E_{1}\rightarrow E_{2}$ is a $G$-isomorphism
whose restriction to $G$ is $\gamma\in Aut(G)$ then  by Lemma \ref{fpi}, we have $L^{\gamma}=L$ so
that $\gamma\in N_{Aut(G)}(L)$. Using the same Lemma, $f_{2}(j(S_{1}))=B^{\gamma}$.
By hypothesis there is $l\in L$ such that $\gamma\in N_{Aut(G)}(B).l$, whence
$f_{2}(j(S_{1}))=B^{l}$. Note that $j(S_1)$ contains $C_{E_2}(G)$ (because $C_{E_1}(G)<S_1$ and a
$G$-isomorphism maps $C_{E_1}(G)$ onto $C_{E_2}(G)$) ; thus  $j(S_1)$ is the preimage under $f_2$
of a subgroup of $Aut(G)$ and since $f_{2}(j(S_{1}))=B^{l}$ we have $j(S_{1})=f_2 ^{-1} (B^{l})$.
But $f_2 (E_2)=L$ implies that there is a $x\in E_2$  such that $f_2 (x)=l$. Therefore, $S_{2} ^{x}=S_{2} ^{l}=
(f_2 ^{-1} (B))^{l}=f_2 ^{-1} (B^{l})$ which is equal to $j(S_{1})$ (the equality in the middle is obtained applying
$f_2$ to both terms). Finally we have proved that every $G$-isomorphism from $E_1$ onto $E_2$ maps
$S_1$ onto a conjugate $S_2 ^{x}$ of $S_2$ in $E_2$, thus $(2)$ is proved and according to what we have
explained at the beginning of this subsection, this is sufficient for $E\to S_E$ to be preserved by a
$G$-isomorphism.$\quad\Box$

Remark that since $L=BInn(G)$ and $B\leq N_{Aut(G)}(B)$, the condition of Proposition \ref{Lisopres}
amounts for $N_{Aut(G)}(B)$ to be a supplement to $Inn(G)$ in $N_{Aut(G)}(L)$.
\subsubsection{Summary}
Proposition \ref{Lisopres} gives a strategy to classify the extensions of $G$ associated with
a subgroup $L\geq Inn(G)$. First, we look for a supplement $B$ to $Inn(G)$ in $L$ such that its
 normalizer $N$ is a supplement to $Inn(G)$ (or to $L$) in $N_{Aut(G)}(L)$. The preimage $S_E$
 of $B$ in an extension $E$ associated to $L$ is a supplement to $G$ in $E$
 and the construction of $E$ is reduced to the construction of $S_E$.  Moreover isomorphism testing
 for such extensions $E$ is reduced to the consideration of isomorphisms $j$ between
 the supplements $S_E$  ( and such that $j|_{G}\in N$).
% In this case, the restriction $\gamma$ of a $G$-isomorphism $i:E_{1}\rightarrow E_{2}$
%belongs to $N_{L}=N_{Aut(G)}(B)Inn(G)=$. Hence, there exists $g\in G$ such that
%$\gamma$ maps $B$ onto $B^{f_{2}(g)}$ and consequently $i$
%maps $S_{1}$ onto $S_{2}^{g}$. This is a stronger result than just being preserved by
%a $G$-isomorphism since it implies that every $G$-isomorphism preserves
%the conjugacy classes of the supplements $S_{E_k}$.
%   For $k=1,2$, let $E_{k}$,  be extensions of $G$, let $f_{k}$ be the conjugation
%  action of $E_{k}$ on $G$ and let $L_{k}:=f_{k}(E_{k})$.
%
%\begin{proposition}\label{supL} Let $L\geq Inn(G)$ be a subgroup of $Aut(G)$.
%Assume that for each extension $E$ associated with a conjugate of $L$, a function gives
%a supplement----------
%Let $E$ be an extension of $G$ and let $f$ be the
%conjugation action of $E$ on $G$. Let $L=f(E)$ and let $B$ be a supplement to $Inn(G)$
%in $L$. If $N_{Aut(G)}(B)$ is a supplement to $L$ in $N_{Aut(G)}(L)$, then every
%$G$-isomorphism maps the conjugacy class of $S_{1}$ onto the conjugacy class of $S_{2}$.
%Let $S_{k}:=f_{k}^{-1}(B_{k})$.
%\end{proposition}

If one wants a supplement function preserved by a $G$-isomorphism for a larger family than $\mathcal{F}_{L}$
then the following result can be used.
\begin{proposition}\label{Allisopres}
Let $G$ be a group and let $\mathcal{F}$ be the set of extensions of all extensions of $G$.
Suppose that there is a supplement $B$ to $Inn(G)$ in $Aut(G)$. For every $E\in \mathcal{F}$,
 let $S_E$ be the preimage in $E$ of $B\cap f(E)$ where $f$ is the conjugation action of $E$ on $G$.
\begin{enumerate}
\item the function $E\to S_E$ on $\mathcal{F}$ is preserved by a $G$-isomorphism.
\item For $E_1$ and $E_2$ in $\mathcal{F}$, every $G$-isomorphism from $E_{1}$ onto
$E_{2}$ maps the conjugacy class of $S_{E_1}$ onto the conjugacy class of $S_{E_2}$.
%%Every $G$-isomorphism $E_1 \to E_2$ preserves the conjugacy classes of the supplements $S_{E_k}$
\end{enumerate}
\end{proposition}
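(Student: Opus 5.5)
The plan is to imitate the proof of Proposition \ref{Lisopres}, with $B\cap f(E)$ in place of $B$. The special feature here is that $B$ supplements $Inn(G)$ in the whole of $Aut(G)$, so for any $\gamma\in Aut(G)$ we can write $\gamma=b\,\iota$ with $b\in B$ and $\iota\in Inn(G)$.

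Let $E_1,E_2\in\mathcal{F}$, let $j:E_1\to E_2$ be a $G$-isomorphism, and let $\gamma=j|_G$. Set $L_k=f_k(E_k)$, where $f_k$ is the conjugation action of $E_k$ on $G$.

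By Lemma \ref{fpi}, $f_2(j(e))=f_1(e)^{\gamma}$ for every $e\in E_1$. Hence $L_2=L_1^{\gamma}$, and also $f_2(j(S_{E_1}))=(B\cap L_1)^{\gamma}=B^{\gamma}\cap L_2$.

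Now write $\gamma=b\,\iota$ with $b\in B$ and $\iota\in Inn(G)$, and say $\iota=f_2(g)$ for some $g\in G$. This is possible because $f_2(G)=Inn(G)\le L_2$. Then $B^{\gamma}=B^{\iota}$. Since $Inn(G)\le L_2$, we have $\iota\in L_2$, so $L_2^{\iota}=L_2$ and therefore $B^{\gamma}\cap L_2=(B\cap L_2)^{\iota}$.

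Next, $j$ maps $C_{E_1}(G)$ onto $C_{E_2}(G)$, and $C_{E_1}(G)\le S_{E_1}$. So $j(S_{E_1})$ contains $\ker f_2$, which means it equals the full preimage $f_2^{-1}\bigl((B\cap L_2)^{\iota}\bigr)$. Since $f_2(g)=\iota$, this preimage is $f_2^{-1}(B\cap L_2)^{g}=S_{E_2}^{g}$.

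This shows that every $G$-isomorphism sends $S_{E_1}$ to a conjugate of $S_{E_2}$, which is statement (2). Statement (1) then follows by composing $j$ with conjugation by $g^{-1}$ in $E_2$, exactly as in the remark after the definition of a supplement function preserved by a $G$-isomorphism.

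Some routine points remain. First, that $f^{-1}(B\cap f(E))$ is a supplement, with $S_E\cap G$ independent of $E$; this follows from the preimage-supplement discussion, because $B\cap L$ supplements $Inn(G)$ in $L$ by Dedekind's rule ($L=L\cap B\,Inn(G)=(L\cap B)Inn(G)$). Second, the check that $f_2^{-1}(X^{f_2(g)})=f_2^{-1}(X)^{g}$.

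The main obstacle I expect is the intersection step: showing that $B^{\gamma}\cap L_2$ is a conjugate of $B\cap L_2$ by an element of $f_2(E_2)$. This works precisely because the inner part $\iota$ lies in $L_2$. Here the hypothesis $Aut(G)=B\,Inn(G)$ replaces the normalizer condition of Proposition \ref{Lisopres}.
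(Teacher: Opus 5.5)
Your proposal is correct and is essentially the paper's proof: you decompose $j|_G=b\iota$ with $b\in B$, $\iota\in Inn(G)$, use that $Inn(G)\le L_2$ normalizes $L_2$, and finish with the same centralizer/preimage argument as in Proposition \ref{Lisopres} to get $j(S_{E_1})=S_{E_2}^{g}$. The only cosmetic difference is that the paper first derives $L_2=L_1^{b}$ and $(B\cap L_1)^{b}=B\cap L_2$ before conjugating by $\iota$, whereas you go directly to $B^{\gamma}\cap L_2=(B\cap L_2)^{\iota}$.
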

\emph{Proof }.
%%The proof is similar to the to the proof of Proposition \ref{Lisopres}.
For $k=1,2$, let $E_{k}$ be extensions of $G$.
 Let $f_{k}$ be the conjugation action of $E_{k}$ on $G$ and $L_{k}=f_{k}(E_{k})$.
Let $B_k=B\cap L_k$. By hypothesis $S_k:=S_{E_k}=f_{k}^{-1}(B_{k})$.
If there is a $G$-isomorphism $j:E_{1}\rightarrow E_{2}$ that induces $\pi\in Aut(G)$ on $G$  then
 by Lemma \ref{fpi}, we have $L_2=L_1 ^{\pi}$
 %and hence any such $G$-isomorphism must induce on $G$ an element of $N_{Aut(G)}(L_{1})\pi$.
Since $Aut(G)=B Inn(G)$, we can write $\pi=b\tilde{g}$ where $b\in B$ and where $\tilde{g}$ is the inner
automorphism of $G$ induced by some $g\in G$.  Then $L_2=L_1 ^{\pi}=L_1 ^{b\tilde{g}}=L_1 ^{b}$ (since
$Inn(G)$ normalizes every subgroup that contains it). Consequently $B_1 ^{b}=(B\cap L_1)^{b}=
B^{b}\cap L_1 ^{b}=B\cap L_2=B_2$.

At this point, the proof is identical to the proof of Proposition \ref{Lisopres} :
$f_{2}(j(S_{1}))=B_1 ^{\pi}=B_1 ^{b\tilde{g}}=B_2 ^{\tilde{g}}$ implies
$j(S_{1})=(f_2 ^{-1} (B_2))^{g}=S_2 ^{g}$ which proves $(2)$ and $(1)$ is a consequence of $(2)$.
$\quad\Box$

%%%%%%%%%%%%%%%%%%%%%%%%%%%%%%%%%%%%%%%%%%%%%%%
%\section{Finding supplements}
\chapter{Finding supplements} \label{findsup}
\subsubsection*{Introduction}
In this chapter  we show that for the extensions $E$ of a finite group $G$ it is always possible to find
supplement functions $E\to S(E)$ preserved by a $G$-isomorphism and such that $G\cap S(E)$ is nilpotent
for every $E$.
We describe explicitely an algorithm to obtain such functions efficiently (section \ref{funcNp}).
Moreover, we will prove that any supplement function $E\to S(E)$
preserved by a $G$-isomorphism, can be refined into a function $E\to \widetilde{S}(E)\subseteq S(E)$
that is also preserved by a $G$-isomorphism and such that $G\cap S(E)$ is nilpotent
for every extension $E$ of $G$.
%\subsection{Various supplement function for various families}\label{variousfam}
%\begin{enumerate}
%\item If $F$:=all extensions of $G$, then take  , where $\vec{p}$ is an ordering of the prime
%factors of $|G|$.
%\end{enumerate}
%A last family will be described in chapter \ref{General} (\ref{Gisofunctphi})

% observe that since nilpotent groups are successive central extensions, If Inn(G) is nilpotent
%then $G$ is nilpotent because it is a central extension of Z(G) by the nilpotent G
%%%%%%%%%%%%%%%%%%%%%%%%%%%%%%%%
%%%%%%%%%%%%%%%%%%%%%%%%%%%%%%%%

%%%            %%%%%%        %%%%   %%%
%%%%%%%%%%%%%%%%%%%%%%%%%%%%%
%%%%%%%%%%%%%%%%%%%%%%%%%%%%%

%\subsection{The functions $N_{\vec{p}}$ and $N_{\vec{p}+}$}
\section{The functions $N_{\vec{p}}$}\label{funcNp}
Let $E$ be an extension of a group $G$ and let $p_1$ be a prime divisor of $|G|$.
%For a given prime divisor $p_{1}$ of $|G|$, let $S_{p_{1}}$ be a Sylow $p_{1}$-subgroup  of $G$.
Let us define $N_{(p_{1})}(E,G)$ as the set of all the
 normalizers in $E$ of  Sylow $p_{1}$-subgroups of $G$.
Let $S_{p_{1}}$ be a Sylow $p_{1}$-subgroup  of $G$ an let  $N_{p_{1}}:=N_E (S_{p_1})$.
 For $g\in G$, since the normalizer of $S_{p_{1}}^{g}$ in $E$ is the image $N_{p_{1}}^{g}$ of
 $N_{p_{1}}$ under $g$, the set $N_{(p_{1})}(E,G)$ is the conjugacy class $N_{p_{1}}^{G}$,
 whose elements are supplements of $G$ in $E$.
To avoid repeating each time that $p_{1}$ is a divisor of $|G|$, we choose
$N_{(p_{1})}(E,G):=\{E\}$ as set of supplements  in the case where $p_{1}$ is not a divisor
 of $|G|$.

 Let $\vec{p}=(p_{1},\ldots ,p_{k})$ be a sequence of prime divisors of $|G|$.
For the sequence $(\vec{p},p_{i})=(p_{1},\ldots ,p_{k},p_{i})$, we define inductively
the set  of subgroups $N_{(\vec{p},p_{i})}(E,G)$  as the union of the sets
$N_{(p_{i})}(N_{\vec{p}},G_{\vec{p}})$ where $N_{\vec{p}}$ belongs to
$N_{\vec{p}}(E,G)$ and $G_{\vec{p}}=G\cap N_{\vec{p}}$
% To avoid repeating each time that $p_{i}$ is a divisor of $|G_{\vec{p}}|$, we choose
% $N_{(\vec{p},p_{i})}(E):=N_{\vec{p}}(E)$ in the case where $p_{i}$ is not a divisor
% of $|G_{\vec{p}}|$ ( and $N_{(p)}(E):=\{E\}$ if $p$ does not divide $|G|$).
 ( note that $N_{(\vec{p},p_{i})}(E,G)=N_{\vec{p}}(E,G)$ if $p_{i}$ does not divide $|G_{\vec{p}}|$).
For an element $N$ of $N_{(p_{i})}(N_{\vec{p}},G_{\vec{p}})$ we have $N_{\vec{p}}=G_{\vec{p}}N$, by the
Frattini argument. If $N_{\vec{p}}$ is a supplement to $G$ in $E$, then
$E=GN_{\vec{p}}=GG_{\vec{p}}N=GN$ so that $N$ is also a supplement to $G$ in $E$.
%Since $E=GN_{(p_{1})}(E)$, by induction $N_{(\vec{p},p_{i})}(E)$ is a set of supplements
Since $N_{(p_{1})}(E,G)$ is a set of supplements
of $G$ in $E$, by induction the set $N_{(\vec{p},p_{i})}(E,G)$ has the same property.
 As shown in the previous paragraph, $N_{(p_{i})}(N_{\vec{p}},G_{\vec{p}})$ is
equal to $\{N^{x}:x\in G_{\vec{p}}\}$.\\

Whenever no confusion is possible for the normal subgroup $G$ of $E$ that
is considered, we abbreviate $N_{\vec{p}}(E,G)$ as $N_{\vec{p}}(E)$. Otherwise $G$ must
be mentioned explicitly. For instance, if $G$ is not nilpotent and has a non trivial
maximal normal nilpotent subgroup $Fit(G)$, then $N_{\vec{p}}(E,Fit(G))=E$ whereas
$N_{\vec{p}}(E,G)\neq E$.
We now investigate further the properties of these supplements.\\

%ffffffffffffffffffffff
%Let $E$ be an extension of $G$ and let $S_{p_{1}}$ be a Sylow $p_{1}$-subgroup of $G$
%Let $N_{p_{1}}=N_{E}(S_{p_{1}})$ be the normalizer of $S_{p_{1}}$ in $E$ and let
%$N_{(p_{1})}$ be the set of all such normalizers for a given prime $p_{1}$.

% $(\vec{p},p_{i})$ be the sequence $(p_{1},\ldots ,p_{k},p_{i})$. We define the set
% of subgroups $N_{(\vec{p},p_{i})}$ inductively for as follows ; Let $N$ be a subgroup
% in the set of $N_{\vec{p}}$, let $K=N\cap G$ then the subgroups of
% $N_{(\vec{p},p_{i})}$ are the

\begin{proposition} Let $E$ be an extension of a finite group $G$.
\begin{enumerate}
\item $N_{\vec{p}}(E)$ is a conjugacy class of supplements of $G$ in $E$.

\item If $E_{i}\,$, $i=1,2$, are extensions of $G$, every $G$-isomorphism from $E_{1}$ onto
$E_{2}$ maps $N_{\vec{p}}(E_{1})$ onto $N_{\vec{p}}(E_{2})$.
\item If $p\in \vec{q}$ then $N_{(\vec{q},p)}=N_{\vec{q}}$.  %%% EQUALITY OF FUNCTION
\item There exists a sequence of distinct primes $\vec{p}=(p_{1},\ldots,p_{k})$
such that $N\cap G$ is nilpotent for any $N$ in $N_{\vec{p}}(E)$ and
any ordering of the distinct prime factors of $|G|$ is such a sequence.
%If $\vec{p}=\{p_{a},\ldots,p_{b}\}$ are the distinct prime factors of $|G|$ then
%$N_{\vec{p}}(E)$ has the required property
\end{enumerate}
\end{proposition}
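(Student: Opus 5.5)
All four parts will be proved by induction on the length $k$ of $\vec{p}$. Throughout, $N_{(\vec{p},p)}(E)$ is built from the subgroups $N\in N_{\vec{p}}(E)$ by taking normalizers inside $N$ of Sylow $p$-subgroups of $G\cap N$.

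For (1), the case $k=1$ has been done: $N_{(p_1)}(E,G)=N_{p_1}^{G}$, and its members are supplements by the Frattini argument. For the inductive step, suppose $N_{\vec{p}}(E)=\{N^{g}:g\in G\}$ for a fixed $N$. Then $N_{(\vec{p},p)}(E)$ is the union over $g\in G$ of the sets $N_{(p)}(N^{g},G\cap N^{g})$. Conjugation by $g$ is an isomorphism $N\to N^{g}$ carrying $G\cap N$ onto $G\cap N^{g}$. It therefore carries the set of normalizers in $N$ of Sylow $p$-subgroups of $G\cap N$ onto the corresponding set for $N^{g}$. Hence $N_{(p)}(N^{g},G\cap N^{g})=\{M^{xg}:x\in G\cap N\}$ for a fixed $M\in N_{(p)}(N,G\cap N)$. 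Taking the union over $g$ gives $N_{(\vec{p},p)}(E)=\{M^{y}:y\in G\}$, a single conjugacy class. It consists of supplements, as already shown before the statement.

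Part (2) follows the same pattern, since a $G$-isomorphism $j:E_1\to E_2$ maps $G$ onto $G$. It maps Sylow $p_1$-subgroups of $G$ onto Sylow $p_1$-subgroups of $G$, and it maps normalizers in $E_1$ onto normalizers in $E_2$. So $j(N_{(p_1)}(E_1))=N_{(p_1)}(E_2)$. Inductively, for $N\in N_{\vec{p}}(E_1)$ the restriction $j|_N$ is a $(G\cap N)$-isomorphism onto $j(N)\in N_{\vec{p}}(E_2)$, because $j(G\cap N)=G\cap j(N)$. It therefore maps $N_{(p)}(N,G\cap N)$ onto $N_{(p)}(j(N),G\cap j(N))$, and taking unions gives the claim.

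For (3), let $N\in N_{\vec{q}}(E)$ and suppose $p$ occurs in $\vec{q}$, say at the step producing $N'\in N_{(q_1,\dots,q_i)}(E)$ with $q_i=p$. Write $N'=N_{N''}(P)$, where $P$ is a Sylow $p$-subgroup of $G\cap N''$. Then $P\le G\cap N'$ and $P$ is normal in $N'$. Each later step replaces the current group $X$ by $N_X(Q)$, and $P$ stays inside $G\cap X$. The main point is that $P$ remains a Sylow $p$-subgroup there, because $G\cap X\le G\cap N''$ and $P\le G\cap X$. Since normality passes to subgroups, $P$ stays normal in each later group. So in $N$ the subgroup $P$ is a normal Sylow $p$-subgroup of $G\cap N$. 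Its normalizer in $N$ is all of $N$, so $N_{(p)}(N,G\cap N)=\{N\}$. Taking the union over $N\in N_{\vec{q}}(E)$ gives $N_{(\vec{q},p)}=N_{\vec{q}}$. The degenerate case where $p$ does not divide $|G\cap N|$ is covered by the convention that the set is then $\{N\}$.

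For (4), take $\vec{p}$ to be any ordering of all the distinct prime divisors of $|G|$, and let $N\in N_{\vec{p}}(E)$. The argument in (3) shows that each Sylow $p_i$-subgroup selected at step $i$ stays a normal Sylow $p_i$-subgroup of $G\cap N$. If instead $p_i$ no longer divides the order at step $i$, then $G\cap N$ is trivially fine for that prime. So every Sylow subgroup of the finite group $G\cap N$ is normal, and $G\cap N$ is nilpotent by the characterization recalled in subsection \ref{frattini}. The step to write carefully is the persistence argument in (3); the rest is bookkeeping with conjugation.
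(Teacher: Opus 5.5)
Parts (1) and (2) are correct. (1) is the paper's own induction. For (2) you give the direct induction that the paper only mentions, deducing (2) instead from Theorem~\ref{preNp} and Proposition~\ref{Allisopres}; your version works because $j(G\cap N)=G\cap j(N)$.

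The gap is the persistence claim in (3), which you reuse in (4). After the step $N'=N_{N''}(P)$ you assert that every later step $X\mapsto N_X(Q)$ keeps $P$ inside $G\cap X$. This is false. A normal $p$-subgroup $P$ of $X$ normalizes a $q$-subgroup $Q\le X$ ($q\neq p$) only if it centralizes it, since the commutators would lie in $P\cap Q=1$. Concretely:
\begin{itemize}
\item Take $E=G=S_3$ and $\vec q=(3,2)$. The first step gives $N'=S_3$ with $P=A_3$. The second gives $N=N_{S_3}(\langle(1\,2)\rangle)=\langle(1\,2)\rangle$, which does not contain $P$.
\item Take $E=G=C_3\times S_3$ with the same $\vec q$. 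Then $G\cap N=C_3\times\langle(1\,2)\rangle$, so $P\cap N=C_3\times 1$ is a nontrivial proper subgroup of $P=C_3\times A_3$.
\end{itemize}
So your sentences ``$P$ remains a Sylow $p$-subgroup there'' and ``$P$ stays normal in each later group'' rest on a false containment. The same is true of your claim in (4) that each selected Sylow $p_i$-subgroup ``stays a normal Sylow $p_i$-subgroup of $G\cap N$''.

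What persists is $P\cap X$, not $P$, and proving this needs an order argument rather than containment.
\begin{itemize}
\item Normality: since $P\unlhd N'$ and $X\le N'$, the subgroup $P\cap X=P\cap(G\cap X)$ is normal in $X$.
\item Sylow property: since $G\cap X\le N'$ normalizes $P$, the product $P(G\cap X)$ is a subgroup of $G\cap N''$. Hence $[G\cap X:P\cap X]=[P(G\cap X):P]$ divides $[G\cap N'':P]$, which is prime to $p$. So $P\cap X$ is a Sylow $p$-subgroup of $G\cap X$.
\end{itemize}
This is exactly the computation the paper makes with $\tilde S=S_p\cap K'$ and the subgroup $S_pK'$. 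Applied to $X=N$, it shows that $P\cap N$ is the unique Sylow $p$-subgroup of $G\cap N$ and is normal in $N$, so $N_{(p)}(N,G\cap N)=\{N\}$. Applied to each $p_i$ in (4), it shows that either $P_i\cap N$ is a normal Sylow $p_i$-subgroup of $G\cap N$ or $p_i\nmid|G\cap N|$. That gives nilpotency.
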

\emph{Proof }.\begin{enumerate}
\item This has already been proved in the case where $\vec{p}=(p_{1})$ is of length $1$ and
we already know that the elements of $N_{\vec{p}}(E)$ are supplements of $G$ in $E$.
Let us use an induction argument. Assume that, $N_{\vec{p}}(E)$ is a
conjugacy class $\{N^{e}:e\in E\}=\{N^{g}:g\in G\}$ for some supplement $N$.
Then by definition, \begin{displaymath}
N_{(\vec{p},p_{i})}(E)=\bigcup_{g\in G}N_{(p_{i})}(N^{g},N^{g}\cap G).
\end{displaymath}
Let $M\in N_{(p_{i})}(N,N\cap G)=\{M^{x}:x\in N\cap G\}$.
Since a conjugation maps the normalizer of a subgroup onto the normalizer of its image,
 for $g\in G$, the subgroup $M^{g}$ belongs to $N_{(p_{i})}(N^{g},N^{g}\cap G)$ and
 therefore $N_{(\vec{p},p_{i})}(E)$ contains the conjugacy class $M^{G}$.
This also implies that $N_{(p_{i})}(N^{g},N^{g}\cap G)$ is $\{M^{gx}:x\in N^{g}\cap G\}\subset M^{G}$
 so that $N_{(\vec{p},p_{i})}(E)=M^{G}$.

\item A classical induction argument can easily (but tediously) prove this statement.
Nevertheless, we prefer to link it with the general situation described in
subsection \ref{isopreserved}. %% ( remark $n^{\circ}3$).
 For $k=1,2$, let $E_{k}$,  be extensions of $G$, let $f_{k}$ be the conjugation
  action of $E_{k}$ on $G$ and let $L_{k}:=f_{k}(E_{k})$. In Theorem \ref{preNp} we
will  prove that $S_{k}:=N_{\vec{p}}(E_{k})$ is  the preimage of
  $B_{k}:=N_{\vec{p}}(Aut(G))\cap L_{k}$ under $f_{k}$.
Since $N_{\vec{p}}(Aut(G))$ is a supplement to $Inn(G)$ in $Aut(G)$, we have proved
in Proposition \ref{Allisopres}  %%section \ref{isopreserved} %%( remarks $n^{\circ}2$ and $3$)
that every $G$-isomorphism $i:E_{1}\rightarrow E_{2}$ maps $S_{1}$ onto $S_{2}$.
%% proposition}\label{Lisopres}
\index{use of L-preimage as been reformulated in chapter 1 : so reformulate also here}

\item If $p\in \vec{q}$ then $\vec{q}=(p_{1},\ldots,p_{n})$ has a subvector
$\vec{p}=(p_{1},\ldots,p)$. If $N\in N_{\vec{p}}(E)$ then by definition, the Sylow
$p$-subgroup $S_{p}$ of $K=N\cap G$ is normal in $K$. %% and in $N$
 If $|S_{p}|=p^{v}$, the order of $K$ is $p^{v}k$ and $k$ is prime to $p$.
$N_{\vec{q}}$ is defined recursively from $N_{\vec{p}}$. From each representative $N$ of
$N_{\vec{p}}(E)$ there is a decreasing sequence of supplements $N\supseteq\ldots\supseteq N'$,
 where $N'\in N_{\vec{q}}(E)$. %Since $N\supseteq N'$, we get $N'\cap K:=K'<K$. %% Since $N=KN'$

Let $K':=K\cap N' <K$. The subgroup $\tilde S=S_{p}\cap K'$ is normal in $K'$ as intersection of $K'$ with a
normal   subgroup of $K$. It is a Sylow $p$-subgroup of $K'$ because its index
  $l=|K'|/|\tilde S|$ is prime to $p$.
Indeed, since $S_{p}\lhd K$, the set $S_{p}K'$ is a subgroup of $K$
and its order $p^{v}|K'|/|S_{p}\cap K'|=p^{v}l$ must divide $|K|=p^{v}k$.
Thus $l=|K'|/|\tilde S|$ divides $k$, it is prime to $p$ and so $K'$ has a normal Sylow
$p$-subgroup $\tilde S$. Hence, the normalizer $N''$ of $\tilde S$ in $N'$ is equal to $N'$.
By definition $N''$ is a representative of $N_{(\vec{q},p)}(E)$ but since it also
a representative of $N_{\vec{q}}(E)=(N')^{G}$, we conclude that these conjugacy classes
are equal.

\item The existence of such a finite sequence for a finite group $G$ is just
 a rephrasing of Proposition \ref{nilsup} with the $N_{\vec{p}}$ notation.
By (3) we could restrict to distinct primes. Let $N$ be a representative of $N_{\vec{p}}(E)$
 ($p$ is an arbitrary sequence )
and let $p$ be a prime factor of $|N\cap G|$. In the proof of (3), we showed that if $p$
 belongs to $\vec{p}$ then the Sylow-$p$ subgroup of $N\cap G$ is normal. If
 $\vec{p}=\{p_{a},\ldots,p_{b}\}$ are the distinct prime factors of $|G|$ then
$p\in\vec{p}$ since the order of the subgroup $N\cap G$ of $G$ divides the order of $G$.
Hence, every Sylow subgroup of $N\cap G$ is normal and $N\cap G$ is nilpotent.$\quad\Box$
% Note however that a proper subsetsince the prime divisors of $N\cap G$ can be a proper
% subset of $\vec{p}$.$\quad\Box$. NOO, this is a bad argument, an absent divisor could
% have been used during the process to reduce the order of the supplements.
\end{enumerate}

Note however, that
different orderings  for the set $\{p_{1},\ldots,p_{n}\}$ could give non conjugate supplements.
%%%%%%%%%%%%%%%%%%%%%%%%%%%%%%%%%
\subsubsection{$N_{\vec{p}}(E)$ is a preimage}
The following lemma is needed for Theorem \ref{preNp} and Proposition \ref{refinep}.
\begin{lemma}\label{prei} Let $G$ be a normal subgroup of a group $E$ and $S_{p}$ a Sylow
$p$-subgroup of $G$. If $f:E\rightarrow f(E)$ is a homomorphism such that
$Ker f\leq N_{E}(S_{p})$ then $N_{E}(S_{p})$ is the preimage under $f$ of $N_{f(E)}(f(S_{p}))$.
\end{lemma}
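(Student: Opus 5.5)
We prove the two inclusions $N_{E}(S_{p})\subseteq f^{-1}(N_{f(E)}(f(S_{p})))$ and $f^{-1}(N_{f(E)}(f(S_{p})))\subseteq N_{E}(S_{p})$. The first is immediate: if $x\in E$ normalizes $S_{p}$ then $f(S_{p})^{f(x)}=f(S_{p}^{x})=f(S_{p})$, so $f(x)$ normalizes $f(S_{p})$.

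For the reverse inclusion, take $x\in E$ with $f(x)\in N_{f(E)}(f(S_{p}))$. Then $f(S_{p}^{x})=f(S_{p})$, hence $S_{p}^{x}\subseteq S_{p}\,Ker f$. Put $K:=Ker f$. By hypothesis $K\leq N_{E}(S_{p})$, so $S_{p}K$ is a subgroup of $E$ in which $S_{p}$ is normal. Also $S_{p}^{x}\leq G$, since $G\unlhd E$. Therefore $S_{p}^{x}\leq S_{p}K\cap G=S_{p}(K\cap G)$, using Dedekind's modular law and $S_{p}\leq G$. Call this group $T:=S_{p}(K\cap G)$.

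The key step is to identify the Sylow $p$-subgroups of $T$. Since $S_{p}\unlhd T$, it is the unique Sylow $p$-subgroup of $T$. To see that it is Sylow in $T$, note that $T\leq G$ and $S_{p}$ is a Sylow $p$-subgroup of $G$. So $|T|_{p}$ divides $|G|_{p}=|S_{p}|$, and $S_{p}\leq T$ then forces $|T|_{p}=|S_{p}|$. Every $p$-subgroup of $T$ lies in its unique normal Sylow $p$-subgroup, so $S_{p}^{x}\leq S_{p}$. Since $|S_{p}^{x}|=|S_{p}|$, we get $S_{p}^{x}=S_{p}$, that is, $x\in N_{E}(S_{p})$.

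The only delicate point is this Sylow argument. It uses finiteness of $S_{p}$, or at least of $G$, which is implicit in the phrase ``Sylow $p$-subgroup''. It also uses the hypothesis $Ker f\leq N_{E}(S_{p})$, which guarantees that $S_{p}$ is normal in $S_{p}K$. Without that hypothesis the containment $S_{p}^{x}\leq S_{p}K$ would not force $S_{p}^{x}=S_{p}$.
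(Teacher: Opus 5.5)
Your proof is correct and takes essentially the same route as the paper. Both arguments place $S_{p}^{x}$ inside $G\cap S_{p}K$: the paper does this by noting that $x$ normalizes $KS_{p}$ and hence $G\cap KS_{p}$, while you use the direct inclusion $S_{p}^{x}\subseteq S_{p}K$ together with $S_{p}^{x}\leq G$. Both then observe that $K\leq N_{E}(S_{p})$ makes $S_{p}$ a normal, hence unique, Sylow $p$-subgroup of that intersection, which forces $S_{p}^{x}=S_{p}$.
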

\emph{Proof }.
Let $M:=N_{f(E)}(f(S_{p}))$. We must show that
$N_{E}(S_{p})$ is equal to $f^{-1}(M):=\{e\in E\,|\,f(e)\,\, normalizes\,\,f(S_{p}) \}$.
Trivially, if $e\in N_{E}(S_{p})$, then
$S_{p}^{e}=S_{p}$ and $f(S_{p})=f(S_{p}^{e})=f(e^{-1}S_{p}e)=f(S_{p})^{f(e)}$ so that
$f(e)$ normalizes $f(S_{p})$. Hence, $N_{E}(S_{p})\leq f^{-1}(M)$.
Conversely let us show that under our assumptions, if $f(e)$ normalizes $f(S_{p})$ then
$e$ normalizes $S_{p}$.
Let $f(e)\in M$. Let $K:=Ker f$. We have $f(S_{p})=f(S_{p})^{f(e)}=f(S_{p}^{e})$
$\Rightarrow KS_{p}^{e}=KS_{p}$. But if $e$ normalizes $KS_{p}$ then it
normalizes $N:=G\cap KS_{p}$. Since $S_{p}\subseteq N\leq G$, and since $S_{p}$ is a Sylow
$p$-subgroup of $G$, it is also a Sylow $p$-subgroup of $N$. But we have assumed that
$K$ normalizes $S_{p}$ so that $N\leq KS_{p}$ normalizes also $S_{p}$ and thus
$S_{p}$ is normal in $N$ whence it is the unique Sylow $p$-subgroup of $N$.  As $e$ normalizes $N$, the
subgroup $S_{p}^{e}\subseteq N$ is a Sylow $p$-subgroup of $N$ and hence $S_{p}^{e}=S_{p}$
$.\quad\Box$

Let $S\leq L\leq A$ be an increasing sequence of groups. The normalizer $N_{L}(S)$ is
equal to $N_{A}(S)\cap L$. Since the function $N_{\vec{p}}$ is defined by iterated
normalizers, we obtain the following consequence : if $E_{1}\leq E_{2}$ are extensions of
a group $G$ then $N_{\vec{p}}(E_{1})=E_{1}\cap N_{\vec{p}}(E_{2})$.

\begin{theorem}\label{preNp}Let $E$ be an extension of $G$ and let $f$ be the action
of $E$ on $G$ by conjugation.  Then $N_{\vec{p}}(E)$ is
 the preimage of $N_{\vec{p}}(Aut(G),Inn(G))\cap f(E)$ under $f$.
% If we consider $Aut(G)$ as an extension of $Inn(G)$, then $N_{\vec{p}}(E)$ is
% the preimage of $N_{\vec{p}}(Aut(G))\cap f(E)$ under $f$.
\end{theorem}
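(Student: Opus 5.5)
We will argue by induction on the length of the sequence $\vec{p}$, using Lemma \ref{prei} at every step together with the remark preceding the theorem: if $E_{1}\leq E_{2}$ are extensions of $G$, then $N_{\vec{p}}(E_{1})=E_{1}\cap N_{\vec{p}}(E_{2})$. First note that $f(E)$ is itself an extension of $Inn(G)=f(G)$. By that remark, $N_{\vec{p}}(f(E),Inn(G))=f(E)\cap N_{\vec{p}}(Aut(G),Inn(G))$ as conjugacy classes. It therefore suffices to prove the following claim: $N_{\vec{p}}(E,G)$ is exactly the set of preimages under $f$ of the members of $N_{\vec{p}}(f(E),Inn(G))$.

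For length one, let $S_{p_1}$ be a Sylow $p_1$-subgroup of $G$. Its image $f(S_{p_1})$ is a Sylow $p_1$-subgroup of $Inn(G)\cong G/Z(G)$, and every Sylow $p_1$-subgroup of $Inn(G)$ arises this way. We have $Ker f=C_E(G)$, which centralizes $G$ and hence normalizes $S_{p_1}$. Lemma \ref{prei} then gives $N_E(S_{p_1})=f^{-1}(N_{f(E)}(f(S_{p_1})))$. So the class $N_{(p_1)}(E,G)$ is the set of preimages of the class $N_{(p_1)}(f(E),Inn(G))$. If $p_1$ does not divide $|G|$ but divides $|Inn(G)|$, this cannot happen, since $|Inn(G)|$ divides $|G|$. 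If $p_1$ divides $|G|$ but not $|Inn(G)|$, then $S_{p_1}\leq Z(G)$ is normal, so both sides equal $E$ and $f(E)$ respectively.

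For the inductive step, suppose $N\in N_{\vec{p}}(E,G)$ is $f^{-1}(\bar N)$ with $\bar N\in N_{\vec{p}}(f(E),Inn(G))$. Then $N\supseteq Ker f$, and $f(N\cap G)=\bar N\cap Inn(G)$ because $N$ is a full preimage. Now apply the length-one argument to the extension $N$ of $G_{\vec{p}}=N\cap G$, with the restricted homomorphism $f|_N$, whose kernel is $C_E(G)$. The hypothesis of Lemma \ref{prei} holds because $C_E(G)$ centralizes $G_{\vec{p}}$ and hence normalizes every Sylow subgroup of it. Sylow $p_i$-subgroups $P$ of $G_{\vec{p}}$ map onto the Sylow $p_i$-subgroups of $f(G_{\vec{p}})=\bar N\cap Inn(G)$. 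So $N_N(P)=f^{-1}(N_{\bar N}(f(P)))$, and collecting over conjugates gives the statement for $(\vec{p},p_i)$.

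The main obstacle is bookkeeping rather than substance. The Sylow subgroups and normalizers are taken relative to $G_{\vec{p}}$ in $E$ but relative to $Inn(G)\cap\bar N$ in $f(E)$, so we must check that $f$ restricted to $G_{\vec{p}}$ has image exactly $\bar N\cap Inn(G)$ and that the degenerate conventions match. The degenerate cases are when $p_i$ divides one order but not the other, and here the normality of a Sylow subgroup contained in $Z(G)$ settles it. The identification of $f(G_{\vec{p}})$ is where the preimage property $Ker f\leq N$ is essential.
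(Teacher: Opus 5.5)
Your proposal is correct and follows essentially the same route as the paper. Both argue by induction on the length of $\vec{p}$, using Lemma \ref{prei} with $Ker f=C_E(G)$ together with the paper's observations (iii) and (ii): the identity $f(G\cap N)=f(G)\cap f(N)$ for full preimages, and the fact that a Sylow subgroup maps either onto a Sylow subgroup of the image or to $1$. Both finish with the remark $N_{\vec{p}}(L)=L\cap N_{\vec{p}}(Aut(G),Inn(G))$ to pass from $f(E)$ to $Aut(G)$; you simply invoke this remark at the outset rather than at the end.
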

Let us make three preliminary observations.\\

$(i)$ If a subgroup $S$ of $E$ is the preimage of $f(S)$ then $Ker f\subseteq S$. Due to the
one-to-one correspondence between such subgroups and the subgroups of $f(E)$,
each conjugate of $f(S)$ in $f(E)$ is the image of exactly one conjugate of $S$ in $E$.
Hence, the conjugacy class $\mathcal{C}_{1}$ of $S$ is the preimage of the conjugacy class
 $\mathcal{C}_{2}$ of $f(S)$ and
we just have to prove that one element of $\mathcal{C}_{1}$ is the preimage of an
element of $\mathcal{C}_{2}$.\\

$(ii)$ If $S_{p}$ is a Sylow $p$-subgroup of $G$ and $f$ a homomorphism with kernel $K$,
then the index $i:=[f(G):f(S_{p})]$ is
$|KG|/|KS_{p}|=\frac{|K|.|G|/|K\cap G|}{|K|.|S_{p}|/|K\cap S_{p}|}=\frac{|G|.|K\cap S_{p}|}
{|S_{p}|.|K\cap G|}=[G:S_{p}]/l$ where $l$ is the index of $K\cap S_{p}$ in $K\cap G$.
Since $[G:S_{p}]$ is coprime to $p$, the index $i$ is also coprime to $p$ and since
$d:=|f(S_{p})|$ divides $|S_{p}|$, either $d=1$ and $p$ does not divide $i=|f(G)|$
 or $d$ is a $p$-power and $f(S_{p})$ is a $p$-Sylow of $f(G)$.\\

$(iii)$ $f(G\cap S)\subseteq f(G)\cap f(S)$ always holds. But if $Ker f\subseteq S$ then
the reverse inclusion is also true so that $f(G\cap S)=f(G)\cap f(S)$. Indeed, if
$x\in f(G)\cap f(S)$ then $x=f(g)=f(s)$ for some $g\in G$ and some $s\in S$. Thus
$\exists k_{s}\in Ker f\subseteq S$ such that $g=k_{s}s$ whence $g\in G\cap S$ and
$x=f(g)\in f(G\cap S)$.\\

\emph{Proof }.
Let $C$ be the kernel of $f$ so that $C$ is the centralizer of $G$ in $E$. Since $C$ normalizes
every subgroup of $G$, Lemma \ref{prei} can be used iteratively with $f$.
We use an induction argument. Assume that $S$  is a supplement to $G$ in $E$ and that $S$
is the preimage under $f$ of $f(S)$ (for instance we can start with $S=E$).
% Note that $f(S)$ is then a supplement to $In(G)$ in $f(E)$
Let $G_{s}:=G\cap S$ and $I_{s}:=Inn(G)\cap f(S)$. Since $Ker f\subseteq S$, by $(iii)$
$f(G_{s})=f(G)\cap f(S)=Inn(G)\cap f(S)=I_{s}$.\\
If $p$ is a prime number, let us prove that an element of $N_{(p)}(S,G_{s})$ is the
preimage under $f$ of some element of $N_{(p)}(f(S),I_{s})$.
 If $p$ does not divide $|G_{s}|$, it does not divide
$|I_{s}|=|f(G_{s})|$ and trivially $N_{(p)}(S,G_{s})=\{S\}$ is the preimage of $N_{(p)}(f(S),I_{s})=\{f(S)\}$.\\
If $p$ divides $|G_{s}|$, let $S_{p}$ be a $p$-Sylow subgroup of $G_{s}$. Then by Lemma
\ref{prei}, $A:=N_{S}(S_{p})\in N_{(p)}(S,G_{s})$ is the preimage of
$B:=N_{f(S)}(f(S_{p}))$ and we must prove that $B\in N_{(p)}(f(S),I_{s})$. %% ($\star$).
 Now by $(ii)$ there are two possible cases : either
$f(S_{p})$ is a $p$-Sylow of $f(G_{s})=I_{s}$ whence $B\in N_{(p)}(f(S),I_{s})$ ;
or $f(S_{p})=1$, $p$ does not divide $|f(G_{s})|=|I_{s}|$ (see $(ii)$) whence
$N_{(p)}(f(S),I_{s}):=\{ f(S)\}$  and since $B:=N_{f(S)}(1)=f(S)$ ($\star$),
 we have that $B\in N_{(p)}(f(S),I_{s})$%%holds.

 Let $L:=f(E)$. By induction we have proved that for a sequence $\vec{p}$ of prime
 numbers, $N_{\vec{p}}(E)$ is the preimage of $N_{\vec{p}}(L)$ under $f$ but as we
 mentioned before this theorem, since $Inn(G)\leq L\leq Aut(G)$, then
 $N_{\vec{p}}(Aut(G))\cap L=N_{\vec{p}}(L)$  (we consider $Aut(G)$ as an extension of $Inn(G)$).$\quad\Box$
%
%  This proves that the preimage of
%$N_{\vec{p}}(L)\cap Inn(G)$ is nilpotent. If we look at the upper central series, it is
%not difficult to prove that a central extension of a nilpotent group by another
%nilpotent group is still a nilpotent group.
%
%\begin{enumerate}
%\item $f(S_{p})$ is a $p$-Sylow of $f(G_{s})=I_{s}$ whence $B\in N_{(p)}(f(S),I_{s})$
%and its preimage is $A\in N_{(p)}(S,G_{s})$ ( by $(i)$, that is all we need ) ;
%\item $f(S_{p})=1$ whence $B=f(S)\in N_{(p)}(f(S),I_{s})$ and $S_{p}\leq Ker f$.
%But $Ker f\leq S\Rightarrow Ker f \cap G=Z(G)\leq S\cap G=G_{s}$ and
%$S_{p}\leq Ker f$ implies $S_{p}\leq G_{s}\cap Ker f$
%$\Rightarrow $ .
%\end{enumerate}
%
% argument. Let $\vec{q}=(p_{1},\ldots,p_{i-1})$ and let
% $\vec{p}=(p_{1},\ldots,p_{i-1},p_{i})$. First, if

The crucial consequence of Theorem \ref{preNp} is the possibility to calculate the
$N_{\vec{p}}$ supplements once, for all the extensions of $G$, in a unique computation on $Aut(G)$.
We just mentioned for the reader that if we apply this theorem iteratively,
we obtain that $N_{\vec{p}}(E)$ contains in turn $Z(G)$,  $Z(G/Z(G)),\ldots$ and
eventually $Z_{\infty}$, the stationary term of the upper central series of $G$.
% !! $Z_{\infinity}$ is a nilpotent normal subgroup of $G$ but it can be strictly smaller
% than $Fitting(G)$. Think to the numerous examples of centerless groups that have a non
% trivial  Fitting subgroup
% ex : Symm(3) (Fit=3), Symm(4) (Fit=2*2), PerefctGroups(960) (Fit =2^{4}
\subsubsection{Nilpotent refinement of supplement functions}
Finally, let us show that if $G$ is finite, it is always possible to consider supplement functions ($E\to S(E)$)
preserved by a $G$-isomorphism and such that $G\cap S(E)$ is nilpotent for every extension $E$ of $G$.
\begin{proposition}\label{refinep} Let $G$ be a finite group and let $\mathcal{F}$ be a family of extensions
of $G$. Let $\Sigma$ be the set of all supplement functions for $\mathcal{F}$ that are preserved by a
$G$-isomorphism.  Then for every $S\in\Sigma$ there exists $\widetilde{S}\in \Sigma$ such that
 for every $E\in \mathcal{F}$, the group $G\cap \widetilde{S}(E)$ is nilpotent and such that
 $\widetilde{S}(E)\subseteq S(E)$.
\end{proposition}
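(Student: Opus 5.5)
Given $S\in\Sigma$, I define $\widetilde{S}(E)$ as a member of $N_{\vec{p}}(S(E),G\cap S(E))$. Here $\vec{p}$ is a fixed ordering of the distinct prime divisors of $|G|$, and $S(E)$ is viewed as an extension of the finite normal subgroup $G_E:=G\cap S(E)$. This subgroup is normal in $S(E)$ because $G\lhd E$.

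\textbf{Step 1: the supplement property and nilpotency.} By the proposition on the functions $N_{\vec{p}}$, part (1), the set $N_{\vec{p}}(S(E),G_E)$ is a conjugacy class of supplements to $G_E$ in $S(E)$. So $S(E)=G_E\widetilde{S}(E)$, and hence
$$E=GS(E)=GG_E\widetilde{S}(E)=G\widetilde{S}(E).$$
Thus $\widetilde{S}(E)\subseteq S(E)$ is a supplement to $G$ in $E$, and $\widetilde S$ is a supplement function for $\mathcal{F}$. Since $\widetilde{S}(E)\subseteq S(E)$, we have $G\cap\widetilde{S}(E)=G_E\cap\widetilde{S}(E)$. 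By part (4) of the same proposition, applied with $\vec{p}$ containing all primes dividing $|G_E|$ (the extra primes change nothing), this group is nilpotent. For definiteness, choose one representative $\widetilde{S}(E)$ of the class for each $E$.

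\textbf{Step 2: preservation by a $G$-isomorphism.} Let $E_1,E_2\in\mathcal{F}$ be $G$-isomorphic. Because $S\in\Sigma$, there is a $G$-isomorphism $j:E_1\to E_2$ with $j(S(E_1))=S(E_2)$. Then $j$ maps $G_{E_1}=G\cap S(E_1)$ onto $G\cap S(E_2)=G_{E_2}$. Write $G_k:=G_{E_k}$. The restriction $j|_{S(E_1)}:S(E_1)\to S(E_2)$ is therefore an isomorphism of extensions that carries $G_1$ onto $G_2$.

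These are extensions of two different (though isomorphic) groups, so I cannot quote part (2) verbatim. Instead I argue directly from the definition: $N_{\vec{p}}$ is built from iterated normalizers of Sylow subgroups of the successive intersections, and an isomorphism transports every ingredient. Concretely, by induction on the length of $\vec{p}$, $j$ maps $N_{\vec{p}}(S(E_1),G_1)$ onto $N_{\vec{p}}(S(E_2),G_2)$. If $N\in N_{\vec{q}}(S(E_1),G_1)$, then $j(N)\in N_{\vec{q}}(S(E_2),G_2)$ and $j(N\cap G_1)=j(N)\cap G_2$. Moreover, $j$ maps Sylow $p$-subgroups of $N\cap G_1$ bijectively onto those of $j(N)\cap G_2$, and it maps normalizers to normalizers. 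The convention $N_{(p)}=\{\cdot\}$ for primes not dividing the order is preserved, because $j$ preserves orders.

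It follows that $j(\widetilde{S}(E_1))$ is a conjugate of $\widetilde{S}(E_2)$ in $S(E_2)\subseteq E_2$. Composing $j$ with the corresponding inner automorphism of $E_2$ gives a $G$-isomorphism mapping $\widetilde{S}(E_1)$ onto $\widetilde{S}(E_2)$. This is legitimate because inner automorphisms of $E_2$ fix $G$, as noted right after the definition of preservation. Hence $\widetilde{S}\in\Sigma$.

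The main obstacle is Step 2. I have to be careful that the inductive transport argument genuinely applies to an isomorphism between extensions of the distinct subgroups $G_1$ and $G_2$, rather than to a $G$-isomorphism, and that the conjugating element can be taken in $E_2$. Both points follow once the class is shown to be an $S(E_2)$-conjugacy class, which is part (1).
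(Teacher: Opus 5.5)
Your proof is correct and follows the paper's argument. Like the paper, you refine $S(E)$ to a member of the iterated Sylow-normalizer class $N_{\vec p}(S(E),G\cap S(E))$ and get nilpotency from part (4). You then show that a $G$-isomorphism $j$ with $j(S(E_1))=S(E_2)$ carries one class onto the other; the paper obtains this from an iterated use of Lemma~\ref{prei} with $\ker j=1$, which is exactly your transport-of-structure induction. Finally, both arguments adjust $j$ by an inner automorphism of $E_2$.

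The remaining differences are cosmetic. You fix one ordering of the primes of $|G|$, whereas the paper uses the primes of $|G\cap S(E)|$. You also pick representatives freely, whereas the paper constructs a choice-free canonical selection; as your argument shows, that selection plays no role in the preservation property.
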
 \emph{Proof }. Let $S\in\Sigma$ and for $E\in\mathcal{F}$ let $\vec{p}(E)$ be the prime factors of
$G\cap S(E)$ in increasing order. For every $E\in\mathcal{F}$ let $V_E$ be the set $N_{\vec{p}(E)}(S(E),S(E)\cap G)$.
 In order to define a supplement function $E\to \widetilde{S}(E)\subset S(E)$, let us show that it is possible
 for each $E$, to choose unequivocally one element in $V_E$ without assuming  the axiom of choice
 (note that $\mathcal{F}$ may be infinite if $E/G$ is infinite).
 
 For this purpose since $G$ is finite, it has a finite number of subgroups and we may assign to each subgroup
 of $G$ a different positive integer. The function $N_{\vec{p}}$ is defined inductively  as a repeated union
 of normalizers of some subgroup of $G$. Hence, at each step  we may assign to each normalizer, the number
 of the subgroup of $G$ that is used to define it. We associate in this way to each element $N$ of $V_E$ at
 least one finite sequence of positive integers (the length of the sequence is the length of $\vec{p}(E)$).
 By construction, it is possible from each sequence, to find back the corresponding subgroups of $G$ that define
 $N$ unequivocally. Different sequences could produce the same element $N\in V_E$, but since $G$ has a finite number
 of subgroups and since $\vec{p}(E)$ is of finite length, the number of such sequences for $N$ is finite. We can
  assign a different integer to each sequence $\sigma$ (for instance the integer $n_{\sigma}$ whose sequence of decimal
  digits is  $\sigma$) and we assign to each element of $V_E$ the smallest such positive integer $n_{\sigma}$.
  Finally define $\widetilde{S}(E)$  as the element of $V_E$ whose number is the smallest and we have
 constructed unequivocally a supplement function $E\to \widetilde{S}(E)\subset S(E)$ (even if $\mathcal{F}$
 contains an infinite number of extensions).

Now, we show that   this function is preserved by a $G$-isomorphism. Let $E_1$ and $E_2$ be two
$G$-isomorphic extensions of $\mathcal{F}$. Since $S\in\Sigma$, there is a $G$-isomorphism
$j:E_1 \to E_2$ that maps $S(E_1)$ onto $S(E_2)$. Since $Ker\, j=1$ normalizes every subgroup of $G$,
Lemma \ref{prei} may be used iteratively. Moreover remark $(iii)$ of Theorem \ref{preNp} holds since
$Ker\,j=1$ is contained in every subgroup of $G$ and the proof is identical to the proof of Theorem \ref{preNp} ;
$j$ maps %%$N_{\vec{p}(E_1)}(S(E_1),S(E_1)\cap G)$ onto $N_{\vec{p}(E_2)}(S(E_2),S(E_2)\cap G)$.
 $V_{E_1}$ onto $V_{E_2}$ .
Since these sets are conjugacy classes of supplements, we have explained in subsection \ref{isopreserved}
that if $j$ is composed with the conjugation in $E_2$ by a well-chosen element of $G$, then there is a
$G$-isomorphism that maps $\widetilde{S}(E_1)\in V_{E_1}$ onto $\widetilde{S}(E_2)\in V_{E_2}$.
$\quad\Box$
%%%%%%%%%%%%%%%%%%%%%%%%%%%%%%
\subsection{Solvable techniques for nonsolvable groups}
Let $E$ be an extension of a finite group $G$ by a (non necessarily finite) solvable
group $H$. We have proved the existence of a supplement function $E\to S_{E}$ preserved by a
$G$-isomorphism and such that $S_{E}\cap G$
is nilpotent (hence solvable). Therefore $S_{E}$ is solvable because it is an extension
of the solvable group $S_{E}\cap G$ by the solvable group $H$.
 %As a particular case we have the following result :
%\begin{corollary} If $E$ is a nonsolvable group with a finite perfect residuum
%$P\unlhd E$ then $P$ has a solvable supplement that is preserved by a $P$-isomorphism.
%\end{corollary}

This result implies that the construction of nonsolvable groups
having $P$ as perfect residuum can be reduced to the construction
of solvable extensions of a fixed nilpotent subgroup of $P$. From
a computational point of view, this is a major improvement. The
polycyclic presentation of finite solvable groups $S$ is one of the
most efficient group presentations. Most of the computational
problems on $S$ can be solved very quickly if such a presentation
is provided (see \cite{seress2001}). On the other hand, most of the algorithms
are much slower if the group is given as an arbitrary finitely
presented group (Fp-group for short), and unfortunately this is the general way to
handle arbitrary nonsolvable groups. For instance an extension $E$
of a perfect group $P$ has to be constructed as a Fp-group, in the
general case. The first way to study such an extension $E$ is to use the coset
enumeration algorithm of Todd-Coxeter  (see \cite{Suzuki_vol1} page 174). We have tried
this approach but it has turned out that, even for relatively
small nonsolvable groups (of order around 3000),
computations in $E$ were very slow : Todd-Coxeter's method spent a
long time to enumerate cosets and the amount of memory needed for
this purpose was quickly increasing. If the perfect
 group $P$ is given as a transitive permutation group acting on $d$ points an alternative method is to
% for a non transitive , the same holds if the kernels $K_{i}$ of the actions on each
% orbits stay invariant (normal) in the extensions
 extend this presentation to the extensions of $P$ by a group of order $n$ in order to
 construct a permutation group of degree $d.n$. Despite the fact that permutation
 presentations are computationnaly efficient for nonsolvable groups, a large degree
 is still a strong limitation. For instance $P=SL(2,5)$ has a presentation of degree $24$,
 but an extension of $P$ of order $3600=120*30$ has degree $30*24=720$.

But the main problem that makes these approaches extremely unpractical is isomorphism testing.
This is usually the most time-consuming part of groups classifications, especially for
groups of large order. Hence, the elaboration another method seemed essential to us for classification
of large collections of groups and this leaded us to our solvable supplement method (i.e. supplement
functions that are preserved by some isomorphisms).
 For the isomorphism problem, the advantage of our method is due to the fact that
isomorphism testing has to be done only on solvable subgroups of much smaller order.

%%%%%%%%%%%%%%%%%%%%%%%%%%%%%%%%%%%%
\section{A fast splitting test}\label{fastsplitest}

Assume that we are in the general situation where we want to find supplements of a normal
subgroup $G$ of a group $E$ and assume that we are looking for supplements of minimal
order. Essentially, the whole subgroup lattice has to be computed first, then we need to
 select the subgroups that are supplements of $G$ and finally we need to compare their
 order. A slight variation of this is a recursive determination of the maximal subgroups
  $M$ of $E$ which are supplements of $G$ ;  the maximals of the maximals which are still
  supplements are computed next and so on, until we reach
  a supplement $S$ of $G$ such that $S$ does not contain any smaller supplement to $G$.
The last step is the order comparison of the "final" supplements
obtained in this way. In some special cases, better methods are
available : for instance if $G$ (but not necessarily
$E/G$) is solvable, then there is an algorithm that starts computing an
elementary abelian series through $G$ (see \cite{GAP4}).

We propose to speed up the process by starting the search into the
supplements provided by the function $N_{\vec{p}}$. The process
is the following. An order $u$ is fixed and we are looking for
supplements $S$ of $G$ such that $|S\cap G|\leq u$. First choose
an arrangement $\vec{p}=(p_{1},\ldots,p_{n})$ of the prime factors
of $|G|$. Then, compute the supplements $S\in N_{\vec{p}}(E)$ (we
assume that
 $G$ is not nilpotent). If $E/G$ is solvable (for instance if $G$ is the perfect residuum
 of $E$), we are sure that $S$ is solvable too.
Finally, the last step is the general approach described in the previous paragraph where
maximal subgroups need to be computed but it is now applied on a subgroup $S$,
 generally much smaller than $E$. Moreover, the maximal subgroups of a solvable subgroup
 $S$ are much cheaper to compute than the maximal subgroups of $E$.
%% ( with algorithms that uses a special polycyclic presentation).
If no supplement such that $|S\cap G|\leq u$ is found, we start once again the process with
another arrangement of the prime factors of $|G|$.

At the end of the process, we are sure to obtain proper
supplements, but, except in the case where a complement is found,
there is no guarantee to obtain the minimal ones.

Let us show on some examples how this method allows to investigate
large nonsolvable groups for which we cannot compute the maximal
subgroups. However, our computing times are not optimal at all here. We have not even
 transform the solvable subgroups into pc-groups. Hence, one could expect further improvements
 of our timings by restricting maximal subgroups computations into polycyclic subgroups.% : once it will be done, maximal subgroups computation will be restricted to
%pc-groups : this will improve the timing a lot).
The identification number for perfect groups refers to those indicated
in the library of Holt and Plesken (\cite{Holt_Plesken}).

\begin{itemize}
\item
For $P:=PerfectGroup(7500,1)$, $|Out(P)|=40$ and thus $Aut(P)$ is
a group of order $300,000$. Our method has found a complement of
$Inn(P)$ in $Aut(P)$ in 34 seconds (containing 16 seconds to determine
$Aut(P)$), while the computation of the maximal subgroups of
$Aut(P)$ could not be achieved with 800 mega ram on a $800$ Mhz
Duron processor. Next, we found that the smallest transitive
permutation representation  for $Aut(P)$ has degree 49. The
computation of the maximal subgroups with the 49 points
representation for $Aut(P)$ took 110 minutes with $400$ megaram  %(?could be less ?)
dedicated to Gap 4.2 on the same machine.

\item For $P=PerfectGroup(29160,4)$, $|Out(P)|=72$ and we get in $80$ seconds, (containing
$49$ seconds for $Aut(P)$ ), a supplement $S$ of $Inn(P)$ such
that $u=|S\cap Inn(P)|=2$. Testing the minimality of such
supplement would require to compute the full subgroup lattice of
$Aut(P)$, which is a group of order $29,160*72=2,099,520$
elements. Such subgroups computation is out of the capacity of our
machine. However, even if there is a hidden complement, $u=2$ is
small enough to classify the extensions of $P$ efficiently. For
perfect groups
 up to order $30,000$, $2$ is the maximal value for $u$.

\item Even more impressive is the case of $P=PerfectGroup(15360,4)$ whose
outer automorphism group is the nonsolvable group $A\Sigma L(2,4)$ of order 5760.
Thus $Aut(P)$ is a group of order $15,360*5,760=88,473,600$ which is of course completely
 beyond the capacity of our machine (note moreover that $Aut(P)$ is not given as a permutation group
but as the automorphism group of a permutation group). First $GAP$
took 66 minutes to determine $Aut(P)$. Then using
$N_{\vec{p}}(Aut(P))$ a supplement $S$ such that $|S\cap
Inn(P)|=2$ was found in 30 seconds. Finally, $GAP$ took 90 minutes
to determine the maximal subgroups of $S$ and among them, a
complement has been found.

\item A last example is given by $P=PerfectGroup(32256,1)$ whose
outer automorphism group has order 168 whence $|Aut(P)|=5,419,008$. We found
a complement of $Inn(P)$ after 217 seconds, including the computation of $Aut(P)$.
\end{itemize}

%%%%%%%%%%%%%%%%%%%%%%%%%%%%%%%%%%%%%%%%%%%%%%%%%%%%%%%%%%%%
%\subsection{Tables of Perfect groups of order $< 15360$ with centre=1}\label{tableout}
\section{Tables of Perfect groups}\label{tableout}
\pagestyle{empty}
\index{explain this table :Mult,split,smallest sup}
% for multiplier : see the file /testbig
\begin{tabular}{ c|c|p{2.3 cm}|c|p{2.5 cm}|c }
Perf. Id & Structure & $Out(P)$ & Split & Smallest sup. in & Mult \\% for the Schur multiplier
\hline\hline
$[60,1]$ & $A_{5}$ & $[2,1]$ &yes & $[3,2]$ & $2$ \\
$[168,1]$ & $L_{3}(2)\cong L_{2}(7)$ & $[2,1]$ &yes & $[3,2]$ & $2$\\
$[360,1]$ & $A_{6}$ & $[4,2]\cong 2^{2}$ &$no:\,u=2$ & $[5,2],[ 2 ]$ & $2\times 3$ \\
$[504,1]$ & $L_{2}(8)$ & $[3,1]$ & yes& $[2,7]$ & $1$\\
$[660,1]$  & $L_{2}(11)$ & $[2,1]$ & yes & $[11,5]$ & $2$\\
$[960,1]$ & $2^{4}.A_{5}$ & $[24,12]\cong S_{4}$ & yes  & $[3,2]$ & $4^{2}\times 2$\\
$[960,2]$ & $2^{4}.A_{5}$ & $[2,1]$ & yes & $[2,3]$ & $2^{2}$\\
$[1092,1]$ & $L_{2}(13)$ & $[2,1]$ & yes & $[7,2]$ & $2$\\
$[1344,1]$ & $L_{3}(2) 2^{3}$ & $[2,1]$ & yes & $[7,3]$ & $2^{2}$\\
$[1344,2]$ & $L_{3}(2)N 2^{3}$ & $[2,1]$ & yes & $[7,3]$ & $2$\\
$[1920,4]$ & $A_{5} 2^{1} E 2^{4}$ & $[4,2]\cong 2^{2}$ &$no:\,u=2$ & $[3,2],[2,3]$ & $2$\\% $2^{5}.A_{5}$
$[2448,1]$  & $L_{2}(17)$ & $[2,1]$ &yes & $[3,2]$ & $2$\\
$[2520,1]$ & $A_{7}$ & $[2,1]$ &yes & $[7,3]$ & $2\times 3$\\
$[3000,1]$ & $A5 2^{1} 5^{2}$ & $[4,1]\cong C_{4}$ &yes & $[5,2]$ & $5$ \\  %$5-local=5$\\
$[3420,1]$ & $L_{2}(19)$ & $[2,1]$ &yes & $[3,2]$ & $2$\\
$[3600,1]$ & $A_{5}\times A_{5}$ & $[8,3]$ &yes & $[2,3]$ & $2^{2}$\\
$[4080,1]$ & $L_{2}(16)$ & $[4,1]$ &yes & $[3, 2]$ & $1$\\
%Mult=1 because 4080=5*3*17*2^4 and holt and plesken do not contain any
%central extension of L2(16) by 2,3,5 or 17
$[4860,1]$ & $A_{5} 3^{4'}$ & $[12,4]$ &yes & $[2,3]$ & $2\times 3$  \\ % $3-local=3$\\
$[4860,2]$ & $A_{5} N 3^{4'}$ & $[6,2]\cong C_{6}$ & yes & $[2,3]$ & $2$ \\ % $3-local=1$\\
$[5616,1]$ & $L_{3}(3)$ & $[2,1]$ & yes & $[13,3]$ & $1$\\
%Mult=1 because 5616=3^3*2^4*13 and holt and plesken do not contain any
%central extension of L3(3) by 2,3 or 13
$[5760,1]$ & $A_{6} 2^{4}$ & $[4,2]$ & yes& $[3,2]$ & $4\times 2$ \\ % $2-local=4\times 2$\\% page 203 of Holt_Plesken
$[6048,1]$ & $U_{3}(3)$ & $[2,1]$ & yes & $[7,3]$ & $1$\\
$[6072,1]$ & $L_{2}(23)$ & $[2,1]$ &yes & $[23,11]$ & $2$\\
$[7500,1]$ & $A_{5} 5^{3}$ & $[40,12]\cong$ & yes & $[3,2]$ & $2\times 5$ \\ % $5-local=5$\\
 & & $5\times 2:Aut_{5}$ & & &\\
$[7500,2]$ & $A_{5} N 5^{3}$ & $[10,1]\cong D_{10}$ &yes & $[3,2]$ & $2$ \\ % $5-local=1$\\
$[7800,1]$ & $L_{2}(25)$ & $[4,2]\cong 2^{2}$ & $no:\,u=2$ & $[13,2],[3,2],[2]$ & $2$ \\ % 7800=2^3*5^2*3*13
$[7920,1]$ & $M_{11}$ & $[1,1]$ &yes & $--$ & $1$\\ %7920=16*5*9*11
$[9720,3]$ & $A_{5} 2^{1} 3^{4}$ & $[8,3]\cong D_{8}$ &$no:\,u=2$ & $[2,3],[3,2]$ & $3^{2}$ \\ % $3-local=3^{2}$\\
% yes u=2 is the smallest : see /root/gap/gap4r2/pkg/extperf/lib.g
$[10080,1]$ & $A_{5} \times L_{3}(2)$ & $[4,2]$ & yes & $[7,2,3]$ & $2^{2}$?\\
%2*2= mult =mult a5*mult l3(2) ???
$[10752,1]$ & $L_{3}(2)2^{3}.2^{3} $ & $[24,12]$ & yes & $[7,3]$ & $2^{3}$\\ % pg 175->181 Holt_Plesken
$[10752,2]$ & $L_{3}(2) 2^{3} \times N 2^{3}$ & $[8,3]$ &yes & $[7,3]$ & $2^{2}$\\
$[10752,3]$ & $L_{3}(2) 2^{3} A 2^{3}$ & $[4,2]$ & yes & $[7,3]$ & $2^{2}$\\
$[10752,4]$ & $L_{3}(2) N 2^{3} A 2^{3}$ & $[4,2]$ &yes & $[7,3]$ & $2$\\
$[10752,5]$ & $L_{3}(2) 2^{3} \times 2^{3'}$ & $[8,3]$ &yes & $[7,3]$ & $2^{4}$\\
$[10752,6]$ & $L_{3}(2) 2^{3} \times N 2^{3'}$ & $[4,2]$ &yes & $[7,3]$ & $2^{2}$\\
$[10752,7]$ & $L_{3}(2) N 2^{3} \times N 2^{3'}$ & $[8,3]$ &yes & $[7,3]$ & $4$\\
$[10752,8]$ & $L_{3}(2) 2^{3} E 2^{3'}$ & $[2,1]$ &yes & $[7,3]$ & $2^{3}$\\
$[10752,9]$ & $L_{3}(2) N 2^{3} E 2^{3'}$ & $[2,1]$ &yes & $[7,3]$ & $2$\\
$[11520,4]$ & $A_{6} 2^{1} E 2^{4}$ & $[2,1]$ & yes & $[3,2]$ & $2-local=1$\\ % pg 205
$[12180,1]$ & $L_{2}(29)$ & $[2,1]$ &yes & $[5,2]$ & $2$ \\
$[14520,1]$ & $A_{5} 2^{1} 11^{2}$ & $[5,1]$ &yes & $[2,3]$ & $11-local=11$\\ % pg 166
$[14880,1]$ & $L_{2}(31)$ & $[2,1]$ &yes & $[3,2]$ & $2$\\
\end{tabular}\label{perf15360}
%%%%%%%%%%%%%%%%%%%%%%%%%%%%%%%%%%%%%%%%%%%%%%%%%%%%%%%

\begin{minipage}{10cm}\vspace{-2.5cm}

%\subsubsection{Tables 2}
%\pagestyle{empty}
\begin{tabular}{ c|c|c|c|c|c }
Perf. Id & Structure & $Out(P)$ & Split & Smallest sup. in & Mult \\% for the Schur multiplier
\hline\hline

%gap> List([2,3,4,10,11,22,23,24,25,36,37],x->outz([30720,x]));time;
%[ [ [ 64, 138 ], "z=", [ 1, 1 ] ], [ [ 64, 138 ], "z=", [ 1, 1 ] ],
%  [ [ 48, 48 ], "z=", [ 1, 1 ] ], [ [ 192, 1493 ], "z=", [ 1, 1 ] ],
%  [ [ 16, 13 ], "z=", [ 1, 1 ] ], [ [ 4, 2 ], "z=", [ 1, 1 ] ],
%  [ [ 4, 2 ], "z=", [ 1, 1 ] ], [ [ 4, 2 ], "z=", [ 1, 1 ] ],
%  [ [ 4, 2 ], "z=", [ 1, 1 ] ], [ [ 2, 1 ], "z=", [ 1, 1 ] ],
%  [ [ 2, 1 ], "z=", [ 1, 1 ] ] ]

$[15360,3]$ & $A_{5} 2^{4} \times 2^{4}$ & $A\Sigma L(2,4)$* &yes & $<3,2>$ & $$ \\ % * nonsolvable group of order 5760
$[15360,4]$ & $$ & $[24,12]$ & $u=2$  & $<2,3>$ & $$\\  %  yes  % $<,>^{\pi}$
$[15360,5]$ & $$ & $[24,12]$ & yes & $<2,3>$ & $$\\ % \begin{verbatim{ text \end{verbatim}
$[15360,6]$ & $$ & $[12,4]$ & yes & $<2,3>$ & $$\\
$[15360,7]$ & $$ & $[4,2]$ & yes  & $<2,3>$ & $$\\
$[16464,1]$ & $ L_{3}(2) 2^{1} 7^{2}$ & $[6,2]$ & yes  & $<7,2,3>$ & $$\\  %   L3(2) 2^1 7^2
$[20160,4]$ & $A_{8}$ & $[2,1]$ &yes  & $<7,3>$ & $$\\
$[20160,5]$ & $L_{3}(4)$ & $[12,4]$ &yes   & $<2,3>$ & $$\\ % also $<7,3>$
$[21600,1]$ & $A_{5}\times A_{6}$ & $[8,5]\cong 2^{3}$ &no $u=2$  & $<2,3>$ & $$ \\
$[25308,1]$ & $ L_{2}(37)$ & $[2,1]$ & yes  & $<2,3>$ & $$\\
$[25920,1]$ & $U_{4}(2)$ & $[2,1]$ &yes   & $<3,2>$ & $$\\
$[28224,1]$ & $L_{3}(2)\times L_{3}(2)$ & $[8,3]\cong D_{8}$ & yes & $<7,3>$ & $$\\
$[29120,1]$ & $ Sz(8)$ & $[3,1]$ &yes   & $<2,7>$ & $$\\
$[29160,4]$ & $A_{6} 3^{4'}$ & $[72,40]$ &  u=2 & $<5,2>$ & $2\times 3^{3}$\\
$[30240,1]$ & $A_{5}\times L_{2}(8)$ & $[6,2]$ & yes & $<2,3,7>$ & $$\\ % [6,2]\cong C_{6}
$[30720,2]$ & $ A_{5} ( 2^{4} E 2^{1} E 2^{4} ) A$ & $[64,138]$ &$u=2$ & $<2,3>$ & $$\\
$[30720,3]$ & $$ & $[64,138]$ & $u=2$ & $<2,3>$ & $$ \\
$[30720,4]$ & $$ & $[48,48]$ & $u=2$  & $<2,3>$ & $$\\
$[30720,10]$ & $$ & $[192,1493]$ & $u=2$ & $<2,3>$ & $$\\ % is a subdirect product
$[30720,11]$ & $$ & $[16,13]$ &$u=2$  & $<2,3>$ & $$ \\ % is a subdirect product
$[30720,22]$ & $$ & $[4,2]$ &$u=4$   & $<5,2>$ & $$\\
$[30720,23]$ & $$ & $[4,2]$ & $u=2$  & $<2,3>$ & $$\\  % is a subdirect product
$[30720,24]$ & $$ & $[4,2]$ & yes  & $<2,3>$ & $$\\
$[30720,25]$ & $$ & $[4,2]$ & yes  & $<2,3>$ & $$\\
$[30720,36]$ & $$ & $[2,1]$ & yes  & $<2,3>$ & $$\\
$[30720,37]$ & $$ & $[2,1]$ & yes & $<2,3>$ & $$ \\
$[32256,1]$ & $L_{2}(8) 2^{6}$ & $[168,43]$ & yes  & $<3,2>$ & $$\\
$[32256,2]$ & $L_{2}(8) N 2^{6}$ & $[24,13]$ & yes  & $<2,7>$ & $$\\
$[32736,1]$ & $ L_{2}(32)$ & $[5,1]$ &yes   & $<2,31>$ & $$\\
$[34440,1]$ & $L_{2}(41)$ & $[2,1]$ & yes  & $<3,2>$ & $$\\
$[39600,1]$ & $A_{5}\times L_{2}(11)$ & $[4,2]$ & $yes$  & $<11,2,3,5>$ & $$\\
%$[39732,1]$ & $ L_{2}(43)$ & $[2,1]$ &   & $<,>$ & $$\\
%$[40320,2]$ & $A_{7} 2^{4}$ & $[1,1]$ &  & $<,>$ & $$ \\
%$[43008,1]$ & $L_{3}(2) 2^{1}(2^{3} E 2^{1} E 2^{3})$ & $[1,1]$ &   & $<,>$ & $$\\
%$[43008,2]$ & $$ & $[2,1]$ &   & $<,>$ & $$\\
%$[43008,3]$ & $$ & $[2,1]$ &   & $<,>$ & $$\\
%$[43008,25]$ & $$ & $[2,1]$ &   & $<,>$ & $$\\
%$[43320,1]$ & $A_{5} 2^{1} 19^{2}$ & $[9,1]$ &yes   & $<3,2>+$ & $$\\  %%%  A5 2^1 19^2
%$[43740,1]$ & $A_{5} 3^{6}$ & $[16,8]$ &yes   & $<2,3>+$ & $$\\
%$[48000,1]$ & $A_{5}\#2^{5} 5^{2}[1]$ & $[48,30]$ &  & $<,>$ & $$ \\
%$[48000,2]$ & $$ & $[16,6]$ &   & $<,>$ & $$\\
%$[48000,3]$ & $$ & $[4,1]$ &   & $<,>$ & $$\\
%$[51888,1]$ & $L_{2}(47)$ & $[2,1]$ &   & $<,>$ & $$\\
%$[57600,1]$ & $A_{5}\times A_{5}\#2^{4}[1]$ & $[48,48]$ &   & $<,>$ & $$\\
%$[57600,2]$ & $$ & $[4,2]$ &   & $<,>$ & $$\\
%$[57624,1]$ & $ L_{3}(2) 7^{3}$ & $[12,5]$ &   & $<,>$ & $$\\
%$[57624,2]$ & $L_{3}(2) N 7^{3}$ & $[2,1]$ &  & $<,>$ & $$ \\
%$[58320,2]$ & $ A_{6} 2^{1} 3^{4}$ & $[16,8]$ &   & $<,>$ & $3^{3}$\\
%$[58800,1]$ & $ L_{2}(49)$ & $[4,2]$ & u=2  & $<5,2>$ & $$\\
%$[60480,3]$ & $L_{3}(2)\times A_{6}$ & $[8,5]$ & no   & $<,>$ & $$\\

\end{tabular}
\end{minipage}

%%%%%%%%%%%%%%%%%%%%%%%%%%%%%%%%%%%%%%%%%%%%%%
\subsubsection{How to read these tables}
\pagestyle{plain}
Each line of the table corresponds to a perfect group $P$ whose centre is $1$ and provides information
about $Aut(P)$. If $\tilde{P}$ is a perfect group with a non trivial centre then $\tilde{P}/Z(\tilde{P})$ has a
trivial centre and $Aut(\tilde{P})$ is a subgroup of $Aut(\tilde{P}/Z(\tilde{P}))$ (see the Appendix).
Hence, the informations contained in this table about perfect groups with trivial centre holds for every
perfect group.

% that is a central extension of $P$
For a given perfect group $P$, "Perf. Id" and "Strucure" refer to its Id and its description in the Library of
perfect groups (\cite{Holt_Plesken}) whereas the Id in column $Out(P)$ refers to the Id of $Out(P)$ in
the Small Groups Library (if $|Out(P)|\leq 2000$). Column "Split" tells whether $Aut(P)$ splits over $Inn(P)$.
 If it is not the case then let $S$ be a supplement to $Inn(P)$ in $Aut(P)$ such that $u=|S\cap Inn(P)|$ is
 minimal. The value of $u$ is given and next column ("Smallest sup. in") gives a sequence $\vec{p}$ of prime
 numbers for which such a minimal supplement $S$ can be found in an element of $N_{\vec{p}}$.

The column "Mult" reflects the informations that can be obtained from (\cite{Holt_Plesken}) about the
\emph{Schur multiplier} of a $P$. For every subgroup $\tilde{Z}$ of the abelian group $Mult(P)$,
there is a perfect group $\tilde{P}$ such that $\tilde{P}/Z(\tilde{P})\cong P$ and by the results of the Appendix
(chapter \ref{appendix} ), the informations about every such $\tilde{P}$ can be deduced from the informations given
about $P$.
%%%%%%%%%%%%%%%%%%%%%%
%%%%%%%%%%%%%%%%%%%%%%%%%%%%%%%%%%%%%%%%%%%%%%%%%

%%%%%%%%%%%%%%%%%%%%%%%%%%%%%%%%%%%%%%%%%%%%%%%%%%%%%%%%%%%ù
\chapter{$Z\phi$-classes and classification theorems}\label{classiftheo}
%%MMMM FOR MORE READIBILITY USE THE "EQUATION" ENVIRONEMENT +LABEL (NUMBER)
\index{In this chapter, use \BEGIN{equation}}
\subsubsection{Introduction}
%\begin{figure}[ht]
%  %\begin{center}
%\includegraphics[width=4cm,height=7cm]{uz1.eps}
%\caption{}
%\label{fig:ccircuit}
%  %\end{center}
% \end{figure}
%%%%%%%%%%%%%%%%%%%%%%%%%%%%%%%%%
%macros/latex/contrib/other/misc/wrapfig.sty has syntax:
%\begin{wrapfigure}[height of figure in lines]{l|r}[overhang]{width}
%  figure, caption, etc.
%\end{wrapfigure}
%
%\setlength{\intextsep}{0pt}
% \begin{wrapfigure}[6]{l}{0.2\linewidth} % l for left r for right
%  \mbox{\input{latt1.latex}} %%% IT IS THE WRONG GRAPHIC
% \end{wrapfigure}
%%%%%%%%%%%%%%%%%%%%%%%%%%%%%%%%%%
%%MMMM see figure \ref{fig:uz1}\\

From the second half of the 19th century up to nowadays, the history of group theory
is full of incorrect group classifications. Some historical examples can be found in
\cite{O_Brien_Short}. In large catalogues of
groups where many groups share many common properties, it is very frequent either to forget some
 case or to count it twice. The best mathematicians have faced such errors. This was largely due to
hand computations and nowadays, computer implementations of these classification techniques help
avoiding such problems. Nevertheless programming mistakes are still possible as well as a mathematical
error in the method itself. Therefore it is extremely important to compare new groups catalogues with
previously existing partial catalogues obtained by other methods. Every classification method
should provide such cross-checks.
In this chapter we expose for some cases, an alternative to our general classification method.
It allows us to enumerate group extensions in a different way and provides the required cross-checks.
Even if this method does not cover every group extension, it covers the overwhelming majority
of the groups involved to classify  the nonsolvable groups of order less than 58,320.

\section{$Z\phi$-classes}
%MMMM (- Note that if Z=1, the definition gives the cns of isomorphism for extensions of
%centerless groups. This generalizes the case z=1. )
\index{cite-Laue z=1 to show the CNS for z=1 }
%\\
%\\
%\\
%\\
%macros/latex/contrib/other/misc/wrapfig.sty has syntax:
%\begin{wrapfigure}[height of figure in lines]{l|r}[overhang]{width}
%  figure, caption, etc.
%\end{wrapfigure}
%
%\setlength{\intextsep}{0pt}
% \begin{wrapfigure}[6]{l}{0.2\linewidth} % l for left r for right
%  \mbox{\input{latt1.latex}} %%% IT IS THE WRONG GRAPHIC
% \end{wrapfigure}
%
%
%
Let $G$ be a group. Its center $Z(G)$ is a characteristic subgroup and every
automorphism $\omega$ of $G$ induces an automorphism of $Z(G)$. Since conjugations
in $G$ fix every element of $Z(G)$, for $\omega\in Aut(G)$, all the elements of the coset
$Inn(G)\omega$ induce the
 same automorphism on $Z(G)$ and this defines a canonical action of $Out(G)$ on $Z(G)$.
%\textbf{Definition }Let $Z=Z(G)$, let $S$ be an extension of $Z$ and let
%$\phi$ be a homomorphism from $S/Z$ into $Out(G)$ . The pair $(S,\phi)$ is called a
%\emph{$Z\phi$-extension} iff $z^{s}=z^{\tilde\phi(s)}$ for every $s\in S$
%(where $\tilde \phi:S\rightarrow Out(G)$ is the homomorphism $\phi$ extended to $S$ by the
%rule $\tilde\phi(s)=\phi(sZ)$). The action of $S$ on $Z(G)$ by
%conjugation must thus coincide with the action of $\tilde\phi(S)\subseteq Out(G)$ on
%$Z(G)$. \\
%For $i=1,2$, let $(S_{i},\phi_{i})$ be  $Z\phi$-extensions.
%We say that pair $(S_{1},\phi_{1})$ and pair $(S_{2},\phi_{2})$ are \emph{equivalent}
% if there is an isomorphism $j:S_{1}\rightarrow S_{2}$ and an element $\pi\in Out(G)$
% such that
%\begin{enumerate}
%\item for $z\in Z(G)$, $j$ maps $z$ to $z^{\pi}$
%\item for $s\in S_{1}$ we have $\tilde\phi_{2}(j(s))=\tilde\phi_{1}^{\pi}(s)$
%\end{enumerate}

\textbf{Definition.} Write $Z=Z(G)$, let $S$ be an extension of $Z$ and let
$\phi$ be a homomorphism from $S$ into $Out(G)$ with $Z\subset Ker\phi$
 (or equivalently a homomorphism from $S/Z$ into $Out(G)$).
 We say that the pair $(S,\phi)$ is a \emph{\textbf{$Z\phi$-extension}} if and only if
 $z^{s}=z^{\phi(s)}$ for every $s\in S$ and every $z\in Z$. The conjugation action of every $s\in S$
 on $Z(G)$ must thus coincide with the action of $\phi(s)\subseteq Out(G)$ on $Z(G)$.
If $\phi(S)=L/Inn(G)$ (where  $Inn(G)\leq L\leq Aut(G)$),
we say that \emph{$(S,\phi)$ is associated with $L/Inn(G)$}. % (or more simply, with $L$).
Finally let us precise that since $\phi(s)$ is a coset of $Inn(G)$ in
$Aut(G)$, for $\pi\in Aut(G)$ we use the notation $\pi^{-1}\phi(s)\pi:=\phi^{\pi}(s)$.\

For $i=1,2$, let $(S_{i},\phi_{i})$ be  $Z\phi$-extensions.
We say that the pair $(S_{1},\phi_{1})$ and $(S_{2},\phi_{2})$ are \emph{\textbf{equivalent}}
 if there is an isomorphism $j:S_{1}\rightarrow S_{2}$ and an element $\pi\in Aut(G)$
 such that
\begin{enumerate}
\item for every $z\in Z(G)$, $j$ maps $z$ to $j(z)=z^{\pi}$
\item for every $s\in S_{1}$ we have $\phi_{2}(j(s))=\pi^{-1}\phi_{1}(s)\pi:=\phi_{1}^{\pi}(s)$.
\end{enumerate}

If ${\pi _{2}}$ is another representative of $Inn(G)\pi$, then $z^{\pi}=z^{\pi _{2}}$
and $\pi^{-1}\phi_{1}(s)\pi=\pi _{2}^{-1}\phi_{1}(s)\pi _{2}$ ; therefore
pairs that are equivalent through $j$ and $\pi$ are also equivalent through $j$ and
$\pi _{2}$. Hence, we can say that pairs are equivalent if there exists an isomorphism
$j$ and $\tilde{\pi}\in Out(G)$ that satisfies conditions 1 and 2.
It is not difficult to check that we have defined an equivalence relation. Indeed, suppose that
$(S_{1},\phi_{1})$ is equivalent to $(S_{2},\phi_{2})$ through $j$
 and $\pi$ and that $(S_{3},\phi_{3})$ is equivalent to $(S_{2},\phi_{2})$ through $j'$
 and $\pi '$ then $(S_{3},\phi_{3})$ is also equivalent to $(S_{1},\phi_{1})$ through $j'j$
 and $\pi '\pi$. The relation is symmetrical and a pair is equivalent to itself.
This notion must not be confused with the notion of equivalent extensions in Schreier's
general theory (see paragraph \ref{equivext})% \cite{Zassenhaus} §49 or\cite{Hall} page 221).
We call $Z\phi$-classes, the equivalence classes of this equivalence relation.

It follows from this definition that :
\begin{enumerate}
\item if for $i=1,2$, $H_{i}\cong S_{i}/Z(G)$ and if the $(S_{i},\phi _{i})$ are equivalent $Z\phi$-extensions
of $Z(G)$, then the groups $H_{1}$ and $H_{2}$ are isomorphic ;
% the factor groups $S_{1}/Z(G)$ and $S_{2}/Z(G)$ are isomorphic
\item if for $i=1,2$, the $(S_{i},\phi _{i})$ are equivalent $Z\phi$-extensions of $Z(G)$,
then $\phi_{1}(S)$ and $\phi_{2}(S)$ are conjugate subgroups of $Out(G)$ ;
\item conversely, if $(S_{1},\phi _{1})$ is a $Z\phi$-extension associated with a subgroup
 $V$ of $Out(G)$, then for every $\pi\in Out(G)$, $(S_{1},\phi ^{\pi} _{1})$ is a $Z\phi$-extension
 associated with $V^{\pi}$. Hence, in order to list representatives of $Z\phi$-classes, it
 is sufficient to construct only $Z\phi$-extensions associated with $V_{k}\leq Out(G)$ where
 $\{V_{k}\}$ is a set of representatives for the conjugacy classes of subgroups in $Out(G)$.
\end{enumerate}

% We call the equivalence classes of this relation, the $Z\phi$-classes.

Let us remark that for every homomorphism $\widetilde{\phi}:H\rightarrow Out(G)$, there exists at
least one $Z\phi$-extension $(S,\phi)$ such that $S/Z(G)\cong H$ and such that $\phi$ is the
homomorphism from $Z(G)s_{h}\to \widetilde{\phi (h)}$ corresponding to a given isomomorphism $s_h\to h$ from
$S/Z$ onto $H$. It suffices to consider the semi-direct product $S=H\ltimes_{\phi '}Z$ where
$\widetilde{\phi '}:H\to Aut(Z(G))$ is the action induced by $\phi$ on $Z(G)$.

\subsection{Applications of $Z\phi$-classes}
%for the next theorem G and H could be infinite but since the set of extensions could
%be then infinite, we must speak of bijection between sets instead of same number of elements in both sets
%The notions of $Z\phi$-classes of are fundamental to formulate classification theorem like Theorem \ref{uzsplit}.
%Cite Mac-Lane's work.
An example of use of these concepts is the following result which will be proved later as a
consequence of Proposition \ref{uzprime}.
\begin{theorem}\label{autsplit}Assume that $Aut(G)$ is a split extension of $Inn(G)$.
There is a bijection between the $G$-isomorphism classes of extensions of $G$ by $F$
and the $Z\phi$-classes of $Z\phi$-extensions of $Z(G)$ by $F$.
\end{theorem}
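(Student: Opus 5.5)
Let $B$ be a complement to $Inn(G)$ in $Aut(G)$, so $B\cap Inn(G)=1$. Then the subgroup $U$ of $G$ corresponding to $Inn(G)\cap B$ is $Z=Z(G)$. The plan is to use the supplement function of Proposition \ref{Allisopres}. For an extension $E$ of $G$ by $F$ with conjugation action $f$, set $S_E:=f^{-1}(B\cap f(E))$. This is a supplement to $G$ with $S_E\cap G=Z$, so $S_E$ is an extension of $Z$ by $E/G\cong F$. Since $B\cap Inn(G)=1$, the composite $B\to Aut(G)\to Out(G)$ is injective. Define $\phi_E:S_E\to Out(G)$ by $s\mapsto f(s)Inn(G)$. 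Its kernel contains $Z$, and because $f(z)=1$ for $z\in Z$ we have $z^s=z^{f(s)}=z^{\phi_E(s)}$. Hence $(S_E,\phi_E)$ is a $Z\phi$-extension of $Z$ by $F$. The map is then $\beta:E\mapsto$ the $Z\phi$-class of $(S_E,\phi_E)$.

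\emph{Surjectivity.} Take a $Z\phi$-extension $(S,\phi)$ with $S/Z\cong F$. Lift $\phi$ to $\hat\phi:S\to B$ by composing with the inverse of the isomorphism $B\to Out(G)$; this is a homomorphism, and $Z\subseteq Ker\hat\phi$. Form $S\ltimes_{\hat\phi}G$ and $K=\{(z,z^{-1}):z\in Z\}$, using the identity $i:Z\to Z\unlhd S$. We check Proposition \ref{ucns}. Condition (ii) holds because $g^z=g$ and $\hat\phi(z)=1$. Condition (i) reads $z^s=z^{\hat\phi(s)}$, which is exactly the $Z\phi$ condition, since $\hat\phi(s)$ and $\phi(s)$ induce the same action on $Z$. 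By Proposition \ref{uu-1}, $E=(S\ltimes_{\hat\phi}G)/K$ is an extension of $G$ with quotient $S/Z\cong F$. Since $V=U=Z$, the image of $S$ is exactly $f^{-1}(\tilde B)$, i.e.\ $S_E$. Its action is $\hat\phi$, so $\beta(E)$ is the class of $(S,\phi)$.

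\emph{Well-definedness and injectivity.} This is the main step, and it comes from Propositions \ref{isogh} and \ref{Allisopres}. Suppose $E_1,E_2$ are $G$-isomorphic. By Proposition \ref{Allisopres} there is a $G$-isomorphism $\alpha$ mapping $S_{E_1}$ onto $S_{E_2}$, with restriction $\pi\in Aut(G)$. Let $j:=\alpha|_{S_{E_1}}$. Proposition \ref{isogh} gives $f_2(j(s))=\pi^{-1}f_1(s)\pi$, so $\phi_2(j(s))=\phi_1^\pi(s)$, and also $j(z)=z^\pi$ on $Z$. These are exactly the equivalence conditions, so $\beta$ is well defined.

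For injectivity, suppose $(j,\pi)$ is an equivalence between $(S_{E_1},\phi_1)$ and $(S_{E_2},\phi_2)$. Here lies the subtle point: the condition $\phi_2(j(s))=\phi_1^\pi(s)$ holds only modulo $Inn(G)$, while Proposition \ref{isogh} needs equality in $Aut(G)$. Write $\pi=b\tilde g$ with $b\in B$ and $\tilde g\in Inn(G)$. Replacing $\pi$ by $b$ changes neither $z^\pi$ nor $\phi_1^\pi$ modulo $Inn(G)$, so we may assume $\pi=b\in B$. Then $f_2(j(s))$ and $b^{-1}f_1(s)b$ both lie in $B$ and have the same image in $Out(G)$. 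Since $B$ maps injectively to $Out(G)$, they are equal. The maps $f_r$ restricted to $S_{E_r}$ are the conjugation actions, so both hypotheses of Proposition \ref{isogh} hold, with $S_{E_1}\cap G=Z$ and $j(z)=z^{b}$. Therefore $s.g\mapsto j(s).g^{b}$ is a $G$-isomorphism $E_1\to E_2$. This gives the bijection.

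The main obstacle I expect is exactly this lifting from $Out(G)$ to $Aut(G)$, and it is where the splitting hypothesis is essential. A secondary point is that $f_2(j(s))$ lies in $B\cap f_2(E_2)$; this is automatic because $j(s)\in S_{E_2}$.
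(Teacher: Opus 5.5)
Your proof is correct, and it takes a more direct route than the paper. The paper does not treat the split case on its own. It obtains the theorem as the special case $B\cap Inn(G)=1$ of Proposition~\ref{uzprime}, whose proof goes through the general machinery of Theorem~\ref{usplitz}:
\begin{itemize}
\item extensions are built as $E(S,\phi,B,U)$, a quotient of $D\ltimes_h G$ with $D=\{(s,b)\in S\times B : b\in\phi(s)\}$;
\item one direction of the correspondence is Proposition~\ref{uppers6};
\item the other is Proposition~\ref{onetone}, which needs the normalizer condition $N_{Aut(G)}(L)=LN_{U,B}$, checked inside the proof of Proposition~\ref{uzprime}.
\end{itemize}

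You instead use the fact that a complement $B$ maps isomorphically onto $Out(G)$. This gives a unique homomorphic lift $\hat\phi:S\to B$, so the subdirect product $D$ collapses to $S$ itself, and Propositions~\ref{ucns} and \ref{uu-1} apply directly. For the isomorphism question you use the supplement function of Proposition~\ref{Allisopres} together with Proposition~\ref{isogh}. Your normalization $\pi=b\tilde g\mapsto b$ plays the role of the choice of $n\in N_{U,B}$ in the paper. It has the advantage that $B^b=B$, so both extensions are built from the same $B$. Injectivity of $B\to Out(G)$ then upgrades the equality in $Out(G)$ to the equality in $Aut(G)$ that Proposition~\ref{isogh} requires.

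One step you leave implicit in the surjectivity part is that $B\cap f(E)=\hat\phi(S)$. This follows from the modular law, since $f(E)=Inn(G)\hat\phi(S)$ and $\hat\phi(S)\le B$, and it is what identifies the image of $S$ with $S_E$.

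Your argument is shorter and shows exactly where the splitting hypothesis is used. The paper's detour buys the more general statement for supplements $B$ with $|B\cap Inn(G)|$ coprime to $|Z(G)|$, via Schur--Zassenhaus, which it then needs for Theorem~\ref{ZL=1} and Corollary~\ref{uout=1}.
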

%%%This Theorem will be proved later with Proposition \ref{uzprime}
%%%%% BE CAREFUL THE TERM EQUIVALENCE CLASSES COULD NOT BE USED . IT COULD BE CONFUSED
%% WITH THE NOTION OF "EQUIVALENT EXTENSIONS" OF SCHREIER.
%.NOOOO: I HAVE ANOTHER PROOF, MUCH SIMPLER WHERE WE DO NOT
% HAVE TO FIX $\Phi$. SEE THEOREM $(Z(G),Out(G))$ are coprime (=$U=U\times Z$).
% DONC RENVOYER A LA DEMO DE $U=U\times Z$.

%Hence, under the hypothesis of Theorem \ref{autsplit}, the extensions of $G$ by a group
%$F$ are in one-to-one correspondence with the $Z\phi$-classes of $Z\phi$-extensions
%$(S,\phi)$ such that $S/Z\cong F$.
  %\subsubsection*{an algorithm to determine $(Z,\phi)$ classes}
\index{label autsplit multiply defined}

Since equivalent $Z\phi$-extensions correspond to isomorphic extensions of $Z(G)$
(but not conversely), a natural step toward an enumeration of $Z\phi$-classes
is to enumerate the classes associated with a fixed isomorphism type. % In some cases
%Hence, rather than aiming to classify the isomorphism types of the extensions of $Z(G)$
%it is some times easier to use available groups catalogues.
Groups catalogues can be used for such tasks.  For instance, if we aim at
classifying extensions of a group $G$ by a solvable group $H$, the $Z\phi$-extensions of $Z(G)$
by $H$ are solvable too and we need a catalogue of solvable groups. Such a catalogue
has been developed by Bettina Eick and Hans Ulrich Besche for groups up to order $2000$,
 and is available on the computer algebra systems $GAP$ and $Magma$.
%ù (in $GAP$, excepting $1024$ %since 1997 and recently
%and more recently in Magma up to order 1000, excepting 512 and 768).
 We have used it to compute tables of $Z\phi$-classes.

Let $S$ be a group picked up in a catalogue ; to identify $S$ with an extension of
$Z(G)$, we need to find a normal subgroup $N\unlhd S$ and an isomorphism $i$ from $Z(G)$ onto $N$.
The problem is that there are several
ways to identify $N$ with $Z(G)$ which can lead to non-equivalent $Z\phi$-extensions.
There is a unique identification if and only if $Aut(Z(G))=1$ thus when $|Z(G)|=1$ or $2$.
 We discuss this case in the next paragraph.%%( and thus $N$ is central in $S$).

\subsubsection{$|Z(G)|\leq 2$ and $|\phi(S)|\leq 2$}\label{tableout=2}
Numerous examples of groups $G$ such that $Z(G)=1$ or $2$ can be found in the table of
section \ref{tableout}. The group $Out(G)$ has a subgroup $V$ of order $2$ if and
only if $|Out(G)|$ is even. Infinite families of such groups $G$ with either $|Z(G)|=2$ or $|Z(G)|=1$ exist.
The alternating groups $A_{n}$ and their twice bigger perfect central extensions $2.A_{n}$, the groups
$SL_{n}(q)$ and $PSL_{n}(q)$ for $q$ an odd prime power and $gcd(n,q-1)=2$ are
examples of such families.

Given such a group $G$, a group $S$, and an order $2$ subgroup $V\leq Out(G)$, we want
to enumerate the $Z\phi$-classes of $Z\phi$-extensions $(S,\phi)$ where $\phi (S)=V$ or $\phi (S)=1$.
 We show how to proceed and we explain the meaning of the numbers $\mathcal{O}_{Z}$,
$\mathcal{O}_{K}$ ,$\mathcal{O}_{Z,K}$ and "grps" in our tables.

\begin{enumerate}
\item If $Z(G)=1$ and $\phi$ is the trivial mapping ($\phi (S)=1$) then for each $S$, there is
just one such $Z\phi$-extension. The number of  such $Z\phi$-classes for a fixed order $n$
is just the number of groups of order n ("grps" in the table).

\item If $Z(G)=2$ and $\phi$ is the trivial mapping ($\phi (S)=1$) then each $Z\phi$-extension
is fully defined by the order-2 normal subgroup of $S$  % of order $2$ ($N that
that we identify with $Z(G)$. Observe that $N\cong Z(G)$ is central in $S$.
Let $\mathcal{P}_{Z}$ be the set of central subgroups
of order $2$ in $S$. Two groups $N_{1}$ and $N_{2}$ correspond to
 equivalent $Z\phi$-extensions if and only if there exists an automorphism of $S$ that
 maps $N_{1}$ to  $N_{2}$ and hence the number of such $Z\phi$-classes is the number
 $\mathcal{O}_{Z}$ of orbits in $\mathcal{P}_{Z}$ under the action of $Aut(S)$.

\item If $Z(G)=1$ and $\phi (S)=V$, the homomorphism $\phi$ is completely described by
$K=Ker \phi$, an index 2 subgroup of $S$. If $\mathcal{P}_{K}$ is the set of index-2 subgroups
of $S$, then the number of such $Z\phi$-classes is the number $\mathcal{O}_{K}$ of orbits
in $\mathcal{P}_{K}$ under the action of $Aut(S)$.

\item If $Z(G)=2$ and $\phi (S)=V$, the $Z\phi$-extension is fully described by
a pair $[N,K]$ where
$K=Ker \phi$  (so $[S:K]=2$) and where $N\cong Z(G)$ is an order 2 normal subgroup of $S$
 such that $N\subseteq K$. %%%viewed as $Z(G)$.
According to the definition of equivalence for $Z\phi$ extensions, such pairs  $[N_{1},K_{1}]$
and $[N_{2},K_{2}]$ %%% ( $Z(G)\cong N_{i}\unlhd S$ and $[S:K_{i}]=2$)
define equivalent $Z\phi$-extensions if and only if there exists an automorphism of $S$ that
 maps $N_{1}$ to  $N_{2}$ and $K_{1}$ onto $K_{2}$. Hence, the number of such
 $Z\phi$-classes is just the number of orbits $\mathcal{O}_{Z,K}$ of such pairs under
 the action of $Aut(S)$.

\end{enumerate}
\subsection{There are more than $8.10^{6}$ nonsolvable groups of order at most $15360$}\label{15360}
Each nonsolvable group of order up to $n$ is an extension of a perfect group $G$ by a
solvable group $H$ of order $|H|\leq n/|G|$.
It appears in this work that the majority of nonsolvable groups up to some fixed order $n$,
 are extensions of perfect groups $G$ such that $Z(G)=1$ or $2$, associated with a subgroup
 of order $2$ of $Out(G)$. This is due to the fact that up to a fixed order, %$n$,
 the extensions of small perfect groups seem to predominate because much more groups $H$
 are considered. For a lot of small perfect groups $G$,
$|Z(G)|=1$ or $2$ and $|Out(G)|=2$ (see Table \ref{perf15360}).
 Hence, the number of such extensions (given by the orbits described in the previous subsection)
 is a very good estimate of the number of nonsolvable groups up to some given order.
  We emphasize here, the number of such orbits for orders for which a lot of groups exist.

\begin{flushleft}
{\footnotesize
\begin{tabular}{||c||c|c|c|c||c||c|c|c|c|}
\hline
$|S|$ &grps& $\mathcal{O}_{K}$ & $\mathcal{O}_{Z}$ & $\mathcal{O}_{Z,K}$ &
$|S|$ &grps& $\mathcal{O}_{K}$ & $\mathcal{O}_{Z}$ & $\mathcal{O}_{Z,K}$   \\
\hline\hline
4 & 2 &2 &2&2 & 6 & 2 &2 &1&0\\
8 & 5 &7 &6&8 & 12 & 5 &5 &4&4\\
16 & 14 &28 &21&42 & 24 & 15 &24 &16&24\\
32 & 51 &144 &95&286 & 48 & 52 &118 &72&166\\
64 & 267 &1,120 &649&3,026&  96 & 231 &797 &422&1,574\\
128 &2,328  &16,996 &8,308& 72,010& 192 & 1,543 &8,551 &3,914&23,798\\
256$^{*}$ & 56,092 &1,027,380 &337,956& 6,856,498& 384  & 20169 & ?& ?& ?\\
\hline
\end{tabular} \label{256sl2p}

\medskip
{\scriptsize $^{*}$ Update 2026: we would like to thank Max Horn for his updates and recalculations of some 2002 values in line 256 and 128. For a 2026 reader, his website \url{https://groups.quendi.de/} offers 25 years of countings for group extensions.}
}
\end{flushleft}
%%%%%%%%%%%%%%%%%%%%%%%%%%%
%\begin{flushleft}
%{\footnotesize
%\begin{tabular}{||c||c|c|c|c|c||c||c|c|c|c|c|}
%\hline
%$|S|$ &grps& $\mathcal{O}_{K}$ & $\mathcal{O}_{Z}$ & $\mathcal{O}_{Z,K}$ & $\Sigma$ &
%$|S|$ &grps& $\mathcal{O}_{K}$ & $\mathcal{O}_{Z}$ & $\mathcal{O}_{Z,K}$ & $\Sigma$ \\
%\hline\hline
%4 & 2 &2 &2&2&8 & 6 & 2 &2 &1&0&5\\
%8 & 5 &7 &6&8&26 & 12 & 5 &5 &4&4&18\\
%16 & 14 &28 &21&42&105 & 24 & 15 &24 &16&24&79\\
%32 & 51 &144 &95&286&576 & 48 & 52 &118 &72&166&408\\
%64 & 267 &1,120 &649&3,026& & 96 & 231 &797 &422&1,574&\\
%128 &2,328  &16,996 &8,308& 72,010& & 192 & 1,543 &8,551 &3,914&23,798&\\
%256$^{*}$ & 56,092 &1,027,380 &337,956& 6,856,498& &384  & 20169 & ?& ?& ?& ?\\
%\hline
%\end{tabular}
%}
%\end{flushleft}
%%%%%%%%%%%%%%%%%%%%%%%
It turns out that almost all nonsolvable groups of order at most $58,320$ are extensions that can be
classified using $Z\phi$-classes. Here is an illustration of how accurate is the estimation of the number of
nonsolvable groups if only $Z\phi$-classes are taken it account.
  Assume we want to count the number of nonsolvable groups
of order less than $23,040=60*384$, since $384$ is the first order for which we do not
know the number of orbits ($60$ is the smallest order of a perfect group).

The first perfect group with nontrivial center, for which the number of extensions cannot
be given by $Z\phi$-classes is $SL_2(9)$ (see table \ref{perf15360}) .
% that do not belong to the familly $Sl_2 (p) ($p$ prime), is $G=L_2 (8)$
 Since $|SL_{2}(9)|=720$,  for  extensions $E$ of $SL_2(9)$ by groups $H$ such that $|E|<23,040$ we have
to consider the groups $H$ of order less than $32=23,040/720$. % have to be considered .
Table \ref{tab2a6} will show that there are $1535$
 such extensions, which is negligible compared  to the number ($>7.10^{6}$) of extensions of
 $SL_{2}(5)$  of order less than $23,040$. Nevertheless, the isomorphism problem is much
 harder in the case of $SL_2(9)$ and the small number of extensions is only due to the small number
 of groups $H$ to consider. Indeed there exist only $93$ solvable groups $H$ of order less
than $32$  but $4435$ solvable groups $H$ of order less than $192=23,040/120$ that have to be considered
for the extensions of $SL_{2}(5)$ ($|SL_{2}(5)|=120$).
%%%%%%%% 4435=4441-6

Therefore, $1535$ must rather be compared to the number $k$ of extensions
of $SL_2(p)$ by a group $H$ of order less than 32, and it turns out that $k=154$ and is thus
much smaller than $1535$.

\subsubsection{Computing times}
For computing the automorphism group of the $p$-groups in this table, we have used
 the $GAP$ (\cite{GAP4}) share package "autpgrp" implemented by Bettina Eick. For other groups we have
 used the standard $GAP$ method which is much slower. For instance for the $20,169$ groups of
 order $384$, it should at least compute during more than 2 weeks on our machine and we have not achieved such a
 computation. The enumeration for groups of order 128 took 75 minutes and the enumeration
 for the order 256 took 8 and a half days (on a 800 MHz AMD Duron).
 We have not tried to use the most efficient algorithm to compute these orbits under $Aut(S)$.
Of course  picking up $S$ in a catalogue is much faster than  constructing explicitly all supplements $S$.
%% (see Chapter \ref{quotientsemi}
 %%%%since here we pick $S$ in a catalogue and we do not have to construct it.

%\begin{tabular}{||c||c|c||c||c|c|}
%\hline
%$|S|$ &grps& $\mathcal{O}_{K}+ \mathcal{O}_{Z} + \mathcal{O}_{Z,K}$  &
%$|S|$ &grps& $\mathcal{O}_{K}+ \mathcal{O}_{Z} + \mathcal{O}_{Z,K}$  \\
% $|S|$ &grps& $\mathcal{O}_{K}$ & $\mathcal{O}_{Z}$ & $\mathcal{O}_{Z,K}$ \\
%\hline\hline
% 4 & 2 &2 &2&2&8 & 6 & 2 &2 &1&0&5\\
% 8 & 5 &7 &6&8&26 & 12 & 5 &5 &4&4&18\\
% 16 & 14 &28 &21&42&105 & 24 & 15 &24 &16&24&79\\
% 32 & 51 &144 &95&286&576 & 48 & 52 &118 &72&166&408\\
% 64 & 267 &1,120 &649&3,026& & 96 & 231 &797 &422&1,574&\\
% 128 &2,328  &17,012 &8,307& 72,019& & 192 & 1,543 &8,551 &3,914&23,798&\\
% 256 & 56,092 &1,027,380 &337,943& 6,856,371& &384  & 20169 & ?& ?& ?& ?\\
%\hline
%\end{tabular}

\subsubsection{How to read this table ?}
Let $G$ be a group that satisfies Theorem \ref{autsplit} or its generalizations which will be given
 in the next section (\ref{classifs}) so that  isomorphisms between extensions are reduced to isomorphisms between
extensions of $Z(G)$. In section \ref{classifs} we reconstruct an extension of $G$ by $H$ associated
to a subgroup $L<Out(G)$, from a $Z\phi$-extension of $Z(G)$ by $H$ associated with the same $L$.
Assume now that $Z(G)\leq 2$ and $Out(G)=2$
(or for any order 2 subgroup of $Out(G)$)  as for the groups $SL(2,p)$ or
$PSL(2,p)$. How many extensions of $G$ by a solvable group $F$ of order $128$ do exist ?

%% problem and the isomorphism is completely reduced
If $Z(G)=1$ we have to look in row $128$ of table \ref{256sl2p} which indicates that there are
$2,328$ direct products of $G$ and a group of order $128$ and $17,012$ other extensions.
If $Z(G)=2$ we look in row $256$ since an extension of $Z(G)$ by $F$ has order $256$.
There are $337,956$ central products and $6,856,498$ other extensions
( the classes in $\mathcal{O}_{Z} $ correspond to central products of $G$ and $S$).\\

 \emph{Hence, for p prime ($p\geq 5$)  there are 7,194,454 extensions of $SL(2,p)$  by a group of order
128 and there are 1,083,472 extensions of $PSL(2,p)$ by a group of order 256.}

% We make such tables of $Z\phi$-classes available to anyone that
% asks for it at our address \begin{verbatim} carcher@ulb.ac.be \end{verbatim}

For the groups $G$ that will be described by Theorem \ref{usplitz} $(iii)$, these numbers give an upper bound for the
number of extensions.\\
%%- It gives a cross check for classifications (that construct the whole nonsolvable group $E$
%Moreover a lot of these examples satisfy Theorem \ref{autsplit}.

%%%%%%%%%%%%%%%%%%%%%%%%%%%%%%%%%%%%%%%%%%%%%%%
%\subsection{Extensions of $3A6$}
\section{A classification theorem}\label{classifs}
\index{remove "MMMM" everywhere}
%\chapter{A classification theorem}\label{classifs}
\subsubsection{Introduction}
%\begin{wrapfigure}[12]{l}{0.3\linewidth}
%%\mbox{\input{uz1.eps}}
%\end{wrapfigure}
% The next section shows cases where such a homomorphism is sufficient
%to describe an extension. + UPPER BOUNDS ...
We now describe a general situation where one can associate an extension of $G$ to every
$Z\phi-$ extension of $Z(G)$ and where $G-$isomorphic extensions are in one-to-one
correspondence with the equivalence classes of $Z\phi-$extensions. The results of this
section generalize Theorem \ref{autsplit}.

\begin{center}
\includegraphics[width=2.5cm,height=4.375cm]{uz1.eps}
\label{fig:uz1}
\end{center}

Suppose we want to construct an extension $E$ of a group $G$ associated with a subgroup
$L/Inn(G)$ of $Out(G)$. Let $f$ be the action of $E$ on $G$ by conjugation. As seen
in previous sections, we choose a supplement $B$ of $Inn(G)$ in $L$ and so
$S_{E}=\{e\in E\,|\,f(e)\in B  \}$ is a supplement to $G$ in $E$ that contains
$C_{E}(G)$, the centralizer of $G$ in $E$.
 Its intersection $\tilde{U}=S_{E}\cap G$ with $G$ is the the subgroup of $G$ corresponding
 to $Inn(G)\cap B$. Assume that %there exists a supplement $B$ such that $\tilde{U}\subset G$ splits over $Z(G)$ and is equal to
 $\tilde U=U\times Z(G)$ where $U$ is a subgroup of $G$ invariant under the action of $B$.
In this case $U$ is a normal subgroup of $S_{E}$. Let us show
that modulo some identifications, $f$ and $\tilde S=S_{E}/U$  define a $Z\phi$-extension of $Z(G)$.

Let $Z=Z(G)$. Since $(U\times Z)$ is invariant in $S_{E}$, the subgroup $\tilde Z=(U\times Z)/U$
is a normal subgroup of $S_{E}/U$ and for $z\in Z$, the projection $Uz\rightarrow z$
is an isomorphism from $\tilde Z$ onto $Z$. Modulo this isomorphism, $\tilde S$ is an
extension of $Z(G)$ and let us show that $f$ induces a homomorphism $\phi$ from
$\tilde S/\tilde Z$ onto $L/Inn(G)=f(E)/Inn(G)$ such that $(\tilde S,\phi)$ is a $Z\phi$-extension.

The restriction of $f$ to $S_{E}$ is an homomorphism onto $B=f(S_{E})$ and therefore it
induces an homomorphism $\phi$ from $\tilde S=S_{E}/U$ onto $f(S_{E})/f(U)=B/(B\cap Inn(G))$
which can be identified with the subgroup $Inn(G)B/Inn(G)=L/Inn(G)$ of $Out(G)$(by the isomorphism theorems).
Since $f(\tilde Z)\subset Inn(G)$, the kernel of $\phi$ contains $\tilde Z$ and therefore $\phi$ can
also be defined as an homomorphism from  $\tilde S/\tilde Z$ onto $L/Inn(G)$.
 For an $x=Us\in \tilde S$ we have $\phi(x)=Inn(G).f(s)$ (it is independent of the
representative $s$ of $x$ because $f(U)\subset Inn(G)$). By conjugation, $Inn(G)$ fixes
every element $z\in Z(G)$ so that the action of $x$ on $Z$ is
$z^{x}=z^{s}=z^{f(s)}=z^{Inn(G).f(s)}=z^{\phi(x)}$ and this is the condition for
$(\tilde S,\phi)$ to be a $Z\phi$-extension of $Z(G)$ (modulo the identification with
 $\tilde Z$). Now, we prove that :
\begin{enumerate}
\item \emph{It is possible to reconstruct $E$ from $(\tilde S,\phi)$ and $B$ only.}
\item \emph{For every $Z\phi$-extension $(\tilde S,\phi)$ of $Z(G)$ associated with $L$, there exists an
extension $E$ that produces $(\tilde S,\phi)$ in the way described previously}
\end{enumerate}
%\begin{enumerate}
%\item $S_{E}$
%and its action on $G$ are completely described by the triple $(\tilde S,\phi,B)$.
%This fact is crucial because it as been proved in Theorem \ref{semi} that $E$ can be
%builded as a quotient of the semi-direct product $(S_{E})\ltimes_{f}G$, where the quotient
%amounts to identify $U\times Z$ in $G$ and in $S_{E}$. Finally $E$ is completely
%described by $(\tilde S,\phi,B,U)$ and we write $E=E(\tilde S,\phi,B,U)$.
%
%\item For every $Z\phi$-extension $(\tilde S,\phi)$ of $Z(G)$
%(such that $\phi(\tilde S)=L/Inn(G)$), there exists an extension $E=E(\tilde S,\phi,B,U)$
%associated with $L$.
%\end{enumerate}

$(1)$: If we combine the homomorphism $S_{E}\rightarrow S_{E}/U=\tilde S$ with
the homomorphism $f_{S_{E}}:S_{E}\rightarrow B$, we create a third homomorphism
$\gamma:s\rightarrow (Us,f(s))$ from $S_{E}$ into $\tilde S\times B$. Its kernel $Ker\gamma$
is $U\cap C_{E}(G)$, the intersection of the two previous kernels.
But $U\cap C_{E}(G)\subset G\cap C_{E}(G)=Z(G)$ and by our hypothesis $U\cap Z(G)=1$
so that $Ker\gamma=1$. Therefore $\gamma$ is injective
and $S_{E}$ is isomorphic to the subgroup $D=\{(Us,f(s))|s\in S_{E}\}$
of $\tilde S\times B$. Actually, let us show that we could define $D$ as the subgroup
 $\{(x,b) |\,b\in\phi(x)\}$ of $\tilde S\times B$.
Obviously,  for an element $(Us,f(s))$ of $D$, $f(s)\in\phi(Us)=Inn(G)f(s)$.
 Conversely, suppose that for an element $d=(Us_{1},f(s_{2}))$ of $\tilde S\times B$,
 the element $f(s_{2})$ belongs to $\phi(Us_{1})=Inn(G)f(s_{1})$. Then as $f(s_{i})\in B$,
 $f(s_{2})$ belongs to $B\cap Inn(G)f(s_{1})=(B\cap Inn(G))f(s_{1})=f(U)f(s_{1})=f(Us_{1}).$
Thus there is an element $u\in U$ such that $f(s_{2})=f(us_{1})$ and for
$s_{3}=us_{1}$, the element $d$ is equal to $(Us_{3},f(s_{3}))$ and belongs to $D$.
As announced, $S_{E}$ and its action on $G$ are completely defined by $(\tilde S,\phi,B)$.

By Theorem \ref{semi}, $E$ is isomorphic to a quotient of the semi-direct product
$D\ltimes G$. The action of $D$ on $G$ is given by the canonical projection of $D$ on $B$ and
 the quotient amounts to identify $Z\times U$ in $G$ and $\tilde Z\times f(U)$ in $D$.
 Let us describe this identification with more details. For $\tilde z=Uz\in \tilde Z$,
 the inverse projection $i_{Z}:z\rightarrow \tilde z$ is an isomorphism and the restriction
  of $f$ to $U$ induce an isomorphism $i_{U}$ from $U$ onto $f(U)$ because $U\cap Ker f=1$.
  Finally the identification is defined by the isomorphism
  $i=(i_{Z},i_{U}):z.u\rightarrow (\tilde z,f(u))$. Note that this identification depends
  on the decomposition of $\tilde U$ into $Z\times U$ ; the choice of another complement to
  $Z$ than $U$ defines another extension.
   Since the extension $E$ is completely described by the $Z\phi$-extension $\tilde S$ and by $B$,
\emph{\bf{we introduce the notation $\mathbf{E=E(\tilde S,\phi,B,U)}$}}.

%By the isomorphism theorem we can identify $\tilde S/\tilde Z=\frac{S_{E}/U}{(U\times Z)/U}$ and $S_{E}/(U\times Z)$.
%Let $x=(U\times Z)s$ be an element of $\tilde S/\tilde Z=S_{E}/(U\times Z)$ and let us
%define $\phi(x)=Inn(G).f(s)$ ( since $f(U\times Z)\subset Inn(G)$ the element $\phi(x)$
% is the same for every $s\in x$).%The action on $Z(G)$ of an element $x=(U\times Z)s$ of $\tilde S\tilde Z=S_{E}/(U\times Z)$
%Its action on $Z(G)$ is $z^{x}=z^{s}=z^{f(s)}=z^{Inn(G).f(s)}=z^{\phi(x)}$ for $z\in Z(G)$ and
%this is the definition of a $Z\phi$-extension of $Z(G)$.
%In order to prove that $\phi$ is an homomorphism let us remark that the restriction of
%$f$ to $S_{E}$ is an homomorphism onto $B=f(S_{E})$ and therefore it induces an homomorphism
%from $S_{E}/(U\times Z)$ onto $f(S_{E})/f(U\times Z)=B/(B\cap Inn(G))$ which can be
%identified with the subgroup $Inn(G)B/Inn(G)=L/Inn(G)$ of $Out(G)$.\\

$(2)$: Conversely, let us show that every $Z\phi$-extension associated with $L$ produces an
extension of $G$ associated with $L$. % this from an extension give rise to an extension of $G$.
Let $\tilde Z$ be a normal subgroup of a group $S$ and let $i_{Z}:z\rightarrow \tilde z$ be an
isomorphism from $Z(G)$ onto $\tilde Z$. Assume that modulo this isomorphism, $(S,\phi)$ is a
$Z\phi$-extension of $Z(G)$ associated with $L$. This means that
$\tilde z^{s}=\widetilde{ z^{\phi(s)}}$ holds in $S$ and that $\phi(S)=L/Inn(G)$.
 Let $D$ be the subgroup $\{(s,b)|\,b\in\phi(s)\}$ of $S\times B$.
The projection $h$ of $D$ onto $B\subset Aut(G)$ induces an action of $D$ on $G$
that enables us to construct the semi-direct product $D\ltimes_{h}G$.\\
Let $\overline{u}$ denote the conjugation by $u\in U\subset G$. Since $U\cap Z(G)=1$ we
have seen that $i_{U}:u\rightarrow\overline{u}$ is an isomorphism from $U$ onto
the subgroup $\overline{U}=Inn(G)\cap B$.
Clearly, $i=(i_{Z},i_{U}):z.u\rightarrow (\tilde z,\overline{u})$ is an isomorphism and
its image $\tilde Z\times\overline{U}$ is a subgroup of $D$ since
$\overline{U}\subset Inn(G)=\phi(\tilde Z)$.

We will now prove  that in every such semi-direct product $D\ltimes_{h}G$, we may identify the
subgroups $(Z\times U)\subset G$ and $\tilde Z\times\overline{U}$ of $D$ % can be identified
 to give a quotient isomorphic to one of the extensions $E$ described in $(1)$.
For this task we use the two conditions of Theorem \ref{ucns} to prove that the subgroup
$K=\{[i(x),x^{-1}] |\,x\in Z\times U\}$ is normal in $D\ltimes_{h}G$. In order to avoid confusion, we use
 brackets for the elements of the semi-direct product and parentheses for the elements of
 the direct product $\tilde Z\times\overline{U}$.

 %%!!!! AND RECALL THAT IN THE QUOTIENT D IS $F^{-1}(B)$

\begin{itemize}
\item condition $(1)$: the action by conjugation of $D$ on its subgroup $\tilde Z\times\overline{U}$
must coincide with its action on $Z\times U$ through $h$. Let $d=(s,h(d))$ be an element of
 $D$, let $\tilde x=(\tilde z,\overline{u})\in \tilde Z\times\overline{U}$ and let $x=z.u\in Z\times U$.
We must prove the equality $\tilde x^{d}=i(x^{h(d)})$ $(\star)$. Conjugation
 in $D$ is conjugation on each component so that $\tilde x^{d}=(\tilde z^{s},\overline{u}^{h(d)})$.
The fact that we have a $Z\phi$-extension means that $\tilde z^{s}=\widetilde{ z^{\phi(s)}}$ which is also
equal to $\widetilde{z^{h(d)}}$ because $d\in D$ implies $h(d)\in\phi(s)$.
 We also claim that $\overline{u}^{h(d)}=\overline{u^{h(d)}}$ $(\star\star)$, which could
 seem obvious, but is actually more subtle. We prove it with Lemma \ref{fpi} : since $b\in Aut(G)$
 is an isomorphism from $G$ onto $G$ and if $\overline{g}$ denotes the conjugation automorphism in $G$,
  we get $\overline{g^{b}}=b^{-1}\overline{g}b=\overline{g}^{b}$ (which proves
  $(\star\star)$). Finally $\tilde x^{d}=(\widetilde{z^{h(d)}},\overline{u^{h(d)}})$ coincides
   with the image of $(z.u)^{h(d)}=x^{h(d)}$ under $i$ and $(\star)$ is established.

\item condition $(2)$ : $g^{x}=g^{h(\tilde x)}$ for every $g\in G$. This is immediate because
$h(\tilde x)=\overline{u}$ since $\tilde x=(\tilde z,\overline{u})$ and therefore
$g^{x}=g^{z.u}=g^{u}:=g^{\overline{u}}=g^{h(\tilde x)}$.
\end{itemize}

Note also that since $Z\times U$ is the subgroup of $G$ corresponding to $B\cap Inn(G)$,
 Proposition \ref{uu-1} guarantees that the image of $D$ in $E=D\ltimes_{h}G/K$ is the preimage
 $f^{-1}(B)$ of $B$ in $E$ ($f$ is the conjugation action of $E$ on $G$).

% At first sight, the way this construction depends on $U$ it is not clear. If the same
%construction is carried through another decomposition $\tilde{U}=Z\times U_{2}$, the
%identification of $\tilde{U}$ and $Z\times \overline{U}$ will change and we get a different
%identifier subgroup $K_{2}$. It could seem paradoxal that the same group $E$ is obtained
%by factoring out a semi-direct product either by $K$ or by $K_{2}$.
% To understand what is going on, we have to remind us that the extension $S$ of $Z(G)$
% is isomorphic to $S_{E}/U$. If $U_{2}$ is
%chosen instead of $U$, wet get a group $S_{2}=S_{E}/U_{2}$ that is not necessarily
%isomorphic to $S$ so that the involved semi-direct product are different.
%???? NO SINCE $S_{E}$ REMAINS THE SAME BUT WE HAVE 2 DIFFERENT EXPRESSIONS OF $D=S_{E}$
%AS SUBDIRECT PRODUCT. So, it only gives two different expressions of $S_{E}=D$ as a
%subdirect product, once as a subgroup of $S\times B$, the other time as a subgroup
%of $S_{2}\times B$. Finally the
%same extension $E$ is obtained either as $S::$
%???? NON CAR S(E) RESTE LE MEME ?
%RIEN NE DIT QUE LE 2EME GROUPE IDENTIFIANT EST NORMAL  ? \\
%Si on part de E, on voit que $f^{-1}(B)/U$ n'est pas forcement isomorphe a $f^{-1}(B)/U_{2}$
%To see more clearly how this construction depends on $U$, let us note that for another
%decomposition $\tilde{U}=Z\times U_{2}$, the extension $S_{E}/U_{1}$ could be non
%isomorphic to $S_{E}/U$.

\begin{proposition}\label{uppers6} If $(S_{1},\phi_{1})$ and $(S_{2},\phi_{2})$ are equivalent %\\
 through an isomorphism $\tilde j:S_{1}\rightarrow S_{2}$ and $\pi\in Aut(G)$ then %\\
$E(S_{1},\phi_{1},B,U)$ and $E(S_{2},\phi_{2},B^{\pi},U^{\pi})$ are $G$-isomorphic.
\end{proposition}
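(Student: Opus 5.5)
Recall that $E(S_k,\phi_k,B_k,U_k)$ is, by construction, the quotient $(D_k\ltimes_{h_k}G)/K_k$. Here $D_1=\{(s,b)\in S_1\times B\,|\,b\in\phi_1(s)\}$ and $D_2=\{(s,b)\in S_2\times B^{\pi}\,|\,b\in\phi_2(s)\}$. The action $h_k$ is projection on the second factor. The identifier subgroup is $K_k=\{[i_k(x),x^{-1}]\,|\,x\in Z\times U_k\}$, with $i_1(z u)=(\tilde z,\overline{u})$ and $i_2(z u)=(\widetilde{z},\overline{u})$ for $u\in U^{\pi}$. First I note that the construction of $E(S_2,\phi_2,B^{\pi},U^{\pi})$ makes sense. $B^{\pi}$ is a supplement to $Inn(G)$ in $L^{\pi}$, and $U^{\pi}$ is the subgroup of $G$ corresponding to $B^{\pi}\cap Inn(G)=(B\cap Inn(G))^{\pi}$, using $\overline{g}^{\pi}=\overline{g^{\pi}}$ as in Lemma \ref{fpi}. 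Also $\tilde U^{\pi}=Z\times U^{\pi}$ with $U^{\pi}$ invariant under $B^{\pi}$. By condition 2 of equivalence, $\phi_2(S_2)=\phi_1(S_1)^{\pi}=L^{\pi}/Inn(G)$.

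The plan is to build an isomorphism of the semi-direct products and apply Proposition \ref{semiso}, then check that $K_1$ is mapped onto $K_2$, exactly as in the proof of Proposition \ref{isogh}. Define $J:D_1\to D_2$ by $J(s,b)=(\tilde j(s),b^{\pi})$. This lands in $D_2$, since $b\in\phi_1(s)$ gives $b^{\pi}\in\pi^{-1}\phi_1(s)\pi=\phi_2(\tilde j(s))$ by condition 2. The map $J$ is the restriction of the isomorphism $\tilde j\times(\pi\text{-conjugation})$ of $S_1\times B$ onto $S_2\times B^{\pi}$. It is onto $D_2$ by the symmetric argument with $\tilde j^{-1},\pi^{-1}$. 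Then $h_2(J(d))=h_1(d)^{\pi}=\pi^{-1}h_1(d)\pi$. So Proposition \ref{semiso} shows that $\alpha:[d,g]\mapsto[J(d),g^{\pi}]$ is an isomorphism $D_1\ltimes_{h_1}G\to D_2\ltimes_{h_2}G$ preserving $G$.

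The key step is that $\alpha(K_1)=K_2$, i.e. $J(i_1(x))=i_2(x^{\pi})$ for all $x=zu\in Z\times U$. On the $\tilde Z$ component, $\tilde j(\tilde z)=\widetilde{z^{\pi}}$ by condition 1 of equivalence. On the $\overline U$ component, $\overline{u}^{\pi}=\overline{u^{\pi}}$ by the identity from Lemma \ref{fpi} already used in $(\star\star)$, and $u^{\pi}\in U^{\pi}$. Hence $\alpha[i_1(x),x^{-1}]=[i_2(x^{\pi}),(x^{\pi})^{-1}]$. Since $x\mapsto x^{\pi}$ is a bijection from $Z\times U$ onto $Z\times U^{\pi}$, this gives $\alpha(K_1)=K_2$.

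Therefore $\alpha$ induces an isomorphism between the quotients. It maps the image of $G$ onto the image of $G$, with restriction $\pi$, so it is a $G$-isomorphism. The main point needing care is the bookkeeping: the identification $i_2$ uses the same $\tilde Z$ transported by $\tilde j$, and $Z$ is characteristic, so $z^{\pi}\in Z$. Also $D_2$ must be exactly $J(D_1)$, which needs both conditions of equivalence.
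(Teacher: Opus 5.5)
Your proposal is correct and follows the paper's proof essentially step for step. The paper likewise restricts the componentwise isomorphism $(s,b)\mapsto(\tilde j(s),b^{\pi})$ to an isomorphism $D_1\to D_2$, applies Proposition \ref{semiso} to obtain $[d,g]\mapsto[j_\pi(d),g^{\pi}]$, and checks that $K_1$ goes to $K_2$ using condition 1 of equivalence together with $\overline{u}^{\pi}=\overline{u^{\pi}}$. Your extra checks (that $E(S_2,\phi_2,B^{\pi},U^{\pi})$ is well defined, and surjectivity of $J$ via $\tilde j^{-1},\pi^{-1}$) are harmless; the paper either leaves them implicit or covers them with an if-and-only-if chain.
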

\emph{Proof }.
Let $E_{1}=E(S_{1},\phi_{1},B,U)$ and $E_{2}=E(S_{2},\phi_{2},B^{\pi},U^{\pi})$.
To shorten the notations, we consider here that $Z=Z(G)$ is included
in both $S_{1}$ and $S_{2}$. Let $D_{1}=\{(s_{1},b)\in S_{1}\times B\,|\,b\in\phi_{1}(s_{1})\}$
 and $D_{2}=\{(s_{2},b^{\pi})\in S_{2}\times B^{\pi}\,|\,b^{\pi}\in\phi_{2}(s_{2})\}$. Let $h_{1}$
 (respectively $h_{2}$) be the canonical projection from $D_{1}$ onto $B$ (respectively
 from $D_{2}$ onto $B^{\pi}$).
 The equivalence of the $(S_{i},\phi_{i})$ means that $\tilde j$ maps $z\in Z(G)$ to
$z^{\pi}$ and that for $s\in S_{1}$ we have $\phi_{2}(\tilde j(s))=\phi_{1}^{\pi}(s)$.
 Let us show that these conditions are sufficient to extend $\tilde j$ and $\pi$ into a
 $G$-isomorphism from $D_{1}\ltimes_{h_{1}}G$ onto $D_{2}\ltimes_{h_{2}}G$. The isomorphism $\tilde j$ and
 $\pi$ define componentwise an  isomorphism  from $S_{1}\times B$ onto $S_{2}\times B^{\pi}$
 whose restriction to $D_{1}$ is an isomorphism $j_{\pi}:(s,b)\rightarrow(\tilde j(s),b^{\pi})$
 onto $D_{2}$. Indeed, $(s,b)\in D_{1}\Leftrightarrow b\in\phi_{1}(s)\Leftrightarrow
 b^{\pi}\in\phi_{1}^{\pi}(s)=\phi_{2}(\tilde j(s))\Leftrightarrow (\tilde j(s),b^{\pi})\in D_{2}$.\\
The function $t:[d,g]\rightarrow [j_{\pi}(d),g^{\pi}]$ is an isomorphism of semi-direct products since the
condition of Proposition \ref{semiso}, namely $h_{2}(j_{\pi}(d))=h_{1}^{\pi}(d)$, is satisfied here
because $d=(s,b)\in D_{1}\Rightarrow h_{2}(j_{\pi}(d))=h_{2}((\tilde j(s),b^{\pi}))=b^{\pi}=h_{1}^{\pi}(d)$.

By definition $E_{l}=(D_{l}\ltimes_{h_{l}}G)/K_{l}$ for $l=1,2$, and we now prove that the
$G$-isomorphism $t$ maps $K_{1}$ onto $K_{2}$ so that finally the $E_{l}$ are $G$-isomorphic.
We recall that the identifier subgroups are $K_{1}=\{[i_{1}(x),x^{-1}]\,|x\in Z\times U\}$
and $K_{2}=\{[i_{2}(x),x^{-1}]\,|x\in Z\times U^{\pi}\}$ where
$i_{1}:z.u\rightarrow (z,\overline{u})$ and $i_{2}:z.u^{\pi}\rightarrow (z,\overline{u^{\pi}})$
 are isomorphisms from $Z\times U$ (respectively $Z\times U^{\pi}$) onto
 $Z\times\overline{U}<D_{1}$ (respectively $Z\times\overline{U^{\pi}}<D_{2}$ ).
Let $x=z.u\in Z\times U$. The $G$-isomorphism $t$ maps $k_{1}=[i_{1}(x),x^{-1}]$ to
$[j_{\pi}(i_{1}(x)),(x^{-1})^{\pi}]=[(\tilde j(z),\overline{u}^{\pi}),(z^{\pi}.u^{\pi})^{-1}]$.
By the equivalence hypothesis, $\tilde j(z)=z^{\pi}$ and we have showed previously that
$\overline{u}^{\pi}=\overline{u^{\pi}}$ so that finally $k_{1}$ is mapped to
$[(z^{\pi},\overline{u^{\pi}}),(z^{\pi}.u^{\pi})^{-1}]=[i_{2}(x_{2}),x_{2}^{-1}]\in K_{2}$
for $x_{2}=z^{\pi}.u^{\pi}\in Z\times U^{\pi}$
$.\quad\Box$

Does the converse of this proposition hold ? Not necessarily, but
if $E_{1}$ is an extension of $G$ associated with $L$ and if $i$ is a $G$-isomorphism
from $E_{1}$ onto an extension $E_{2}$ of $G$, then by Lemma \ref{fpi} $E_{2}$ is
associated with $L^{\pi}$ where $\pi\in Aut(G)$ is the restriction of $i$ to $G$.
Since $B$ is a supplement to $Inn(G)$ in $L$ that preserves $U$, its image $B^{\pi}$, is a
supplement to $Inn(G)$ in $L^{\pi}$ and the subgroup of $G$ corresponding to $Inn(G)\cap B^{\pi}$
is $Z(G)\times U^{\pi}$ and $B^{\pi}$ preserves $U^{\pi}$.
 Hence, $E_{2}=E(S_{2},\phi_{2},B^{\pi},U^{\pi})$ for some $Z\phi$-extension $(S_{2},\phi_{2})$
 associated with $L^{\pi}/Inn(G)$.
We will now prove that under some extra assumptions, $S_{1},\phi_{1})$ is equivalent to $(S_{2},\phi_{2})$
 (the converse of Proposition \ref{uppers6})
holds and that there is a one-to-one correspondence between the $G$-isomorphism
classes of a family of $G$-extensions and
the equivalence classes of the corresponding family of $Z\phi$ extensions of $Z(G)$.\\
In the next proposition, the hypothesis on $L,B$ and $U$ are still the same as in the
beginning of this section.
%$E(S_{1},\phi_{1},B,U)$ and $E(S_{2},\phi_{2},B,U)$ are $G$-isomorphic if and only if
%$(S_{1},\phi_{1})$ and $(S_{2},\phi_{2})$ are equivalent. Moreover $E(S,\phi,B)/G\cong S/\tilde Z$.

\begin{proposition}\label{onetone} Let $\pi\in Aut(G)$ and let $N_{U,B}=N_{Aut(G)}(B)\cap N_{Aut(G)}(U)$.
Assume that $N_{Aut(G)}(L)=LN_{U,B}$. Then $E(S_{1},\phi_{1},B,U)$ and $E(S_{2},\phi_{2},B^{\pi},U^{\pi})$
are $G$-isomorphic if and only if $(S_{1},\phi_{1})$ and $(S_{2},\phi_{2})$ are equivalent.
\end{proposition}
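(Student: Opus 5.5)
The "if" direction is exactly Proposition \ref{uppers6}, so the plan concentrates on the converse. Write $E_1=E(S_1,\phi_1,B,U)$ and $E_2=E(S_2,\phi_2,B^{\pi},U^{\pi})$, and suppose $i:E_1\to E_2$ is a $G$-isomorphism with restriction $\gamma\in Aut(G)$ to $G$. I will first replace $i$ by a better-behaved $G$-isomorphism, then read an equivalence of $Z\phi$-extensions off its restriction to the supplements.

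\textbf{Step 1: normalize $\gamma$.} By Lemma \ref{fpi}, $f_2(E_2)=L^{\gamma}$. Since $E_2$ is built from $B^{\pi}\leq L^{\pi}$, it is also associated with $L^{\pi}$, so $L^{\gamma}=L^{\pi}$ and hence $\gamma\pi^{-1}\in N_{Aut(G)}(L)=LN_{U,B}$. Write $\gamma\pi^{-1}=l n$ with $l\in L$ and $n\in N_{U,B}$; by the remark after Proposition \ref{onetone}'s setup we may equivalently write $\gamma= l'\, n'\pi$ with $l'\in L$, $n'\in N_{U,B}$, since $L$ is normalized by $N_{U,B}\subseteq N(L)$. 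Pick $e\in E_1$ with $f_1(e)=l'$, and precompose $i$ with conjugation by $e^{-1}$ in $E_1$. The new $G$-isomorphism restricts on $G$ to $\gamma_0=n'\pi$, with $n'\in N_{U,B}$. One should double-check the side conventions here (left versus right action), but by symmetry either order works.

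\textbf{Step 2: the supplements are preserved.} Because $n'$ normalizes $B$ and $U$, we have $B^{\gamma_0}=B^{\pi}$ and $U^{\gamma_0}=U^{\pi}$. Arguing as in Proposition \ref{Lisopres}, $i$ maps $C_{E_1}(G)$ onto $C_{E_2}(G)$, and $f_2(i(S_{E_1}))=B^{\gamma_0}=B^{\pi}$. Hence $i(S_{E_1})=f_2^{-1}(B^{\pi})=S_{E_2}$, and also $i(U)=U^{\gamma_0}=U^{\pi}$. Therefore $i$ induces an isomorphism $\tilde j: S_{E_1}/U\to S_{E_2}/U^{\pi}$, that is, $\tilde j:S_1\to S_2$, using the identification $S_k\cong S_{E_k}/U_k$ established in $(1)$ before Proposition \ref{uppers6}.

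\textbf{Step 3: verify the equivalence with the element $\gamma_0$.} On $Z$, $\tilde j$ maps $\tilde z=Uz$ to $U^{\pi}z^{\gamma_0}$, so $\tilde j(z)=z^{\gamma_0}$. For $s\in S_{E_1}$, Lemma \ref{fpi} gives $f_2(i(s))=\gamma_0^{-1}f_1(s)\gamma_0$; passing to cosets of $Inn(G)$ yields $\phi_2(\tilde j(x))=\phi_1^{\gamma_0}(x)$. Thus $(S_1,\phi_1)$ and $(S_2,\phi_2)$ are equivalent through $\tilde j$ and $\gamma_0$.

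The main obstacle is Step 1, and in particular the identification $L^{\gamma}=L^{\pi}$. It relies on knowing that $E_2$ is associated with exactly $L^{\pi}$, which should follow from $\phi_2(S_2)=L^{\pi}/Inn(G)$ via Proposition \ref{uu-1}. Care with composition order in $\gamma\pi^{-1}$ is also needed so that the correction factor genuinely lies in $N_{U,B}$, which is what makes $B$ and $U$ map to $B^{\pi}$ and $U^{\pi}$.
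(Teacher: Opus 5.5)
Your proposal is correct and follows the paper's proof essentially step for step. Write $\gamma=ln\pi$ with $l\in L$ and $n\in N_{U,B}$, then compose with the inner automorphism of $E_1$ induced by an element mapping to $l^{-1}$, so the restriction becomes $n\pi$; deduce $i(S_{E_1})=S_{E_2}$ and $i(U)=U^{\pi}$, and read off the equivalence through the induced map $S_{E_1}/U\to S_{E_2}/U^{\pi}$ together with $n\pi$. The detour through $l',n'$ and the unnamed ``remark'' is unnecessary, since $\gamma\pi^{-1}=ln$ already gives $\gamma=ln\pi$ directly.
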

\emph{Proof }.$\Rightarrow$ : Let $E_{1}=E(S_{1},\phi_{1},B,U)$ and $E_{2}=E(S_{2},\phi_{2},B^{\pi},U^{\pi})$.
For $l=1,2$ let $f_{l}$ be the action of $E_{l}$ on $G$ by conjugation. By their definition,
$f_{1}(E_{1})=Inn(G)B=L$ and $f_{2}(E_{2})=Inn(G)B^{\pi}=(Inn(G)B)^{\pi}=L^{\pi}$.
Assume that there exists a $G$-isomorphism $\tilde i:E_{1}\rightarrow E_{2}$ and
let $\gamma\in Aut(G)$ be its restriction to $G$.
By Lemma \ref{fpi} $f_{2}(E_{2})=f_{1}^{\gamma}(E_{1})=L^{\gamma}=L^{\pi}$. Consequently
 $\gamma\pi^{-1}\in N_{Aut(G)}(L)$ so that $\gamma\in N_{Aut(G)}(L).\pi=LN_{U,B}.\pi$
 and thus $\gamma=l.n.\pi$ for some $l\in L$ and some $n\in N_{U,B}$. There exists
 $e\in E_{1}$ such that the conjugation $f_{1}(e)=l^{-1}$ (because $f_{1}(E_{1})=L$)
 and if we compose  this conjugation with $\tilde i$, we get a new
 $G$-isomorphism $i=f_{1}(e).\tilde i$ whose restriction to $G$ is $l^{-1}ln\pi=n\pi$.

Let $S_{E_{1}}=\{e\in E_{1}\,|\,f_{l}(e)\in B  \}=f_{1}^{-1}(B)$ and
$S_{E_{2}}=\{e\in E_{2}\,|\,f_{2}(e)\in B^{\pi}  \}=f_{2}^{-1}(B^{\pi})$.
For an element $e_{1}$ of $E_{1}$, we get $f_{2}(i(e_{1}))=f_{1}^{n\pi}(e_{1})$ (by Lemma
\ref{fpi}) which implies that $f_{2}(i(e_{1}))\in B^{\pi}\Leftrightarrow f_{1}(e_{1})\in B$
 (since $B^{n}=B$). Consequently, $i$ maps $S_{E_{1}}$ onto $S_{E_{2}}$ and maps $U$ onto
 $U^{\pi}$ (since $U^{n}=U$ if $n\in N_{U,B}$).

Let $U_{1}=U$ and $U_{2}=U^{\pi}$. The $Z\phi$-extension of $Z(G)$ associated with $E_{l}$
 has been defined as $(S_{E_{l}}/U_{l},\phi_{l})$,
where $\phi_{l}(U_{l}s_{l})=Inn(G)f_{l}(s_{l})$ for $s_{l}\in S_{E_{l}}$. Since $i$ maps
$U_{1}$ onto $U_{2}$ it induces an isomorphism from $S_{E_{1}}/U_{1}$ onto $S_{E_{2}}/U_{2}$
defined by $j:U_{1}s_{1}\rightarrow i(U_{1}s_{1})=U_{2}i(s_{1})$.
Let us show that the $Z\phi$-extensions  associated with the $E_{l}$ are equivalent
through $j$ and $n\pi$. First, $z\in Z(G)$ correspond to the coset $U_{1}z$ which is mapped
 by $j$ to $U_{2}i(z)=U_{2}z^{n\pi}$ that corresponds to $z^{n\pi}\in Z(G)$ (because $i|_{G}=n\pi$).
Next, since by Lemma \ref{fpi} $f_{2}(i(s_{1}))=f_{1}^{n\pi}(s_{1})$ we get
$\phi_{2}(j(Us_{1}))=\phi_{2}(U_{2}i(s_{1})):=Inn(G)f_{2}(i(s_{1}))$ which is equal to
$Inn(G)f_{1}^{n\pi}(s_{1})=(Inn(G)f_{1}(s_{1}))^{n\pi}:=\phi_{1}^{n\pi}(Us_{1})$.
Hence, the two conditions for equivalence are fulfilled.

$\Leftarrow$ : already proved in Proposition \ref{uppers6}$.\quad\Box$

We integrate the previous results in the following theorem :

\begin{theorem}\label{usplitz} Let $L/Inn(G)$ be a subgroup  of $Out(G)$.
 If there exists a supplement $B$ of $Inn(G)$ in $L$
such that the subgroup of $G$ corresponding to $Inn(G)\cap B$ is equal to
$U\times Z(G)$ for some $U<G$ such that $U^{B}=U$ then

 \begin{itemize}%\begin{enumerate}
 \item $(i):\,$ there exists an extension of $G$ by a group $H$ associated with $L$ if and only if there
 exists an epimorphism $H\rightarrow L/Inn(G)$.
\end{itemize}
Let $N_{U,B}=N_{Aut(G)}(B)\cap N_{Aut(G)}(U)$.
Let $Z\phi_{L,H}$ be the set of equivalence classes of $Z\phi$ extensions of $Z(G)$ by
$H$ associated with $L$.

\begin{itemize}
\item $(ii):\,$ If $N_{Aut(G)}(L)=LN_{U,B}$
then there is a one-to-one correspondence between $Z\phi_{L,H}$ and the $G$-isomorphism
classes of extensions of $G$ by $H$ associated with $L$.

\item $(iii):\,$ If for every $\pi\in N_{Aut(G)}(L)$ there exists
$n\in N_{U,B}$ such that \begin{enumerate}
\item $b^{n}\in Inn(G)b^{\pi}$  $\quad\forall\, b\in B$
\item $z^{n}=z^{\pi}$  $\qquad\qquad\forall\, z\in Z(G)$.\end{enumerate}
then, if $|Z\phi_{L,H}|<\infty$  there are, up to $G$-isomorphism, at most
$|Z\phi_{L,H}|$ extensions of $G$ by $H$ associated with $L$.
 \end{itemize}%\end{enumerate}
\end{theorem}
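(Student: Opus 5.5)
The plan is to assemble the three parts from the construction $E(\tilde S,\phi,B,U)$ of this section together with Propositions \ref{uppers6} and \ref{onetone}.

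\emph{Part $(i)$.} If an extension $E$ of $G$ by $H$ associated with $L$ exists, its conjugation action $f$ induces an epimorphism $H\cong E/G\to f(E)/Inn(G)=L/Inn(G)$. Conversely, suppose we are given an epimorphism $\psi:H\to L/Inn(G)$. By the remark following the definition of $Z\phi$-extensions, the semi-direct product $S=H\ltimes Z(G)$, with $H$ acting on $Z(G)$ through $\psi$ and the canonical action of $Out(G)$ on $Z(G)$, gives a $Z\phi$-extension $(S,\phi)$ with $S/Z(G)\cong H$ and $\phi(S)=L/Inn(G)$. Step $(2)$ of the construction above then yields $E(S,\phi,B,U)=(D\ltimes_h G)/K$. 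By Proposition \ref{uu-1}, the image of $D$ is $f^{-1}(B)$ and $f(E)=Inn(G)B=L$. Moreover $E/G\cong D/(D\cap G\text{-part})\cong S/Z(G)\cong H$, since $D/(\tilde Z\times\overline U)\cong S/\tilde Z$ via the first projection (its kernel is $D\cap(1\times B)=1\times\overline U$).

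\emph{Part $(ii)$.} We define the map $(S,\phi)\mapsto E(S,\phi,B,U)$ from $Z\phi$-extensions of $Z(G)$ by $H$ associated with $L$ to $G$-isomorphism classes.

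Surjectivity comes from step $(1)$. Every extension $E$ associated with $L$ is $E(S_E/U,\phi,B,U)$, and $S_E/U$ is an extension of $Z(G)$ by $S_E/(U\times Z)\cong E/G\cong H$.

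Well-definedness and injectivity follow from Proposition \ref{onetone} with $\pi=1$: under $N_{Aut(G)}(L)=LN_{U,B}$, the extensions $E(S_1,\phi_1,B,U)$ and $E(S_2,\phi_2,B,U)$ are $G$-isomorphic if and only if the pairs are equivalent.

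One subtlety needs care. Proposition \ref{onetone} involves an equivalence through some $\pi$ with target $E(\dots,B^\pi,U^\pi)$, so I must check that it applies when both targets use the same $B$ and $U$. This is the case $\pi=1$, so the $\Leftarrow$ direction would a priori need an equivalence with $\pi$ normalizing $B,U$. An arbitrary equivalence through $\pi$ gives $E(S_2,\phi_2,B^\pi,U^\pi)$ instead. Since $\phi_2(S_2)=L^\pi/Inn(G)=L/Inn(G)$, we have $\pi\in N_{Aut(G)}(L)=LN_{U,B}$. Writing $\pi=ln$ and replacing $\pi$ by $n$, which is allowed because $l$ acts as an inner automorphism modulo $Inn(G)$ up to adjusting by $f(e)$ as in the proof of Proposition \ref{onetone}, reduces to $B^n=B$ and $U^n=U$. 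I expect this normalization step to be the main obstacle, because $l\in L$ is not inner, and it must be absorbed using $L=Inn(G)B$: write $l=\bar g b$ and note that conjugation by a preimage of $b$ in $S_2$ realizes the needed adjustment.

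\emph{Part $(iii)$.} The map from $Z\phi$-classes to $G$-isomorphism classes is still surjective by step $(1)$, so it suffices to show the classes of extensions are at most as many as the $Z\phi$-classes. Well-definedness requires that equivalent pairs give $G$-isomorphic extensions with the same $(B,U)$. Given an equivalence through $\pi\in N_{Aut(G)}(L)$, pick $n\in N_{U,B}$ as in the hypotheses. Conditions (a) and (b) say that $n$ induces on $Z(G)$, and on $\phi$ modulo $Inn(G)$, exactly what $\pi$ induces, so the same $\tilde j$ realizes an equivalence through $n$. Proposition \ref{uppers6} with $B^n=B$ and $U^n=U$ then gives a $G$-isomorphism. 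Hence the map is well defined and surjective, which gives the bound $|Z\phi_{L,H}|$.
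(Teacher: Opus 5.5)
Your proposal is correct and follows the paper's proof. Part $(i)$ goes through the $Z\phi$-extension $H\ltimes Z(G)$ and the construction $E(S,\phi,B,U)$. Part $(ii)$ uses the reconstruction step $(1)$ together with Proposition \ref{onetone}. Part $(iii)$ trades the equivalence through $\pi$ for one through $n\in N_{U,B}$ and then applies Proposition \ref{uppers6}.

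The normalization issue you raise in $(ii)$ is genuine. The paper glosses it: it cites Proposition \ref{onetone} with $\pi=1$, but the ``$\Leftarrow$'' direction of that proposition rests on Proposition \ref{uppers6}, which only produces $E(S_2,\phi_2,B^{\pi},U^{\pi})$ for the automorphism $\pi$ realizing the equivalence. Your first justification, that $l$ acts as an inner automorphism modulo $Inn(G)$, is false, since $l\in L$ need not be inner. You also do not need any conjugation in $S_2$. Because $U^{B}=U$, we have $B\leq N_{U,B}$, so $\pi\in LN_{U,B}=Inn(G)N_{U,B}$; write $\pi=\bar g m$ with $\bar g\in Inn(G)$ and $m\in N_{U,B}$. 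Inner automorphisms fix $Z(G)$ pointwise and act trivially by conjugation on $Out(G)$, so the same $\tilde j$ realizes an equivalence through $m$. Proposition \ref{uppers6} then applies directly, with $B^{m}=B$ and $U^{m}=U$.
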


\emph{Proof }.\begin{itemize}
\item $(i)$ Let $E$ be one such extension. If $f$ is the conjugation action of $E$ on $G$, then
it induces an epimorphism from $H\cong E/G$ onto $f(E)/f(G)=L/Inn(G)$.
 Conversely let $\phi$ be an epimorphism $H\rightarrow L/Inn(G)$. Through the action
 of $Out(G)$ on $Z=Z(G)$, it defines an action  $\tilde\phi:H\rightarrow Aut(Z)$ and
 the semi-direct product $S=H\ltimes_{\tilde\phi}Z$ is a $Z\phi$-extension.
 We now that $E=E(S,\phi,B,U)$ is an extension of $G$ associated with $L$
and let us prove that $E/G\cong H$ so that
$E$ is an extension of $G$ by $H$.

Let $D$ be the subgroup $\{(x,b) |\,b\in\phi(x)\}$ of $\tilde S\times B$.
As proved in Proposition \ref{uu-1}, the image $D_{k}\cong D$ of $D$ in $D\ltimes_{h}G/K=E(S,\phi,B,U)$
 is a supplement to $G$ and $D_{k}\cap G=Z\times U$. Hence, $E/G$ is isomorphic
 to $D_{k}/(Z\times U)\cong D/(\tilde Z\times\overline{U})$.
Let $\pi$ be the projection of $S\times B$ on $S$, whose kernel is $1\times B$.
For every $s\in S$ there exists $b\in\phi(s)\in L/Inn(G)$ because $B$ is a supplement to
$Inn(G)$ in $L$. Hence, the image of $D$ under $\pi$ is the full group $S$ and
$S\cong D/(1\times\overline{U})$ since $Ker\pi\cap D=1\times\overline{U}$.
Finally, $E/G\cong D/(\tilde Z\times\overline{U})\cong \frac{D/(1\times\overline{U})}{\tilde Z\times\overline{U}/(1\times\overline{U})}\cong S/\tilde Z$,
by the isomorphism theorem and we have proved that $E/G\cong H$.
% We have proved that $E(S,\phi,B)\in\mathcal{E}_{L}$ is and it is an extension of $G$ by $H$.

\item $(ii)$ We have proved that every extension of $G$ associated with $L$ can be realized
as $E(S_{1},\phi_{1},B,U)$ where $(S_{1},\phi_{1})$ is a $Z\phi$-extension of $Z(G)$
associated with $L$. If  $N_{Aut(G)}(L)=LN_{U,B}$, the one-to-one correspondence is
stated by Proposition \ref{onetone} ; $E(S_{1},\phi_{1},B,U)$ and $E(S_{2},\phi_{2},B,U)$
 are $G$-isomorphic if and only if $(S_{1},\phi_{1})$ and $(S_{2},\phi_{2})$ are
 equivalent. If we just want to classify extensions of $G$ by $H$ then, since
$H\cong E_{i}/G\cong S_{i}/Z(G)$, only $Z\phi$ extensions of $Z(G)$ by $H$ are
to be taken into account.\\

\item $(iii)$
Let $Z\phi_{L,H}$ be the set of equivalence classes of $Z\phi$-extensions of $Z(G)$ by
$H$ associated with $L$ and assume that $|Z\phi_{L,H}|=k<\infty$. We know that every
extension of $G$ by $H$ associated with $L$ is of type $E(S_{1},\phi_{1},B,U)$
for a $Z\phi$-extension of $Z(G)$ by $H$ associated with $L$. Let us show that hypothesis $(iii)$
implies that each equivalence
class in $Z\phi_{L,H}$ corresponds to only one $G$-isomorphism class of extensions of $G$
by $H$ associated with $L$, so that there are at most $k$ non $G$-isomorphic such extensions.

As proved above, if $(S_{1},\phi_{1})$   and $(S_{2},\phi_{2})$, associated with $L$,
  are equivalent through an  isomorphism $j:S_{1}\rightarrow S_{2}$ and $\pi\in Aut(G)$
  then $\pi\in N_{Aut(G)}(L)$. For every $s\in S_{1}$, $\phi_{1}(s)\in L/Inn(G)$
 and since $L=Inn(G)B$, we have $\phi_{1}(s)=Inn(G)b$ for some $b\in B$. Hence, our hypothesis
 implies that for some $n\in N_{U,B}$, $\phi_{2}(j(s))=\phi_{1}^{\pi}(s)=\phi_{1}^{n}(s)$ and $j(z)=z^{\pi}=z^{n}$
and it means that $(S_{1},\phi_{1})$   and $(S_{2},\phi_{2})$
are equivalent through $j$ and $n$. By Proposition \ref{uppers6},
 $E(S_{1},\phi_{1},B,U)$ and $E(S_{2},\phi_{2},B^{n},U^{n})=E(S_{2},\phi_{2},B,U)$ are $G$-isomorphic
 ( since $n$ normalizes both $B$ and $U$).
Thus all the element of an equivalence class in $Z\phi_{L,H}$ correspond to the same
$G$-isomorphism class.
This result gives an upper bound for the number of $G$-isomorphism classes in a case
 that is more general than $(ii)$ since here, $N_{U,B}$ does not need to be a supplement
 of $L$ in $N_{Aut(G)}(L)$. $\quad\Box$
 \end{itemize}

%%Even if as for $S6$, the normalizer of $B$ is not a supplement to $L$.\\

\begin{proposition}\label{uzprime} If $B$ is a supplement to $Inn(G)$ in $Aut(G)$ such that
\mbox{$|B\cap Inn(G)|$} is coprime to $|Z(G)|$,  then there is a one-to-one correspondence
 between the equivalence classes of $Z\phi$-extensions of $Z(G)$ and the $G$-isomorphism
classes of extensions of $G$.
\end{proposition}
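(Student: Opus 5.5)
We will deduce the statement from Theorem \ref{usplitz}, applied for every subgroup $L$ with $Inn(G)\leq L\leq Aut(G)$, using the supplement $B_L:=B\cap L$ of $Inn(G)$ in $L$. By Dedekind's law, $L=L\cap B\,Inn(G)=(B\cap L)Inn(G)$, so $B_L$ is indeed a supplement. Moreover $B_L\cap Inn(G)=B\cap Inn(G)$ does not depend on $L$.

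Let $\tilde U$ be the subgroup of $G$ corresponding to $B\cap Inn(G)$, that is, its preimage under $g\mapsto\overline{g}$. Then $Z(G)\leq\tilde U$ and $\tilde U/Z(G)\cong B\cap Inn(G)$, whose order is coprime to $|Z(G)|$. This coprimality is what lets us split off the centre canonically.

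First we show $\tilde U=U\times Z(G)$ with $U$ characteristic in $\tilde U$. Since $Z(G)$ is central in $\tilde U$ and $|Z(G)|$ is coprime to $[\tilde U:Z(G)]$, Schur--Zassenhaus (Theorem \ref{schurzass}) gives a complement $U$. Because $Z(G)$ is central, $U$ is normal and $\tilde U=U\times Z(G)$. More directly, $U$ is the set of elements of $\tilde U$ whose order divides $m=[\tilde U:Z(G)]$. Indeed, $\tilde U$ is a direct product of $U$ and the central subgroup $Z(G)$ of coprime order, so $U$ is the unique subgroup of order $m$, hence characteristic in $\tilde U$. (If finiteness of $\tilde U$ is not assumed, we would take $U=\{x\in\tilde U: x^m=1\}$ and argue the same way; I expect the statement implicitly assumes $G$ finite, as in the paper's applications.)

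Next we check the invariance conditions. $\tilde U$ is invariant under every automorphism normalizing $B\cap Inn(G)$, by the formula $\overline{g^{\pi}}=\overline{g}^{\pi}$ used in Proposition \ref{uppers6}. Hence $\tilde U$ is $B$-invariant, and $U$, being characteristic in $\tilde U$, satisfies $U^B=U$. So the hypotheses of Theorem \ref{usplitz} hold for each $L$ with $B_L$ and this $U$, and part $(i)$ gives the construction $E(S,\phi,B_L,U)$ for every $Z\phi$-extension.

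For the bijection we prefer to use Propositions \ref{uppers6} and \ref{onetone} over all $L$ at once rather than verifying $N_{Aut(G)}(L)=LN_{U,B_L}$ separately for each $L$. Given a $G$-isomorphism $E_1\to E_2$ inducing $\gamma\in Aut(G)$, we write $\gamma=b\overline g$ with $b\in B$, using $Aut(G)=B\,Inn(G)$. Composing with an inner automorphism of $E_2$, as in the proof of Proposition \ref{Allisopres}, we may assume $\gamma=b\in B$. Then:
\begin{itemize}
\item $b$ normalizes $B$;
\item $b$ maps $B_{L_1}=B\cap L_1$ to $B\cap L_1^b=B_{L_2}$;
\item $b$ fixes $U$, since $U$ is characteristic in the $B$-invariant $\tilde U$.
\end{itemize}
Hence, as in the proof of Proposition \ref{onetone}, the isomorphism maps $f_1^{-1}(B_{L_1})$ onto $f_2^{-1}(B_{L_2})$ and $U$ onto $U$. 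It therefore induces an equivalence, through $b$, of the associated $Z\phi$-extensions $S_{E_l}/U$.

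Conversely, suppose two $Z\phi$-extensions are equivalent through $(j,\pi)$. Writing $\pi=b\overline g$ and replacing $\pi$ by $b$, which the remark after the definition of equivalence allows, Proposition \ref{uppers6} gives that $E(S_1,\phi_1,B_{L_1},U)$ and $E(S_2,\phi_2,B_{L_1}^b,U^b)=E(S_2,\phi_2,B_{L_2},U)$ are $G$-isomorphic.

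Combined with the surjectivity from $(i)$, this yields the one-to-one correspondence.

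The main obstacle is the bookkeeping that equivalence classes may mix different $L$ in a conjugacy class. This is exactly handled by reducing $\gamma$ and $\pi$ into $B$. The only genuinely new ingredient is the canonical splitting $\tilde U=U\times Z(G)$ with $U$ characteristic, which is where coprimality is used.
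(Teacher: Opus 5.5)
Your proof is correct. Its first half matches the paper's: you split $\tilde U=U\times Z(G)$ by Schur--Zassenhaus, show $U^B=U$, and use $B_L=B\cap L$ as the supplement in each $L$. Your description of $U$ as the elements of order dividing $m$ is a slightly cleaner way to get that $U$ is characteristic than the paper's argument via uniqueness of complements.

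You differ in the final assembly. The paper checks the hypothesis $N_{Aut(G)}(L)=LN_{U,B}$ of Theorem \ref{usplitz}(ii) for each $L$ and then cites Proposition \ref{onetone}. As written, this only makes sense with $B_L$ in place of $B$: since $B\leq N_{U,B}$, one would get $LN_{U,B}\supseteq Inn(G)B=Aut(G)$. The correct check is $N_{Aut(G)}(L)=Inn(G)(B\cap N_{Aut(G)}(L))$, where $B\cap N_{Aut(G)}(L)$ normalizes both $B_L$ and $U$. The paper also leaves implicit how the per-$L$ statements combine into one bijection, even though $Z\phi$-classes mix conjugate $L$'s.

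You avoid the normalizer condition altogether. You push both the induced automorphism $\gamma$ and the equivalence datum $\pi$ into $B$, as in Proposition \ref{Allisopres}. A single $b\in B$ then carries $B_{L_1}$ onto $B_{L_2}=B_{L_1^b}$ and fixes $U$, which handles the cross-$L$ bookkeeping explicitly. The cost is that you re-run the argument of Proposition \ref{onetone} instead of quoting it. The gain is a uniform argument with no case-by-case hypothesis.

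One citation should be fixed: Theorem \ref{usplitz}(i) is only an existence statement. What you actually use is the following:
\begin{itemize}
\item For surjectivity: every extension $E$ with $f(E)=L$ is $G$-isomorphic to $E(S_E/U,\phi,B_L,U)$. This is step (1) at the start of Section \ref{classifs}.
\item For injectivity: the $Z\phi$-extension read off from $E(S,\phi,B_L,U)$ is equivalent to $(S,\phi)$. This is step (2) together with Proposition \ref{uu-1}, and it is what turns your forward argument into injectivity.
\end{itemize}
Both facts are in the paper, so this is a labeling issue, not a gap.

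Your finiteness hedge is also unnecessary. Coprimality of the two orders presupposes that both are finite, so $|\tilde U|=|Z(G)|\,|B\cap Inn(G)|$ is finite.
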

\emph{Proof }. Let $\tilde{U}$ be the subgroup of $G$ corresponding to $U_{1}=B\cap Inn(G)$.
It is an extension of $Z(G)$ by $U_{1}$. By Schur-Zassenhaus Theorem (\ref{schurzass}),
$\tilde{U}$ splits over $Z(G)$ but, since $Z(G)$ commutes with every subgroup of $G$,
  $\tilde{U}=Z(G)\times U$ where $U$ is a complement to $Z(G)$ isomorphic to $U_{1}$.
Since all complements are conjugate and $U\unlhd \tilde{U}$, $U$ is the unique complement
of $Z(G)$ in $\tilde{U}$.
 Every $b\in B$ normalizes $U_{1}=B\cap Inn(G)$ and thus normalizes $\tilde{U}$.
 Since $b$  normalizes $Z(G)$, it also normalizes $U$ $(\star)$, because $U$ is the
  unique complement in $\tilde{U}$ .

 Since $Aut(G)=Inn(G)B$, for every subgroup $L/Inn(G)<Out(G)$, the subgroup
$B_{L}=B\cap L$ is a supplement to $Inn(G)$ (thus $L=Inn(G)B_{L}$). Hence, every extension of
$G$ is realized as $E(S_{1},\phi_{1},B_{L},U)$ where $(S_{1},\phi_{1})$ is a $Z\phi$-extension
of $Z(G)$ such that $\phi_{1}(S_{1})=L$. Moreover, by $(\star)$, $B$ is a subgroup of
 $N_{U,B}=N_{Aut(G)}(B)\cap N_{Aut(G)}(U)$ and since $Aut(G)=Inn(G)B$, we get
 $N_{Aut(G)}(L)=Inn(G)B_{N,L}$ where $B_{N,L}:=B\cap N_{Aut(G)}(L)$. Since $Inn(G)\leq L$
 and $B\leq N_{U,B}$ we have $N_{Aut(G)}(L)=LN_{U,B}$ ($N_{U,B}$ normalizes $L=Inn(G)B$).
 Therefore the hypotheses of Proposition \ref{onetone} are realized : there is a one-to-one correspondence
 between  the $Z\phi$-classes and the $G$-isomorphism classes of all extensions of $G$
 ( and not only for the extensions associated with a specific $L/Inn(G)$)$.\quad\Box$

Examples of groups $G$ satifying the hypotheses of Proposition \ref{uzprime} can be found, using the tables
of subsection \ref{tableout}. The following Theorem and the corollary that follows it, show that the perfect
$3$-covers of $G=A_6$ and $G=PerfectGroup(9720,3)$ are such examples (because $Z(G)=3$ and
$Out(G)$ is a $2$-group).
% and $3.[29160,1] ??? no it is amistake ????

% in the next theorem, the hypothesis G finite is necessary for the existence of
%a supplement by sylow normalizers, and for out(G) to be finite
%but H could be infinite. But then since the
% number of extensions could be infinite, we should use the words bijection between sets
%instead of same number of extensions as
%\begin{theorem}\label{uzout} Let $G$ be a finite group, let $H$ be a group and let
%$\phi:H\rightarrow Out(G)$ be an homomorphism. Assume that the order $\phi(H)$ is
%relatively prime to the order of $Z(G)$.
%There is a bijection between the $G$-isomorphism classes of extensions of $G$ by $H$
%associated with $\phi$ and the $Z\phi$-classes of $Z\phi$-extensions of $Z(G)$
%by $H$ associated with $\phi$.
%\end{theorem}
%\emph{Proof }.
%$.\quad\Box$

\begin{theorem}\label{ZL=1} Let $L/Inn(G)$ be a subgroup of $Out(G)$ whose order
is relatively prime to $|Z(G)|$.
There is a bijection between the $G$-isomorphism classes of extensions of $G$ associated
with $L$  and the $Z\phi$-classes of $Z\phi$-extensions of $Z(G)$ associated
with $L$.
\end{theorem}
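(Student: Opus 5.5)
The plan is to derive the statement from Theorem \ref{usplitz}$(ii)$, following the pattern of the proof of Proposition \ref{uzprime}. I will work one subgroup $L$ at a time, with $\pi$ the set of primes dividing $|Z(G)|$; the hypothesis says that $L/Inn(G)$ is a $\pi'$-group. The goal is a supplement $B$ of $Inn(G)$ in $L$ together with a subgroup $U<G$ such that:
\begin{enumerate}
\item the subgroup $\tilde U$ of $G$ corresponding to $Inn(G)\cap B$ satisfies $\tilde U=U\times Z(G)$;
\item $U^{B}=U$;
\item $N_{Aut(G)}(L)=LN_{U,B}$.
\end{enumerate}
Once these hold, Theorem \ref{usplitz}$(ii)$ gives the bijection for each $H$. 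Taking the union over $H$ then gives the bijection for all extensions associated with $L$.

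First step: isolate the $\pi$-part of $Inn(G)\cap B$. Consider the extensions in $\mathcal{F}_{L}$ together with the preimage supplements of Subsection \ref{preimsupp}. For any supplement $B$ of $Inn(G)$ in $L$, the group $S_E=f^{-1}(B)$ is an extension of $\tilde U=G\cap S_E$ by the $\pi'$-group $L/Inn(G)$, and $Z(G)\leq Z(\tilde U)$. I would choose $B$ so that $Inn(G)\cap B$ is as small as possible in its $\pi$-part. The natural candidate is $B=N_{\vec p}(L,Inn(G))$ with the primes of $\pi'$ placed first, since Theorem \ref{preNp} makes this choice uniform over all extensions in $\mathcal{F}_L$. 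Then $\tilde U$ is nilpotent, so $\tilde U=\tilde U_{\pi}\times\tilde U_{\pi'}$.

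Second step: split $Z(G)$ off. The central subgroup $Z(G)$ lies in $\tilde U_{\pi}$. The plan is to take $U=\tilde U_{\pi'}\times C$, where $C$ is a complement to $Z(G)$ in $\tilde U_{\pi}$. To produce $C$, I would use a Gaschütz-type argument for the abelian normal subgroup $Z(G)$ of the nilpotent group $\tilde U$. The fact that $|L/Inn(G)|$ is prime to $|Z(G)|$ should then let me average, exactly as in the Schur--Zassenhaus step of Proposition \ref{uzprime}, to make $C$ invariant under $B$. Uniqueness of such a $B$-invariant complement up to the relevant conjugacy should then yield condition 3, via a Frattini argument: as in Proposition \ref{Allisopres}, any $\pi\in N_{Aut(G)}(L)$ carries $B$ to an $L$-conjugate, and it carries $U$ to a $B$-invariant complement, hence to a conjugate of $U$.

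The main obstacle I expect is the second step. Coprimality is only between $|Z(G)|$ and the index $|L/Inn(G)|$, not $|\tilde U/Z(G)|$, so Schur--Zassenhaus cannot be applied to $\tilde U$ directly, and a complement to $Z(G)$ in $\tilde U_{\pi}$ need not exist. If that fails, the fallback is to bypass supplements. I would directly define a $Z\phi$-extension from $E$ by taking a suitable quotient of a $\pi'$-Hall-type supplement of $GC_E(G)$ in $E$; such a supplement exists by Schur--Zassenhaus because $E/GC_E(G)\cong L/Inn(G)$ is a $\pi'$-group. I would then check the two equivalence conditions through Lemma \ref{fpi} and Proposition \ref{isogh}, as in Proposition \ref{onetone}.
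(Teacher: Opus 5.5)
Your proposal has a genuine gap at the second step, and the obstruction is real, not just technical: with $B=N_{\vec p}(L,Inn(G))$ the group $\tilde U_{\pi}$ can fail to split over $Z(G)$.

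\textbf{A counterexample to your choice of $B$.} Take $G=SL_2(5)$ and $L=Inn(G)\cong A_5$. Then $L/Inn(G)=1$, so the hypothesis holds with your $\pi=\{2\}$. Take $\vec p=(3,5,2)$. The normalizer of a Sylow $3$-subgroup of $A_5$ is $S_3$, and a Sylow $2$-subgroup of $S_3$ is self-normalizing. So $B$ has order $2$. Its preimage $\tilde U$ in $G$ is cyclic of order $4$, since the only involution of $SL_2(5)$ is central. Hence $Z(G)$ has no complement in $\tilde U$.

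\textbf{Why your repairs do not work.} A Gasch\"utz argument cannot help: it only transfers existence of complements between nested groups under a coprimality condition, and here everything lives inside the $\pi$-group $\tilde U_\pi$. Without a canonical $U$, your derivation of $N_{Aut(G)}(L)=LN_{U,B}$ also has nothing to stand on. The fallback fails as well. Schur--Zassenhaus for $GC_E(G)\unlhd E$ needs $|GC_E(G)|$ coprime to $|L/Inn(G)|$. This already fails for $G=A_6$, $E=Aut(A_6)$, where $Z(G)=1$ and in fact no complement exists.

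\textbf{The missing idea.} Your instinct to make the $\pi$-part of $Inn(G)\cap B$ small is right, but it must be pushed all the way to $1$. The tool is Schur--Zassenhaus applied inside $Aut(G)$, not inside $G$. Here is how the paper's proof does it.
\begin{enumerate}
\item Take a supplement $S$ of $Inn(G)$ in all of $Aut(G)$ with $V:=S\cap Inn(G)$ nilpotent (Proposition \ref{nilsup}). Put $S_L=S\cap L$, and let $V_\pi$ be the Hall $\pi$-subgroup of $V$. The paper removes, more generously, every prime not dividing $|L/Inn(G)|$.
\item $V_\pi$ is characteristic in $V$, hence normal in $S_L$. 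Also $|S_L:V_\pi|=|L/Inn(G)|\cdot|V_{\pi'}|$ is a $\pi'$-number. So $V_\pi$ has a complement $C$ in $S_L$, unique up to $S_L$-conjugacy.
\item Then $C\,Inn(G)=L$ and $C\cap Inn(G)=V_{\pi'}$. So the preimage $\tilde U$ of $C\cap Inn(G)$ in $G$ is an extension of $Z(G)$ by a group of coprime order.
\item Schur--Zassenhaus now does apply in $G$: $\tilde U=Z(G)\times U$, with $U$ the unique complement. Hence $N_{Aut(G)}(C)\le N_{Aut(G)}(U)$, and in particular $U^C=U$.
\item For the normalizer condition, write $\gamma\in N_{Aut(G)}(L)$ as $\gamma=gs$ with $g\in Inn(G)$ and $s\in S$. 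Then $s$ normalizes $S_L$ and $V_\pi$, so $C^s$ is again a complement of $V_\pi$ in $S_L$. Hence $C^s=C^t$ for some $t\in S_L$, which gives $N_{Aut(G)}(L)=L\,N_{Aut(G)}(C)$.
\end{enumerate}
With these, Theorem \ref{usplitz}$(ii)$ applies with $B=C$.
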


\emph{Proof }. Let us show that every hypothesis of Theorem \ref{usplitz} $(ii)$ is satisfied.
$Aut(G)$ is an extension of $Inn(G)$. By Proposition \ref{nilsup},
$Inn(G)$ has a supplement $S$ such that $U:=S\cap Inn(G)$ is nilpotent. The group
 $S_{L}:=S\cap L$ is a supplement to $Inn(G)$ in $L$.
Let $\pi$ be the set of  primes $p$ that do divide $|L/Inn(G)|=|S_{L}/U|$ and
 let $\pi'$ be the set of primes $p'$ that do not.
Nilpotency of $U$ implies that $U$ is the direct product $U_{\pi}\times U_{\pi'}$
where $U_{\pi}$ (respectively $U_{\pi'}$) is the direct product of the $p$-Sylow
 subgroups of $U$  where $p\in\pi$ (respectively $p'$-Sylow where $p'\in\pi'$).
  $U_{\pi'}$ is characteristic in $U$, so it is normal
 in $S_{L}$ and since the primes $p'\in\pi'$ do not divide $|S_{L}/U_{\pi'}|=|S_{L}/U|.|U_{\pi}|$,
by \emph{Schur-Zassenhaus} theorem, there is a complement $C_{\pi}$ of $U_{\pi'}$ in
$S_{L}$ and all such complements are conjugate in $S_{L}$ (we do not need to use the Feit-Thompson
theorem since $U_{\pi'}$ is solvable). $C_{\pi}\cap U$ is a complement of $U_{\pi'}$ in $U$ ;
  by the same theorem all such complements are conjugate to the complement $U_{\pi}$, but
  since $U_{\pi}\unlhd U$, we get $C_{\pi}\cap U=U_{\pi}$.
  %  its order is $|U/U_{\pi'}|=|U_{\pi}|$, which is the product of the orders of the
%$Sylow-p$ subgroups of $U$ such that $p\in\pi$, $C_{\pi}\cap U$ must contain a $Sylow-p$
% subgroup of $U$ for every $p\in\pi$. Hence
As $C_{\pi}$ is a supplement to $U$ in $S_{L}$, it is also a supplement to $Inn(G)$ in $L$
 and its intersection $C_{\pi}\cap Inn(G)=C_{\pi}\cap U=U_{\pi}$.

Since $Aut(G)=Inn(G)S$, an element $\gamma\in N_{Aut(G)}(L)$ is equal to $gs$ for $g\in Inn(G)$
% and $s\in S$ that normalizes $L$ (because $L^{gs}=(L^{g})^{s}=L^{s}$). But as $L=Inn(G)S_{L}$
 and $s\in S$ that normalizes $L$ (because $g$ normalizes $L$). But as $L=Inn(G)S_{L}$
and $L^{s}=L$, it implies that $S_{L}^{s}\subseteq L$ and $S_{L}^{s}\subseteq S$ whence
$S_{L}^{s}\subseteq S\cap L=S_{L}$ thus $s$ normalizes $S_{L}$. Next, $s$ must normalize
$U=S_{L}\cap Inn(G)$ and also $U_{\pi'}$ which is a characteristic subgroup of $U$.
As  $C_{\pi}$ is a complement of $U_{\pi'}$ in $S_{L}$, its image $C_{\pi}^{s}$ under $s$
 is also a complement of $U_{\pi'}$ in $S_{L}$ : it is thus a conjugate $C_{\pi}^{s_{l}}$
 %for some $s_{l}\in S_{L}$ so that $s_{l}^{-1}s$ normalizes $C_{\pi}$. Finally
 for some $s_{l}\in S_{L}$ so that $ss_{l}^{-1}$ normalizes $C_{\pi}$. The element
 $\tilde{s_{l}}=(s_{l}^{-1})^{s^{-1}}$ belongs to $S_{L}$ since $s$ normalizes $S_{L}$.
 But since $\tilde{s_{l}}s=ss_{l}^{-1}s^{-1}s=ss_{l}^{-1}\in N_{Aut(G)}(C_{\pi})$, then
 finally $s\in S_{L}N_{Aut(G)}(C_{\pi})$ and $\gamma=gs\in Inn(G)S_{L}N_{Aut(G)}(C_{\pi})=LN_{Aut(G)}(C_{\pi})$ :
$N_{Aut(G)}(C_{\pi})$ is a supplement to $L$ in $N_{Aut(G)}(L)$.

Now, we go back into $G$.
 Let $\tilde{U}$ be the subgroup of $G$ corresponding to $U_{\pi}=C_{\pi}\cap Inn(G)$.
It is an extension of $Z(G)$ by $U_{\pi}$. By hypothesis, the prime divisors of $|Z(G)|$
are not in $\pi$ and by Schur-Zassenhaus Theorem (\ref{schurzass}), $Z(G)$ has a
complement $U_{1}$ isomorphic to $U_{\pi}$. $Z(G)$ commutes with every subgroup of $G$
thus  $\tilde{U}=Z(G)\times U_{1}$. Since all complements are conjugate and
$U_{1}\unlhd \tilde{U}$, $U_{1}$ is the unique complement of $Z(G)$ in $\tilde{U}$.
 Every $n\in N_{Aut(G)}(C_{\pi})$ normalizes $U_{\pi}=C_{\pi}\cap Inn(G)$ and thus normalizes
 $\tilde{U}$. Since $n$  normalizes $Z(G)$, it also normalizes $U_{1}$, its unique
 complement in $\tilde{U}$ so that $N_{Aut(G)}(C_{\pi})\cap N_{Aut(G)}(U_{1})=N_{Aut(G)}(C_{\pi})$.
  Hence, we have proven that there exists a supplement $C_{\pi}$
 of $Inn(G)$ in $L$ that satisfies the hypothesis of Theorem \ref{usplitz}(ii).
 $\quad\Box$
% or of Proposition \ref{onetone}.
%Every extension of $G$ associated with $L$  is realized as $E(S,\phi,C_{\pi},U_{1})$ where
% $(S,\phi)$ is $Z\phi$-extensions of $Z(G)$ associated
% to $L$ and  each equivalence class of such $Z\phi$-extensions, corresponds to exactly one
% $G$-isomorphism class of extensions of $G$ associated with $L$. $.\quad\Box$

In the case $L/Inn(G)=Out(G)$ (hence $|Z(G)|$ and $|Out(G)|$ are coprime), the subgroup
$C_{\pi}$ constructed in the proof of Theorem \ref{ZL=1} is a supplement to $Inn(G)$ in
$Aut(G)$ that satisfies the hypotheses of Proposition \ref{uzprime} and this gives the
following result.
\begin{corollary}\label{uout=1}
If $|Z(G)|$ and $|Out(G)|$ are coprime then there is a one-to-one correspondence
 between the equivalence classes of $Z\phi$ extensions of $Z(G)$ and the $G$-isomorphism
classes of extensions of $G$.
\end{corollary}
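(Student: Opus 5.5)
The plan is to reduce Corollary \ref{uout=1} to Proposition \ref{uzprime}. We need a supplement $B$ to $Inn(G)$ in $Aut(G)$ with $|B\cap Inn(G)|$ coprime to $|Z(G)|$. This $B$ is the subgroup $C_{\pi}$ built in the proof of Theorem \ref{ZL=1} for the choice $L=Aut(G)$, so that $L/Inn(G)=Out(G)$. As in that proof, we assume $G$ finite, so that Proposition \ref{nilsup} applies to $Inn(G)\unlhd Aut(G)$.

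\textbf{Step 1: construct $C_{\pi}$.} By Proposition \ref{nilsup} there is a supplement $S$ of $Inn(G)$ in $Aut(G)$ with $U:=S\cap Inn(G)$ nilpotent. Let $\pi$ be the set of primes dividing $|Out(G)|=|S/U|$, and let $\pi'$ be its complement. Nilpotency gives $U=U_{\pi}\times U_{\pi'}$. The factor $U_{\pi'}$ is characteristic in $U$, hence normal in $S$. Since $|S/U_{\pi'}|=|Out(G)|\,|U_{\pi}|$ is a $\pi$-number, $U_{\pi'}$ is a normal Hall subgroup of $S$.

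\textbf{Step 2: apply Schur--Zassenhaus.} Theorem \ref{schurzass} gives a complement $C_{\pi}$ to $U_{\pi'}$ in $S$. Because $U_{\pi'}$ is nilpotent, the solvable version suffices and Feit--Thompson is not needed. Since $U_{\pi'}\leq U$, we have $S=U C_{\pi}$, so $C_{\pi}$ supplements $U$ in $S$ and therefore supplements $Inn(G)$ in $Aut(G)$.

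\textbf{Step 3: check the intersection.} We have $C_{\pi}\cap Inn(G)=C_{\pi}\cap U$. This is a complement to $U_{\pi'}$ in $U$, so its order is $|U_{\pi}|$, a $\pi$-number. In fact it equals $U_{\pi}$, since $U_{\pi}$ is the unique Hall $\pi$-subgroup of $U$, but only the order matters here. By hypothesis no prime of $\pi$ divides $|Z(G)|$. Hence $|C_{\pi}\cap Inn(G)|$ is coprime to $|Z(G)|$.

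Proposition \ref{uzprime} applied with $B=C_{\pi}$ then gives the bijection between $Z\phi$-classes of $Z\phi$-extensions of $Z(G)$ and $G$-isomorphism classes of all extensions of $G$. There is no genuine obstacle. The only point requiring care is Step 3: one must check that the orders of $S/U_{\pi'}$ and of $C_{\pi}\cap U$ are $\pi$-numbers, which is where the coprimality hypothesis on $|Z(G)|$ and $|Out(G)|$ enters.
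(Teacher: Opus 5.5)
Your proposal is correct and follows the paper's argument. The paper proves this corollary in one line: for $L=Aut(G)$, the subgroup $C_{\pi}$ built in the proof of Theorem~\ref{ZL=1} is a supplement to $Inn(G)$ in $Aut(G)$ whose intersection with $Inn(G)$ is the $\pi$-group $U_{\pi}$, so Proposition~\ref{uzprime} applies. You spell that construction out, and you rightly note that it needs only the existence half of Schur--Zassenhaus and the implicit finiteness of $G$.
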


%%%%%%%%%%%%%%%%%%%%%%%%%%%%%%%%%%%%%%%%%%%%%%%%%%%%%
\subsection{The nonsolvable groups of order less than $58,320$}\label{58320}

Let $G$ be a perfect group and $\mathcal{E}$ a family of nonsolvable groups whose perfect
residuum is $G$. The isomorphism classes in $\mathcal{E}$ are exactly the $G$-isomorphism
classes in $\mathcal{E}$. If $Aut(G)$ splits over $Inn(G)$, we have completely solved
the isomorphism problem for the extensions of $G$ by a group $H$ : it has been reduced to the
 classification of $Z\phi$-extensions of $Z(G)$ by $H$ (see Theorem \ref{autsplit}).

It is noteworthy that this is the case for all but $3$ categories of perfect groups such
that $|G/Z(G)|< 29,160$. These categories consist of the cases where the isomorphism type of
$G/Z(G)$ is either $PSL(2,9)$, the perfect group $[1920,4]$,  $PSL(2,25)$ or the perfect group
$[9720,3]$.

The possible center for the corresponding covering group $G$ are

\begin{tabular}{|c|c|c|}
\hline
$G/Z(G)$ &  $Out(G)$ & $Z(G)$\\
\hline\hline
$PSL(2,9)$ & $2\times 2$ & $1,2,3,6$ \\
$[1920,4]$ & $2\times 2$& $2$\\
$PSL(2,25)$ & $2\times 2$ & $1,2$  \\
$[9720,3]$ & $D_{8}$& $1,3$\\
\hline
\end{tabular}

Since in each case, $Out(G)$ is a $2$-group, the cases $Z(G)=3$ or $1$ are completely classified by
Corollary \ref{uout=1}. For the two remaining situations where $Z(G)=2$ and $Z(G)=2\times 3$,
our $z\phi$-class method does not provide a classification of the extensions.  Instead, we provide tables for
these cases, that where obtained by our "supplement" method (combined with our results in chapter \ref{General})
%The case $Z(G)=2\times 3$ will be reduced to the solution of
%case $Z(G)=3$ and $Z(G)=2$ in Chapter 2.

In the section about Theorem \ref{bij3840}, we have proved that the case $Z(G)=2$ and
$G/Z(G)$ isomorphic to either PerfectGroup(1920,4) or $PSL(2,25)$ are equivalent to the case
$G=SL(2,9)$. Hence, all cases that are not yet classified by the previous results can be reduced
to the extensions of $SL(2,9)$. At the end of chapter \ref{General}, we will provide a table for the extensions
of $SL(2,9)$.
%The $Aut(SL(2,9)\cong A_{6}$ and the subgroups of $Out(G)$ correspond to $A_{6}$,
%$PGL(2,9)$, $S_{6}$, $M_{10}$ and $P\Gamma L(2,9)$. Let us show that $Z\phi$-classes solve
%completely the 2 first case, that the number of $Z\phi$-classes is an upper bound in the
%third case and that for the 2 last case, some $Z\phi$-classes do not correspond to any
%extensions of $SL(2,9)$.
%...to be continued

Hence, we have a way to classify all the finite nonsolvable groups with perfect residuum $G$
such that $|G/Z(G)|<29,160$ and therefore all nonsolvable groups of order
$2*29,160=58,320$. This is done by computing $z\phi$-classes in almost every case except
for some central extensions of $PSL(2,9)$ for which tables obtained by the supplement method are provided.
%%%%%%%%%%%%%%%%%%%%%%%%%%%%%%%%%%%%%%%%

%%%%%%%%%%%%%%%%%%%%%%%%%%%%%%%
%%  SOME LATEX LAYOUT %%%%%%
%%%%%%%%%%%%%%%%%%%%%%%%%%%%%
%  \chapter{Basic results on extensions}
%  \section{The action on $G$ associated with an extension.}
%  \emph{associated with a homomorphism from $H$ into $Out(G)$}.
%  \subsection{Extensions with inner action on $G$}
%
%  \begin{proposition}\label{inneract}
%  \end{proposition}
%  \emph{Proof }. .\quad\Box$

%  \textbf{Definition }.

%  \begin{corollary}
%  Every extension of a complete group  is a direct product.
%  \end{corollary}
%
%  begin{enumerate}\item
%  \item see \cite{Carter}).
%  \end{enumerate}
%
%  \begin{corollary}If $H$ is a group of odd order and if $G$ belongs to the list
%  below then, the only extension of $G$ by $H$ is the direct product $G\times H$.
%  \begin{itemize}
%  \item The sporadic groups $M_{12},M_{22},J_{2},Suz,HS,McL,He,Fi_{22},
%  Fi^{'}_{24},HN,O'N,J_{3}$.
%  \item The alternating groups $A_{n}$ ( $n>3$).
%  \item $L_{2}(p^{k})$ where $p$ is an odd prime number and $k$ is a power of $2$.
%  \end{itemize}
%  \end{corollary}
%%%%%%%%%%%%%%%%%%%%%%%%%%%%%%%%%

% % macro duble overline
% \newcommand{\overdub}[1]{\overline{\overline{#1 }}}
%%%%%%%%%%%%%%%%%%%%%%%

\chapter{A general classification algorithm}\label{General}

\section{Describing extensions}
Let us first expose a classical way of describing extensions. % We follow the notation of \cite{Hall}.\\
 Let $E$ be an extension of $G$ by a given factor group $H\cong E/G$.
%Let us choose an isomorphism  $H\rightarrow E/G$ and for each $u\in H$,
%a representative $\overline{u}$ in the coset of $G$ corresponding to $u$.
Let us choose an isomorphism  $j:H\rightarrow E/G$ and for each $u\in H$,
a representative $\overline{u}$ in the coset $j(u)$ of $G$.
  Every element of $E$ is %can now be
 written in a single way as $\overline{u}g$ for some $u\in H$ and some $g\in G$.
Conjugation   by $\overline{u}$ in $E$
induces \mbox{an automorphism}  %defined as
%%$g\to g^{u}=\overline{u}^{-1}g\overline{u}$ of $G$ %for $g\in G$
$g\to \overline{u}^{-1}g\overline{u}$ of $G$. We denote it by $\xi (u)$
and thus the transversal $\{\overline{u}:\,u\in H\}$ to $G$ in $E$ induces
 the function $\xi:H\to Aut(G):u\to \xi (u)$. An equivalent way to define $g^{\xi (u)}$ is
 the equation
 \beq \label{rule1}
 g.\overline{u}=\overline{u}.g^{\xi (u)}
 \eeq
  For any two elements $u, v$ of $H$, the product
 $\overline{u}.\overline{v}$ belongs to the coset $j(uv)$ and
\beq \label{rule2}
 \overline{u}.\overline{v}=\overline{uv} [u,v]
 \eeq
  for some element of $G$ denoted by $[u,v]$ and equal to $\overline{uv}^{-1}\overline{u}.\overline{v}$.
Hence, each transversal to $G$ in $E$ defines a pair of functions $(\xi,\varphi)$ where
$\varphi:H\times H \to G$ maps $(u,v)$ to $[u,v]$. This pair completely defines the multiplication
%% The automorphisms $\xi (u)$ and the elements $[u,v]$ completely define multiplication
 between elements $\overline{u}g$ of $E$ since the rules (\ref{rule1}) and (\ref{rule2})
  imply
$\overline{u} a.\overline{v} b=\overline{u}.\overline{v}a^{\xi (v)}b=
\overline{uv}[u,v]a^{\xi (v)}b,\,\textrm{for\, any\, a,b\,}\in G$.
The extension $E$ is then isomorphic to the group defined by the multiplication
\beq \label{multext}
(u,a).(v,b)=(uv,[u,v]a^{\xi (v)}b)
\eeq
on the cartesian product  of the sets $H$ and $G$. %$\{ (u,a):u\in H\,a\in G\}$
%Conversely, to build an extension of $G$ by  $H$, we need to choose,
%for every $u,v$ in $H$ an element $[u,v]$ in $G$ and automorphisms $g\to g^{u}$.
Conversely, suppose we want to build an extension of $G$ by  $H$.
Which are the conditions  that a function $\xi:H\to Aut(G)$ and a function
$\varphi:H\times H \to G:(u,v)\to [u,v]$
% automorphisms $\xi (u)$ and elements $[u,v]\in G$
must satisfy for the multiplication defined by (\ref{multext}) on set $E=\{ (u,a):u\in H,\,a\in G\}$
to become a group ?\\
%Schreier has proved (see \cite{}) that  $E$ is a group if and only if
%$$i)\qquad(g^{u})^{v}=[u,v]^{-1}(g^{uv})[u,v]$$
%$$ii)\qquad[uv,w][u,v]^{w}=[u,vw][v,w]$$
%(provided that $1_{G}$ is chosen as representative of $G$).\\
%Condition $(i)$ amounts to ask that conjugation in $E$ defines a homomorphism from
%$E$ into $Aut(G)$ that is to say $(g^{\overline{u}a})^{\overline{v}b}=g^{\overline{u}a.\overline{v}b}$
% and condition $(ii)$ is equivalent to associativity.\\
Schreier has proved (see \cite{Hall} page 221) that  $E$ is a group if and only if
\begin{eqnarray}
 & \qquad(g^{\xi (u)})^{\xi (v)}=[u,v]^{-1}(g^{\xi (uv)})[u,v] & \label{schreier1}\\
 & \qquad[uv,w][u,v]^{\xi (w)}=[u,vw][v,w] & \label{schreier2}
\end{eqnarray}
for every $g\in G$ and every $u,v,w \in H$
(provided that $1_{G}$ is chosen as representative of $G$).\\
Condition \ref{schreier1} amounts to ask that conjugation in $E$ defines a homomorphism from
$E$ into $Aut(G)$ %%that is to say $(g^{\overline{u}a})^{\overline{v}b}=g^{\overline{u}a.\overline{v}b}$
 and condition \ref{schreier2} is equivalent to associativity in $E$.

\textbf{Definition }. If for some given function $\xi:H\to Aut(G)$, a function
$\varphi :\,(u,v)\to[u,v]$ from
$H\times H\to G$ satisfies \ref{schreier1} and \ref{schreier2},
 it is called a \emph{\bf{factor set}} for $\xi$. If so, the extension
 denoted by $\mathbf{E(\xi,\varphi)}$ %%corresponds to  the pair $(\xi,\varphi)$  and
 is defined by
 multiplication (\ref{multext}) on the cartesian product of the sets $H$ and $G$. If in
 $E(\xi,\varphi)$ we choose the transversal $\tau=\{\overline{u}:=(u,1)\,|\, u\in H\}$,
  it is  easy to verify from (\ref{multext}) that $(\xi,\varphi)$ is
 the pair induced by $\tau$. The transversal $\tau$ is called
 \emph{\textbf{the canonical transversal}} of $E(\xi,\varphi)$.
%$$ f:\,(u,v)\to[u,v]=\overline{uv}^{-1}.\overline{u}.\overline{v}.$$

\subsubsection*{Equivalent extensions}\label{equivext}
Let $E(\xi,\varphi)$ be an extension of $G$ by $H$ and let $\tau:=\{\overline{h} :
\,h\in H\}$ be its canonical transversal. A change of coset representatives for $G$ in $E(\xi,\varphi)$
is described by $\overline{h}\to\tilde{h}:=\overline{h}g_h$ for some function $h\to g_h$ from
$H$ into $G$. The new transversal $\tau':=\{\tilde{h}:h\in H\}$ gives rise to a new
pair of associated functions $(\xi ',\varphi ')$ and the extension $E(\xi ',\varphi ')$
is clearly isomorphic to $E(\xi,\varphi)$.

\textbf{Definition }. Let $E_1:=E(\xi,\varphi)$ and $E_1':=E(\xi ',\varphi ')$ be extensions
of $G$ by $H$ with canonical transversal  $\tau:=\{\overline{h} :\,h\in H\}$
and $\tau':=\{\tilde{h}:h\in H\}$ respectively. The extension $E_1'$ is \emph{\bf{equivalent}} to $E_1$ if there
 exists a function $h\to g_h$ from $H$ into $G$ such that $(\xi',\varphi')$ is the pair of
 associated functions produced in $E_1$ by the transversal $\{\overline{h}g_h:h\in H\}$.

Hence, the mapping from $E_1$ onto $E_1'$ defined by $\overline{h}g_h\to \tilde{h}$ 
(for $h\in H$) and by $g\to g$ ($g\in G$) is an isomorphism.
Further details can be found in \cite{Suzuki_vol1} pages 195-199.

% Another equivalent definition is that $E(\xi,\varphi)$ and $E(\xi ',\varphi ')$ are
% equivalent if and only if there exists a function $h\to g_h$ from $H$ into $G$ such that
%the mapping defined by $g\to g$ and $\overline{h}g_h\to\tilde{h}$ ($g\in G,\,h\in H$) is
%an isomorphism from $E(\xi,\varphi)$ onto $E(\xi ',\varphi ')$.
%Further details can be found in \cite{Suzuki_vol1} page 195-199.
% Two factor sets are equivalent if

% by a function $g\to g_h$ from $G$ to $H$ such that the new transversal is
% $\tau'$ is described by $:=\{\tilde{h}:\}$ give function give rise to a pair $(\xi ',\mu ')$
%of associated functions which we proceed to compute...
%...If we change the coset representatives of $G$ in $E$.

\subsection{Cohomology and extensions of abelian groups}
%%Define $Z^2$, $B^2$ and $H^2$ : not as differential form but has a function corresponds
%%to an extension. Hall pg 237 and Suzuki 1 page ?;Rothman is the best\\
Let $A$ be an abelian group, let $H$ be a group and let $E(\xi,\varphi)$ be an extension
of $A$ by $H$. Since conjugation is trivial in an abelian group, the automorphism $\xi (h)$ of $A$
does not depend on a particular choice of coset representative and the first Schreier
condition means that $\xi:H\to Aut(A)$ is an homomorphism. Therefore, if an homomorphism
$\xi:H\to Aut(A)$ is given, a pair of functions $(\xi,\varphi)$ defines an extension if
and only if $\varphi:H\times H\to A$ is a factor set function (defined by relation
(\ref{schreier2}), the second Shreier condition).
There is thus a correspondence between the extensions of $A$ by $H$ associated with
$\xi$ and the set $Z_{\xi} ^2 (H,A)$ of all factor sets for $\xi$.
This notation come from Cohomology theory where equation
\ref{schreier2} for factor sets corresponds exactly to the definition of a $2$-cocycle.
% The set $Z_{\xi} ^2 (H,A)$ is not empty since the trivial function that
% not usefull : it works also for non-abelian group if there exist a phi:H-->Aut(G) that
% allows to do a semi-direct product, but then not every coupling gives a phi if non abelian
We now briefly outline the link between Cohomology theory and extensions of abelian groups.
The proofs can be found in \cite{Rothman} (chapter 5, page 141 to 144).

An important property is that $Z_{\xi} ^2 (H,A)$ is an abelian group for the usual sum of
functions. Equivalence of extensions of abelian groups, can be reformulated in terms of cohomological
 notions as follows.
A function $b:H\times H\to A$ is a \emph{2-coboundary} for $\xi$ if and only if there exists
a function $h\to a_h$ from $H$ to $A$ such that
$$b(h_1,h_2)=a_{h_2} ^{\xi (h_1)} - a_{h_1 h_2} + a_{h_1}.$$ The set $B_{\xi} ^2 (H,A)$ of
all coboundaries is a subgroup of $Z_{\xi} ^2 (H,A)$. A key result states that two
extensions $E(\xi,\varphi_1)$ and $E(\xi,\varphi_2)$ of $A$ by $H$ are equivalent if and only if
$\varphi_1 - \varphi_2 \in B_{\xi} ^2 (H,A)$. Therefore there is a one-to-one correspondence
between the equivalence classes of extensions of $A$ by $H$ associated with $\xi$ and
the factor group $H_{\xi} ^2 (H,A):=Z_{\xi} ^2 (H,A)/B_{\xi} ^2 (H,A)$ called
\emph{\textbf{the second cohomology group}}. The construction of extensions of an abelian group
is often referred as the determination of the second cohomology group.

%%%%%%%%%%%%%%%%%%%%%%%%%%%%%%%%%%%%%%%%%%%%%%%%%
\section{Reducing the action on $G$}

\subsection{Couplings and $G$-isomorphism}
Let $E$ be an extension of a group $G$ by $H$.
In subsection \ref{couplingbasic}, we have described how each isomorphism from $E/G$ onto
$H$ induces a well-defined homomorphism $\Phi$ from $H$ into $Out(G)$.
If $E$ is described as $E(\xi,\varphi)$, then the canonical transversal $\{\overline{h}:h\in H\}$
provides us with a canonical isomorphism $\overline{h}G\to h$ from $E/G$ onto $H$.
The corresponding homomorphism $\Phi:H\to Out(G)$ defined for every
$h\in H$ by $h\to \Phi(h)=\xi (h)Inn(G)$ is called \emph{\textbf{the coupling of $E(\xi,\varphi)$}}(see \cite{Robinson}).
More generally we call coupling, any homomorphism from $H$ into $Out(G)$.

%\textbf{Definition} Let $E(\xi,\varphi)$ be an extension of $G$ by $H$. The homomorphism
%$\Phi:H\to Out(G)$ defined for every $h\in H$ by $h\to \xi (h)Inn(G)$ is called
%\emph{\textbf{the coupling of $E(\xi,\varphi)$}}.
If as in the previous section, we want to construct an extension of $G$ by $H$ from
given automorphisms $\{\xi (h): h\in H\}$
 of $G$ and from a factor set associated with them, we cannot choose the
automorphisms $\xi (h)$ anyhow. A homomorphism $\Phi:H\rightarrow Out(G)$ must be chosen
first and then, each automorphism $\xi (h)$ must be chosen in $\Phi(h)$.\\

%%%MMMMMMMMMMMMMMMMMMMM\\
%This requirement together with the ask for factor sets to belong to the good element
%%of $Inn(G)$ is equivalent to first schreier's condition.(see ?suz,Hall?).
%%%%%%%%%%%%%%%%%%%%%%%%%%%%%%
%\textbf{Definition}: Let $E_{1}\unrhd G\,$ and $E_{2}\unrhd G\,$ be  extensions
%of $G$. An isomorphism $i:E_{1}\rightarrow E_{2}$ such that $i(G)=G$, is
%called a \emph{$G-isomorphism$}.

%%%%%MMMM try $\alpha|_{E/G}$ rather than $\alpha|_{H}$ MMMMM\\
%%%%%\index{$\alpha|_{E/G}$ rather than $\alpha|_{H}$}
For $i=1,2$, let $E_i:=E(\xi_i,\varphi_i)$ be extensions of $G$ by $H$ and let
 $\{\overline{h_{i}} :\, h\in H\}$ be the canonical transversal to $G$ in $E_i$.
Let $\alpha$ be a $G$-isomorphism from $E_{1}$ onto $E_{2}$. It induces an
automorphism $\pi =\alpha|_{G}$ of $G$ and an automorphism $\omega:=\alpha|_{E/G}$ of $H$
described for every $h\in H$ by the sequence
$$h\to \overline{h_{1}}G\to \alpha(\overline{h_{1}}G)=\overline{h^{\omega}_{2}}G\to h^{\omega} \quad for\, h\in H.$$
%%\index{change the word action by coupling}
\begin{proposition}\label{pin1}For $i=1,2$ let $E_{i}$ be an extension of $G$ by $H$
with coupling $\Phi_{i}:H\rightarrow Out(G)$. Let $\alpha:E_{1}\to E_{2}$
be a $G$-isomorphism that induces $\omega \in Aut(H)$ and such that
 $\alpha|_{G}=\pi\in Aut(G)$. Then \\
 $$\Phi_{2}(h^{\omega})=\Phi_{1}^{\pi}(h),\forall\, h\in H.$$
\end{proposition}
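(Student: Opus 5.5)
The plan is to reduce the statement to Lemma \ref{fpi} applied to coset representatives. Fix $h\in H$ and let $\overline{h_{1}}$ be the canonical representative of the coset of $G$ in $E_1$ corresponding to $h$. Then $\Phi_1(h)=f_1(\overline{h_1})Inn(G)$, where $f_1$ is the conjugation action of $E_1$ on $G$. By the definition of $\omega$, the element $\alpha(\overline{h_1})$ lies in the coset $\overline{h^{\omega}_{2}}G$. Hence $\Phi_2(h^{\omega})=f_2(\alpha(\overline{h_1}))Inn(G)$. This holds because any element of $\overline{h^{\omega}_2}G$ induces on $G$ an automorphism in $\xi_2(h^\omega)Inn(G)$: indeed $f_2(\overline{h^{\omega}_2}g)=\xi_2(h^{\omega})\,\overline{g}$ with $\overline{g}\in Inn(G)$.

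The next step applies Lemma \ref{fpi} with $t=\overline{h_1}$. For every $g\in G$ it gives $g^{\alpha(\overline{h_1})}=((g^{\pi^{-1}})^{\overline{h_1}})^{\pi}$. In automorphism language this reads $f_2(\alpha(\overline{h_1}))=\pi^{-1}f_1(\overline{h_1})\pi=\pi^{-1}\xi_1(h)\pi$. Passing to cosets of $Inn(G)$, which is normal in $Aut(G)$, we get $\Phi_2(h^{\omega})=\pi^{-1}\xi_1(h)\pi\, Inn(G)=\pi^{-1}\Phi_1(h)\pi=\Phi_1^{\pi}(h)$. This is the notation introduced for cosets in the $Z\phi$ section.

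The only delicate point is the bookkeeping of right-sided notation when composing automorphisms. Lemma \ref{fpi} must be read as $g\mapsto g^{\pi^{-1}}\mapsto (\cdot)^{\overline{h_1}}\mapsto(\cdot)^{\pi}$, which is the product $\pi^{-1}\xi_1(h)\pi$ in exponential order. One also has to check that the coset $\pi^{-1}\xi_1(h)\pi\,Inn(G)$ does not depend on the choice of representative of $\Phi_1(h)$; this follows from $\pi^{-1}Inn(G)\pi=Inn(G)$. I expect no other obstacle.
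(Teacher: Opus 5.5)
Your proof is correct and follows essentially the same route as the paper. You apply Lemma \ref{fpi} to a representative $t$ of the coset of $G$ corresponding to $h$, obtain $f_2(\alpha(t))=\pi^{-1}f_1(t)\pi$, and pass to cosets of $Inn(G)$. Your explicit checks — that every element of $\overline{h^{\omega}_2}G$ induces an automorphism in $\xi_2(h^{\omega})Inn(G)$, and that $\pi^{-1}Inn(G)\pi=Inn(G)$ — simply spell out what the paper leaves implicit.
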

\emph{Proof}:
Let $f_{t}\in Aut(G)$(respectively $f'_{t}$) be the conjugation by
 $t \in E_{1}$(respectively $E_{2}$) on $G$.
%let us chose a transversal $\{\overline{h_{i}} :\, h\in H\}$ of $G$ in $E_{i}$.
% such that $G\overline{h_{i}}\rightarrow h$ is an isomorphism from $E_{i}/G$ onto $H$.
By Lemma \ref{fpi}, we have
 $f'_{\alpha(t)}=\pi^{-1} f_{t}\pi =f_{t}^{\pi}\,$(*).\\
  For $h\in H$, if $t\in \overline{h_{1}}G $, then $\alpha (t)\in \overline{h^{\omega}_{2}}G$. Since
$f_{\overline{h_{1}}G}=f_{\overline{h_{1}}Inn(G)}$, it is by definition the element
  $\Phi_{1}(h)$ of $Out(G)$ and similarly $f'_{\alpha(t)}=f'_{\overline{h^{\omega}_{2}}G}=\Phi_{2}(h^{\omega})$. %%%\\
Now, equation (*) becomes $\Phi_{2}(h^{\omega})=\Phi_{1}^{\pi}(h)\qquad\Box$. \\
% % which is equivalent to $\Phi_{2}(h)=\Phi_{1}^{\pi}(\omega(h)),\, h\in H$ if
%%  $\omega=\gamma^{-1}\qquad\Box$. \\

Note that equation $\Phi_{2}(h^{\omega})=\Phi_{1}^{\pi}(h)$ implies that
$\Phi_{2}(H)=\Phi_{1}^{\pi}(H)$ and so, the subgroups $\Phi_{i}(H)$ are conjugate  in $Out(G)$ ($i=1,2$).
Moreover, $(Ker\Phi_{1})^{\omega}=Ker\Phi_{2}$, since $k\in Ker\Phi_{1}\Leftrightarrow\Phi_{1}(k)=1
\Leftrightarrow\Phi_{1}^{\pi}(k)=1\Leftrightarrow\Phi_{2}(k^{\omega})=1
\Leftrightarrow k^{\omega}\in Ker\Phi_{2}$.
We obtain the following corollary that can be used to prove quickly that two extensions
are not G-isomorphic.
% Note that the equation $\Phi_{2}(h)=\Phi_{1}^{\pi}(\omega(h))$ implies that
% $\Phi_{2}(H)=\Phi_{1}^{\pi}(H)$ and $\Phi_{i}(H)$ are conjugate subgroup of $Out(G)$ ($i=1,2$).
% Moreover, $\omega(Ker\Phi_{1})=Ker\Phi_{2}$, since $k\in Ker\Phi_{2}\Leftrightarrow\Phi_{2}(k)=1
% \Leftrightarrow\Phi_{1}^{\pi}(\omega(k))=1\Leftrightarrow\Phi_{1}(\omega(k))=1
%\Leftrightarrow \omega (k)\in Ker\Phi_{1}$.
% We get the following corollary that can be used to prove quickly that two extensions
% are not G-isomorphic.
\begin{corollary}\label{corpin1} For $i=1,2$ let $E_{i}$ be an extension of $G$ by $H$ with coupling
 $\Phi_{i}:H\rightarrow Out(G)$. If the $E_{i}$'s are $G-isomorphic$  then
 \begin{enumerate}
 \item the $\Phi_{i}(H)$ are conjugate subgroups of $Out(G)$
 \item the subgroups $ker\Phi_{i}\unlhd H$ are in the same orbit under the action of $Aut(H)$.
 \end{enumerate}
 \end{corollary}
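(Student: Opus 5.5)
The corollary will follow directly from Proposition \ref{pin1} and the remark that comes right after it. Assume $E_1$ and $E_2$ are $G$-isomorphic, and fix a $G$-isomorphism $\alpha:E_1\to E_2$. Let $\pi=\alpha|_G\in Aut(G)$, and let $\omega\in Aut(H)$ be the automorphism that $\alpha$ induces on $H$ through the canonical identifications $E_i/G\cong H$, exactly as described before Proposition \ref{pin1}. That proposition gives the identity
$$\Phi_2(h^{\omega})=\Phi_1^{\pi}(h)\qquad\textrm{for every } h\in H,$$
and both parts of the corollary are read off from it.

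For part (1), let $h$ run over $H$. Because $\omega$ is a bijection of $H$, the left-hand side runs over all of $\Phi_2(H)$. The right-hand side runs over $\Phi_1(H)^{\bar\pi}$, where $\bar\pi=Inn(G)\pi$ is the image of $\pi$ in $Out(G)$; this uses that conjugating a coset $\Phi_1(h)=Inn(G)\xi(h)$ by $\pi$ is the same as conjugating it in $Out(G)$ by $\bar\pi$. Hence $\Phi_2(H)=\Phi_1(H)^{\bar\pi}$, so the two images are conjugate subgroups of $Out(G)$.

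For part (2), conjugation by $\bar\pi$ is an automorphism of $Out(G)$, so $\Phi_1^{\pi}(k)=1$ holds exactly when $\Phi_1(k)=1$. Combining this with the displayed identity gives $k\in Ker\Phi_1$ if and only if $k^{\omega}\in Ker\Phi_2$. Therefore $(Ker\Phi_1)^{\omega}=Ker\Phi_2$, and since $\omega\in Aut(H)$ the two kernels lie in the same $Aut(H)$-orbit.

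There is no real obstacle in this argument. The one step that needs care is confirming that $\omega$ is well defined and is an automorphism of $H$. This holds because $\alpha$ maps $G$ onto $G$, so it induces an isomorphism $E_1/G\to E_2/G$, and composing with the canonical isomorphisms $E_i/G\cong H$ turns it into an element of $Aut(H)$. That fact was already established in the setup of Proposition \ref{pin1}.
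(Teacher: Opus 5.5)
Your proof is correct and matches the paper's argument: both parts come straight from the identity $\Phi_{2}(h^{\omega})=\Phi_{1}^{\pi}(h)$ of Proposition \ref{pin1}, giving $\Phi_2(H)=\Phi_1^{\pi}(H)$ for part 1 and the chain $k\in Ker\Phi_1\Leftrightarrow\Phi_1^{\pi}(k)=1\Leftrightarrow\Phi_2(k^{\omega})=1\Leftrightarrow k^{\omega}\in Ker\Phi_2$ for part 2. Your observation that conjugating cosets by $\pi$ is the same as conjugating by $Inn(G)\pi$ in $Out(G)$ spells out a step the paper leaves implicit.
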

                      %%%%%%%%%%%%       %%%%%%%%%%%%      %%%%%%%%%%
\subsection{The action of $Aut(G)\times Aut(H)$ on the couplings.}\label{orbcoupling}
In order to classify all the extensions of a group $G$ by a group $H$, we have to list
all the possible homomorphisms $\Phi:H\rightarrow Out(G)$. We would like to partition
such list into equivalence classes such that homomorphisms from distinct
classes %give extensions that are not G-isomorphic.
correspond to extensions that are not $G$-isomorphic.

For this purpose we define a right action $\mathcal{A}$ of $Aut(G)\times Aut(H)$ on the set $F$
of all homomorphisms from $H$ into $Out(G)$ as follows :

% Let $F$ be the set of all homomorphisms from $H$ into $Out(G)$.
% Let $A=Aut(G)\times Aut(H)$. We define an action of $A$ on $F$ as follows :
\begin{equation}  \label{eqaction1}
\mathcal{A}(\pi,\omega):\Phi_{1}\rightarrow \Phi_{2}\quad \textrm{such that}\,\,\Phi_{2}(h^{\omega})=\Phi_{1} ^{\pi}(h)
  \end{equation}
%$$(\pi,\omega):\Phi_{1}\rightarrow \Phi_{2}\quad \textrm{such that}\,\,\Phi_{2}(\omega (h))=\Phi_{1} ^{\pi}(h)$$
for $(\pi,\omega)\in Aut(G)\times Aut(H)$,  for $\Phi_{1}\in F$ and for every $h\in H$.
Note that in order to check that this defines an action, it is equivalent  to define
$\Phi_{2}(h)=\Phi_{1} ^{\pi}(h^{\omega^{-1}})$. Obviously, since $\Phi^{\tilde{\pi}}=\Phi^{\pi}$
for every $\tilde{\pi}\in \pi Inn(G)$, this also defines and action of $Out(G)\times Aut(H)$ on $F$.
Computing the orbits of such an action is a quite classical problem that has arisen
since a long time, in different group classification problems such as \begin{itemize}
\item Semi-direct products. Let $\Phi:H\to Aut(G)$ be an homomorphism. Proposition \ref{semiso}
shows that the action defined by \ref{eqaction1} classifies semi-direct products $H\ltimes_{\Phi}G$
up to $G$-isomorphism that preserve $H$.
\item Subdirect products. Let $G$ be a group such that $Z(G)=1$.  We have proved in \cite{Archermem}
that there is a one-to-one correspondence between the orbits of $\mathcal{A}$ and the $G$-isomorphism
 classes of extensions of $G$ by $H$. %% equation \ref{eqaction1} classifies up to $G$-isomorphism, the
\end{itemize}
%%%%%MMMMM et autre classif de subdirect product for $6A_{6}$.
%%%%%\index{citer classif de $6A_{6}$}.

The determination of the orbits under the action $\mathcal{A}$ leads to computation of double cosets
into $Aut(\Phi (H))$ (see subsection \ref{countorbits}).  A very interesting historical overview on the applications
of this type of action to various classification problems can be found in \cite{Laue}. The same reference
describes various algorithms to compute efficiently double cosets.

Now,  an equivalent formulation of Proposition \ref{pin1} is the following one :
 \begin{center}\emph{Two extensions associated with different $\mathcal{A}$-orbits are not G-isomorphic.}\end{center}
Next, we will prove in Proposition \ref{newext} that
\begin{center}\emph{if $\Phi_{1}$ and $\Phi_{2}$ belong to the same  $\mathcal{A}$-orbit, then
 every extension with coupling $\Phi_{2}$ is G-isomorphic to an extension with coupling
 $\Phi_{1}$}.\end{center}
Therefore, if we aim to classify extensions up to G-isomorphism it is
 sufficient to choose for each orbit, a representative $\Phi$ and to construct all the
 extensions with coupling $\Phi$ (if they exist). Then, we only need to consider
 $G$-isomorphisms between extensions that have the same coupling $\Phi$.

%%THIS IS THE SAME AS ROBINSON ACTION OF PAGE 68
%%\subsubsection{citation of Robinson pg 68}
%%%\index{compare my method to robinson one, explain the differences}
%%MMMM I HAVE TO EXPLAIN THE DIFFERENCE BETWEEN MY METHOD AND THIS OF BETTINA OR ROBINSON
%%%OTHERWISE BETTINA THINKS THAT I COPIED IT FROM SOME OTHER AUTORS. BETTINA DID IT
%%%IN ITS ARTICLES IN ONLY 4 SENTENCES THE DIFFERENCE BETWEEN ITS METHOD AND THIS OF LAUE
%AND OF ROBINSON\\
%MMMMMMMMMMMMMMMMM\\
%"Define a left action of $Aut(H)$ and a right action of $Aut(G)$ on the set of functions
% from $H\times H\to G$ by the rules. Let $\omega\in Aut(H)$ and $\pi\in Aut(G)$.
% Let $\tilde{\pi}$ be the conjugation by $\pi$ in $Aut(G)$. If we
% apply schreier's conditions we see that if $(\xi,\varphi)$ is an associated pair of
% functions for an extension $E$, then $(\xi\tilde{pi},\varphi\pi)$ is also an associated pair
% of functions for an extension and we find that $(\omega\xi,\omega\varphi)$ is also an
%associated pair of functions for an extension."\\

%%%%%%%%%%%%%%%%%%%%%%%%%%%%%%%%%%%%%%%
\subsubsection*{Generating extensions with another coupling}
Let $\omega\in Aut(H)$ and $\pi\in Aut(G)$. For every extension $E_1$ of $G$ by $H$,
with coupling $\Phi$, we generate an extension $E_2$ with coupling $h\to\Phi^{\pi} (h^{\omega^{-1}} )$
 ($h\in\, H$) that is $G$-isomorphic to $E_1$.
Similar ideas can be found in the literature. Robinson describes in \cite{Robinson} an
 action $\Gamma$ of $Aut(H)\times Aut(G)$ on the set $S$ of pairs of functions $(\xi,\varphi)$
 associated with an extension of $G$ by $H$ and we show here that this action generates
 $G$-isomorphic extensions. For the case where $G$ is elementary abelian, such a method has been
 used by Eick and Besche in \cite{Eick_Ulrich} to solve the $G$-isomorphism problem with the extra
 assumption that $H$ is finite.  %%and where $G$ is elementary abelian.
They proved, in that case, that each $G$-isomorphism class is the
union of the equivalence classes of extensions $\mathcal{E}(x)$, where $x$ ranges over an
orbit of $\Gamma$. %%%We generalize here this process to any groups $H$ and $G$.\\
%\begin{theorem}\label{newext} Let $\xi:H\to Aut(G)$ and $\varphi:H\times H\to G$ be a
%pair of associated functions for an extension $E_1$ of $G$ by $H$.
%For every $\omega \in Aut(H)$ and $\pi \in Aut(G)$ , the functions $\xi':h\to \xi (h^{\omega^{-1}})^{\pi}$ and
%$\varphi':(h_1,h_2)\to \varphi (h_1^{\omega^{-1}},h_2^{\omega^{-1}})^{\pi}$,
% describe an extension $E_{2}$ that is $G$-isomorphic to $E_1$.
%\end{theorem}

\begin{theorem}\label{newext} Let $E(\xi,\varphi)$ be an extension of $G$ by $H$.
For every $\omega \in Aut(H)$ and every $\pi \in Aut(G)$ ,% the functions $\xi':h\to \xi (h^{\omega^{-1}})^{\pi}$ and
% $\varphi':(h_1,h_2)\to \varphi (h_1^{\omega^{-1}},h_2^{\omega^{-1}})^{\pi}$,
the functions $\xi' (h):=\xi (h^{\omega^{-1}})^{\pi}$ and
$\varphi'(h_1,h_2):=\varphi (h_1^{\omega^{-1}},h_2^{\omega^{-1}})^{\pi}$,
 describe an extension $E(\xi',\varphi')$ that is $G$-isomorphic to $E(\xi,\varphi)$.

For $h\in H$, let $\{\overline{h}\}$ (respectively $\{\tilde{h}\}$) be the canonical
transversal of $E(\xi,\varphi)$ (respectively $E(\xi',\varphi')$). Then the mapping defined
by $\overline{h}\to\widetilde{h^{\omega}}$ and $g\to g^{\pi}$ ($g\in G$) is a $G$-isomorphism from $E(\xi,\varphi)$
onto $E(\xi',\varphi')$.
\end{theorem}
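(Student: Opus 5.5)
The cleanest route avoids checking Schreier's conditions for $(\xi',\varphi')$ by hand. Instead, I would transport the group structure of $E(\xi,\varphi)$ along an explicit bijection and read off the pair of functions induced by a suitable transversal. Let $E_1=E(\xi,\varphi)$ have canonical transversal $\{\overline{h}\}$. Every element of $E_1$ is uniquely of the form $\overline{h}g$.

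I would define a set map
$$\Theta: E_1\to H\times G,\qquad \overline{h}g\mapsto (h^{\omega},g^{\pi}).$$
It is a bijection, since $\omega$ and $\pi$ are bijections. Let $E_2$ be the set $H\times G$ carrying the multiplication transported by $\Theta$; then $E_2$ is a group isomorphic to $E_1$. The identification $g\leftrightarrow(1,g)$ is compatible with this structure, because $\overline{1}=1$ (the representative of $G$ is $1_G$). It maps the copy of $G$ onto itself via $\pi$, so $\Theta$ restricted to $G$ is $\pi$.

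Next I would compute the transported product in the form (\ref{multext}), using the rules (\ref{rule1}) and (\ref{rule2}) in $E_1$:
$$\overline{u}a\cdot\overline{v}b=\overline{uv}\,[u,v]\,a^{\xi(v)}b .$$
Applying $\Theta$ gives
$$(u^{\omega},a^{\pi})\cdot(v^{\omega},b^{\pi})=\big((uv)^{\omega},\ \varphi(u,v)^{\pi}\,(a^{\xi(v)})^{\pi}\,b^{\pi}\big).$$
Now rename $h_1=u^{\omega}$, $h_2=v^{\omega}$, $a'=a^{\pi}$, $b'=b^{\pi}$. Then $\varphi(u,v)^{\pi}=\varphi'(h_1,h_2)$. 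The key identity is
$$(a^{\xi(v)})^{\pi}=\big(a'^{\pi^{-1}}\big)^{\xi(v)\pi}=a'^{\pi^{-1}\xi(v)\pi}=a'^{\xi'(h_2)},$$
using right-sided composition and $\xi'(h)=\xi(h^{\omega^{-1}})^{\pi}=\pi^{-1}\xi(h^{\omega^{-1}})\pi$. So the transported multiplication is exactly
$$(h_1,a')(h_2,b')=(h_1h_2,\ \varphi'(h_1,h_2)\,a'^{\xi'(h_2)}\,b'),$$
which is (\ref{multext}) for the pair $(\xi',\varphi')$.

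Two conclusions follow. First, $(\xi',\varphi')$ satisfies Schreier's conditions (\ref{schreier1}) and (\ref{schreier2}): its multiplication is a group law, and by Schreier's theorem quoted above that law is a group exactly when those conditions hold. Alternatively, the conditions are simply the images under $\Theta$ of the conditions for $(\xi,\varphi)$. Hence $E(\xi',\varphi')=E_2$ is an extension of $G$ by $H$. Second, $\Theta$ is the required $G$-isomorphism. Its canonical transversal element is $\widetilde{h^{\omega}}=(h^{\omega},1)=\Theta(\overline{h})$, and $\Theta(g)=g^{\pi}$.

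The main obstacle is bookkeeping with exponents. One must check that the conjugate automorphism in the definition of $\xi'$ is $\pi^{-1}\xi\pi$, consistent with the exponential convention in item 3 of the notation section, and that the inverse $\omega^{-1}$ sits on the correct side. I would verify both on the single displayed identity above. Everything else is formal.
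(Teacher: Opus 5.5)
Your proposal is correct and follows essentially the same route as the paper: transport the multiplication of $E(\xi,\varphi)$ along the bijection $(u,a)\mapsto(u^{\omega},a^{\pi})$ and identify the resulting pair of functions, so that the Schreier conditions for $(\xi',\varphi')$ come for free. The only cosmetic difference is that you read off $(\xi',\varphi')$ directly from the transported law (\ref{multext}). The paper instead applies the rules (\ref{rule1}) and (\ref{rule2}) to the image of the canonical transversal, first for arbitrary isomorphisms $j,i$ and then specialising to $j=\omega$, $i=\pi$.
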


\emph{Proof }. As before, for $u,v$ in $H$ we denote the value of the factor
 set $\varphi$ on $(u,v)$ by $[u,v]$.% and since $\xi (u)$ is the conjugation
 %by $\overline{u}$ on $G$, we write $g^{\overline{u}}$ for $g^{\xi (u)}$ ($g\in G$).
We can view $E$ as  the group defined by the multiplication
$(u,a).(v,b)=(uv,[u,v]a^{\xi (v)}b) \quad u,v \in H \,and\,\,a,b\in G $ on the cartesian product  of set $H$ and
set $G$. Let $j:u\rightarrow u_{j}$ be an isomorphism from $H$ onto a group $j(H)$ and
$i:a\rightarrow a_{i}$ be an isomorphism from $G$ onto a group $i(G)$. Let $E_{2}$
be the cartesian product of set $j(H)$ and set $i(G)$. The canonical bijection
$\phi: (u,a)\rightarrow (u_{j},a_{i})$ from $E$ to $E_{2}$ can be used to define a
product between two elements $\phi (x)$ and $\phi (y)$ of $E_{2}$ as the image
of the product in $E$: hence,
$$\phi (x).\phi (y)=\phi(x.y) \iff (u_{j},a_{i}).(v_{j},b_{i})=((uv)_{j},([u,v]a^{\xi (v)}b)_{i}).$$
This product is completely defined on $E_2$ because $\phi$ is surjective
 on $E_2$.
Does this make $E_{2}$ a group ? It does because $\phi$ preserves product
 so that any equation in $E$ gives an equivalent equation in $E_{2}$. For instance
 associativity (Schreier's second condition), in $E_{2}$ is obtained as the image
 of equation  $(x.y).z=x.(y.z)$ in $E$. The image $\phi (1_{H},1_{G})$ of the unit will be
 the unit $1_{E_{2}}$of $E_{2}$ and the inverse of $\phi (x)$ will be $\phi(x^{-1})$
 since $x.x^{-1}=1_{E}\Rightarrow \phi (x).\phi (x^{-1})=1_{E_{2}}$. Thus $\phi$ is
 an isomorphism from the extension $E$ of $G$ onto the extension $E_{2}$ of $i(G)$.

For $u\in H$, let us choose in $E_2$ the transversal $\tau'=\{\widetilde{u_{j}}:=(u_{j},1)\}$
which is the image under $\phi$ of the canonical transversal $\tau=\{\overline{u}:=(u,1)\}$ in $E$.
What are the automorphisms $\xi' (u_j):a_{i}\rightarrow a_{i}^{\xi' (u_j)}$ and the factor set
 $\varphi':(u_j,v_j)\to [u_{j},v_{j}]_{2}$ associated with $\tau'$ in $E_{2}$ ?

 First remind that these are defined by equations \ref{rule1} and \ref{rule2}.
Applying $\phi$ on equation $a.\overline{u}=\overline{u}a^{\xi (u)}$ in $E$ implies
$a_{i}.\widetilde{u_{j}}=\widetilde{u_{j}}(a^{\xi (u)})_{i}$ in $E_2$ and since by
 definition $a_{i}.\widetilde{u_{j}}=\widetilde{u_{j}}a_{i}^{\xi' (u_j)}$, we obtain 
 \beq \label{newxit}
a_{i}^{\xi' (u_j)}=(a^{\xi (u)})_{i}\quad a\in G,\,\,u\in H.
\eeq
Similarly, applying $\phi$ on equation $\overline{u}.\overline{v}=\overline{uv} [u,v]$
 implies $\widetilde{u_{j}}.\widetilde{v_{j}}=\widetilde{(uv)_{j}} [u,v]_{i}$
 that must be equal to $\widetilde{u_{j}v_{j}}.[u_{j},v_{j}]_{2}$ we obtain
\beq \label{newphit}
 [u_{j},v_{j}]_{2}=[u,v]_{i}\quad u,v\in H.
\eeq

Finally let us choose $j=\omega\in Aut(H)$ and $i=\pi\in Aut(G)$ (thus $u_j=u^{\omega}$ and
$g_i=g^{\pi}$). We obtain an extension $E_{2}=E(\xi',\varphi')$ of $G=\pi (G)$ by
$H=\gamma (H)$ that is $G$-isomorphic to $E=E(\xi,\phi)$. By \ref{newphit}, we have
$\varphi'(u^{\omega},v^{\omega})=\varphi (u,v)^{\pi}$ and so
$$\varphi' (u,v)=\varphi'((u^{\omega ^{-1}})^{\omega},(v^{\omega ^{-1}})^{\omega})=
\varphi (u^{\omega ^{-1}},v^{\omega ^{-1}})^{\pi}.$$

similarly, for $g\in G$ and $u\in H$, by \ref{newxit} we obtain
$(g^{\pi})^{\xi ' (u^{\omega})}=(g^{\xi (u)})^{\pi}$ so that
$\xi '(u^{\omega})=\pi ^{-1}\xi (u)\pi$, whence
$$\xi '(u)=\xi '((u^{\omega^{-1}})^{\omega})=\pi^{-1}\xi (u^{\omega^{-1}})\pi .$$

We recall that for $u\in H$ and $g\in G$, the $G$-isomorphism $\phi$ from $E$ onto $E_2$ is
defined by $\overline{u}\to \widetilde{u_j}$ and $g\to g_i$.
Hence, there is a $G$-isomorphism from $E(\xi ,\varphi)$ onto $E(\xi',\varphi')$ that is
defined by $\overline{u}\to \widetilde{u^{\omega}}$ and $g\to g^{\pi}$.
$\Box$

%%%%%%%%%%%%%%%%%%%%%%%%%%%%%%%%%%%%%%%

\subsection{Determination of the orbits}\label{countorbits}
%%The formulation must be not only for the case $Z(G)=1$. J'en ai besoin aussi pour d' autre classif de
%%%subdirect product for $6A_{6}$. Dire que ca classe les nonsolvable groups such that $Z(G)=1$.
%% oui mais pas le temps de faire 6A6-->le faire en gap avec reduced pcgs
%%%Rem: je n'ai pas besoin de citer mon memoire car Proposition \ref{newext} a pour corollaire de de prouver
%%%( de manière détournée) le cas $Z=1$. Le dire en citant suzuki.
As explained in the previous section, the first step in order to classify up to $G$-isomorphism
the extensions of a group $G$, is to determine representatives of the orbits under the action
$\mathcal{A}$ of $Aut(H)\times Aut(G)$ on the couplings.

Note that In the case where $Z(G)=1$, this step is the
only one to be performed. Indeed, if $Z(G)=1$, for each coupling $\Phi:H\to Out(G)$ there is a unique
extension of $G$ by $H$ associated with $\Phi$ and it is isomorphic to the subgroup $T_{\Phi}$ of $H\times Aut(G)$
$$T_{\Phi}:=\{(h,\pi): h\in H,\,\pi\in Aut(G)\,|\, \pi\in \Phi(h)\}.$$
A proof can be found in \cite{Suzuki_vol1} page 196 or in subsection \ref{extmodZ}.
Since $T_{\Phi}$ is unique, by the results of the previous section,  if $Z(G)=1$, there is a
one-to-one correspondence between the orbits of $\mathcal{A}$ and the $G$-isomorphism classes
of extensions of $G$.

In \cite{Archermem} we described, for any given groups $G$ and $H$, the following process to obtain
exactly one coupling $\Phi$ in each orbit.
\begin{enumerate}
\item For each conjugacy class $\mathcal{C}$ of subgroups in $Out(G)$, choose a representative $V_c$.
\item For each $V_c$,  let $\mathcal{K}_{\mathcal{C}}$ be the set of all normal subgroups $K$ of $H$ such that there
exists an isomorphism  from $H/K$ onto $V_c$.  If $\omega$ is an automorphism of $H$ and if
$K\in \mathcal{K}_{\mathcal{C}}$, then $K^{\omega}$ also belongs to $\mathcal{K}_{\mathcal{C}}$.
Hence, $Aut(H)$ acts on $\mathcal{K}_{\mathcal{C}}$. For each orbit $O$ under the latter action,
 choose a representative  $K_{c,O}$ and an isomorphism $i_{c,O}$ from
 $H/K_{c,O}$  onto $V_c$.
  \item Let $\mathcal{P}$ be the set of pairs $(V_c,K_{c,O})$ and  consider the couplings $\Phi:H\to Out(G)$
  such that $\Phi(H)=V_c$ and $Ker \Phi=K_{c,O}$.  By Corollary \ref{corpin1}, two couplings that
  correspond to two different pairs in  $\mathcal{P}$ are not in the same $\mathcal{A}$-orbit.
\item Let $(V_c,K_{c,O})$ be a pair in $\mathcal{P}$. Every automorphism $\omega$ of $H$ that
preserves $K_{c,O}$ induces an automorphism
$i_{c,O}^{-1} \omega i_{c,O}$ of $V_c$ and let $\Omega_{c,O}$ be the subgroup of those induced automorphisms.
 The normalizer of $V_c$ in $Out(G)$ induces by conjugation on $V_c$ a subgroup $N_c$ of automorphisms
 of $V_c$.
\item For the pair $p=(V_c,K_{c,O})$,  we already have a coupling described by
$i_{c,O}:H/K_{c,O} \to V_c$. Then, the other couplings attached to this pair are $\Phi_{x}:=i_{c,O}x$
where $x$ is any automorphism of $V_c$. Two couplings $\Phi_{x}$ and $\Phi_{y}$ are in the same
$\mathcal{A}$-orbit if and only if $y$ belongs to the double coset $\Omega_{c,O}xN_c$
(\cite{Archermem} page 36 or \cite{Laue} page 6). If $\mathcal{D}(p)$ denotes the set of such double cosets
in $Aut(V_c)$ then the $\mathcal{A}$-orbits are in one-to-one correspondence with the union set
$$\bigcup_{p\in\mathcal{P}}\mathcal{D}(p).$$
 \end{enumerate}
\subsubsection{Tables}
We have implemented in GAP 4.3 this algorithm to compute $\mathcal{A}$-orbits for a given group $H$ and
a given group $Out(G)$. We consider every group $H$ of order less than $64$ in the Small Groups Library and
 every group $Out(G)$ (of order less or equal to $24$) that
corresponds to a centerless perfect group of order less than $61,440$. We recall that $61,440$ is the smallest
order for which the perfect groups are not completely known (\cite{Holt_Plesken} page 261).
The groups $Out(G)$ are denoted by their Id in the Small Groups Library. Up to order $29,120$, the only
centerless perfect groups whose outer automorphism group has order above 24 are
$G=PerfectGroup(7500,1)$ and $G=PerfectGroup(15360,3)$. Excepted for these 2 groups, our table enumerate
all the extensions of such centerless perfect groups by a group of order less than 64
(for $|Out(G)|=2$ see subsection \ref{tableout=2}).  For instance, one can
 deduce from \cite{Holt_Plesken} that for $G=PerfectGroup(30720,11)$, the Id of $Out(G)$ is $[16,13]$.
 A look in our table at the intersection of column $[16,13]$ and line $32$ shows that there are, up to
 isomorphism, $2010$ extensions of $G$ by a group of order $32$.
We have also used this table, as the first step for classifying extensions of an arbitrary perfect group $P$
such that $|P/Z(P)|\leq 29,120$.

On a 800 MHz AMD Duron,  with  200 Mega Ram dedicated to GAP 4.3, it took 620 minutes to compute
the whole table. In particular, the case $|H|=32$ took 235 minutes, while the case  $|H|=48$ took 167 minutes.

%We give In partcular it gives the number of extensions
\index{false : i have forgotten Out(Perfect(32256,2))=24,13}
%Dire que out=2 est déja fait dans zphi-classes.

%\input{zis1tabtex}
%\documentclass[11pt]{article}
%	\usepackage{umlaut}
%	\usepackage{color}
	%\oddsidemargin -0.54cm
%\evensidemargin -0.1cm
%\oddsidemargin -0.1cm
	\textwidth 17cm
	\parskip 1em
%\begin{document}

\begin{flushleft}

\begin{minipage}{\linewidth} % pour que la syntaxe soit correcte il veut un argument
% apres minipage  ({10cm} ou {0.5\linewidth} par exemple ) mais il n'en tient pas compte :
\vspace{-2.2cm}

%\begin{tabular}{|l|l|l|l|l|l|l|l|l|l|l|l|l|l|l|l|}\hline
%% l'environement \tabular* éconne complètement
%% pour donner la largeur d'une colonne remplacer {|l|l|} ou {|c|c|} par {|l|p{0.55cm}|
% astuce trouvée dans file:/usr/share/texmf/doc/latex/general/essential.dvi page 12
%%% pour donner la hauteur d'un ligne apres le \\ de fin de ligne faire
%%%%%%%%  SAMUEL M'A DONNE LE TRUC : A LA FIN D'UN LIGNE (APRES LE \\) PLACER
%%%%%%%% [-0.5ex] [-1ex] POUR RETRECIR L'ESPACE ENTRE LES LIGNES [0.5ex] [2ex] POUR ELARGIR
%\settoheight{0.6\baselineskip}  % ne change rien
% \setlength{\baselineskip}{0.5\baselineskip}  % ne change rien non plus
{\footnotesize
\begin{tabular}{|p{0.85cm}||p{0.45cm}|p{0.45cm}|p{0.55cm}|p{0.45cm}|p{0.45cm}|p{0.55cm}|p{0.70cm}|p{0.45cm}|p{0.62cm}|p{0.55cm}|p{0.55cm}|p{0.55cm}|p{0.55cm}|p{0.75cm}|p{0.75cm}|}\hline
  \textsf{Out(G)}
	&[3,1] % &\textsf{[3,1]}
	&[4,1] % &\textsf{[4,1]}
	&[4,2] %&\textsf{[4,2]}
	&[5,1] %&\textsf{[5,1]}
	&[6,2] %&\textsf{[6,2]}
	&[8,3] %&\textsf{[8,3]}
	&[8,5] %&\textsf{[8,5]}
	&[9,1] %&\textsf{[9,1]}
	&[10,1] %&\textsf{[10,1]}
	&[12,4] %&\textsf{[12,4]}
	&[12,5] %&\textsf{[12,5]}
	&[16,6] %&\textsf{[16,6]}
	&[16,8] %&\textsf{[16,8]}
	&[16,13] %&\textsf{[16,13]}
	&[24,12] %&\textsf{[24,12]}
\\[-0.6ex]\hline
	 \textsf{order of H}
	&
	&
	&
	&
	&
	&
	&
	&
	&
	&
	&
	&
	&
	&
	&
\\[-0.6ex]\hline
	 \textsf{1}
	&\textsf{1}
	&\textsf{1}
	&\textsf{1}
	&\textsf{1}
	&\textsf{1}
	&\textsf{1}
	&\textsf{1}
	&\textsf{1}
	&\textsf{1}
	&\textsf{1}
	&\textsf{1}
	&\textsf{1}
	&\textsf{1}
	&\textsf{1}
	&\textsf{1}
\\[-0.6ex]\hline
	 \textsf{2}
	&\textsf{1}
	&\textsf{2}
	&\textsf{4}
	&\textsf{1}
	&\textsf{2}
	&\textsf{4}
	&\textsf{8}
	&\textsf{1}
	&\textsf{2}
	&\textsf{4}
	&\textsf{4}
	&\textsf{3}
	&\textsf{3}
	&\textsf{5}
	&\textsf{3}
\\[-0.6ex]\hline
	 \textsf{3}
	&\textsf{2}
	&\textsf{1}
	&\textsf{1}
	&\textsf{1}
	&\textsf{2}
	&\textsf{1}
	&\textsf{1}
	&\textsf{2}
	&\textsf{1}
	&\textsf{2}
	&\textsf{2}
	&\textsf{1}
	&\textsf{1}
	&\textsf{1}
	&\textsf{2}
\\[-0.6ex]\hline
	 \textsf{4}
	&\textsf{2}
	&\textsf{5}
	&\textsf{9}
	&\textsf{2}
	&\textsf{4}
	&\textsf{11}
	&\textsf{23}
	&\textsf{2}
	&\textsf{4}
	&\textsf{9}
	&\textsf{9}
	&\textsf{9}
	&\textsf{9}
	&\textsf{17}
	&\textsf{9}
\\[-0.6ex]\hline
	 \textsf{5}
	&\textsf{1}
	&\textsf{1}
	&\textsf{1}
	&\textsf{2}
	&\textsf{1}
	&\textsf{1}
	&\textsf{1}
	&\textsf{1}
	&\textsf{2}
	&\textsf{1}
	&\textsf{1}
	&\textsf{1}
	&\textsf{1}
	&\textsf{1}
	&\textsf{1}
\\[-0.6ex]\hline
	 \textsf{6}
	&\textsf{3}
	&\textsf{4}
	&\textsf{8}
	&\textsf{2}
	&\textsf{6}
	&\textsf{8}
	&\textsf{16}
	&\textsf{3}
	&\textsf{4}
	&\textsf{12}
	&\textsf{12}
	&\textsf{6}
	&\textsf{6}
	&\textsf{10}
	&\textsf{8}
\\[-0.6ex]\hline
	 \textsf{7}
	&\textsf{1}
	&\textsf{1}
	&\textsf{1}
	&\textsf{1}
	&\textsf{1}
	&\textsf{1}
	&\textsf{1}
	&\textsf{1}
	&\textsf{1}
	&\textsf{1}
	&\textsf{1}
	&\textsf{1}
	&\textsf{1}
	&\textsf{1}
	&\textsf{1}
\\[-0.6ex]\hline
	 \textsf{8}
	&\textsf{5}
	&\textsf{14}
	&\textsf{34}
	&\textsf{5}
	&\textsf{12}
	&\textsf{41}
	&\textsf{111}
	&\textsf{5}
	&\textsf{12}
	&\textsf{34}
	&\textsf{34}
	&\textsf{32}
	&\textsf{32}
	&\textsf{66}
	&\textsf{32}
\\[-0.6ex]\hline
	 \textsf{9}
	&\textsf{4}
	&\textsf{2}
	&\textsf{2}
	&\textsf{2}
	&\textsf{4}
	&\textsf{2}
	&\textsf{2}
	&\textsf{5}
	&\textsf{2}
	&\textsf{4}
	&\textsf{4}
	&\textsf{2}
	&\textsf{2}
	&\textsf{2}
	&\textsf{4}
\\[-0.6ex]\hline
	 \textsf{10}
	&\textsf{2}
	&\textsf{4}
	&\textsf{8}
	&\textsf{3}
	&\textsf{4}
	&\textsf{8}
	&\textsf{16}
	&\textsf{2}
	&\textsf{6}
	&\textsf{8}
	&\textsf{8}
	&\textsf{6}
	&\textsf{6}
	&\textsf{10}
	&\textsf{6}
\\[-0.6ex]\hline
	 \textsf{11}
	&\textsf{1}
	&\textsf{1}
	&\textsf{1}
	&\textsf{1}
	&\textsf{1}
	&\textsf{1}
	&\textsf{1}
	&\textsf{1}
	&\textsf{1}
	&\textsf{1}
	&\textsf{1}
	&\textsf{1}
	&\textsf{1}
	&\textsf{1}
	&\textsf{1}
\\[-0.6ex]\hline
	 \textsf{12}
	&\textsf{8}
	&\textsf{12}
	&\textsf{24}
	&\textsf{5}
	&\textsf{15}
	&\textsf{28}
	&\textsf{68}
	&\textsf{8}
	&\textsf{10}
	&\textsf{34}
	&\textsf{34}
	&\textsf{22}
	&\textsf{22}
	&\textsf{42}
	&\textsf{28}
\\[-0.6ex]\hline
	 \textsf{13}
	&\textsf{1}
	&\textsf{1}
	&\textsf{1}
	&\textsf{1}
	&\textsf{1}
	&\textsf{1}
	&\textsf{1}
	&\textsf{1}
	&\textsf{1}
	&\textsf{1}
	&\textsf{1}
	&\textsf{1}
	&\textsf{1}
	&\textsf{1}
	&\textsf{1}
\\[-0.6ex]\hline
	 \textsf{14}
	&\textsf{2}
	&\textsf{4}
	&\textsf{8}
	&\textsf{2}
	&\textsf{4}
	&\textsf{8}
	&\textsf{16}
	&\textsf{2}
	&\textsf{4}
	&\textsf{8}
	&\textsf{8}
	&\textsf{6}
	&\textsf{6}
	&\textsf{10}
	&\textsf{6}
\\[-0.6ex]\hline
	 \textsf{15}
	&\textsf{2}
	&\textsf{1}
	&\textsf{1}
	&\textsf{2}
	&\textsf{2}
	&\textsf{1}
	&\textsf{1}
	&\textsf{2}
	&\textsf{2}
	&\textsf{2}
	&\textsf{2}
	&\textsf{1}
	&\textsf{1}
	&\textsf{1}
	&\textsf{2}
\\[-0.6ex]\hline
	 \textsf{16}
	&\textsf{14}
	&\textsf{51}
	&\textsf{151}
	&\textsf{14}
	&\textsf{42}
	&\textsf{185}
	&\textsf{645}
	&\textsf{14}
	&\textsf{42}
	&\textsf{151}
	&\textsf{151}
	&\textsf{134}
	&\textsf{135}
	&\textsf{314}
	&\textsf{141}
\\[-0.6ex]\hline
	 \textsf{17}
	&\textsf{1}
	&\textsf{1}
	&\textsf{1}
	&\textsf{1}
	&\textsf{1}
	&\textsf{1}
	&\textsf{1}
	&\textsf{1}
	&\textsf{1}
	&\textsf{1}
	&\textsf{1}
	&\textsf{1}
	&\textsf{1}
	&\textsf{1}
	&\textsf{1}
\\[-0.6ex]\hline
	 \textsf{18}
	&\textsf{8}
	&\textsf{10}
	&\textsf{20}
	&\textsf{5}
	&\textsf{16}
	&\textsf{20}
	&\textsf{40}
	&\textsf{9}
	&\textsf{10}
	&\textsf{32}
	&\textsf{32}
	&\textsf{15}
	&\textsf{15}
	&\textsf{25}
	&\textsf{21}
\\[-0.6ex]\hline
	 \textsf{19}
	&\textsf{1}
	&\textsf{1}
	&\textsf{1}
	&\textsf{1}
	&\textsf{1}
	&\textsf{1}
	&\textsf{1}
	&\textsf{1}
	&\textsf{1}
	&\textsf{1}
	&\textsf{1}
	&\textsf{1}
	&\textsf{1}
	&\textsf{1}
	&\textsf{1}
\\[-0.6ex]\hline
	 \textsf{20}
	&\textsf{5}
	&\textsf{15}
	&\textsf{27}
	&\textsf{7}
	&\textsf{11}
	&\textsf{32}
	&\textsf{75}
	&\textsf{5}
	&\textsf{15}
	&\textsf{27}
	&\textsf{27}
	&\textsf{27}
	&\textsf{26}
	&\textsf{51}
	&\textsf{25}
\\[-0.6ex]\hline
	 \textsf{21}
	&\textsf{5}
	&\textsf{2}
	&\textsf{2}
	&\textsf{2}
	&\textsf{5}
	&\textsf{2}
	&\textsf{2}
	&\textsf{5}
	&\textsf{2}
	&\textsf{4}
	&\textsf{5}
	&\textsf{2}
	&\textsf{2}
	&\textsf{2}
	&\textsf{4}
\\[-0.6ex]\hline
	 \textsf{22}
	&\textsf{2}
	&\textsf{4}
	&\textsf{8}
	&\textsf{2}
	&\textsf{4}
	&\textsf{8}
	&\textsf{16}
	&\textsf{2}
	&\textsf{4}
	&\textsf{8}
	&\textsf{8}
	&\textsf{6}
	&\textsf{6}
	&\textsf{10}
	&\textsf{6}
\\[-0.6ex]\hline
	 \textsf{23}
	&\textsf{1}
	&\textsf{1}
	&\textsf{1}
	&\textsf{1}
	&\textsf{1}
	&\textsf{1}
	&\textsf{1}
	&\textsf{1}
	&\textsf{1}
	&\textsf{1}
	&\textsf{1}
	&\textsf{1}
	&\textsf{1}
	&\textsf{1}
	&\textsf{1}
\\[-0.6ex]\hline
	 \textsf{24}
	&\textsf{22}
	&\textsf{44}
	&\textsf{120}
	&\textsf{15}
	&\textsf{54}
	&\textsf{138}
	&\textsf{422}
	&\textsf{22}
	&\textsf{39}
	&\textsf{159}
	&\textsf{159}
	&\textsf{102}
	&\textsf{102}
	&\textsf{222}
	&\textsf{122}
\\[-0.6ex]\hline
	 \textsf{25}
	&\textsf{2}
	&\textsf{2}
	&\textsf{2}
	&\textsf{4}
	&\textsf{2}
	&\textsf{2}
	&\textsf{2}
	&\textsf{2}
	&\textsf{4}
	&\textsf{2}
	&\textsf{2}
	&\textsf{2}
	&\textsf{2}
	&\textsf{2}
	&\textsf{2}
\\[-0.6ex]\hline
	 \textsf{26}
	&\textsf{2}
	&\textsf{4}
	&\textsf{8}
	&\textsf{2}
	&\textsf{4}
	&\textsf{8}
	&\textsf{16}
	&\textsf{2}
	&\textsf{4}
	&\textsf{8}
	&\textsf{8}
	&\textsf{6}
	&\textsf{6}
	&\textsf{10}
	&\textsf{6}
\\[-0.6ex]\hline
	 \textsf{27}
	&\textsf{13}
	&\textsf{5}
	&\textsf{5}
	&\textsf{5}
	&\textsf{13}
	&\textsf{5}
	&\textsf{5}
	&\textsf{15}
	&\textsf{5}
	&\textsf{12}
	&\textsf{13}
	&\textsf{5}
	&\textsf{5}
	&\textsf{5}
	&\textsf{12}
\\[-0.6ex]\hline
	 \textsf{28}
	&\textsf{4}
	&\textsf{11}
	&\textsf{23}
	&\textsf{4}
	&\textsf{9}
	&\textsf{27}
	&\textsf{67}
	&\textsf{4}
	&\textsf{9}
	&\textsf{23}
	&\textsf{23}
	&\textsf{21}
	&\textsf{21}
	&\textsf{41}
	&\textsf{21}
\\[-0.6ex]\hline
	 \textsf{29}
	&\textsf{1}
	&\textsf{1}
	&\textsf{1}
	&\textsf{1}
	&\textsf{1}
	&\textsf{1}
	&\textsf{1}
	&\textsf{1}
	&\textsf{1}
	&\textsf{1}
	&\textsf{1}
	&\textsf{1}
	&\textsf{1}
	&\textsf{1}
	&\textsf{1}
\\[-0.6ex]\hline
	 \textsf{30}
	&\textsf{6}
	&\textsf{8}
	&\textsf{16}
	&\textsf{6}
	&\textsf{12}
	&\textsf{16}
	&\textsf{32}
	&\textsf{6}
	&\textsf{12}
	&\textsf{24}
	&\textsf{24}
	&\textsf{12}
	&\textsf{12}
	&\textsf{20}
	&\textsf{16}
\\[-0.6ex]\hline
	 \textsf{31}
	&\textsf{1}
	&\textsf{1}
	&\textsf{1}
	&\textsf{1}
	&\textsf{1}
	&\textsf{1}
	&\textsf{1}
	&\textsf{1}
	&\textsf{1}
	&\textsf{1}
	&\textsf{1}
	&\textsf{1}
	&\textsf{1}
	&\textsf{1}
	&\textsf{1}
\\[-0.6ex]\hline
	 \textsf{32}
	&\textsf{51}
	&\textsf{238}
	&\textsf{936}
	&\textsf{51}
	&\textsf{195}
	&\textsf{1136}
	&\textsf{5534}
	&\textsf{51}
	&\textsf{195}
	&\textsf{939}
	&\textsf{939}
	&\textsf{764}
	&\textsf{767}
	&\textsf{2010}
	&\textsf{846}
\\[-0.6ex]\hline
	 \textsf{33}
	&\textsf{2}
	&\textsf{1}
	&\textsf{1}
	&\textsf{1}
	&\textsf{2}
	&\textsf{1}
	&\textsf{1}
	&\textsf{2}
	&\textsf{1}
	&\textsf{2}
	&\textsf{2}
	&\textsf{1}
	&\textsf{1}
	&\textsf{1}
	&\textsf{2}
\\[-0.6ex]\hline
	 \textsf{34}
	&\textsf{2}
	&\textsf{4}
	&\textsf{8}
	&\textsf{2}
	&\textsf{4}
	&\textsf{8}
	&\textsf{16}
	&\textsf{2}
	&\textsf{4}
	&\textsf{8}
	&\textsf{8}
	&\textsf{6}
	&\textsf{6}
	&\textsf{10}
	&\textsf{6}
\\[-0.6ex]\hline
	 \textsf{35}
	&\textsf{1}
	&\textsf{1}
	&\textsf{1}
	&\textsf{2}
	&\textsf{1}
	&\textsf{1}
	&\textsf{1}
	&\textsf{1}
	&\textsf{2}
	&\textsf{1}
	&\textsf{1}
	&\textsf{1}
	&\textsf{1}
	&\textsf{1}
	&\textsf{1}
\\[-0.6ex]\hline
	 \textsf{36}
	&\textsf{23}
	&\textsf{36}
	&\textsf{76}
	&\textsf{14}
	&\textsf{46}
	&\textsf{88}
	&\textsf{224}
	&\textsf{26}
	&\textsf{30}
	&\textsf{111}
	&\textsf{111}
	&\textsf{68}
	&\textsf{68}
	&\textsf{132}
	&\textsf{86}
\\[-0.6ex]\hline
	 \textsf{37}
	&\textsf{1}
	&\textsf{1}
	&\textsf{1}
	&\textsf{1}
	&\textsf{1}
	&\textsf{1}
	&\textsf{1}
	&\textsf{1}
	&\textsf{1}
	&\textsf{1}
	&\textsf{1}
	&\textsf{1}
	&\textsf{1}
	&\textsf{1}
	&\textsf{1}
\\[-0.6ex]\hline
	 \textsf{38}
	&\textsf{2}
	&\textsf{4}
	&\textsf{8}
	&\textsf{2}
	&\textsf{4}
	&\textsf{8}
	&\textsf{16}
	&\textsf{2}
	&\textsf{4}
	&\textsf{8}
	&\textsf{8}
	&\textsf{6}
	&\textsf{6}
	&\textsf{10}
	&\textsf{6}
\\[-0.6ex]\hline
	 \textsf{39}
	&\textsf{5}
	&\textsf{2}
	&\textsf{2}
	&\textsf{2}
	&\textsf{5}
	&\textsf{2}
	&\textsf{2}
	&\textsf{5}
	&\textsf{2}
	&\textsf{4}
	&\textsf{5}
	&\textsf{2}
	&\textsf{2}
	&\textsf{2}
	&\textsf{4}
\\[-0.6ex]\hline
	 \textsf{40}
	&\textsf{14}
	&\textsf{49}
	&\textsf{125}
	&\textsf{19}
	&\textsf{39}
	&\textsf{147}
	&\textsf{449}
	&\textsf{14}
	&\textsf{51}
	&\textsf{125}
	&\textsf{125}
	&\textsf{119}
	&\textsf{112}
	&\textsf{251}
	&\textsf{111}
\\[-0.6ex]\hline
	 \textsf{41}
	&\textsf{1}
	&\textsf{1}
	&\textsf{1}
	&\textsf{1}
	&\textsf{1}
	&\textsf{1}
	&\textsf{1}
	&\textsf{1}
	&\textsf{1}
	&\textsf{1}
	&\textsf{1}
	&\textsf{1}
	&\textsf{1}
	&\textsf{1}
	&\textsf{1}
\\[-0.6ex]\hline
	 \textsf{42}
	&\textsf{12}
	&\textsf{12}
	&\textsf{24}
	&\textsf{6}
	&\textsf{24}
	&\textsf{24}
	&\textsf{48}
	&\textsf{12}
	&\textsf{12}
	&\textsf{36}
	&\textsf{48}
	&\textsf{18}
	&\textsf{18}
	&\textsf{30}
	&\textsf{24}
\\[-0.6ex]\hline
	 \textsf{43}
	&\textsf{1}
	&\textsf{1}
	&\textsf{1}
	&\textsf{1}
	&\textsf{1}
	&\textsf{1}
	&\textsf{1}
	&\textsf{1}
	&\textsf{1}
	&\textsf{1}
	&\textsf{1}
	&\textsf{1}
	&\textsf{1}
	&\textsf{1}
	&\textsf{1}
\\[-0.6ex]\hline
	 \textsf{44}
	&\textsf{4}
	&\textsf{11}
	&\textsf{23}
	&\textsf{4}
	&\textsf{9}
	&\textsf{27}
	&\textsf{67}
	&\textsf{4}
	&\textsf{9}
	&\textsf{23}
	&\textsf{23}
	&\textsf{21}
	&\textsf{21}
	&\textsf{41}
	&\textsf{21}
\\[-0.6ex]\hline
	 \textsf{45}
	&\textsf{4}
	&\textsf{2}
	&\textsf{2}
	&\textsf{4}
	&\textsf{4}
	&\textsf{2}
	&\textsf{2}
	&\textsf{5}
	&\textsf{4}
	&\textsf{4}
	&\textsf{4}
	&\textsf{2}
	&\textsf{2}
	&\textsf{2}
	&\textsf{4}
\\[-0.6ex]\hline
	 \textsf{46}
	&\textsf{2}
	&\textsf{4}
	&\textsf{8}
	&\textsf{2}
	&\textsf{4}
	&\textsf{8}
	&\textsf{16}
	&\textsf{2}
	&\textsf{4}
	&\textsf{8}
	&\textsf{8}
	&\textsf{6}
	&\textsf{6}
	&\textsf{10}
	&\textsf{6}
\\[-0.6ex]\hline
	 \textsf{47}
	&\textsf{1}
	&\textsf{1}
	&\textsf{1}
	&\textsf{1}
	&\textsf{1}
	&\textsf{1}
	&\textsf{1}
	&\textsf{1}
	&\textsf{1}
	&\textsf{1}
	&\textsf{1}
	&\textsf{1}
	&\textsf{1}
	&\textsf{1}
	&\textsf{1}
\\[-0.6ex]\hline
	 \textsf{48}
	&\textsf{72}
	&\textsf{198}
	&\textsf{686}
	&\textsf{52}
	&\textsf{222}
	&\textsf{799}
	&\textsf{3351}
	&\textsf{72}
	&\textsf{170}
	&\textsf{856}
	&\textsf{856}
	&\textsf{549}
	&\textsf{551}
	&\textsf{1333}
	&\textsf{653}
\\[-0.6ex]\hline
	 \textsf{49}
	&\textsf{2}
	&\textsf{2}
	&\textsf{2}
	&\textsf{2}
	&\textsf{2}
	&\textsf{2}
	&\textsf{2}
	&\textsf{2}
	&\textsf{2}
	&\textsf{2}
	&\textsf{2}
	&\textsf{2}
	&\textsf{2}
	&\textsf{2}
	&\textsf{2}
\\[-0.6ex]\hline
	 \textsf{50}
	&\textsf{5}
	&\textsf{10}
	&\textsf{20}
	&\textsf{8}
	&\textsf{10}
	&\textsf{20}
	&\textsf{40}
	&\textsf{5}
	&\textsf{16}
	&\textsf{20}
	&\textsf{20}
	&\textsf{15}
	&\textsf{15}
	&\textsf{25}
	&\textsf{15}
\\[-0.6ex]\hline
	 \textsf{51}
	&\textsf{2}
	&\textsf{1}
	&\textsf{1}
	&\textsf{1}
	&\textsf{2}
	&\textsf{1}
	&\textsf{1}
	&\textsf{2}
	&\textsf{1}
	&\textsf{2}
	&\textsf{2}
	&\textsf{1}
	&\textsf{1}
	&\textsf{1}
	&\textsf{2}
\\[-0.6ex]\hline
	 \textsf{52}
	&\textsf{5}
	&\textsf{15}
	&\textsf{27}
	&\textsf{5}
	&\textsf{11}
	&\textsf{32}
	&\textsf{75}
	&\textsf{5}
	&\textsf{11}
	&\textsf{27}
	&\textsf{27}
	&\textsf{27}
	&\textsf{26}
	&\textsf{51}
	&\textsf{25}
\\[-0.6ex]\hline
	 \textsf{53}
	&\textsf{1}
	&\textsf{1}
	&\textsf{1}
	&\textsf{1}
	&\textsf{1}
	&\textsf{1}
	&\textsf{1}
	&\textsf{1}
	&\textsf{1}
	&\textsf{1}
	&\textsf{1}
	&\textsf{1}
	&\textsf{1}
	&\textsf{1}
	&\textsf{1}
\\[-0.6ex]\hline
	 \textsf{54}
	&\textsf{30}
	&\textsf{30}
	&\textsf{60}
	&\textsf{15}
	&\textsf{60}
	&\textsf{60}
	&\textsf{120}
	&\textsf{33}
	&\textsf{30}
	&\textsf{108}
	&\textsf{120}
	&\textsf{45}
	&\textsf{45}
	&\textsf{75}
	&\textsf{69}
\\[-0.6ex]\hline
	 \textsf{55}
	&\textsf{2}
	&\textsf{2}
	&\textsf{2}
	&\textsf{7}
	&\textsf{2}
	&\textsf{2}
	&\textsf{2}
	&\textsf{2}
	&\textsf{5}
	&\textsf{2}
	&\textsf{2}
	&\textsf{2}
	&\textsf{2}
	&\textsf{2}
	&\textsf{2}
\\[-0.6ex]\hline
	 \textsf{56}
	&\textsf{13}
	&\textsf{40}
	&\textsf{112}
	&\textsf{13}
	&\textsf{35}
	&\textsf{130}
	&\textsf{406}
	&\textsf{13}
	&\textsf{35}
	&\textsf{112}
	&\textsf{112}
	&\textsf{96}
	&\textsf{96}
	&\textsf{212}
	&\textsf{98}
\\[-0.6ex]\hline
	 \textsf{57}
	&\textsf{5}
	&\textsf{2}
	&\textsf{2}
	&\textsf{2}
	&\textsf{5}
	&\textsf{2}
	&\textsf{2}
	&\textsf{5}
	&\textsf{2}
	&\textsf{4}
	&\textsf{5}
	&\textsf{2}
	&\textsf{2}
	&\textsf{2}
	&\textsf{4}
\\[-0.6ex]\hline
	 \textsf{58}
	&\textsf{2}
	&\textsf{4}
	&\textsf{8}
	&\textsf{2}
	&\textsf{4}
	&\textsf{8}
	&\textsf{16}
	&\textsf{2}
	&\textsf{4}
	&\textsf{8}
	&\textsf{8}
	&\textsf{6}
	&\textsf{6}
	&\textsf{10}
	&\textsf{6}
\\[-0.6ex]\hline
	 \textsf{59}
	&\textsf{1}
	&\textsf{1}
	&\textsf{1}
	&\textsf{1}
	&\textsf{1}
	&\textsf{1}
	&\textsf{1}
	&\textsf{1}
	&\textsf{1}
	&\textsf{1}
	&\textsf{1}
	&\textsf{1}
	&\textsf{1}
	&\textsf{1}
	&\textsf{1}
\\[-0.6ex]\hline
	 \textsf{60}
	&\textsf{19}
	&\textsf{37}
	&\textsf{77}
	&\textsf{18}
	&\textsf{41}
	&\textsf{87}
	&\textsf{237}
	&\textsf{19}
	&\textsf{39}
	&\textsf{105}
	&\textsf{105}
	&\textsf{69}
	&\textsf{67}
	&\textsf{133}
	&\textsf{79}
\\[-0.6ex]\hline
	 \textsf{61}
	&\textsf{1}
	&\textsf{1}
	&\textsf{1}
	&\textsf{1}
	&\textsf{1}
	&\textsf{1}
	&\textsf{1}
	&\textsf{1}
	&\textsf{1}
	&\textsf{1}
	&\textsf{1}
	&\textsf{1}
	&\textsf{1}
	&\textsf{1}
	&\textsf{1}
\\[-0.6ex]\hline
	 \textsf{62}
	&\textsf{2}
	&\textsf{4}
	&\textsf{8}
	&\textsf{2}
	&\textsf{4}
	&\textsf{8}
	&\textsf{16}
	&\textsf{2}
	&\textsf{4}
	&\textsf{8}
	&\textsf{8}
	&\textsf{6}
	&\textsf{6}
	&\textsf{10}
	&\textsf{6}
\\[-0.6ex]\hline
	 \textsf{63}
	&\textsf{11}
	&\textsf{4}
	&\textsf{4}
	&\textsf{4}
	&\textsf{11}
	&\textsf{4}
	&\textsf{4}
	&\textsf{14}
	&\textsf{4}
	&\textsf{9}
	&\textsf{11}
	&\textsf{4}
	&\textsf{4}
	&\textsf{4}
	&\textsf{9}
\\[-0.6ex]\hline

	&
	&
	&
	&
	&
	&
	&
	&
	&
	&
	&
	&
	&
	&
	&
\\[-0.6ex]\hline
	 \textsf{total}
	&\textsf{423}
	&\textsf{935}
	&\textsf{2721}
	&\textsf{349}
	&\textsf{990}
	&\textsf{3179}
	&\textsf{12249}
	&\textsf{437}
	&\textsf{852}
	&\textsf{3116}
	&\textsf{3146}
	&\textsf{2275}
	&\textsf{2270}
	&\textsf{5243}
	&\textsf{2580}
\\[-0.6ex]\hline
\end{tabular}
}
\end{minipage}
\end{flushleft}
%\end{document}

%%%%%%%%%%%%%%%%%%%%%%%%%%%%%%%%%%%%%%%
\section{Generating isomorphic extensions.}
%\subsection{The structure modulo $Z(G)$, of an extension of $G$.}
From now on, we will assume that
 we have chosen  in each $\mathcal{A}$-orbit one coupling as representative.
For each representative $\Phi:H\to Out(G)$, the problem is now to classify, up to $G$-isomorphism,
 all extensions with coupling $\Phi$.
 For every $h\in H$ let us choose in $\Phi (h)$ a representative $\xi (h)\in Aut(G)$.
Hence, since $\Phi (h)=\xi (h) Inn(G)$, in every extension with coupling ${\Phi}$ there exists a transversal
$\{\overline{h} : h\in H\}$ of $G$ such that the conjugation by $\overline{h}$ on $G$
is the automorphism $\xi (h)$. Thus, we obtain a description of extensions with coupling $\Phi$
as extensions $E(\xi,\varphi)$ that differ only  by their factor set $\varphi$.
Let $\varphi_1$ and $\varphi_2$ be two such factor sets for $\xi$ and let $<g>\in Aut(G)$ denote the
conjugation by an element $g\in G$.  Since for every $h_1,h_2\in H$,
$\varphi_1 (h_1,h_2)=(\overline{h_1 h_2})^{-1}\overline{h_1}\overline{h_2}$  %($i=1,2$),
we have $<\varphi_1 (h_1,h_2)>=\xi (h_1 h_2)^{-1} \xi (h_1) \xi (h_2)$ in $E(\xi,\varphi_1)$.
But in $E(\xi,\varphi_2)$, $\xi (h_1 h_2)^{-1} \xi (h_1) \xi (h_2)$ is also equal to $<\varphi_2 (h_1,h_2)>$.
Hence, the elements $\varphi_1 (h_1,h_2)$ and $\varphi_2 (h_1,h_2)$ differ by an element $z (h_1,h_2)\in Z(G)$
because they induce the same inner automorphism of $G$. MacLane-Eilenberg theorem (\ref{MacEinlen})
states that this function from $H\times H$ to $Z(G)$ is the factor set (a 2-cocycle)  of an extension
of $Z(G)$ by $H$ for the restriction of $\xi (h)$ to $Z(G)$.
Since each class of $Out(G)$ induces a unique automorphism of $Z(G)$, the restriction of $\Phi$ to $Z(G)$
is a homomorphism from $H$ into $Aut(Z(G))$ and we denote the second cohomology group associated with
this restriction as  $H_{\Phi} ^{2}(H,Z(G))=Z_{\Phi} ^{2}(H,Z(G))/B_{\Phi} ^{2}(H,Z(G))$.

\subsubsection*{Mac-Lane-Eilenberg theorem}
We follow here, the formulation of \cite{Robinson}. For a proof see \cite{kurosh} page 139-145.
\begin{theorem}\label{MacEinlen}(\cite{maclane-eilenberg})
Let $\Phi:H\to Out(G)$ be a homomorphism. Let $\xi:H\to Aut(G)$ be a function
such that $\xi (h)\in\Phi(h)$ for any $h\in H$. Then the following hold
\begin{enumerate}
\item Every extension of $G$ by $H$ with coupling $\Phi$ has an associated pair of functions
of the form $(\xi,\varphi)$.
\item If $E_1$ and $E_2$ are two such extensions with associated pairs of functions
$(\xi,\varphi_1)$ and $(\xi,\varphi_2)$ then $\varphi_2 -\varphi_1\,\in Z_{\Phi} ^{2}(H,Z(G))$.
\item $E_1$ and $E_2$ are equivalent if and only if $\varphi_2\equiv\varphi_1 \, mod\, B_{\Phi} ^{2}(H,Z(G))$.

\item Let $E$ be fixed. Then as $\zeta +B_{\Phi} ^{2}(H,Z(G))$ varies over $H_{\Phi} ^{2}(H,Z(G))$, the
extensions $E(f,\varphi_1 +\zeta)$ form a complete set of inequivalent extensions of
$G$ by $H$ with coupling $\Phi$.
\end{enumerate}
\end{theorem}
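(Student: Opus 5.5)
The approach is the standard Schreier-type analysis, carried out in the notation $(\xi,\varphi)$ and $[u,v]$ fixed above. The four parts are taken in order, each reduced to a computation with transversals and rules (\ref{rule1}), (\ref{rule2}).

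\textbf{Part 1.} Let $E$ be an extension with coupling $\Phi$, and pick any transversal $\{\hat h\}$. By definition of the coupling, conjugation by $\hat h$ lies in $\Phi(h)=\xi(h)Inn(G)$. So there is $g_h\in G$ with $\hat h g_h$ inducing exactly $\xi(h)$. The transversal $\{\overline h:=\hat h g_h\}$ then has associated pair $(\xi,\varphi)$ for some factor set $\varphi$.

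\textbf{Part 2.} This part carries the main work. As observed just before the theorem, both $\varphi_1(u,v)$ and $\varphi_2(u,v)$ induce the inner automorphism $\xi(uv)^{-1}\xi(u)\xi(v)$. Hence $\zeta(u,v):=\varphi_1(u,v)^{-1}\varphi_2(u,v)$ lies in $Z(G)$; we write $Z(G)$ additively, so $\varphi_2=\varphi_1+\zeta$. We then substitute $\varphi_2(u,v)=\varphi_1(u,v)\zeta(u,v)$ into Schreier's condition (\ref{schreier2}) for $\varphi_2$ and cancel against the same condition for $\varphi_1$. This uses that $\zeta$ values are central and that $\xi(w)$ restricted to $Z(G)$ depends only on $\Phi(w)$. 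What remains is exactly the 2-cocycle identity for $\zeta$ with respect to the action of $H$ on $Z(G)$ induced by $\Phi$. The only delicate point is ordering the factors so that the central elements can be pulled out; I expect this bookkeeping to be the main obstacle.

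\textbf{Part 3.} Suppose $E_1,E_2$ are equivalent. Then there is a change of transversal $\overline h\mapsto\overline h g_h$ carrying $(\xi,\varphi_1)$ to $(\xi,\varphi_2)$. Since both transversals induce the same $\xi(h)$, conjugation by $g_h$ is trivial, so $g_h=a_h\in Z(G)$. Recomputing $\overline{u}a_u\,\overline{v}a_v$ via (\ref{rule1}) and (\ref{rule2}) gives
\[
\varphi_2(u,v)=\varphi_1(u,v)\,a_{uv}^{-1}\,a_u^{\xi(v)}\,a_v .
\]
So $\varphi_2-\varphi_1$ is a coboundary, matching the definition of $B^2_\Phi$ up to the left/right convention. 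Conversely, a coboundary difference lets us take $a_h$ as the change of transversal, which makes $E_2$ equivalent to $E_1$.

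\textbf{Part 4.} We must show that $E(\xi,\varphi_1+\zeta)$ is an extension for every cocycle $\zeta$. We verify (\ref{schreier1}), which holds because $\zeta$ is central, and (\ref{schreier2}), by reversing the computation of Part 2. Its coupling is $\Phi$. By Parts 1 and 2, every extension with coupling $\Phi$ is of this form. By Part 3, two such extensions are equivalent exactly when their $\zeta$'s agree modulo $B^2_\Phi$. This yields the bijection with $H^2_\Phi(H,Z(G))$.
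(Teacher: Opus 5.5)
Your argument is correct. The paper does not prove this theorem itself: it attributes it to Mac Lane and Eilenberg, adopts Robinson's formulation, and refers to Kurosh (pp.~139--145) for a proof. The only reasoning it records is in the paragraph just before the statement, namely that $\varphi_1(h_1,h_2)$ and $\varphi_2(h_1,h_2)$ both induce the inner automorphism $\xi(h_1h_2)^{-1}\xi(h_1)\xi(h_2)$ and therefore differ by an element of $Z(G)$.

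Your Part~2 starts from exactly that observation. The rest of your proposal supplies the cocycle, coboundary and completeness verifications that the paper outsources, and it does so directly in the paper's conventions: rules (\ref{rule1}) and (\ref{rule2}), and the Schreier conditions with $\xi$ acting on the right.

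Doing the verification yourself has a real benefit: the conventions come out consistent. The coboundary you derive in Part~3, $a_u^{\xi(v)}-a_{uv}+a_v$ in additive notation, is the one adapted to the paper's right action. The paper's displayed definition $a_{h_2}^{\xi(h_1)}-a_{h_1h_2}+a_{h_1}$, by contrast, is written in the left-action convention. So your ``up to the left/right convention'' caveat is substantive, not cosmetic. $B^2_\Phi$ in Part~3, and in its later uses such as Proposition~\ref{Gamisfunction}, has to be read as your group of right-action coboundaries. The citation route is shorter, but it leaves that mismatch unaddressed.
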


\subsection{Extensions of $G$ are known modulo $Z(G)$.}\label{extmodZ}
%\begin{wrapfigure}[9]{l}{0.35\linewidth}
%\mbox{\input{Giso1.latex}
\includegraphics[width=15cm,height=4cm]{Tphigam.eps}
%\end{wrapfigure}

Let $E$ be an extension of $G$ by $H$ described by a transversal $\tau =\{\overline{h}:h\in H\}$ of $G$.
If we combine the homomorphism $\rho :E\to H$ induced by $\tau$  with
the conjugation homomorphism $f:E\to Aut(G)$, we create a third homomorphism
$\gamma :e\rightarrow (\rho (e),f(e))$ from $E$ into $H \times Aut(G)$. The kernel
of $\gamma$ is $Z(G)=G\cap C_{E}(G)$, the intersection of the two previous kernels.
Hence, $E/Z(G)$ is isomorphic to the subgroup $\gamma (E)=\{(h,f(\overline{h}g))|h\in H,\,g\in G\}$
of $H \times Aut(G)$.
Let us show that  if $\Phi:H\to Out(G)$ is the coupling associated with $E$, then
 $\gamma (E)$ is uniquely determined by $\Phi$ and
 $$\gamma (E)=T_{\Phi}:=\{(h,\pi) |\,h\in H\,,\pi\in Aut(G)\,\textrm{such that}\,\pi\in\Phi(h)\}\subset H\times Aut(G).$$
Obviously, since $\Phi (h):=f(\overline{h})Inn(G)$ we have $(h,f(\overline{h}g)) \in T_{\Phi}$.
 Conversely if an element $\pi$ of $Aut(G)$ belongs to $\Phi (h)=f(\overline{h})Inn(G)$
 then there exists $g\in G$ such that $\pi=f(\overline{h}g)$, so that the element
 $(h,\pi)$ of $T_{\Phi}$ is equal to $(h,f(\overline{h}g))\in \gamma (E)$.
%%%%%%%%%%%%%%%%%%%%%%%%%%%%%%%%%%%%%%%%%%%%%%%%%%%
\subsubsection{Compatible pairs relatively to $\Phi$}
%Let $E_{1}$ and $E_{2}$ be extensions of $G$ by $H$ associated with $\Phi:H\to Out(G)$.
%{proposition}\label{pi}
Let $E_1$ and $E_2$ be two extensions of $G$ with coupling $\Phi:H\to Out(G)$. Let
$\alpha:E_{1}\to E_2$ be a $G$-isomorphism that induces $\omega \in Aut(H)$ and such that
 $\alpha|_{G}=\pi\in Aut(G)$. By Proposition \ref{pin1}
 $\Phi^{\pi}(h)=\Phi (h^{\omega})$ for every $h\in H$. A \emph{\bf{compatible pair for $\Phi$}} is an
element $(\omega,\pi)$ of $Aut(H)\times Aut(G)$ for which such equality holds. Therefore
every $G$-isomorphism between extensions having the same coupling $\Phi$ induces a
compatible pair for $\Phi$.
Remark that the set $Comp(\Phi)$ of compatible pairs is just the Stabilizer of $\Phi$ for the
action of $Aut(H)\times Aut(G)$ on the couplings described in subsection \ref{orbcoupling}.
Hence, $Comp(\Phi)$ is a subgroup of $Aut(H)\times Aut(G)$.
%%%%%%%%%%%%%%%%%%%%%%%%%%%%%%%%%%%%%%%%%%%%%%%%
\subsubsection{$G$-isomorphism modulo $Z(G)$}
%  \setlength{\intextsep}{0pt}
%%MMMM HOW TO WRITE "figure 10.3.2" AS A TITLE FOR THE FIGURE
\index{TITLE FOR THE FIGURE}
\begin{wrapfigure}[9]{l}{0.3\linewidth}\label{GisoTphi}
%\mbox{\input{Giso1.latex}}
\setlength{\unitlength}{1947sp}%
\begingroup\makeatletter\ifx\SetFigFont\undefined%
\gdef\SetFigFont#1#2#3#4#5{%
  \reset@font\fontsize{#1}{#2pt}%
  \fontfamily{#3}\fontseries{#4}\fontshape{#5}%
  \selectfont}%
\fi\endgroup%
\begin{picture}(300,380)(2001,-550)
\thicklines
{\put(2176,-661){\vector( 1,-2){885}}
}%
{ \put(4501,-736){\vector(-2,-3){1165.385}}
}%
{\put(2326,-661){\vector( 1, 0){2025}}
}%

\put(1801,-511){\makebox(0,0)[lb]{\smash{\SetFigFont{12}{14.4}{\rmdefault}{\mddefault}{\updefault}$E_1$%
}}}
\put(4276,-511){\makebox(0,0)[lb]{\smash{\SetFigFont{12}{14.4}{\rmdefault}{\mddefault}{\updefault}$E_{2}$%
}}}
%\put(3226,-136){\makebox(0,0)[lb]{\smash{\SetFigFont{12}{14.4}{\rmdefault}{\mddefault}{\updefault}$\alpha$%
%}}}
\put(2701,-1411){\makebox(0,0)[lb]{\smash{\SetFigFont{14}{14.4}{\rmdefault}{\mddefault}{\updefault}$\gamma_1$%
}}}
\put(4351,-1411){\makebox(0,0)[lb]{\smash{\SetFigFont{14}{14.4}{\rmdefault}{\mddefault}{\updefault}$\gamma_2$%
}}}
\put(2576,-2861){\makebox(0,0)[lb]{\smash{\SetFigFont{12}{14.4}{\rmdefault}{\mddefault}{\updefault}$T_{\Phi}\subset H\times Aut(G)$%
}}}
\put(2576,-3461){\makebox(0,0)[lb]{\smash{\SetFigFont{12}{14.4}{\rmdefault}{\mddefault}{\updefault}$\alpha |_{T_{\Phi}}=(\omega,\pi)$%
}}}
\put(3301,-511){\makebox(0,0)[lb]{\smash{\SetFigFont{12}{14.4}{\rmdefault}{\mddefault}{\updefault}$\alpha$%
}}}
\end{picture}
\label{fig:GisoTphi}
%\caption{OOOO}
%  \footnotesize\vspace{-10pt} %% -10000 pt =idem
%\centerline{\textbf{fig.\ref{GisoTphi}}}
%  \footnotemark%{fig.\ref{GisoTphi}}
%\leftline{\textbf{fig.\ref{GisoTphi}}}
\end{wrapfigure}
%Let $(\xi,\varphi_1)$ (respectively $(\xi,\varphi_2)$) be a pair of associated functions
% for an extension $E_1:=E(\xi,\varphi_1)$ (repectively ...). %H\to Aut(G)$ be a function such that
%Let $E_{1}:=E(\xi,\varphi_1)$ and $E_{2}:=E(\xi,\varphi_2)$ be two extensions of $G$
Let $E_{1}$ and $E_2$ be two extensions of $G$ by $H$ with coupling $\Phi$.
Figure \ref{GisoTphi} illustrates a $G$-isomorphism $\alpha:E_1 \to E_2$.
 If $\tau_1:=\{\overline{h} : h\in H\}$ is the canonical transversal to $G$ in $E_{1}$
(respectively $\tau_2:=\{\tilde{h} : h\in H\}$ in $E_{2}$), then for $k=1,2$,
let $\rho_k :E_k\to H$  be the canonical homomorphism induced by $\tau_k$.
Let $f_{k}$ ($k=1,2$), be the conjugation action of $E_k$ on $G$.
Finally let $\gamma_k :e\rightarrow (f_k(e),\rho_k (e))$ be the homomorphism from $E_k$
onto $T_{\Phi}\subset Aut(G)\times H$.

Now, since $\alpha$ maps $Ker \gamma_1=Z(G)$ onto $Z(G)=Ker \gamma_2$, it induces an isomorphism
from $E_1 /Z(G)$ onto $E_2 /Z(G)$. Since $\gamma_k$ is an isomorphism from $E_k /Ker \gamma_k =
E_k /Z(G)$ onto $\gamma_k (E_k)=T_{\Phi}$, figure \ref{GisoTphi} shows that
$$\tilde{\alpha}:=\gamma_2\alpha\gamma_1^{-1}:\gamma_1(e)\to\gamma_2 (\alpha (e))\qquad\,e\in E_1$$
 is an automorphism of $T_{\Phi}$. % Now let us describe $\tilde{\alpha}$ on each component.
 similarly, $\alpha$ induces the mapping $\omega:=\alpha |_{E_1/G}:\rho_1 (e)\to\rho_2 (\alpha (e))$
which is an automorphism of $H$ ; $\alpha$ induces on $f_1 (E_1)\leq Aut(G)$ the mapping
$f_1 (e)\to f_2 (\alpha (e))$. If $\alpha |_{G}=\pi\in Aut(G)$, then Lemma \ref{fpi} implies that
$f_2 (\alpha (e))=f_1 (e)^{\pi}$. Finally since $\gamma_1(e)=(\rho_1 (e),f_1 (e))$
is mapped by $\tilde{\alpha}$ on $\gamma_2 (\alpha (e))=(\rho_2 (\alpha (e)),f_2 (\alpha (e)))$,
we can describe the action of $\alpha$ on $T_{\Phi}$ as
%%%% $\gamma_1(e)=(\rho_1 (e),f_1 (e))\to (\rho_2 (\alpha (e)),f_2 (\alpha (e)))=\gamma_2 (\alpha (e))$
\begin{eqnarray}
\tilde{\alpha}:&(h,l)\to (h,l)^{(\omega,\pi)}:=(h^{\omega},l^{\pi}) & \qquad\, h\in H,\,l\in Aut(G) \label{Gisomod1}\\
\tilde{\alpha}:& \gamma_1 (e)\to \gamma_2 (\alpha (e))=\gamma_1 (e)^{(\omega,\pi)} & \qquad\,e\in E_1 \label{Gisomod2}
\end{eqnarray}
where $(\omega,\pi)\in Aut(H)\times Aut(G)$ is a compatible pair for $\Phi$.

%%%%%%%%%%%%%%%%%%%%%%%%%%%%%%%%%%%%%%%%%%%%%%%%%%
% If $\omega\in Aut(H)$ and if $\tilde{\pi}$ denotes conjugation by $\pi$ in $Aut(G)$ then the componentwise action of
Moreover, let us show that for $(\omega,\pi)\in Aut(H)\times Aut(G)$, the function
$\beta:(h,l)\to(h,l)^{(\omega,\pi)}:=(h^{\omega},l^{\pi})$ defines an automorphism
of $T_{\Phi}$ if and only if $(\omega,\pi)$ is a compatible pair.

%%$\tilde{\alpha}$ preserves $T_{\Phi}$ if and only if $(\omega,\pi)$ is a compatible pair.
% For $(\omega,\pi)\in Aut(H)\times Aut(G)$, the function
% $(h,l)\to(h,l)^{(\omega,\pi)}:=(h^{\omega},l^{\pi})$ defines an automorphism
% of $H\times Aut(G)$ ;%% by  $(h,l)\to(h,l)^{(\omega,\pi)}:=(h^{\omega},l^{\pi})$ ;
 %It maps $(h,l)\in H\times Aut(G)$ to $(h,l)^{(\omega,\pi)}:=(h^{\omega},l^{\pi})$
Clearly $\beta$ is an automorphism of $H\times Aut(G)$
 thus it defines an automorphism of $T_{\Phi}$ if and only if it preserves $T_{\Phi}$.
 If $t=(h,l)\in T_{\Phi}$ (i.e. $l\in \Phi (h)$), its image $t^{\tilde{\alpha}}=(h^{\omega},l^{\pi})$ under $\tilde{\alpha}$
belongs to $T_{\Phi}$ if and only if $l^{\pi}\in \Phi (h^{\omega})$. But since $l\in \Phi (h)$ % ($t\in T_{\Phi}$)
 we have $l^{\pi}\in \Phi^{\pi} (h)$ so that $t^{\tilde{\alpha}}$
belongs to $T_{\Phi}$ if and only if $\Phi^{\pi} (h)=\Phi (h^{\omega})$ (two cosets
with a nonempty intersection are equal), what was to be proved.
Hence, every compatible pair for $\Phi$ induces an automorphism of $T_{\Phi}$
Hence, a pair is compatible for $\Phi$ if and only if it induces an automorphism of $T_{\Phi}$.
For this reason, $Comp(\Phi)$ can be considered as a sort of automorphism group of the group $T_{\Phi}$
and we denote the factor group $Comp(\Phi)/Inn (T_{\Phi})$ as $ \mathbf{Out(\Phi)}$.
%macros/latex/contrib/other/misc/wrapfig.sty has syntax:
%\begin{wrapfigure}[height of figure in lines]{l|r}[overhang]{width}
%  figure, caption, etc.
%\end{wrapfigure}

%%%%%%%%%%%%%%%%%%%%%%%%%%%%%%%%%
\subsection{The action of $Comp(\Phi)$ on $H^{2}(H,Z(G))$.}
 Since $E_1$ and $E_2$
have the same coupling, we can associate the same automorphism of $G$ to
$\overline{h}$ and $\tilde{h}$.
Observe that since $f_1(\overline{h})=f_2(\tilde{h})$, we have $\gamma_1(\overline{h}g)=\gamma_2(\tilde{h}g)$
for every $g\in G$ and every $h\in H$.

%\begin{wrapfigure}[9]{l}{0.6\linewidth}
%\mbox{\input{Giso1.latex}
\includegraphics[width=7cm,height=5cm]{pregam.eps}
%\end{wrapfigure}

Let $E_1:=E(\xi,\varphi + \zeta_1)$ and $E_2:=E(\xi,\varphi + \zeta_2)$
 be two extensions of $G$ with coupling $\Phi$ ($\zeta_1 ,\zeta_2 \in Z^{2}(H,Z(G)) $).
We will now state necessary and sufficient conditions for $E_1$ and $E_2$ to be
$G$-isomorphic. The results of this section generalize to any group $G$ a theorem proved by Eick and Besche
 (\cite{Eick_Ulrich} page 8) for the case where $G$ is an elementary abelian group.\\
%MMM INSERT PICTURE + $\overline{\overline{h}}\to\tilde{h}$ under $\alpha ^{-1}$
% \index{picture $\overline{\overline{h}}\to\tilde{h}$}\\

As in figure \ref{GisoTphi}, let $\alpha:E_1 \to E_2$ be a $G$-isomorphism and let
$(\omega,\pi)$ be the compatible pair associated with $\alpha$.
Let $\tau_1:=\{\overline{h} : h\in H\}$ be the canonical transversal to $G$ in $E_{1}$
(respectively $\tau_2:=\{\tilde{h} : h\in H\}$ in $E_{2}$) that corresponds to the
automorphisms $\xi (h)$ of $G$ and to the factor set $\varphi_1=\varphi +\zeta_1$
(respectively $\varphi_2=\varphi +\zeta_2$). As before, for $k=1,2$, consider the
homomorphisms $\gamma_k :e\rightarrow (f_k(e),\rho_k (e))$ from $E_k$ onto $T_{\Phi}\cong E_k/Z(G)$.
Now, observe that \begin{equation}  \label{gam12}
\gamma_1 (\overline{h})=(h,\xi (h))=\gamma_2 (\tilde{h})\qquad\forall\,h\in H
\end{equation}

For any $h\in H$, let $\overdub{h}\in E_1$ be the preimage $\alpha^{-1}(\tilde{h})$ of $\tilde{h}$.
Each $\overdub{h}$ is known modulo $Z(G)$ since by equation \ref{Gisomod2},
$\gamma_1 (\overdub{h})^{(\omega,\pi)}=\gamma_2 (\alpha (\overdub{h}))$, which is equal to
$\gamma_2 (\tilde{h})$ (as $\alpha (\overdub{h})=\tilde{h}$) and hence to
$\gamma_1 (\overline{h})$ (equation \ref{gam12}). Finally, we have
\begin{equation}  \label{overduub}
\gamma_1 (\overdub{h})=\gamma_1 (\overline{h})^{(\omega^{-1},\pi^{-1})}
\end{equation}

%since for $h\in H$, the elements $\overline{h}$ and $\tilde{h}$ are
%mapped on the same element $(h,\xi (h))$ of $T_{\Phi}$ and hence

% Figure \ref{GisoTphi} describes a $G$-isomorphism $\alpha:E_1 \to E_2$ that induces
% $\omega\in Aut(H)$ and $\pi\in Aut(G)$. We have showed in the previous section that the
% compatible pair $(\omega,\pi)$ defines on $T_{\Phi}$ an automorphism
% Let $\tau_1:=\{\overline{h} : h\in H\}$ be the canonical transversal to $G$ in $E_{1}$
% (respectively $\tau_2:=\{\tilde{h} : h\in H\}$ in $E_{2}$) that corresponds to the
% automorphisms $\xi (h)$ of $G$ and to the factor set $\varphi_1=\varphi +\zeta_1$
% (respectively $\varphi_2=\varphi +\zeta_2$).

% Let $f_{k}$ ($k=1,2$), be the conjugation action of $E_k$ on $G$.
%  Let $\rho_k :E_k\to H$  be the canonical homomorphism induced by $\tau_k$.
% Finally let $\gamma_k :e\rightarrow (f_k(e),\rho_k (e))$ from $E_k$ onto
% $T_{\Phi}\subset Aut(G)\times H$. Observe that since $f_1(\overline{h})=f_2(\tilde{h})=f_h$, we have $\gamma_1(\overline{h}g)=\gamma_2(\tilde{h}g)$
% for every $g\in G$ and every $h\in H$.\\

Let $H=<X|R>$ be a presentation for $H$ where $X$ are the generators of a free group and $R$ is a set
of words (the relations) on the elements of $X$.
Let $X'=\{h_1,h_2,\ldots\}$ be the generators
of $H$ that correspond to the elements of $X$. For every relation $r\in R$,
the homomorphism $\rho_2$ from $E_2$ onto $H$ maps $g_r:=r(\tilde{h_1},\tilde{h_2},\ldots)$
 to $r(h_1,h_2,\ldots)=1\in H$ and thus  $g_r\in G$. For similar reasons
$g'_r:=r(\overdub{h_1},\overdub{h_2},\ldots)\in G$. But $g_r$ is the image of $g'_r$ under
$\alpha$ and since $\alpha |_{G}=\pi$, %we obtain
\begin{equation} \label{relpi1}
r(\tilde{h_1},\tilde{h_2},\ldots)=(r(\overdub{h_1},\overdub{h_2},\ldots))^{\pi}
\qquad\textrm{for every}\,\, r\in R
\end{equation}
As a particular case, factor set functions correspond to relation $h_3 ^{-1}h_1 h_2=1$
($h_1 ,h_2 \in H$ and $h_3=h_1 h_2$) where the set $X'$ of generators is the full set $H$.
%as generators set $X'$.
Therefore, since the factor set $\varphi_2$ of $E_2$ is defined by
$\varphi_2 (h_1,h_2)=\widetilde{h_1 h_2}^{-1}\tilde{h_1}\tilde{h_2}$

\begin{equation} \label{relpi2}
\varphi_2 (h_1,h_2)=(\varphi ' (h_1,h_2))^{\pi}\quad where\quad \varphi '(h_1,h_2)=
(\overdub{h_1 h_2})^{-1}\overdub{h_1}\,\,\overdub{h_2}
\end{equation}
Hence,
\begin{proposition}\label{newfactset}
If there is a $G$-isomorphism from $E(\xi,\varphi_1)$ to $E(\xi,\varphi_2)$
that induces the compatible pair $(\omega,\pi)$, then there
exist elements $\{\overdub{h}:h\in H\}$ of $E(\xi,\varphi_1)$
such that equality \ref{overduub} and equality \ref{relpi2} are satisfied.
%(equation \ref{relpi1} if a presentation $H=<X|R>$ is used to describe the extensions).}
%($\star\star$)
\end{proposition}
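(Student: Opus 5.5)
The plan is to take the obvious candidates, namely $\overdub{h}:=\alpha^{-1}(\tilde{h})$ for every $h\in H$, where $\alpha$ is the given $G$-isomorphism from $E_1=E(\xi,\varphi_1)$ onto $E_2=E(\xi,\varphi_2)$ inducing $(\omega,\pi)$. With this choice I will verify the two equalities in turn. The argument is really a bookkeeping of what has already been established in this subsection.

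\emph{Equality \ref{overduub}.} By the analysis of $G$-isomorphisms modulo $Z(G)$, the map $\tilde{\alpha}=\gamma_2\alpha\gamma_1^{-1}$ acts on $T_{\Phi}$ as $(h,l)\mapsto(h^{\omega},l^{\pi})$. This is equation \ref{Gisomod2}, which uses Lemma \ref{fpi} for the second component and the definition of $\omega=\alpha|_{E_1/G}$ for the first. Applying this to $e=\overdub{h}$ gives $\gamma_1(\overdub{h})^{(\omega,\pi)}=\gamma_2(\alpha(\overdub{h}))=\gamma_2(\tilde h)$. By equation \ref{gam12}, $\gamma_2(\tilde h)=(h,\xi(h))=\gamma_1(\overline{h})$, since both extensions have the same $\xi$. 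Applying the inverse pair $(\omega^{-1},\pi^{-1})$, which is legitimate because $Comp(\Phi)$ is a group acting componentwise on $T_\Phi$, yields \ref{overduub}.

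\emph{Equality \ref{relpi2}.} By definition of the factor set of the canonical transversal, $\varphi_2(h_1,h_2)=\widetilde{h_1h_2}^{-1}\tilde{h_1}\tilde{h_2}$, and this element lies in $G$. Write it as $\alpha$ applied to $(\overdub{h_1h_2})^{-1}\overdub{h_1}\,\overdub{h_2}=\varphi'(h_1,h_2)$, using only that $\alpha$ is a homomorphism. Since $\alpha$ is bijective and maps $G$ onto $G$, the element $\varphi'(h_1,h_2)$ lies in $G$. Because $\alpha|_G=\pi$, we get $\varphi_2(h_1,h_2)=\varphi'(h_1,h_2)^{\pi}$. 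This is precisely the special case of \ref{relpi1} for the relations $h_3^{-1}h_1h_2$ with generating set $H$.

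There is no real obstacle. The only delicate point is ensuring that $\gamma_1(\overdub{h})$ is correctly expressed through the inverse compatible pair: one must check that the componentwise action on $T_\Phi$ is a genuine right action, so that $(\omega,\pi)$ followed by $(\omega^{-1},\pi^{-1})$ is the identity. This follows from the fact, shown just before this subsection, that compatible pairs act as automorphisms of $T_\Phi$.
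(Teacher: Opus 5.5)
Your proposal is correct and follows the paper's proof essentially step for step. The paper also sets $\overdub{h}:=\alpha^{-1}(\tilde h)$, derives \ref{overduub} from \ref{Gisomod2} and \ref{gam12}, and obtains \ref{relpi2} by applying $\alpha$ (with $\alpha|_G=\pi$) to $\widetilde{h_1h_2}^{-1}\tilde{h_1}\tilde{h_2}$, which is the special case of \ref{relpi1}. One small point: inverting $(\omega,\pi)$ needs nothing about $Comp(\Phi)$, since $(h,l)\mapsto(h^{\omega},l^{\pi})$ is already an automorphism of $H\times Aut(G)$ whose inverse is given by $(\omega^{-1},\pi^{-1})$.
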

Note that if a presentation $H=<X|R>$ is used to describe the extensions,  it is
equality \ref{relpi1} that has to be considered.

\textbf{Definition }.
Let $E(\xi,\varphi_1)$ be an extension with coupling $\Phi$ and let  $(\omega,\pi)$
be a given compatible pair for $\Phi$.
%The set $\varphi_1 ^{\Gamma (\omega,\pi)}$ is defined as the set of all factor set functions
The set \mbox{\boldmath$\varphi_1 ^{\Gamma (\omega,\pi)}$} is defined as the set of all functions
 $\varphi _2$ from $H\times H$ to $G$ generated through equation \ref{relpi2} by the
solutions $\{\overdub{h}:h\in H\}$ of equation \ref{overduub}. Let us show that
this set is not empty. The pair $(\omega^{-1},\pi^{-1})$
 is also compatible (since $Comp(\Phi)$ is a group) and so, it preserves $T_{\Phi}$
 as showed at the end of the previous section. Thus for $h\in H$, the element
 $\gamma_1 (\overline{h})^{(\omega^{-1},\pi^{-1})}$ belongs to
 $T_{\Phi}=\gamma_1 (E(\xi,\varphi_1))$ and it is possible to choose
an element $\overdub{h}$ in its preimage under $\gamma_1$ that satisfies
equation \ref{overduub}.%%% holds on $\overdub{h}$.
%Condition ($\star\star$)

Proposition \ref{newfactset} may now be reformulated as follows :
\emph{if there is a $G$-isomorphism from $E(\xi,\varphi_1)$ to $E(\xi,\varphi_2)$
that induces the compatible pair $(\omega,\pi)$, then $\varphi_2\in \varphi_1 ^{\Gamma (\omega,\pi)}$.}
We prove now that the converse holds and that every element of
$\varphi_1 ^{\Gamma (\omega,\pi)}$ is a factor set for $\xi$.

%condition ($\star\star$) is also sufficient.
%We now prove that every element $\varphi_2 \in \varphi_1 ^{\Gamma (\omega,\pi)}$ is
%the factor set of an extension $E(\xi,\varphi_2)$ that is $G$-isomorphic to $E(\xi,\varphi_1)$.
\begin{theorem}\label{cnsisoGam}
 Let $E(\xi,\varphi_1)$ and $E(\xi,\varphi_2)$ be extensions of $G$ by $H$ with coupling
 $\Phi$ and $(\omega,\pi)$ be a compatible pair for $\Phi$.
 There exists a $G$-isomorphism $E(\xi,\varphi_1)\to E(\xi,\varphi_2)$
  that induces $(\omega,\pi)$ if and only if $\varphi_2 \in \varphi_1 ^{\Gamma (\omega,\pi)}$.
\end{theorem}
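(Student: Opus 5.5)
The necessity direction is exactly Proposition \ref{newfactset}, as reformulated just before the statement, so the whole work is the sufficiency direction. Assume $\varphi_2\in\varphi_1^{\Gamma(\omega,\pi)}$. By definition there are elements $\overdub{h}\in E_1:=E(\xi,\varphi_1)$ satisfying (\ref{overduub}), with $\varphi_2(h_1,h_2)=\big((\overdub{h_1h_2})^{-1}\overdub{h_1}\,\overdub{h_2}\big)^{\pi}$. The plan is to show that $\{\overdub{h}\}$ is a transversal of $G$ in $E_1$, to compute the pair of functions it induces, and then to transport this description through Theorem \ref{newext}.

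\textbf{Step 1: the transversal.} Since $\gamma_1(\overdub{h})=(h^{\omega^{-1}},\xi(h)^{\pi^{-1}})$, we get $\rho_1(\overdub{h})=h^{\omega^{-1}}$. So $\{\overdub{h}:h\in H\}$ meets each coset of $G$ exactly once, and it is a transversal. Set $\hat h:=\overdub{h^{\omega}}$, so that $\hat h$ lies in the coset of $h$. Conjugation by $\hat h$ on $G$ is $f_1(\hat h)=\xi(h^{\omega})^{\pi^{-1}}$. Let $(\xi_1',\varphi_1')$ be the pair induced by the transversal $\{\hat h\}$. Then $E_1$ is equivalent to, hence isomorphic to $E(\xi_1',\varphi_1')$ via an isomorphism that is the identity on $G$, and we have
\[
\xi_1'(h)=\pi\,\xi(h^{\omega})\,\pi^{-1},\qquad \varphi_1'(h_1,h_2)=\widehat{h_1h_2}^{-1}\hat h_1\hat h_2 .
\]

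\textbf{Step 2: apply Theorem \ref{newext}.} Apply it to $E(\xi_1',\varphi_1')$ with the pair $(\omega,\pi)$. This produces an extension $E(\xi'',\varphi'')$ that is $G$-isomorphic to $E_1$, with
\[
\xi''(h)=\pi^{-1}\xi_1'(h^{\omega^{-1}})\pi=\xi(h),\qquad \varphi''(h_1,h_2)=\varphi_1'(h_1^{\omega^{-1}},h_2^{\omega^{-1}})^{\pi}.
\]
Substituting $\hat k=\overdub{k^{\omega}}$ gives $\varphi''(h_1,h_2)=\big((\overdub{h_1h_2})^{-1}\overdub{h_1}\,\overdub{h_2}\big)^{\pi}=\varphi_2(h_1,h_2)$. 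Thus $E(\xi,\varphi_2)$ is literally this extension; in particular $\varphi_2$ is a factor set for $\xi$, which settles the claim made just before the theorem.

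\textbf{Step 3: the induced pair.} The composite isomorphism sends $\hat h=\overdub{h^{\omega}}\mapsto\widetilde{h^{\omega}}$ and $g\mapsto g^{\pi}$, that is, $\overdub{k}\mapsto\tilde k$. Its restriction to $G$ is $\pi$, and on $H$ it induces $\omega$, since the coset of $h$ is sent to the coset of $h^{\omega}$. This is the required $G$-isomorphism.

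The main obstacle is the bookkeeping of the exponents $\omega$ versus $\omega^{-1}$, that is, choosing the reindexing $\hat h=\overdub{h^{\omega}}$ so that the formulas of Theorem \ref{newext} return exactly $\xi$ and $\varphi_2$. A second point needing care is to verify that the equivalence (change of transversal) step is a $G$-isomorphism which is the identity on $G$, so that the composite still restricts to $\pi$ on $G$.
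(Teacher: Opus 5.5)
Your proposal is correct and is essentially the paper's own proof: reindex the solution of (\ref{overduub}) as the transversal $\{\overdub{h^{\omega}}\}$ of $E(\xi,\varphi_1)$, pass to the equivalent extension $E(\xi_1',\varphi_1')$ by a $G$-isomorphism inducing $(1,1)$, then apply Theorem \ref{newext} with $(\omega,\pi)$ to land exactly on $E(\xi,\varphi_2)$, with the composite sending $\overdub{h}\mapsto\tilde{h}$ and inducing $(\omega,\pi)$. Your exponent bookkeeping checks out as written: $\xi_1'(h)=\xi(h^{\omega})^{\pi^{-1}}$ is sent back to $\xi$, and $\varphi''=\varphi_2$.
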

\emph{Proof }. It remains to prove that $\varphi_2 \in \varphi_1 ^{\Gamma (\omega,\pi)}$ is
a sufficient condition for the existence of a $G$-isomorphism that induces $(\omega,\pi)$.
Let $E_1:=E(\xi,\varphi_1)$, let $\{\overline{h}:h\in H\}$
be the canonical transversal associated with $E_1$ and let
$\gamma_1 =(\rho_1,f_1):E_1\to T_{\Phi}$ be as before. Let $(\omega,\pi)$ be a
compatible pair for $\Phi$. 
Assume that $\varphi_2 \in \varphi_1 ^{\Gamma (\omega,\pi)}$
is generated by $\{\overdub{h}:h\in H\}$ (see equation \ref{relpi2}).
By definition, $\gamma_1 (\overdub{h})=\gamma_1 (\overline{h})^{(\omega^{-1},\pi^{-1})}$
and this implies
\beq \label{imply1}
\rho_1 (\overdub{h})=h^{\omega^{-1}}\quad and \quad f_1 (\overdub{h})=
 f_1 (\overline{h})^{\pi^{-1}}=\xi(h)^{\pi^{-1}}.
\eeq
For every $h\in H$, the element of $\{\overdub{h}\}$ that is mapped on $h$ by
$\rho_1$ is $\overdub{h^{\omega}}$
(since $\rho_1 ( \overdub{h^{\omega}} )=(h^{\omega }) ^{\omega^{-1}}=h$).
	Hence, $\tau ':=\{\overdub{h^{\omega}}:h\in H\}$ is a new transversal and it describes an
 extension $E_1 ':=E(\xi_1',\varphi_1')$ equivalent to $E_1$.
Let us compute the pair $(\xi_1',\varphi_1')$ of functions associated with $\tau'$.
Using \ref{imply1}, the automorphism $f_1 (\overdub{h^{\omega}})$ of $G$ that corresponds
to the coset representative $\tau'(h)=\overdub{h^{\omega}}$ is
\begin{equation}\label{newxi1}
  \xi_1 '(h)=\xi(h^{\omega})^{\pi^{-1}}\qquad h\in H,
  \end{equation}
  and the factor set $\varphi_1 '$ corresponding to $\tau '$ is defined by
 \begin{equation}\label{newfact1}
 \varphi_1' (h_1,h_2)=(\overdub{(h_1 h_2)^{\omega}})^{-1} \overdub{h_1 ^{\omega}}\,\,\overdub{h_2 ^{\omega}}
  \qquad h_1,h_2\in H.
\end{equation}
Let $\{\hat{h}:\, h\in H\}$ be the canonical transversal to $G$ in $E_1 '$ that produces
the pair $(\xi_1',\varphi_1')$. Since this pair is equal to the pair produced by
$\{\overdub{h^{\omega}}:h\in H\}$ in $E_1$, the mapping $\beta$ defined by
$\overdub{h^{\omega}}\to \hat{h}$ and $g\to g$ for $h\in H,\, g\in G$
%\beq \label{betah}
%\beta : \overdub{h^{\omega}}\to \hat{h}\quad and \quad g\to g\quad for\,h\in H,\, g\in G
%\eeq
is a $G$-isomorphism from $E_1$ onto $E_1 '$ that induces identity on $H$
since $\rho_1 ( \overdub{h^{\omega}} )=h$, and induces identity on $G$. Consequently, $\beta$ induces the trivial
pair $(1,1)\in Comp(\Phi)$.

   Now, with Theorem \ref{newext}, we use $(\omega,\pi)$ to create a new extension
 $E(\xi_3,\varphi_3)$ that is $G$-isomorphic to $E_1 '$ and hence to $E_1$. Using Theorem
 \ref{newext} and equation \ref{newxi1}, we obtain
\begin{equation}\label{newxi2}
 \xi_3 (h)=\xi_1'(h^{\omega^{-1}})^{\pi}=(\xi (h^{\omega^{-1}\omega})^{\pi})^{\pi^{-1}}
 =\xi (h)\qquad h\in H.
\end{equation}
similarly, $\varphi_3 (h_1,h_2)=\varphi_1' (h_1 ^{\omega^{-1}},h_2 ^{\omega^{-1}})^{\pi}$
and since by equation \ref{newfact1}\\
$\varphi_1' (h_1 ^{\omega^{-1}},h_2 ^{\omega^{-1}})=
(\overdub{(h_1 h_2)^{\omega^{-1}\omega}})^{-1} \overdub{h_1 ^{\omega^{-1}\omega}}\,\,\overdub{h_2 ^{\omega^{-1}\omega}}=
(\overdub{h_1 h_2})^{-1}\overdub{h_1}\overdub{h_2}$, \\
for $h_1,h_2\in H$, we obtain
%\begin{equation} %\label{relpi2}
$$\varphi_3 (h_1,h_2)=(\varphi ' (h_1,h_2))^{\pi}\quad for\quad \varphi '(h_1,h_2)=
(\overdub{h_1 h_2})^{-1}\overdub{h_1}\,\,\overdub{h_2}.$$
which compared to equation \ref{relpi2}, is precisely the definition of $\varphi_2$, the
function of $\varphi_1 ^{\Gamma (\omega,\pi)}$ generated by $\{\overdub{h}\}$.
Hence, $\varphi_2=\varphi_3$ is a factor set for $\xi$ and
$E_2:=E(\xi_3,\varphi_3)=E(\xi,\varphi_2)$ is $G$-isomorphic to $E_1$.

If we denote the canonical transversal of $E_2$ by $\{\tilde{h}:\,h\in H\}$, the
$G$-isomorphism $\alpha$ from $E_1 '$ onto $E_2$ produced in Theorem \ref{newext}
is defined by
\beq \label{alphah}
\alpha : \hat{h}\to \widetilde{h^{\omega}}\quad and \quad g\to g^{\pi}\,for\,\,h\in H,\, g\in G
\eeq
 so that $(\omega,\pi)$ is the compatible pair
induced by $\alpha$. Observe that $\beta$ maps $\overdub{h}$ to $\widehat{h^{\omega^{-1}}}$
which is mapped to $\tilde{h}$ by $\alpha$. The composition of $\beta$ and $\alpha$
\begin{eqnarray}
\beta \alpha : & E(\xi,\varphi_1)\stackrel{\beta}{\longrightarrow}  E(\xi_1 ',\varphi_1 ')
\stackrel{\alpha}{\longrightarrow} E(\xi,\varphi_2)& \\
 : & \overdub{h}\stackrel{\beta}{\longrightarrow} \widehat{h^{\omega^{-1}}}
 \stackrel{\alpha}{\longrightarrow}\tilde{h}  & \textrm{for}\,\, h\in H
\end{eqnarray}
is a $G$-isomorphism that induces the pair $(1,1).(\omega,\pi)=(\omega,\pi)$.
$\quad\Box$
%Finally, through the intermediate extension $E_1 '$, we have constructed a $G$-isomorphism
%$\alpha$ from $E_1=E(\xi,\varphi_1)$ onto $E_2=E(\xi,\varphi_2)$. From the proof of Theorem \ref{newext},
%we see that it induces $\pi$ on $G$ and $\omega$ on $H$ since it maps $\overdub{h}$
%to $\tilde{h}$ (where $\{\tilde{h}: h\in H \}$ denotes the canonical transversal in $E_2$).
% denote the canonical transversal in $E_2$ by % is the canonical transversal in $E_2$).
%MMMMMMMMM ADD THE TRANSLATION FOR FPGROUP\\
\index{formulate the sufficient condition for FpGroup}

\textbf{Definition }. Let $E:=E(\xi,\varphi)$ be an extension with coupling $\Phi$. An automorphism
of $E$ that preserves $G$ is a $G$-isomorphism from $G$ onto itself, (a $G$-automorphism)
and hence it induces a compatible pair $(\omega,\pi)$ (i.e. an automorphism of $T_{\Phi}$).
There is a homomorphism $\theta$ from the group of $G$-automorphisms of $E$ into $Comp (\Phi)$.
A pair $(\omega,\pi)$, compatible for
$\Phi$ is called \emph{\bf{an inducible pair for $E(\xi,\varphi)$}} if there exists a
$G$-automorphism  that induces $(\omega,\pi)$, that is if  $(\omega,\pi)\in Im \theta$ (\cite{Robinson} page 66).
 We denote the subgroup $Im \theta$ of inducible pairs as $Ind (E)$. The inner automorphisms of $E$
 induce the inner automorphisms of $T_{\Phi}$. Hence, $Inn (T_{\Phi})=\theta (Inn(E))$ is a normal subgroup
 of $Ind (E)$.

Here is a straightforward corollary of Theorem \ref{cnsisoGam}.
\begin{corollary}\label{Gaminduce}
Let $E(\xi,\varphi)$ be an extension of $G$ by $H$ with coupling
 $\Phi$. A compatible pair $(\omega,\pi)$ is inducible for $E(\xi,\varphi)$
if and only if $\varphi \in \varphi ^{\Gamma (\omega,\pi)}$.
\end{corollary}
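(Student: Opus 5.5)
The corollary follows from Theorem \ref{cnsisoGam} by specialising to $\varphi_1=\varphi_2=\varphi$. I will treat the two directions separately. In both, $E:=E(\xi,\varphi)$ plays the role of both extensions in that theorem, so they trivially share the same coupling $\Phi$ and the same function $\xi$.

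\emph{($\Rightarrow$)} Suppose $(\omega,\pi)$ is inducible for $E$. By definition there is a $G$-automorphism $\alpha$ of $E$ with $\theta(\alpha)=(\omega,\pi)$. This $\alpha$ is a $G$-isomorphism $E(\xi,\varphi)\to E(\xi,\varphi)$ inducing $(\omega,\pi)$. The necessity part of Theorem \ref{cnsisoGam}, which is Proposition \ref{newfactset} reformulated, then gives $\varphi\in\varphi^{\Gamma(\omega,\pi)}$.

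\emph{($\Leftarrow$)} Suppose $\varphi\in\varphi^{\Gamma(\omega,\pi)}$ for a compatible pair $(\omega,\pi)$. The sufficiency part of Theorem \ref{cnsisoGam}, with $\varphi_1=\varphi_2=\varphi$, provides a $G$-isomorphism $\beta\alpha:E(\xi,\varphi)\to E(\xi,\varphi)$ inducing $(\omega,\pi)$. A $G$-isomorphism of $E$ onto itself is a $G$-automorphism, so $(\omega,\pi)\in Im\,\theta=Ind(E)$.

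The only point needing care is that in the proof of Theorem \ref{cnsisoGam} the target $E(\xi_3,\varphi_3)$ is built as an abstract extension on the set $H\times G$. Since $\xi_3=\xi$ and $\varphi_3=\varphi$, the multiplication rule (\ref{multext}) shows it is literally the same group as $E$. Hence the composite map really is an automorphism of $E$ that preserves $G$ and induces $(\omega,\pi)$, as required.
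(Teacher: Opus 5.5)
Your proposal is correct and is exactly the route the paper intends: the paper states this corollary without proof, as a ``straightforward corollary of Theorem~\ref{cnsisoGam}'', obtained by taking $\varphi_1=\varphi_2=\varphi$ and reading ``$G$-isomorphism $E\to E$ inducing $(\omega,\pi)$'' as ``$(\omega,\pi)\in Ind(E)$''. Your additional check that $E(\xi_3,\varphi_3)$ is literally the same group as $E(\xi,\varphi)$, via the multiplication rule (\ref{multext}), is a harmless extra precision that the paper leaves implicit.
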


%%%%%%%%%%%%%%%%%%%%%%%%%%%%%%%
For each compatible pair $(\omega,\pi)$, the determination of the set $\varphi_1 ^{\Gamma (\omega,\pi)}$
 requires considering all solutions $\{\overdub{h}:h\in H\}$ of equation \ref{overduub}. If $H$ and
 $Z(G)$ are finite, there are $|Z(G)|^{|H|}$ such solutions. We prove now that it is possible 
 to avoid this and to determine $\varphi_1 ^{\Gamma (\omega,\pi)}$ from a single solution 
$\{\overdub{h}:h\in H\}$.
% \begin{center}
%\emph{for a given $E_1$, the class $\varphi_2 + B^{2}(H,Z(G))$ only depends on $(\omega,\pi)$.}
% \end{center}
\begin{proposition}\label{Gamisfunction} Let $E(\xi,\varphi_1)$ be an extension with coupling $\Phi$ and let
 $(\omega,\pi)$ be a compatible pair for $\Phi$. Let $B=B_{\Phi} ^{2}(H,Z(G))$. Then the following hold

 \begin{enumerate}
%\item $\varphi_1 ^{\Gamma (\omega,\pi)}=\varphi_2 + B_{\Phi} ^{2}(H,Z(G))$ for any element
% $\varphi_2$  of $\varphi_1 ^{\Gamma (\omega,\pi)}$.
\item $\varphi_1 ^{\Gamma (\omega,\pi)}=\varphi_2 + B$ for any element
 $\varphi_2$  of $\varphi_1 ^{\Gamma (\omega,\pi)}$.
%\item ($\varphi_1 + B_{\Phi} ^{2}(H,Z(G)))^{\Gamma (\omega,\pi)}=\varphi_1 ^{\Gamma (\omega,\pi)}$.
\item ($\varphi_1 + B)^{\Gamma (\omega,\pi)}=\varphi_1 ^{\Gamma (\omega,\pi)}$.
%\item $\varphi_1 ^{\Gamma (1,1)}=\varphi_1 + B_{\Phi} ^{2}(H,Z(G)).$
\item $\varphi_1 ^{\Gamma (1,1)}=\varphi_1 + B.$
\end{enumerate}
\end{proposition}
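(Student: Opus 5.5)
The whole proposition should follow from understanding how much freedom there is in the choice of the elements $\overdub{h}$. By equation \ref{overduub}, each $\overdub{h}$ is fixed by its image under $\gamma_1$. Since $Ker\,\gamma_1=Z(G)$, any two solutions $\{\overdub{h}\}$ and $\{\overdub{h}'\}$ are related by $\overdub{h}'=\overdub{h}\,c_h$ for some function $c:H\to Z(G)$, and conversely every such function gives a solution. So I will compute how the generated function $\varphi_2$ of equation \ref{relpi2} changes under $\overdub{h}\to\overdub{h}c_h$, and show that the change is exactly a coboundary, with every coboundary occurring.

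\textbf{Step 1 (computation).} Put $c$ into $\varphi'(h_1,h_2)=(\overdub{h_1h_2})^{-1}\overdub{h_1}\,\overdub{h_2}$. Since $c_{h_1}$ is central in $G$, moving it past $\overdub{h_2}$ conjugates it by $\overdub{h_2}$. By \ref{imply1}, $f_1(\overdub{h_2})=\xi(h_2)^{\pi^{-1}}$, which acts on $Z(G)$ as $\Phi(h_2)^{\pi^{-1}}$. This gives
\[
\varphi''(h_1,h_2)=\varphi'(h_1,h_2)\;c_{h_1h_2}^{-1}\,c_{h_1}^{\Phi(h_2)^{\pi^{-1}}}c_{h_2},
\]
where everything except $\varphi'$ lies in $Z(G)$. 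Applying $\pi$, and writing $a_h:=c_h^{\pi}$, the correction term becomes $a_{h_1h_2}^{-1}a_{h_1}^{\Phi(h_2)}a_{h_2}$. In additive notation this is a $2$-coboundary for $\Phi$ restricted to $Z(G)$, in the convention used for factor sets here (rule (\ref{multext}) makes $\xi(v)$ act on the left-hand argument). As $c$ runs over all functions $H\to Z(G)$, $a=c^{\pi}$ does too, so every element of $B$ is obtained. This proves (1).

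The main obstacle is the bookkeeping in this step. One has to check that the twisted action matches the coboundary formula in the paper's ordering convention, and that $\pi$ turns $\Phi^{\pi^{-1}}$ into $\Phi$ on $Z(G)$. The latter holds because $(z^{\Phi(h)^{\pi^{-1}}})^{\pi}=z^{\pi\cdot\pi^{-1}\Phi(h)\pi}=(z^{\pi})^{\Phi(h)}$. If the sides come out mirrored, I will reindex the coboundary accordingly; the set $B$ itself is unaffected.

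\textbf{Step 2.} For (2), let $\varphi_1'=\varphi_1+b$ with $b\in B$. By Theorem \ref{MacEinlen}(3), $E(\xi,\varphi_1')$ is equivalent to $E(\xi,\varphi_1)$ through a transversal change $\overline{h}\to\overline{h}z_h$ with $z_h\in Z(G)$, since $\xi$ is unchanged. This equivalence is a $G$-isomorphism inducing $(1,1)$, so by Theorem \ref{cnsisoGam} composed with a $G$-isomorphism realising $(\omega,\pi)$, the isomorphism classes targeted are the same. More directly, I will identify the two groups via this equivalence. Then $\gamma$ is the same map, equation \ref{overduub} has the same solution set, and the transversal elements $\overline{h}$ differ only by central elements, which do not change $\gamma_1(\overline{h})$. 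Hence both sets are generated by the same families $\{\overdub{h}\}$ and coincide.

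\textbf{Step 3.} For (3), take $(\omega,\pi)=(1,1)$. Then $\overdub{h}=\overline{h}$ is a solution, and it generates $\varphi_1$ itself via \ref{relpi2}. By (1), $\varphi_1^{\Gamma(1,1)}=\varphi_1+B$.
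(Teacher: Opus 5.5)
Your proof is correct. Part (3) is exactly the paper's argument, but for (1) and (2) you take a different route.

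\textbf{How the paper argues.} It proves both inclusions of (1) by transport. It uses the $G$-isomorphism $\alpha:E(\xi,\varphi_1)\to E(\xi,\varphi_2)$ with $\alpha(\overdub{h})=\tilde h$, supplied by Theorem \ref{cnsisoGam}. Replacing $\overdub{h}$ by $\overdub{h}z_h$ corresponds under $\alpha$ to the central transversal change $\tilde h\mapsto\tilde h z_h^{\pi}$ in $E(\xi,\varphi_2)$. Theorem \ref{MacEinlen}(3) is then applied in both directions. For (2), the paper composes the $(1,1)$-equivalence $E(\xi,\varphi_1)\to E(\xi,\varphi_1+b)$ with a $G$-isomorphism inducing $(\omega,\pi)$, and invokes Theorem \ref{cnsisoGam} again; your first sentence of Step 2 is this argument.

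\textbf{How you argue.} For (1) you compute explicitly the effect of $\overdub{h}\mapsto\overdub{h}c_h$ on (\ref{relpi2}). For (2) you identify $E(\xi,\varphi_1+b)$ with $E(\xi,\varphi_1)$ through the equivalence, which is the identity on $G$. Then $\gamma$, the solutions of (\ref{overduub}) and the generated functions literally coincide, giving the equality for each $b$ directly.

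\textbf{What each buys.} Your version is self-contained. It needs neither $\alpha$ nor the fact that the generated functions are factor sets, and it shows concretely why one solution of (\ref{overduub}) suffices in the orbit computation. The price is that you must fix the coboundary convention by hand, which the paper hides inside Theorem \ref{MacEinlen}(3).

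\textbf{Two small corrections.} First, your correction term $a_{h_1h_2}^{-1}a_{h_1}^{\Phi(h_2)}a_{h_2}$ is exactly the change produced by the transversal change $\overline h\mapsto\overline h a_h$ under (\ref{multext}). That is, it lies in the $B$ for which Theorem \ref{MacEinlen}(3) holds. The paper's displayed formula $a_{h_2}^{\xi(h_1)}-a_{h_1h_2}+a_{h_1}$ is the left-module convention and does not match (\ref{schreier2}). So drop your hedge about reindexing: the two sets need not coincide when $H$ acts nontrivially on $Z(G)$, and your computation already lands on the correct one.

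Second, in your check of $(z^{\Phi(h)^{\pi^{-1}}})^{\pi}=(z^{\pi})^{\Phi(h)}$, the middle term should be $z^{\pi\Phi(h)\pi^{-1}\pi}=z^{\pi\Phi(h)}$. As written, $z^{\pi\cdot\pi^{-1}\Phi(h)\pi}$ equals $(z^{\Phi(h)})^{\pi}$, although the identity you actually need is true.
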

\emph{Proof }.
\begin{enumerate}
\item
\begin{itemize}
\item proof of the inclusion $\subseteq$. As before, $\{\overdub{h}\}$ and $\{\tilde{h} \}$
denote respectively the canonical transversal in $E(\xi,\varphi_1)$ and $E(\xi,\varphi_2)$.
Let $\{\overdub{h}:h\in H\}$ be a solution of equation \ref{overduub}. Then, since $Ker \gamma_1=Z(G)$
any other solution is $\{\overdub{h}\,':=\overdub{h}z_h | \,h\in H\}$ for some function
$h\to z_h$ from $H$ into $Z(G)$.
Let $\varphi_2$ be the factor set generated by $\{\overdub{h}:h\in H\}$.
We have showed that there exists a $G$-isomorphism
$\alpha:E(\xi,\varphi_1)\to E(\xi,\varphi_2)$ that induces $(\omega,\pi)$ on $T_{\Phi}$
, that maps $\overdub{h}$ to $\tilde{h}$ and thus
$\overdub{h}\,'$ to $\tilde{h}z_h ^{\pi}$.\\
%$\alpha:\overdub{h}\,'\to\tilde{h}\,'$ for $\tilde{h}':=\tilde{h}z_h ^{\pi}$.\\

The image set $\tau_2 ':=\{\tilde{h}\,':=\tilde{h}z_h ^{\pi}:h\in H\}$ is a new transversal to $G$ in $E(\xi,\varphi_2)$
 whence it defines an extension $E(\xi',\varphi_2 ')$ equivalent to $E(\xi,\varphi_2)$.
But $\tilde{h}\,'$ and $\tilde{h}$ differ by an element of $Z(G)$
so that their conjugation action on $G$ is the same (i.e. $\xi' (h)=\xi (h)$).
Hence, $E(\xi,\varphi_2 ')$ is equivalent to $E(\xi,\varphi_2)$ and by Theorem \ref{MacEinlen}
 (part 3), $\varphi_2 '\in \varphi_2 + B_{\Phi} ^{2}(H,Z(G))$.

By definition, $\varphi_2 '(h_1,h_2)=(\widetilde{h_1 h_2})\,'^{-1}\tilde{h_1}\,'\,\tilde{h_2}\,'$
for $h_1,h_2 \in H$. Since $\tilde{h}\,'$ is the image of $\overdub{h}\,'$ under $\alpha$,
% maps  to $\tilde{h}\,'$ we obtain
it implies that $\varphi_2 '(h_1,h_2)$ is the image of $\varphi '' (h_1,h_2):=
(\overdub{h_1 h_2})\,'^{-1}\overdub{h_1}\,'\,\overdub{h_2}\,'$ under $\alpha$ and since
$\alpha |_{G}=\pi$ we obtain $\varphi_2 '(h_1,h_2)=(\varphi '' (h_1,h_2))^{\pi}$.
But the factor set $\varphi_3$ generated by the solution $\{\overdub{h}\,':\,h\in H\}$
of equation \ref{overduub} is precisely $(\varphi '' (h_1,h_2))^{\pi}$ (see equation
\ref{relpi2}). Hence, $\varphi_3=\varphi_2 '$ and every $\varphi_3$ in
$\varphi_1 ^{\Gamma (\omega,\pi)}$ belongs to $\varphi_2 + B_{\Phi} ^{2}(H,Z(G))$.\\

% The factor set generated by this new solution is
%\begin{equation}\label{newhz1} \varphi_3 (h_1,h_2)=(\varphi '' (h_1,h_2))^{\pi}\quad for\quad \varphi ''(h_1,h_2)=
%(\overdub{h_1 h_2})\,'^{-1}\overdub{h_1}\,'\,\overdub{h_2}\,'.
%\end{equation}
%they induce the
%the automorphism $\xi' (h)\xi (h)$ associated with $\tilde{h}\,'$ is equal
% and conjugation by $\tilde{h}\,'$ on $G$ is also $\xi (h)$ because
%$\tilde{h}\,'=\tilde{h}z_h ^{\pi}$ and $\tilde{h}$ differ by an element of $Z(G)$.
%Let $\varphi_2 '$ be the factor set associated with $\tau_2 '$ :
%for $h_1,h_2 \in H$, we have
%$\varphi_2 '(h_1,h_2)=(\widetilde{h_1 h_2})\,'^{-1}\tilde{h_1}\,'\,\tilde{h_2}\,'$.
% But since $\alpha:\overdub{h}\,'\to \tilde{h}\,'$ %is the image under $\alpha$ of $\tilde{h}\,'$
% , by equation \ref{newhz1} we obtain that $\alpha:\varphi ''(h_1,h_2)\to\varphi_2 '(h_1,h_2)$ ;
% thus $\varphi_2 '=(\varphi '')^{\pi}$ (because $\alpha |_{G}=\pi$) which compared to
%equation \ref{newhz1} implies $\varphi_3=\varphi_2 '$. Finally, since the extension
%$E(\xi,\varphi_2 ')$ defined by $\tau_2 '$ is equivalent to $E(\xi,\varphi_2)$, Theorem
%\ref{MacEinlen} (part 3) shows that $\varphi_3=\varphi_2 '\in \varphi_2 + B^{2}(H,Z(G))
% .\quad\Box$

% For $h_1,h_2 \in H$, we have
% $\varphi_2 '(h_1,h_2)=(\widetilde{h_1 h_2})\,'^{-1}\tilde{h_1}\,'\,\tilde{h_2}\,'$
% that is the image under $\alpha$ of $(\overdub{h_1 h_2})\,'^{-1}\overdub{h_1}\,'\,\overdub{h_2}\,'$.
% The conjugation by $\tilde{h}z_h ^{\pi}$ on $G$ is also $\xi (h)$ because
% $\tilde{h}z_h ^{\pi}$ and $\tilde{h}$ differ by an element of $Z(G)$.
\item proof of the inclusion $\supseteq$.
 Suppose as before that $\varphi_2$ is generated by $\{\overdub{h}\}$ and that
$\alpha:E(\xi,\varphi_1)\to E(\xi,\varphi_2)$ such that
$\alpha:\overdub{h} \to \tilde{h}$, is the corresponding $G$-isomorphism.
 Let $\varphi_3$ be an element of $\varphi_2 + B_{\Phi} ^{2}(H,Z(G))$.
Let $\{\tilde{h}:h\in H\}$ and $\{\hat{h}:h\in H\}$ be the canonical transversals
 to $G$, respectively in $E(\xi,\varphi_2)$ and in $E(\xi,\varphi_3)$.
 % ; let $\gamma_2$ and $\gamma_3$ be the corresponding projections of
By Theorem \ref{MacEinlen} (part 3), $E(\xi,\varphi_3)$ is equivalent to $E(\xi,\varphi_2)$.
% Let us use the canonical projections of these extensions on $T_{\Phi}$ (see figure \ref{GisoTphi}).
For $g\in G$, let $<g>$ denote the conjugation by $g$ in $G$.
Equivalence (see \ref{equivext}) implies that for each $h\in H$, there exists $g_h\in G$ such that
the pair of functions associated with the transversal $\{\tilde{h}\,':=\tilde{h}g_h\}$ is
$(\xi,\varphi_3)$. That is to say $\xi (h)<g_h>=\xi (h)$, (hence $<g_h>=<1>$) and
%(the conjugation by $\tilde{h}g_h$) is equal to $\xi(\hat{h})$ (the conjugation by $\hat{h}$)
$\varphi_2 '(h_1,h_2):=(\widetilde{h_1 h_2})\,'^{-1}\tilde{h_1}\,'\,\tilde{h_2}\,'
=\varphi_3 (h_1,h_2)$, for $h_1,h_2 \in H$.
Since $<g_h>=<1>$ implies $g_h\in Z(G)$, let us write $z_h:=g_h$.
The set $\{\overdub{h}\,':=\overdub{h}z_h ^{\pi^{-1}} \}$ is the preimage of
$\{\tilde{h}\,'=\tilde{h}z_h\}$ under $\alpha$, and since it differs from $\{\overdub{h}\}$
by elements of $Z(G)$, it is also a solution of equation \ref{overduub}. As in the previous
proof, taking the preimage of $\varphi_2 '(h_1,h_2)$ under $\alpha$ shows that the
solution $\{\overdub{h}\,'\}$ generates the factor set $\varphi_2 '$
whence $\varphi_2 '=\varphi_3$ belongs to $\varphi_1 ^{\Gamma (\omega,\pi)}$.

%Seeing that there is a sequence of isomorphisms
%\begin{tabular}{cc}
%$E(\xi,\varphi_1)\to E(\xi,\varphi_2) \to E(\xi,\varphi_3)$ & \textrm{such that}\\
%$\overdub{h}\,' \to \tilde{h}\,' \to \hat{h}$ & $h\in H$\\
%\end{tabular}
\end{itemize}
\item Let us denote $B_{\Phi} ^{2}(H,Z(G))$ as $B$. It suffices to prove ($\varphi_1 + B)^{\Gamma (\omega,\pi)}
\subseteq \varphi_1 ^{\Gamma (\omega,\pi)}$ because the other inclusion is trivial.
Let $b\in B$. By Theorem \ref{MacEinlen} (part 3), $E(\xi,\varphi_1)$ is equivalent to
$E(\xi,\varphi_1 +b)$. Consequently, there exists a $G$-isomorphism
$\alpha:E(\xi,\varphi_1)\to E(\xi,\varphi_1 +b)$ that induces the compatible pair $(1,1)$.
 For each $\varphi_2$ in $(\varphi_1 + b)^{\Gamma (\omega,\pi)}$, there is a $G$-isomorphism
 $\beta:E(\xi,\varphi_1 +b)\to E(\xi,\varphi_2)$ that induces the compatible pair $(\omega,\pi)$.
 Hence, the composition $\alpha \beta:E(\xi,\varphi_1)\to E(\xi,\varphi_2)$ is a $G$-isomorphism
 that induces $(\omega,\pi)$ and therefore, by Theorem \ref{cnsisoGam},
 $\varphi_2\in \varphi_1 ^{\Gamma (\omega,\pi)}$ and the inclusion is proved.

\item By $(1)$, it suffices to show that $\varphi_1 \in \varphi_1 ^{\Gamma (1,1)}$. This is
trivial since for $\omega=1$ and $\pi=1$, the transversal $\{\overline{h}:\,h\in H\}$ itself
is a solution of \ref{overduub} and it generates the factor set $\varphi_1$.
\end{enumerate}
$\quad\Box$
%%%%%%%%%%%%%%%%%%%%%%%%%%%%%%%%%%%%%%%%%%%%%%%%%%%
\subsection{Orbits are $G$-isomorphism classes.}
%MMMMMMM DIRE QUE "OUR RESULT"
\index{DIRE QUE "OUR RESULT"}

\begin{theorem}\label{Gamaction}
Let $E(\xi,\varphi_1)$ be an extension with coupling $\Phi$ and for $i=1,2$, let
 $(\omega_i,\pi_i)$ be compatible pairs for $\Phi$. Then \\
$$(\varphi_1 ^{\Gamma (\omega_1,\pi_1)})^{\Gamma (\omega_2,\pi_2)}=
\varphi_1 ^{\Gamma (\omega_1 \omega_2,\pi_1\pi_2 )}.$$
\end{theorem}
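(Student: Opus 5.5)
The plan is to use the characterization of Theorem \ref{cnsisoGam}: membership in $\varphi_1^{\Gamma(\omega,\pi)}$ is equivalent to the existence of a $G$-isomorphism inducing $(\omega,\pi)$. By Proposition \ref{Gamisfunction}, each set $\varphi^{\Gamma(\omega,\pi)}$ is a single coset $\varphi_2+B$ of $B=B_{\Phi}^{2}(H,Z(G))$. Moreover, the operation depends only on the coset of $\varphi$ (part 2 of that proposition). So the left-hand side is well defined: it is $\varphi_2^{\Gamma(\omega_2,\pi_2)}$ for any $\varphi_2\in\varphi_1^{\Gamma(\omega_1,\pi_1)}$, and this set is the same for all such choices. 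The claim therefore reduces to showing that $\varphi_2^{\Gamma(\omega_2,\pi_2)}$ and $\varphi_1^{\Gamma(\omega_1\omega_2,\pi_1\pi_2)}$ are equal cosets of $B$.

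First I check that $(\omega_1\omega_2,\pi_1\pi_2)$ is compatible. This holds because $Comp(\Phi)$ is a subgroup of $Aut(H)\times Aut(G)$, being a stabilizer. Hence $\varphi_1^{\Gamma(\omega_1\omega_2,\pi_1\pi_2)}$ is nonempty and is a single coset.

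Next I take $\varphi_3\in\varphi_2^{\Gamma(\omega_2,\pi_2)}$ and build the required isomorphism by composition.
\begin{itemize}
\item By Theorem \ref{cnsisoGam} there is a $G$-isomorphism $\alpha_1:E(\xi,\varphi_1)\to E(\xi,\varphi_2)$ inducing $(\omega_1,\pi_1)$.
\item Likewise there is a $G$-isomorphism $\alpha_2:E(\xi,\varphi_2)\to E(\xi,\varphi_3)$ inducing $(\omega_2,\pi_2)$.
\end{itemize}
The composite $\alpha_1\alpha_2$ (right-sided notation, as in the paper) is a $G$-isomorphism $E(\xi,\varphi_1)\to E(\xi,\varphi_3)$. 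Its restriction to $G$ is $\pi_1\pi_2$. The automorphism of $H$ it induces is the composite of the maps $h\mapsto h^{\omega_1}\mapsto (h^{\omega_1})^{\omega_2}$, read through the canonical projections $\rho_k$, that is $\omega_1\omega_2$. Applying Theorem \ref{cnsisoGam} again gives $\varphi_3\in\varphi_1^{\Gamma(\omega_1\omega_2,\pi_1\pi_2)}$.

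So the two cosets of $B$ intersect, and by part 1 of Proposition \ref{Gamisfunction} they are equal. This proves the identity.

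The step I expect to need the most care is confirming that induced pairs compose in the stated order under the paper's exponential conventions. The $H$-component is defined via $\rho_2(\alpha(e))$, and composing two such maps must give $\omega_1\omega_2$ rather than $\omega_2\omega_1$. I would verify this directly from the definition $\omega=\alpha|_{E/G}$ and from $\alpha|_G=\pi$, written as exponents, so that $g^{\alpha_1\alpha_2}=(g^{\pi_1})^{\pi_2}$.
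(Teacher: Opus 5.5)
Your proposal is correct and follows essentially the paper's proof. You compose the two $G$-isomorphisms supplied by Theorem \ref{cnsisoGam} to get $\varphi_3\in\varphi_1^{\Gamma(\omega_1\omega_2,\pi_1\pi_2)}$, then use Proposition \ref{Gamisfunction} to see that both sides are the coset $\varphi_3+B$. The only cosmetic difference is how the left-hand side is shown to be a single coset: you invoke part 2 of that proposition up front, whereas the paper uses the chain $\varphi_3+B=\varphi_2^{\Gamma(\omega_2,\pi_2)}\subseteq(\varphi_1^{\Gamma(\omega_1,\pi_1)})^{\Gamma(\omega_2,\pi_2)}\subseteq\varphi_1^{\Gamma(\omega_1\omega_2,\pi_1\pi_2)}=\varphi_3+B$.
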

\emph{Proof }. %Inclusion $\subseteq$. If $\varphi_3 \in
If $\varphi_3 \in (\varphi_1 ^{\Gamma (\omega_1,\pi_1)})^{\Gamma (\omega_2,\pi_2)}$ then there
exists $\varphi_2$ in $\varphi_1 ^{\Gamma (\omega_1,\pi_1)}$ such that $\varphi_3 \in
\varphi_2 ^{\Gamma (\omega_2,\pi_2)}$. Hence, by Theorem \ref{cnsisoGam}, we can construct
a sequence of $G$-isomorphisms
$$E(\xi,\varphi_1)\stackrel{(\omega_1,\pi_1)}{\longrightarrow}  E(\xi,\varphi_2)
\stackrel{(\omega_2,\pi_2)}{\longrightarrow} E(\xi,\varphi_3)$$
where the induced compatible pairs are indicated on the arrows. The composition of these
$G$-isomorphisms is also a $G$-isomorphism $E(\xi,\varphi_1)\to E(\xi,\varphi_3)$ that induces
$(\omega_1 \omega_2,\pi_1\pi_2 )$ so that by the same theorem we obtain
$\varphi_3\in \varphi_1 ^{\Gamma (\omega_1 \omega_2,\pi_1\pi_2 )}$ ($\star$) which must be equal to
$\varphi_3 + B$ (for $B:=B_{\Phi} ^{2}(H,Z(G))$ and using Proposition \ref{Gamisfunction}).
But, by Proposition, $\varphi_3 + B=\varphi_2 ^{\Gamma (\omega_2,\pi_2)}$.
Since  ($\star$) implies $(\varphi_1 ^{\Gamma (\omega_1,\pi_1)})^{\Gamma (\omega_2,\pi_2)}
\subseteq \varphi_1 ^{\Gamma (\omega_1 \omega_2,\pi_1\pi_2 )}$, we obtain a sequence
$$\varphi_3 + B=\varphi_2 ^{\Gamma (\omega_2,\pi_2)}\subseteq
(\varphi_1 ^{\Gamma (\omega_1,\pi_1)})^{\Gamma (\omega_2,\pi_2)}\subseteq
\varphi_1 ^{\Gamma (\omega_1 \omega_2,\pi_1\pi_2 )}=\varphi_3 + B$$ that proves the Theorem.
$\quad\Box$

Let $E(\xi,\varphi)$ be an extension of $G$ by $H$ with coupling $\Phi$.
Remind that every extension with coupling $\Phi$ is equal to $E(\xi,\varphi + \zeta)$
for some $\zeta\in Z_{\Phi} ^{2}(H,Z(G))$. The set of equivalence classes of extensions is in
one-to-one correspondence with the set $\Omega:=\varphi + H_{\Phi} ^{2}(H,Z(G))$.

For a compatible pair $x$, Proposition \ref{Gamisfunction} %\emph{Proof }.$\quad\Box$
states that $\Gamma (x)$ defines a function from the set $\Omega$ into itself.
Let us show that $\Gamma (x)$ is a permutation of $\Omega$. Injectivity and surjectivity can
be proved quickly with the help of Theorem \ref{Gamaction}.
For $B:=B_{\Phi} ^{2}(H,Z(G))$, let $\varphi_1 +B$ and $\varphi_2 +B$ be
elements of $\Omega$. For injectivity, if $(\varphi_1 +B) ^{\Gamma (x)}=(\varphi_2 +B)^{\Gamma (x)}$,
 we multiply both sides by $\Gamma (x^{-1})$ and we conclude with Proposition \ref{Gamisfunction}
 that $\varphi_1 +B=(\varphi_1 +B) ^{\Gamma (1)}=(\varphi_2 +B)^{\Gamma (1)}=\varphi_2 +B$.
For surjectivity, observe that $\varphi_1 +B$ is the image of $(\varphi_1+B) ^{\Gamma (x^{-1})}$
under $\Gamma (x)$.

Together with Theorem \ref{Gamaction} and Proposition \ref{Gamisfunction}, this completes the proof of the
fact that the group $Comp(\Phi)$ acts on the right, through $\Gamma$, on the set
$\varphi + H_{\Phi} ^{2}(H,Z(G))$. After a shift $\varphi_1 +B \to (\varphi_1 -\varphi) +B$,
 it becomes an action of $Comp(\Phi)$ on the set $H_{\Phi} ^{2}(H,Z(G))$.
 Observe also that since $Inn(T_{\Phi})$ is constant for every extension associated with $\Phi$,
 it is contained in every stabilizer.   Hence, $\Gamma$ defines an action of the group $Comp(\Phi)/Inn(T_{\Phi})$
 denoted by $Out(\Phi)$ on the set $H^{2}$. Hence, we have proved the following theorem :
 %%%%%%%%%%%
 \begin{theorem}\label{mainAction} Let $\Phi:H\to Out(G)$ be a homomorphism. %%% and let $\xi:H\to Aut(G)$ be
Let $E=E(\xi,\varphi)$ be an extension of $G$ with coupling $\Phi$. Let
$\zeta_1,\zeta_2 \in Z_{\Phi} ^{2}(H,Z(G))$ and let $B:=B_{\Phi} ^{2}(H,Z(G))$.
%and with associated pair $(\xi,\varphi)$.\begin{enumerate}
%\item The function $\Gamma_{E}:Comp(\Phi)\times H^{2}(H,Z(G))\to H^{2}(H,Z(G))$
% described above is a group action on the set $H^{2}(H,Z(G))$.
\begin{enumerate}
\item The function $\Gamma$ defined in this section is a right action of the group $Comp(\Phi)$ on the set
$H_{\Phi} ^{2}(H,Z(G))$.
 \item The extension $E(\xi,\varphi +\zeta_1)$ is $G$-isomorphic to the extension
$E(\xi,\varphi +\zeta_2)$ if and only if $\zeta_1 + B$ and $\zeta_2 +B$ are in the same orbit
 under $\Gamma_{E}$.
\item The stabilizer of $\zeta_1 +B$ is the subgroup of inducible pairs for
$E(\xi,\varphi +\zeta_1)$.
\item $Inn(T_{\Phi})$ fixes every element of $H_{\Phi} ^{2}(H,Z(G))$.
 \end{enumerate}
 \end{theorem}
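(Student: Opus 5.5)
The plan is to assemble Theorem \ref{mainAction} from the results just proved, transporting everything along the shift $\zeta+B\mapsto(\varphi+\zeta)+B$, which is a bijection $H_{\Phi}^{2}(H,Z(G))\to\Omega:=\varphi+H_{\Phi}^{2}(H,Z(G))$. By Theorem \ref{MacEinlen} (part 2), any factor set $\varphi_2$ for $\xi$ satisfies $\varphi_2-\varphi\in Z_{\Phi}^{2}(H,Z(G))$. Hence every element of $\varphi_1^{\Gamma(\omega,\pi)}$ lies in $\varphi+Z_{\Phi}^{2}$. By Proposition \ref{Gamisfunction}(1),(2) this set is a single coset $\varphi_2+B$, which depends only on $\varphi_1+B$. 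So for each $x\in Comp(\Phi)$, $\Gamma(x)$ is a well-defined map $\Omega\to\Omega$, and we transport it to $H^{2}$ through the shift.

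\emph{Part (1).} The identity acts trivially by Proposition \ref{Gamisfunction}(3). Compatibility with products, $(\varphi_1^{\Gamma(x_1)})^{\Gamma(x_2)}=\varphi_1^{\Gamma(x_1x_2)}$, is Theorem \ref{Gamaction}. Together these give a right action; bijectivity of each $\Gamma(x)$ follows as explained before the statement.

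\emph{Part (2).} Suppose $E(\xi,\varphi+\zeta_1)$ and $E(\xi,\varphi+\zeta_2)$ are $G$-isomorphic via $\alpha$. By Proposition \ref{pin1}, the pair $(\omega,\pi)$ induced by $\alpha$ satisfies $\Phi(h^\omega)=\Phi^\pi(h)$, so it is compatible. Theorem \ref{cnsisoGam} then gives $\varphi+\zeta_2\in(\varphi+\zeta_1)^{\Gamma(\omega,\pi)}$, i.e. the two classes lie in the same orbit. Conversely, suppose $\zeta_2+B=(\zeta_1+B)^{\Gamma(x)}$. Then $\varphi+\zeta_2\in(\varphi+\zeta_1)^{\Gamma(x)}$, and Theorem \ref{cnsisoGam} produces a $G$-isomorphism inducing $x$.

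\emph{Part (3).} Apply the same reasoning with $\zeta_1=\zeta_2$; equivalently, use Corollary \ref{Gaminduce} together with Proposition \ref{Gamisfunction}. A pair $x$ fixes $\zeta_1+B$ iff $\varphi+\zeta_1\in(\varphi+\zeta_1)^{\Gamma(x)}$, iff $x$ is inducible.

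\emph{Part (4).} This is the step needing a small argument. An element of $Inn(T_\Phi)$ is conjugation by $\gamma_1(e)=(\rho_1(e),f_1(e))$ for some $e\in E_1=E(\xi,\varphi+\zeta_1)$. We claim it equals the pair $\theta(c_e)$ induced by the inner automorphism $c_e$ of $E_1$. Indeed, $c_e$ acts on $H$ as conjugation by $\rho_1(e)$ and on $G$ as $f_1(e)$, and by (\ref{Gisomod1}) it acts on $T_\Phi$ componentwise, which is conjugation by $\gamma_1(e)$ in $H\times Aut(G)$. So every element of $Inn(T_\Phi)$ is inducible for $E_1$. The main subtlety is independence of the extension: $T_\Phi$ and its inner automorphisms depend only on $\Phi$, since every $\gamma_k$ maps onto the same $T_\Phi$. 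Hence the same pairs are inducible for every extension with coupling $\Phi$, and by (3) they lie in every stabilizer.
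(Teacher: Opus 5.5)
Your proposal is correct and follows essentially the same route as the paper. Well-definedness on $\Omega=\varphi+H_{\Phi}^{2}(H,Z(G))$ comes from Proposition \ref{Gamisfunction}, the action axioms from Theorem \ref{Gamaction}, parts (2) and (3) from Theorem \ref{cnsisoGam} and Corollary \ref{Gaminduce}, followed by the shift to $H_{\Phi}^{2}(H,Z(G))$; your argument for (4) is simply an expanded version of the paper's observation that $Inn(T_{\Phi})=\theta(Inn(E))\leq Ind(E)$ for every extension $E$ with coupling $\Phi$, hence lies in every stabilizer.
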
     %%%% two extensions with coupling
%MMMM once the extension exists, prove it for the Fp case ?prove that rel implies factor sets.
%%MMMMMMMMMMMMMMMMMMM SAY THAT IT IS A RIGHT-ACTION
%%%%%%%%%%%%%%%%%%%%%

%%%%%%%%%%%%%%%%%%%%%%%%%%%%%%
\subsection{Computation of action and orbits}\label{computeorb}
We recall here the points that are essential to compute the orbits of the action $\Gamma$.
We assume that we have already an extension $E(\xi,\varphi )$ with coupling $\Phi$ (note that such extension do not
always exist) and that we have $H_{\Phi} ^{2}(H,Z(G))$.
\subsubsection{Computation of the action}
As usual let $B:=B_{\Phi} ^{2}(H,Z(G))$ and let $\zeta_1$ be an element of $Z_{\Phi} ^{2}(H,Z(G))$.
For every compatible pair $x:=(\omega,\pi)$, we need the following elements in order to compute
$\varphi_1  ^{\Gamma (x)}$ :
\begin{enumerate}
\item To construct $E_1:=E(\xi,\varphi +\zeta_1)$.
\item To construct the homomorphism $\gamma$ from $E_1$ onto $T_{\Phi}$
\item To compute in $E_1$, one preimage $\overdub{h}$ under $\gamma$ of
$(\gamma (\overline{h}))^{x^{-1}}$ for each $h\in H$.
\item To compute $\varphi_2 (h_1,h_2):=((\overdub{h_1 h_2})^{-1}\overdub{h_1}\,\,\overdub{h_2}\,)^{\pi}$  (for $h_1, h_2 \in H$)
in $E_1$. Finally $(\varphi_1 +B) ^{\Gamma (x)}= \varphi_2 + B$.
\end{enumerate}
Observe that $B$ can be generated once for all as the set $\varphi ^{\Gamma (1,1)}-\varphi$ (see Proposition
\ref{Gamisfunction}).
%\subsubsection{Computation of the orbit of $\mathbf{\varphi_1}$}
\subsubsection{Computation of an orbit}
This can be done by a classical orbit-stabilizer algorithm that computes progressively  the right cosets of a
point stabilizer.\\

Assume that we want to determine all the $\zeta_2\in Z_{\Phi} ^{2}(H,Z(G))$
such that the extension $E(\xi,\varphi +\zeta_2)$ is $G$-isomorphic to $E_1:=E(\xi,\varphi +\zeta_1)$.
We have shown that it is equivalent to determine the orbit $\varphi_1 ^{\Gamma}$ (for $\varphi_1:=\varphi +\zeta_1$).
Since $Inn (T_{\Phi})$ fixes every point of $\Omega:=\varphi + H_{\Phi} ^{2}(H,Z(G))$, we can consider
$\Gamma$ as an action of $Out(\Phi):=Comp(\Phi)/Inn(T_{\Phi})$ on $\Omega$. Hence,
if we choose for each right coset $C_i$ of $Inn(T_{\Phi})$ in $Comp(\Phi)$, a representative $x_i$ of $C_i$,  %($i\in 1\ldots k$).
then the full orbit $\varphi_1 ^{\Gamma}$ is contained in the set  $\{\varphi_1 ^{\Gamma (x_i)}\,:i\in I\}$
 ($I$ is a family of indices) but this set could contain some element several times.

 Let us show how to avoid repetitions. More precisely, each time we observe a repetition, that one can be used
 to reduce (at least by a factor 2) the number of steps needed to compute the full orbit.
Since $\Gamma$ is a group action, two compatible pairs $x$ and $y$ map $\varphi_1$ on the same element
$\varphi '$ if and only if $xy^{-1}$ belongs to $Ind(E_1)$, the stabilizer of $\varphi_1$. Now, assume that we can
index the cosets $C_i$ by a sequence $I$ of positive integers. Step by step, we compute a first part of the
orbit, $\mathcal{O}:=\{\varphi_1 ^{\Gamma (x_1)}, \varphi_1 ^{\Gamma (x_2)}\ldots\varphi_1 ^{\Gamma (x_{j-1})}\}$
until we observe a repetition $\varphi_1 ^{\Gamma (x_j)}=\varphi_1 ^{\Gamma (x_i)}$ for some $i<j$. Then,
we know that $x_i(x_j)^{-1}\in Ind(E_1)$. We now have a subgroup of $Stab(\varphi_1)$ that is strictly larger
than $Inn(T_{\Phi})$, namely the group $A$ generated by the union of $x_i(x_j)^{-1}$ and $Inn(T_{\Phi})$.
Hence, it is now possible to reduce the number of potential repetitions by considering the image of
$\varphi_1$ under a set of right coset representatives for $A$ in $Comp(\Phi)$. If this is iterated, it is possible to
enlarge $\mathcal{O}$ and $A$ gradually in order to obtain the full orbit $\varphi_1 ^{\Gamma}$ together with
$Ind(E_1)$.

%we obtain the following process to compute gradually the full orbit of $\varphi_1$ together with $Ind(E_1)$.
The process is the following.
%For $l> j$, either $x_l \in \bigcup_{k<l}Ax_k$ %for some $k<l$
For $l> j$, either $x_l \in Ax_k$ for some $k<l$
so that $\varphi_1 ^{\Gamma (x_l)}$ must not be calculated because we know that it already belongs  to
$\mathcal{O}$ ; either  $x_l \notin \bigcup_{k<l}Ax_k$, then we calculate  $\varphi_1 ^{\Gamma (x_l)}$ and
one of the following situations occurs :
\begin{enumerate}\item either $\varphi_1 ^{\Gamma (x_l)}\notin \mathcal{O}$ so that we add it to $\mathcal{O}$.
\item  Either there is a new repetition ($\varphi_1 ^{\Gamma (x_l)}\in \mathcal{O}$) so that we can enlarge $A$
by adding $x_k(x_l)^{-1}$ to its generators.
\end{enumerate}
Then, we do the same with $x_{l+1}$, and the process will stop when we have the full orbit and the full group
 $Ind=(E_1)$. An important thing to note for next section is that if we have a supplement $N$ to $Ind(E_1)$ in
  $Comp(\Phi)$ (i.e. $Comp(\Phi)=Ind(E_1)N$) then it is possible to restrict our calculations to $N$ because so,
  we may choose coset  representatives for $Ind(E_1)$ within $N$.

%It amounts to compute gradually a set of right coset representatives for $Ind(E_1)$ in $Comp(\Phi)$.
 %It allows us to remove all the representatives
%$x_k$ ($k\geq j$) that belong to $A x_l$ for some $x_l \in \{x_1\ldots x_{j-1}\}$ because all these representatives
%will produce repetitons corresponding to elements of $A$. We obtain a new list $\{y_{i'} : i' \in I'\}$ whose
%union with $\{x_1\ldots x_{j-1}\}$ is just an irredundant list of right-cosets representatives for $A_1$ in
%$Comp(\Phi)$. Nothing has to be removed in $\{x_1\ldots x_{j-1}\}$ : we have already checked that they produce different
%element and therefore they correspond to distinct right-cosets of $Stab(\varphi_1)$.
%The process can be iterated to produce still larger subgroups of $Stab(\varphi_1)$ ;  the process stops when
%there is no more repetitions which amounts to have the full stabilizer of $\varphi_1$ or the full orbit.
% Therefore the same algorithm can determines simultaneously, the group $Ind(E_1)$ of inducible pairs which
% is fundamental in order to determine the automorphism group of the extension $E_1$.
%%%%%%%%%%%%%%%%%%%%%%%%%%%%%
%%%%%%%%%%%%%%%%%%%%%%%%%%%%%
\section{Determine orbits with supplements only}
This section build a bridge between our results on the action $\Gamma$ and our results on reducing the extension
problem to supplements (see chapter \ref{quotientsemi}).
The general idea is the following : \emph{"if we have a supplement function preserved by a $G$-isomorphism,
then the calculation of the image
of a point $\varphi$ under $\Gamma (\omega,\pi)$ can be performed exclusively within a supplement. Moreover
the right coset calculations needed to determine the orbit of $\varphi$ under $\Gamma$, can be performed in
a subgroup of $Comp(\Phi)$"}.
%exists a supplement $N_S$ to $Inn(T_{\Phi})$ in $Comp(\Phi)$ that preserves $S$. Hence, preimages are computed
%exclusively in $S_E$ and the coset calculation of the "Orbit-stabilizer" algorithm do not need to be performed
%in the full group $Comp(\Phi)$ but can be performed in the smaller group $N_S$."}

%%%%%%%%%%%%%%%%%%%%%%%%
\subsubsection{Supplement functions for extensions with coupling $\Phi$} \label{Gisofunctphi}
Let $\Phi$ be a homomorphism from $H$ into $Out(G)$. As before for every $h\in H$ let us choose
$\xi(h)\in \Phi(h)$ and let us assume that there exists an extension $E(\xi,\varphi)$ with coupling $\Phi$.
Let $\mathcal{F}_{\Phi}$ be the family of extensions with coupling $\Phi$. We will attempt to construct
supplement functions for $\mathcal{F}_{\Phi}$ that are preserved by a $G$-isomorphism.

We first observe that we have already constructed examples of such functions.
If for two families of extensions such that $\mathcal{F}_1\subset \mathcal{F}_2$,
there is a supplement  function $S$ for $\mathcal{F}_2$ that is preserved by a $G$-isomorphism, then
trivially the restriction of $S$ to $\mathcal{F}_1$ is a supplement  function $S$ for $\mathcal{F}_2$
that is preserved by a $G$-isomorphism. Let $\Phi (H)=L/Inn(G)$ for some subgroup $L$ such that
$Inn(G)\leq L\leq Aut(G)$ and let $\mathcal{F}_{L}$ be the family of extensions associated with $L$.
 Since the family $\mathcal{F}_{\Phi}$ of extensions
with coupling $\Phi$ is a subfamily of $\mathcal{F}_{L}$, the supplement functions described in
subsection \ref{isopreserved} are also supplement functions for $\mathcal{F}_{\Phi}$ that are preserved by
a $G$-isomorphism. %They are obtained by taking the preimage of $B$ under the natural projection $T_{\Phi}$ in
%$L\leq Aut(G)$, where $B$ is a supplement to $Inn(G)$ in $L$

As before, let $\gamma$ be the projection onto $T_{\Phi}$ of an extension $E\in\mathcal{F}_{\Phi}$.
%%The following arguments are similar to those explained in subsection \ref{isopreserved} for $\mathcal{F}_{L}$.
If $S$ is a supplement to $G$ in $E$, it is mapped by $\gamma$ onto a supplement to $Inn(G)_{\times 1_{H}}$
 in $T_{\Phi}$ and conversely, there is a one-to-one  correspondence between the preimages in $E$ of the
supplements to $Inn(G)_{\times 1_{H}}$  in $T_{\Phi}$ and the supplements to $G$ in $E$ that contains
 $Z(G)=Ker\,\gamma$.
%taking preimage under $\gamma$, supplements to $Inn(G)_{\times 1_{H}}$  in $T_{\Phi}$ are in one-to-one
% correspondence with the supplements to $G$ in $E$ that contains  $Z(G)=Ker\,\gamma$.
Determining conditions for such supplements to be preserved by a $G$-isomorphism will help to find any
type of supplement function for $\mathcal{F}_{\Phi}$. Indeed, if $E\to S_{E}$ is a supplement function for
$\mathcal{F}_{\Phi}$, preserved by a $G$-isomorphism, then $E\to Z(G)S_E$  is also preserved by a
$G$-isomorphism since $Z(G)$ is preserved by every $G$-isomorphism.

We have described in subsection \ref{extmodZ} (equality \ref{Gisomod1}) how each compatible pair for $\Phi$
defines an automorphism of $T_{\Phi}$. Therefore it is possible to define the normalizer (or rather stabilizer)
in $Comp(\Phi)$ of a subgroup $S\subset T_{\Phi}$ and we denote it $N_{Comp(\Phi)}(S)$.

\begin{proposition}\label{Phi-isopres}
%Let $\Phi:H\to Out(G)$ be a homomorphism and let $\mathcal{F}_{\Phi}$ be the family of extensions of $G$
%by $H$ with coupling $\Phi$.
Let $S$ be a supplement to $Inn(G)_{\times 1_H}$ in $T_{\Phi}$ and for every
$E\in \mathcal{F}_{\Phi}$ let $S_E$ be the preimage of $S$ in $E$. Then % under $\gamma$.
%If $Comp(\Phi)$ $Inn(T_{\Phi})N_{Comp(\Phi)}(S)$ then the function $E\to S_E$ on $\mathcal{F}_{\Phi}$
\begin{enumerate}
\item The function $E\to S_E$ on $\mathcal{F}_{\Phi}$  is preserved by a $G$-isomorphism if and only if
%$N_{Comp(\Phi)}(S)$ is a supplement to $Ind(E)$ in $Comp(\Phi)$ for every $E\in\mathcal{F}_{\Phi}$.
$Comp(\Phi)=Ind(E)N_{Comp(\Phi)}(S)$ for every $E\in\mathcal{F}_{\Phi}$.
\item Consequently, if $N_{Comp(\Phi)}(S)$ is a supplement to $Inn(T_{\Phi})$ in $Comp(\Phi)$ then
the function $E\to S_E$ on $\mathcal{F}_{\Phi}$  is preserved by a $G$-isomorphism.
\end{enumerate}
\end{proposition}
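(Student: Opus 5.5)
The plan is to translate everything into $T_{\Phi}$ via the projections $\gamma_E:E\to T_{\Phi}$ of subsection \ref{extmodZ}. By equality (\ref{Gisomod2}), every $G$-isomorphism $\alpha:E_1\to E_2$ between extensions in $\mathcal{F}_{\Phi}$ satisfies $\gamma_2(\alpha(e))=\gamma_1(e)^{x}$, where $x=(\omega,\pi)\in Comp(\Phi)$ is its induced compatible pair. Since $S_{E_k}=\gamma_k^{-1}(S)$ contains $Z(G)=Ker\,\gamma_k$, the image $\alpha(S_{E_1})$ is again a full preimage, namely $\gamma_2^{-1}(S^{x})$. Hence
$$\alpha(S_{E_1})=S_{E_2}\iff S^{x}=S\iff x\in N_{Comp(\Phi)}(S).$$
The whole question is therefore which compatible pairs are induced by $G$-isomorphisms $E_1\to E_2$.

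For $(1)$, first prove ``$\Leftarrow$''. Let $E_1,E_2\in\mathcal{F}_{\Phi}$ be $G$-isomorphic via some $\alpha_0$ inducing $x_0$. The set of pairs induced by $G$-isomorphisms $E_1\to E_2$ is the coset $Ind(E_1)x_0$: precompose $\alpha_0$ with $G$-automorphisms of $E_1$, and note that the induced pair of a composite is the product of the induced pairs, as in Theorem \ref{Gamaction}. Write $x_0=yn$ using the hypothesis $Comp(\Phi)=Ind(E_1)N_{Comp(\Phi)}(S)$. Then $y^{-1}x_0=n$ lies in the coset and normalizes $S$, so the corresponding $G$-isomorphism maps $S_{E_1}$ onto $S_{E_2}$.

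For ``$\Rightarrow$'', fix $E\in\mathcal{F}_{\Phi}$ and $x\in Comp(\Phi)$. By Theorem \ref{cnsisoGam} and the nonemptiness of $\varphi^{\Gamma(x)}$, there is an extension $E'\in\mathcal{F}_{\Phi}$ and a $G$-isomorphism $E\to E'$ inducing $x$. By hypothesis there is also a $G$-isomorphism $E\to E'$ mapping $S_E$ onto $S_{E'}$, and by the first paragraph its induced pair $n$ lies in $N_{Comp(\Phi)}(S)$. Both pairs lie in the same coset $Ind(E)x$, so $x\in Ind(E)n\subseteq Ind(E)N_{Comp(\Phi)}(S)$.

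Part $(2)$ then follows at once: $Inn(T_{\Phi})\leq Ind(E)$ for every $E$, since inner automorphisms of $E$ induce $Inn(T_{\Phi})$. Hence $Comp(\Phi)=Inn(T_{\Phi})N_{Comp(\Phi)}(S)\subseteq Ind(E)N_{Comp(\Phi)}(S)$.

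The main obstacle is the coset bookkeeping, specifically the claim that the pairs induced by $G$-isomorphisms $E_1\to E_2$ form exactly one coset of $Ind(E_1)$. This needs the convention for composing induced pairs to agree with the right-action convention in (\ref{Gisomod1}), and the point should be checked carefully. The preimage identity $\alpha(\gamma_1^{-1}(S))=\gamma_2^{-1}(S^{x})$ is routine once (\ref{Gisomod2}) is in hand.
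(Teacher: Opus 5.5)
Your proposal is correct and follows essentially the same route as the paper. Both arguments translate the question into $T_{\Phi}$ via (\ref{Gisomod2}), so that a $G$-isomorphism maps $S_{E_1}$ onto $S_{E_2}$ exactly when its induced pair normalizes $S$; both use Theorem \ref{cnsisoGam} to realize every compatible pair by a $G$-isomorphism out of $E_1$; and both use the fact that the pairs induced by $G$-isomorphisms $E_1\to E_2$ form the right coset $Ind(E_1)x_0$. The only difference is presentational: you split (1) into its two directions where the paper runs a single chain of equivalences, and your check of the composition convention under (\ref{Gisomod1}) makes explicit the right-coset bookkeeping that the paper uses implicitly.
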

\emph{Proof }.
\begin{enumerate}
\item  Let $E_1$ and $E_2$ be two extensions in $\mathcal{F}_{\Phi}$  and let $\gamma_1$ and $\gamma_2$
 be their projections on $T_{\Phi}$. Let $S$ be a supplement to $Inn(G)_{\times 1_{H}}$  in $T_{\Phi}$.
Observe that if $j:E_1\to E_2$ is a $G$-isomorphism that induces the compatible pair $\beta$, then
$j$ maps $\gamma_1 ^{-1}(S)$ to $\gamma_2 ^{-1}(S)$ if and only if $\beta$ normalizes $S$
(i.e. $S^{\beta}=S$) as shown by equalities \ref{Gisomod1} and \ref{Gisomod2}
(see figure \ref{GisoTphi}). For a given $E_1 \in \mathcal{F}_{\Phi}$, by Theorem \ref{cnsisoGam}, for every
compatible pair $\beta$ there exists $E_2 \in \mathcal{F}_{\Phi}$ and a $G$-isomorphism $j:E_1 \to E_2$ that
induces $\beta$ : if $E_1 =E(\xi,\varphi_1)$, it suffices to take $E_2=E(\xi,\varphi_2)$ where
$\varphi_2 \in \varphi_1 ^{\Gamma (\omega,\pi)}$. A function from $E_1$ onto $E_2$ is a $G$-isomorphism
if and only if it is equal to a $G$-automorphism of $E_1$ followed by $j$.
% The others $G$-isomorphism from $E_1$ onto $E_2$
%induce pairs that are in one-to-one correspondence with the coset $Ind(E_1)(\omega,\pi)$.
Therefore, there is a $G$-isomorphism that maps $S_{E_1}=\gamma_1 ^{-1} (S)$ onto $S_{E_2}=\gamma_2 ^{-1} (S)$
 if and only if there is an inducible pair $\alpha\in Ind(E_1)$ such that $S^{\alpha \beta}=S$. This means that
 $\alpha \beta$ belongs to the normalizer $N_{Comp(\Phi)}(S)$ of $S$ under the action of $Comp(\Phi)$ on $T_{\Phi}$.
Hence, the function $S_E$ is preserved by a $G$-isomorphism if and only if for every compatible pair $x$
and every $E_1 \in\mathcal{F}_{\Phi}$ there is $\alpha\in Ind(E_1)$ such that
$x\in \alpha^{-1}N_{Comp(\Phi)}(S)$. This is equivalent to $Comp(\Phi)=Ind(E_1)N_{Comp(\Phi)}(S)$ for every
$E_1\in\mathcal{F}_{\Phi}$.
\item It is an immediate consequence of (1), since $Inn(T_{\Phi})\subset Ind(E_1)$ for every
$E_1 \in\mathcal{F}_{\Phi}$. $\quad\Box$
\end{enumerate}

%%%%%%%%%%%%%%%%%%%%%%%%%%%%%%%
\subsubsection{Supplement functions reduce orbit calculations}
Let $S$ be a supplement to $Inn(G)_{\times 1_H}$ in $T_{\Phi}$ and for every
$E\in \mathcal{F}_{\Phi}$ let $S_E$ be the preimage of $S$ in $E$.
 Now, assume that the function $E\to S_E$ is preserved by a $G$-isomorphism.
 We explain here how $S_E$ can reduce the computation of
actions and orbits described in subsection \ref{computeorb}.

For $E\in\mathcal{F}_{\Phi}$, let us show that it is always possible to consider a canonical transversal $\{\overline{h}:h\in H\}$
 that belongs to $S_E$ and this can be achieved by a suitable choice of the function $\xi :H\to Aut(G)$ that is
 associated with all the canonical transversals in $\mathcal{F}_{\Phi}$. Let $\tilde{\rho}$ be the projection of $T_{\Phi}$
 onto $H$. Since $S$ is a supplement to $Ker \tilde{\rho}=Inn(G)_{\times 1_H}$ in $T_{\Phi}$, for every $h\in H$
 it is possible to find $s_h\in S$ such that $\tilde{\rho} (s_h)=h$. Let $\tilde{\xi} (s_h)$ be the projection of
 $s_h$ into $Aut(G)$ so that $s_h=(h,\tilde{\xi} (s_h))$. If $\gamma$ is the projection of $E$ onto $T_{\Phi}$,
 an element $e\in E$ belongs to $\gamma^{-1} (s_h)$  if and only if
 $\gamma (e)=s_h=(h,\tilde{\xi} (s_h))$. Hence, for every $h\in H$,  it is sufficient to choose
 $\xi (h)=\tilde{\xi} (s_h)$  to guarantee that $\{\overline{h}:h\in H\}\subset S_E:=\gamma^{-1} (S)$.

Let $E\in\mathcal{F}_{\Phi}$. In order to obtain representatives of the right cosets of $Ind(E)$ in $Comp(\Phi)$,
it is sufficient to compute right coset representatives of $Ind(E)\cap N_{Comp(\Phi)}(S)$ in $N_{Comp(\Phi)}(S)$,
since by Proposition \ref{Phi-isopres},  $N_{Comp(\Phi)}(S)$ is a supplement to $Ind(E)$ in $Comp(\Phi)$.
%for every $E\in\mathcal{F}_{\Phi}$.
The group $Ind(E)$ is generally unknown so that these representatives are
computed gradually (see subsection \ref{computeorb}), starting with right coset representatives of
$Inn(T_{\Phi}) \cap N_{Comp(\Phi)}(S)$ in $N_{Comp(\Phi)}(S)$.
This avoids to compute right cosets in the full group $Comp(\Phi)$.
Actually, %even if the full normalizer of $S$ is unknown,
it is sufficient to work with any subgroup of $N_{Comp(\Phi)}(S)$ that is a supplement to $Ind(E)$ in $Comp(\Phi)$.

Let $\varphi_1$ be the factor set of an extension $E_1=E(\xi,\varphi)\in\mathcal{F}_{\Phi}$ with canonical transversal
$\{\overline{h}:h\in H\}\subset S_{E_1}$. We have just explained that in order to compute the orbit of $\varphi_1$
under $\Gamma$, it is sufficient to consider images of $\varphi_1$ under elements of $N_{Comp(\Phi)}(S)$.
But if $x=(\omega,\pi) \in N_{Comp(\Phi)}(S)$, then for every $h\in H$, a preimage $\overdub{h}$ of
$(\gamma (\overline{h}))^{x^{-1}}$ under $\gamma$ belongs to $S_{E_1}$ because, as $x^{-1}$ normalizes $S$,
 we have $\gamma^{-1} (S^{x^{-1}})=\gamma^{-1} (S)=S_{E_1}$.
Consequently the element $\varphi_2\in \varphi_1 ^{\Gamma (x)}$ defined by
$\varphi_2 (h_1,h_2):=((\overdub{h_1 h_2})^{-1}\overdub{h_1}\,\,\overdub{h_2}\,)^{\pi}$  (for $h_1, h_2 \in H$)
 can be computed in $S_{E_{1}}$ only. Finally in order to compute the orbit of $\varphi_1$, it is sufficient to
 construct the supplement $S_{E_1}$ instead of the full extension $E(\xi,\varphi)$.  It is also sufficient to consider
 the restriction $\gamma |_{S_{E_1}}:S_{E_1}\to S$ instead of constructing the full homomorphism $\gamma$.
%% We assume that we have already an extension $E(\xi,\varphi )$ with coupling $\Phi$.
% first to construct one extension , it suffices to construct a supplement
%solvable if $G$ is finite and $H$ is solvable. a factor set with values in $U$ but not all factor sets are acceptable
%they must be compatible with $Aut(G)$ and not only with $Aut(U)$. It remains (why exactly ?) to
%show that the restriction of $\xi (h)$ on $U$ can be extended to a homomorphism from $UH$ onto $B<Aut(G)$
%see the example with $SL_2 (9)$

\emph{\bf{ Finally, for each step needed to compute the $\Gamma$-orbits, the existence of a supplement
function preserved by a $G$-isomorphism enables us to reduce the computations in substantially smaller
groups. If furthermore $H$ is solvable and $G$ is finite,  all computations can be performed in a solvable
group (by Proposition \ref{refinep}).}}

%%%%%%%%%%%%%%%%%%%%%%%%%%%
\section{Implementations and performances}
We have achieved a basic implementation of our methods using $GAP 4.2$, for the extensions of a finite group $G$
by a finite solvable group $H$. It computes a supplement function preserved by a
$G$-isomorphism and uses it to reduce the determination of the $\Gamma$-orbits into small solvable groups.
We say that this implementation is basic because we have not yet used all the computational tools that have
been developed for finite solvable groups. For instance,  the computation of second cohomology groups $H^{2}$
 and the "Orbit-Stabilizer algorithm" have specific implementations for polycyclic groups (see \cite{GAP4}) that
 we have not used yet. Hence, further performance improvements may be expected.
 Nevertheless, the performances are already very impressive. Our supplement method applied to the
 construction of nonsolvable groups completely outperforms the iterated cyclic extension method.
There is no mystery behind that fact. If instead of classifying groups of order $600,000.$ one classifies groups
of order $80$ everyone would expect such an impressive improvement (such a case occurs with the perfect
group $P$ of order $15,000$ such that $P/Z(P)\cong PerfectGroup(7500,1)$ see table \ref{tableout}).
We believe that what could be expected from a more complete implementation, is to construct nonsolvable
groups as fast as solvable groups.
%%%%%%%%%%%%%%%%%%%%%%%%%%%%%%%%%%%%%%%%%%%%%%%%%%%%%%%%%%%%%%%ùù
\subsection{Comparisons of the different methods}
\subsubsection{Cross-check of the two methods by $z\phi$-classes}
%%\emph{Extensions of $G=PerfectGroup(1080,1)$ by $H$}
\emph{\bf{Extensions of $P=PerfectGroup(1080,1)$ by $H$ :}}\\
We have $|Z(P)|=3$ and $Out(P)\cong 2^2$. The first column shows performances of the iterated cyclic
extension method. The second column do the same for the computation of the $\Gamma$-orbits by the
"reduction to supplement" method. The last column is the cross-check by the $z\phi$-classes method described
in chapter \ref{classiftheo}. The mention "SG" means that the Small Groups Library has been used.
 The supplement method used it to find the groups of order $H$.
The $z\phi$-class method used it to find the groups of order $3|H|$.

\begin{tabular}{|c|c|c|c|}
\hline
& iter. cyclic & $\Gamma$+supp.+SG & $z\phi$-classes + SG \\
\hline\hline
extensions  for $|H|=6$ & 12 & 12 & 12  \\
 time &  17 min. & 7 sec  & 1.3 sec.  \\
\hline
extensions  for $|H|=8$ & 34 & 34 & 34  \\
 time & 1 h 3 min.  & 1 min 20 sec. & 24 sec.  \\
\hline
extensions  for $|H|=12$ & 34  & 34& 34  \\
 time & 3 h. 9 min.  & 1 min. 40 sec.  & 29 sec.  \\
\hline
extensions  for $|H|=16$ & ? & 151 &  151 \\
 time & $?>10\,days$  & 1 h. 44 min.  &  2 min. 27 sec. \\
\hline
extensions  for $|H|=24$ & ? & 159 & 159  \\
 time & $?>10\,days$  & 1 h. 36 min. &  2 min. 30 sec. \\
\hline
\end{tabular}\label{tab3a6_ch5}

%%%%%%%%%%%%%%%%%%%%%%%%%%%%%%
\subsubsection{Supplements versus Cyclic extensions}
Extensions of $P=SL_{2}(9)$ by $H$\\\\

\begin{tabular}{|c | c | c |}
\hline
 & iter. cyclic & supplements  \\
\hline\hline
extensions  for $|H|=4$ &  22 & 22   \\
\hline
 time &  10 min & 6 sec   \\
\hline
extensions  for $|H|=8$ & 107 & 107   \\
\hline
 time &  8 hours & 2 min 5 sec   \\
 \hline
\end{tabular}
\\\\
Actually, the gap between these two methods gets deeper when larger group $P$ are considered.
The supplement method reduces the extensions of $P$ to the extensions of a small subgroup $U<P$.
But the information collected in table \ref{tableout} about perfect groups shows that $|U|$ is almost constant
as $|P|$ increases. Hence, the "reduction ratio" $|P|/|U|$ increases with $|P|$.
%Is it true for $|P|>15360$ ?
%Hence, our method clearly outperforms the method of successive cyclic extensions.
%\item Same set of extensions for $|P|.|H| \leq 2000$ \\
%as Eick \& Besche (year 2000)\Large{\textbf{!}}
\subsection*{example : 1535 extensions $H.SL_2 (9)$ of order $<23040$}
\index{EXTENSIONS $S_6$ BY $2^5$ REMAINS TO BE DONE !! No 32*720=23040
IT IS NOT NECESSARY TO EXPLAIN THE RESULT SINCE IT SUFFICES TO INCLUDE
THIS REFINEMENT IN THE GAP ALGORITHM FOR ELEMENTARY ABELIAN H=32 }
\begin{tabular}{|c|c|c|c|c|c|c|c|c|}
\hline
\textbf{$|H|$} & 2& 3 & 4 & 5&6 &7&8&9\\
\hline
\textbf{$ext$} &5  & 1& 22& 1 & 10&1&107&2\\
\hline\hline
\textbf{$|H|$} & 10& 11 & 12 & 13&14&15 & 16&17 \\
\hline
\textbf{$ext$} &10  &1 & 60& 1 & 10 & 1& 706 &1 \\%
\hline\hline
\textbf{$|H|$} & 18& 19 & 20 & 21& 22& 23 & 24 &25 \\%[17..23]); [ 1, 25, 1, 66, 2, 10, 1 ]
\hline
\textbf{$ext$} &25  &1 & 66& 2 & 10 & 1&394  &2 \\%
\hline\hline
\textbf{$|H|$} &26 &27  &28  &29 &30 &31  &  & \\% numbext720([26..31]);[ 10, 5, 58, 1, 20, 1 ]
\hline
\textbf{$ext$} &10  &5 & 58& 1 & 20 &1 &  & \\%
\hline
\end{tabular}\label{tab2a6}
%%%%%%%%%%%%%%%%%%%%%%%%%ù

\chapter{Appendix on perfect groups}\label{appendix}

%\begin{definition}
Let $\phi$ be an homomorphism of the group $M$ onto the group
$\phi (M)$ whose kernel is $K_{\phi}$. An automorphism $\alpha$
of $M$ such that $\alpha(K_{\phi})=K_{\phi}$ induces an automorphism
$\alpha_{\phi}$ of $\phi(M)$ described by
$$\alpha_{\phi}:\phi(m)\longrightarrow\phi(m^{\alpha})\quad \textrm{for every m}\in M.\quad (\star)$$
and we say that $\alpha$ is a \emph{lifting of $\alpha_{\phi}$ by $\phi$}.
The simplest examples of automorphisms for which a lifting exists, are the inner
automorphisms. Indeed, the conjugation by $x$ in $M$ is a lifting of the
conjugation by $\phi(x)$ in $\phi(M)$ since
$\phi(x)^{-1}\phi(m)\phi(x)=\phi(x^{-1}mx)$ and equation $(\star)$ is satisfied.

\begin{theorem}\label{plift}
Let $P$ be a perfect group, $Z$ a central characteristic subgroup of $P$ and
$\tilde\alpha$ an automorphism of $P/Z$. If there exists a lifting
$\alpha \in Aut(P)$ of $\tilde\alpha$ then $\alpha$ is unique.
\end{theorem}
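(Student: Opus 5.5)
The plan is to reduce uniqueness to a statement about automorphisms inducing the identity on $P/Z$. Suppose $\alpha,\beta\in Aut(P)$ are both liftings of $\tilde\alpha$ by the canonical projection $\phi:P\to P/Z$; since $Z$ is characteristic, both preserve $Z$, so the definition applies. Then $\delta:=\alpha\beta^{-1}$ is an automorphism of $P$ preserving $Z$ and inducing the identity on $P/Z$. Indeed, $\phi(m^{\delta})=\tilde\alpha^{\,}\tilde\alpha^{-1}(\phi(m))=\phi(m)$ for every $m\in P$. It therefore suffices to prove that any such $\delta$ is the identity.

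For such a $\delta$, define $c:P\to Z$ by $c(m)=m^{-1}m^{\delta}$. This element lies in $Z$ because $\phi(m^\delta)=\phi(m)$. The key step is to check that $c$ is a homomorphism, using that $Z$ is central. Compute
$$c(mn)=(mn)^{-1}m^\delta n^\delta=n^{-1}m^{-1}m^\delta n^\delta=n^{-1}c(m)n^\delta .$$
Since $c(m)$ is central, this equals $c(m)\,n^{-1}n^\delta=c(m)c(n)$. So $c$ is a homomorphism from $P$ into the abelian group $Z$.

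It remains to show $c$ is trivial, and here perfectness enters. The image $c(P)$ is abelian, so $c$ factors through $P/P'$. Since $P=P'$, this quotient is trivial, so $c(P)=1$. Alternatively one can invoke the paper's earlier remark that a perfect group has no nontrivial solvable, in particular abelian, quotient. Hence $m^\delta=m$ for all $m$, so $\delta=1$ and $\alpha=\beta$.

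I expect no real obstacle. The only points to handle carefully are the homomorphism computation, where centrality of $Z$ must be used at exactly the right place, and remembering that the characteristic hypothesis on $Z$ is what makes "lifting" meaningful for an arbitrary automorphism of $P$.
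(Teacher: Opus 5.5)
Your proof is correct and is essentially the paper's own argument. The paper likewise passes to $k=\alpha_1\alpha_2^{-1}$, writes $p^k=z_{k,p}\,p$ with $z_{k,p}\in Z$, and uses centrality of $Z$ to show that $p\mapsto z_{k,p}$ is a homomorphism into the abelian group $Z$. Perfectness then forces this homomorphism to be trivial. Your $c(m)=m^{-1}m^{\delta}$ is the same element as the paper's $z_{k,m}$, since $Z$ is central.
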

\emph{Proof }.
Every $\alpha\in Aut(P)$ induces an automorphism on $P/Z$ since $\alpha$ preserves $Z$.
 If $\alpha_{1}$ and $\alpha_{2}$ are two automorphisms of $P$ that
induce the same automorphism on $P/Z$ then $k=\alpha_{1}\alpha_{2}^{-1}$ induces
the identity on $P/Z$.\\
Let us show that $k$ must be the identity automorphism of $P$.\\
For every $p_{i}\in P$ there is a unique $z_{k,p_{i}}\in Z$
such that $p_{i}^{k}=z_{k,p_{i}}p_{i}$. Hence
  $z_{k,p_{1}p_{2}}p_{1}p_{2}=(p_{1}p_{2})^{k}=p_{1}^{k}p_{2}^{k}=z_{k,p_{1}}p_{1}z_{k,p_{2}}p_{2}= z_{k,p_{1}}z_{k,p_{2}}p_{1}p_{2}$
 since $Z$ is a subgroup of the centre of $P$.
The equality $z_{k,p_{1}p_{2}}=z_{k,p_{1}}z_{k,p_{2}}$ shows that the function
$f_{k}:p\rightarrow z_{k,p}$ is a homomorphism from $P$ onto
 the abelian group $f_{k}(P)\leq Z$ and since $P$ is perfect, $f_{k}(P)=1\in Z$.
  As announced, $z_{k,p}=1$ for every $p\in P$
 and $k=\alpha_{1}\alpha_{2}^{-1}$ is the identity automorphism of $P.\quad\Box$

%%-true for infinite group ? : yes since an infinite perfect group can't have abelian quotient

Here are two important consequences of theorem \ref{plift}.
\begin{proposition}\label{autperf}Let $P$ be a perfect group whose centre is $Z$ :
\begin{enumerate}
\item $Aut(P)$ is isomorphic to a subgroup of $Aut(P/Z)$ containing $In(P/Z)$
\item the centre of $P/Z$ is the identity.
\end{enumerate}
\end{proposition}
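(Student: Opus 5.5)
We derive both parts from Theorem \ref{plift}, applied with the centre $Z=Z(P)$. This subgroup is characteristic and central in $P$, so every $\alpha\in Aut(P)$ preserves $Z$. Hence $\alpha$ induces an automorphism $\alpha_{Z}$ of $P/Z$, defined as in $(\star)$ by $\alpha_Z: pZ\mapsto p^{\alpha}Z$.

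For (1), consider the map $\Theta: Aut(P)\to Aut(P/Z)$, $\alpha\mapsto\alpha_Z$. A direct check on cosets shows $(\alpha\beta)_Z=\alpha_Z\beta_Z$, so $\Theta$ is a homomorphism. It is injective by Theorem \ref{plift}: if $\alpha_Z=\beta_Z$, then $\alpha$ and $\beta$ are two liftings of the same automorphism of $P/Z$, so $\alpha=\beta$. Thus $Aut(P)\cong\Theta(Aut(P))\leq Aut(P/Z)$. The image contains $Inn(P/Z)$ by the remark preceding Theorem \ref{plift}: the conjugation by $x$ in $P$ is a lifting of the conjugation by $xZ$ in $P/Z$, and every inner automorphism of $P/Z$ arises in this way.

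For (2), let $xZ\in Z(P/Z)$. Then conjugation by $xZ$ is the identity of $P/Z$. Both $\mathrm{id}_P$ and conjugation by $x$ in $P$ are liftings of this identity. By uniqueness in Theorem \ref{plift}, conjugation by $x$ is trivial on $P$, so $x\in Z(P)=Z$ and $xZ$ is trivial. Hence $Z(P/Z)=1$.

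The main obstacle is only bookkeeping. We must confirm that $Z(P)$ satisfies the hypotheses of Theorem \ref{plift}, namely that it is central and characteristic, which holds because the centre is characteristic. We must also confirm that $\Theta$ respects composition, which is the routine check noted above.
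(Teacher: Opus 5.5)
Your proof is correct and follows essentially the same route as the paper. Injectivity of $Aut(P)\to Aut(P/Z)$ (which the paper phrases as the kernel being trivial) and the triviality of $Z(P/Z)$ both come directly from the uniqueness of liftings in Theorem \ref{plift}, and the inner automorphisms of $P/Z$ are obtained by lifting them to conjugations in $P$.
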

\emph{Proof }. (1) Since $Z$ is preserved by $Aut(P)$, we have a canonical homomorphism
$f$ from $Aut(P)$ into $Aut(P/Z)$. The kernel of this homomorphism is precisely
the set of automorphisms of $P$ that are lifting of $1\in Aut(P/Z)$.
 Since $1\in Aut(P)$ is one such lifting and since the lifting is unique by theorem
 \ref{plift}, the kernel of $f$ is the identity. So $Aut(P)$ is
 isomorphic to its image $f(P)\leq Aut(P/Z)$ and we have shown previously that
 $In(P)$ induces $In(P/Z)$.\\
(2)  Let $x=Zp$ be an element of
$P/Z$ and let $\sigma_{x}$ be the conjugation by $x$ in $P/Z$. The conjugation
$\tilde\sigma_{p}$ by $p$ in $P$ is a lifting of $\sigma_{x}$. If $x$ is in the
 centre of $P/Z$ then $\sigma_{x}$ is the identity on $P/Z$ and so
 $1\in Aut(P)$ is  also a lifting of $\sigma_{x}$. Therefore, theorem \ref{plift}
 implies that  $\tilde\sigma_{p}$ is the identity on $P$ and $p$ must belong to the
 centre $Z$ of $P$. Thus $x=Zp=Z$ is the identity of $P/Z.\quad\Box$

Let $P$ be a perfect group such that $Z(P)=1$ and let us consider the family
$\mathcal{F}_{P}:=\{ \tilde{P}\, \textrm{is perfect and}\, \tilde{P}/Z(\tilde{P})\cong P \}$ 
( the family of perfect central extensions of $P$). Let $\tilde{P}\in \mathcal{F}_{P}$
 and let $L$ be a subgroup such that $Inn(\tilde{P})\leq L\leq Aut (\tilde{P})$. 
If we want to construct the extensions of $\tilde{P}$ associated with $L$ then the method 
described in our work needs a supplement $\tilde{B}$ to $Inn(\tilde{P})$ in $L$. Let 
$j$ be the injection of  $Aut(\tilde{P})$ into $Aut(P)$ described by Proposition 
\ref{autperf}. Then $j (\tilde{B})$ is a supplement to $j(Inn(\tilde{P}))=Inn(P)$ 
in $j(L)\leq Aut(P)$. This is true for every $\tilde{P}$ in  $\mathcal{F}_{P}$ and thus such 
supplements can be found once for all $\tilde{P}\in\mathcal{F}_{P}$, by just finding 
supplements in $Aut(P)$. 

It is also important to note that $Aut(P)$ can be strictly smaller than $Aut(P/Z)$.
The smallest counter example of this statement
can be found in \cite{Holt_Plesken} page 174. The perfect group $P$ of order $2688$
 identified by $[2688,2]$, has no outer automorphism. Nevertheless the group $P/Z$,
  which is a split extension of $2^{3}$ by $L_{3}(2)$, has an outer automorphism group
  of order $2$.
 
%We have found an incorrect version of this theorem in a paper (Peter Schmidt Aachen 1970)
%that stated that $Aut(P)\cong Aut(P/Z)$. 

%\include{tchap2}
%\include{tchap3}
%\include{tchap4}

%\include{tchap6}
%\include{tchap7}
%\include{tchap8}
%\include{summary}

\bibliographystyle{plain}
\bibliography{these}
%\printindex

\end{document}